\newtheorem{theorem}{Theorem}[section]
\newtheorem{lemma}[theorem]{Lemma}
\newtheorem{proposition}[theorem]{Proposition}
\newtheorem{corollary}[theorem]{Corollary}
\theoremstyle{definition}
\newtheorem{definition}[theorem]{Definition}
\newtheorem{question}[theorem]{Question}
\theoremstyle{remark}
\newtheorem{remark}[theorem]{Remark}
\newtheorem{example}[theorem]{Example}
\def\F{\mathbb{F}}
\def\N{\mathbb{N}}
\def\Z{\mathbb{Z}}
\def\K{\mathbb{K}}
\def\cC{\mathcal{C}}
\def\cR{\mathcal R}
\def\cS{\mathcal S}
\def\cB{\mathcal{B}}
\def\Im{\operatorname{im}}
\def \gr {\operatorname{gr}}
\def\d{\partial}
\def\varep{\varepsilon}
\def\co{\colon}
\def\id{\textup{id}}
\def\im{\operatorname{im}}
\def\CF {\mathit{CF}}
\def\HF {\mathit{HF}}
\def\HFK {\mathit{HFK}}
\newcommand\HFhat{\widehat{\HF}}
\newcommand\CFhat{\widehat{\CF}}
\newcommand \CFm {\CF^-}
\newcommand \HFm {\HF^-}
\def\CFK{\mathit{CFK}}
\def\Hom{\mathrm{Hom}}
\def\Chatz{\smash{\widehat{\mathcal{C}}_\mathbb{Z}}}
\def\cCz{\smash{\mathcal{C}_{\mathbb{Z}}}}
\def\cC{\mathcal{C}}
\newcommand{\shift}{\mathrm{sh}} 
\def\lebang{<^!}
\def\gebang{>^!}
\def\leqbang{\leq^!}
\def\geqbang{\geq^!}
\def\KL{\mathfrak{K}} 
\def\KLs{\mathfrak{K}_{\mathrm{sym}}}
\def\sgn{\operatorname{sgn}}
\newcommand{\X}{\mathbb{X}}
\def\CFKUV{\CFK_\cR} 
\def\CFKX{\CFK_\X} 
\def\bfx{\mathbf{x}}
\def\Span{\operatorname{Span}}
\newcommand{\fm}{\mathfrak{m}}
\newcommand{\loc}{\mathcal{L}}
\newcommand{\R}{\mathcal{R}}
\newcommand{\ru}{\mathcal{R}_U} 
\newcommand{\rv}{\mathcal{R}_V} 
\newcommand{\stair}{\mathcal{S}}
\author[I. Dai]{Irving Dai}
\address {Department of Mathematics, The University of Texas at Austin, Austin, TX 78712, USA}
\email{irving.dai@math.utexas.edu}
\author[J. Hom]{Jennifer Hom}
\address {School of Mathematics, Georgia Institute of Technology, Atlanta, GA 30332, USA}
\email{hom@math.gatech.edu}
\author[M. Stoffregen]{Matthew Stoffregen}
\address {Department of Mathematics, Michigan State University, East Lansing, MI 48824, USA}
\email{stoffre1@msu.edu}
\author[L. Truong]{Linh Truong}
\address {Department of Mathematics, University of Michigan, Ann Arbor, MI 48103, USA}
\email{tlinh@umich.edu}
\numberwithin{equation}{section}
\title[Homology concordance]{Homology concordance and knot Floer homology}
\begin{document}

\begin{abstract}
We study the homology concordance group of knots in integer homology three-spheres which bound integer homology four-balls. Using knot Floer homology, we construct an infinite number of $\Z$-valued, linearly independent homology concordance homomorphisms which vanish for knots coming from $S^3$. This shows that the homology concordance group modulo knots coming from $S^3$ contains an infinite-rank summand. The techniques used here generalize the classification program established in previous papers regarding the local equivalence group of knot Floer complexes over $\F[U, V]/(UV)$. Our results extend this approach to complexes defined over a broader class of rings.
\end{abstract}

\maketitle

\section{Introduction}\label{sec:intro} 
Beginning with the $\tau$-invariant \cite{OS4ball}, the knot Floer homology package of \linebreak Ozsv\'ath-Szab\'o \cite{OSknots} and independently J.\ Rasmussen \cite{RasmussenThesis} has had numerous applications to the study of smooth knot concordance, especially of knots in $S^3$. See \cite{Homsurvey} for a survey of such applications. Less investigation has been done so far on knot concordance in general manifolds, although there have been many recent results with bearing here, including relative adjunction inequalities constraining the genera of smoothly embedded surfaces in a four-manifold \cite{Manolescu-Marengon-Piccirillo, Hedden-Raoux}, invariants of almost concordance classes of knots in lens spaces \cite{Celoria}, and the existence of knots which are not homology concordant to any knot in $S^3$ \cite{Levine-PL}.  

In this paper, we study the homology concordance group $\Chatz$ generated by pairs $(Y, K)$, where $Y$ is an integer homology sphere bounding an acyclic smooth $4$-manifold and $K$ is a knot in $Y$. Two classes $(Y_1,K_1)$ and $(Y_2, K_2)$ are equivalent in $\Chatz$ if there is a homology cobordism from $Y_1$ to $Y_2$ in which $-K_1 \sqcup K_2$ bounds a smoothly embedded annulus. Let $\cCz$ denote the set of knots in $S^3$ modulo homology concordance; this admits an inclusion into $\Chatz$ whose image consists of all classes $(Y, K)$ for which $K$ is homology concordant to a knot in $S^3$. Note that $\cCz$ is a quotient of the usual smooth concordance group $\cC$.\footnote{It is not known if $\cC \to \cCz$ is an isomorphism.}

The starting point for this paper is the work of Levine, Lidman, and the second author \cite{HomLidmanLevine}, which established that $\Chatz/\cCz$ is infinitely generated and contains a $\mathbb{Z}$-subgroup.  This was later extended by Zhou \cite{Zhou} to show that $\Chatz/\cCz$ contains a $\Z^\infty$-subgroup. The present paper gives an infinite family of homomorphisms from $\Chatz/\cCz$ to $\mathbb{Z}$:

\begin{theorem}\label{thm:main} 
For each $(i,j) \in (\mathbb{Z}\times \mathbb{Z}^{\geq 0}) - (\mathbb{Z}^{\leq 0} \times \{0\})$, there is a homomorphism 
\[ \varphi_{i,j} \co \Chatz \to \Z. \]
For classes in $\cCz \subset \Chatz$, all homomorphisms of the form $\varphi_{i,j}$ with $j \neq 0$ vanish. Hence these descend to homomorphisms 
\[ \varphi_{i,j} \co \Chatz/\cCz \to \Z. \]
Moreover, for knots in $S^3$, the remaining homomorphisms $\varphi_{i,0}$ agree with the homomorphisms $\varphi_{i}$ defined in \cite{DHSTmoreconcord}. 
In addition, 
\[ \bigoplus_{n > 1}  \varphi_{n,n-1} \co \Chatz/\cCz \to \bigoplus_{n > 1}  \Z \]
is surjective.
\end{theorem}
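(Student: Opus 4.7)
The plan is to build on the classification of local equivalence classes of knot Floer complexes developed in \cite{DHSTmoreconcord}, generalizing it from the ring $\F[U,V]/(UV)$ to a broader setting that captures knot Floer complexes of knots in integer homology spheres. In the $S^3$ setting one assigns to each knot a bigraded complex over $\F[U,V]/(UV)$, reduces it to a unique \emph{standard form}, and reads off integer invariants $\varphi_i$ from the signed arrow lengths. For a knot $K$ in a general homology sphere $Y$, the $\spinc$-filtered complex carries additional information and is most naturally viewed over a larger ring $\R$ in which $UV$ is not forced to vanish. The new parameter $j$ should record the $V$-depth of each arrow in a standard form, while $i$ continues to record the usual $U$ or $V$-exponent.

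The first step is to set up the algebraic category of complexes over $\R$, define the notion of local equivalence, and show that $(Y,K) \mapsto [\CFK(Y,K)]$ gives a well-defined homomorphism from $\Chatz$ into the local equivalence monoid. The homomorphism property reflects the tensor product rule for knot Floer complexes under connected sum, and invariance under homology concordance follows from the existence of cobordism maps, now tracked with an additional $V$-variable bookkeeping. The second step is to extend the classification theorem of \cite{DHSTmoreconcord} to complexes over $\R$, showing that every local class admits a preferred standard representative, now indexed by pairs $(i,j)$. The homomorphisms $\varphi_{i,j}$ are then defined as signed counts of arrows of type $(i,j)$ in this canonical form. Because knots from $S^3$ lift to complexes supported on the locus $UV = 0$, only $j = 0$ arrows can appear, which immediately yields vanishing of $\varphi_{i,j}$ on $\cCz$ for $j \neq 0$ and identifies $\varphi_{i,0}$ with the original $\varphi_i$ for knots in $S^3$.

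For surjectivity of $\bigoplus_{n > 1} \varphi_{n,n-1}$, I would exhibit an explicit family of test pairs $(Y_n, K_n) \in \Chatz$, one for each $n > 1$, whose standard complexes are essentially diagonal: the $n$-th example should contribute a single arrow of type $(n, n-1)$, with $\varphi_{m,m-1}$ vanishing for $m \neq n$. Natural candidates are knots sitting inside small Seifert-fibered homology spheres or plumbed three-manifolds, where the knot Floer complex can be read off from lattice or plumbing data and one has enough control over the $(i,j)$ indices to arrange the desired behavior. Given such a family, an infinite triangular argument on $n$ produces arbitrary finitely supported values in $\bigoplus_{n > 1} \Z$ and hence surjectivity.

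The main obstacle will be the second step: extending the standard-form classification from $\F[U,V]/(UV)$ to $\R$. Over the quotient ring, many cross-terms in the differential are automatically killed, which is what makes the reduction to standard form tractable in \cite{DHSTmoreconcord}. Over $\R$, where $UV$ need not vanish, one must perform the reduction in stages, for instance by filtering according to $V$-power and running an analogue of the previous argument on the associated graded pieces, while carefully verifying that the outcome is an invariant of local equivalence and not just of homotopy equivalence. Showing that this refined standard form is genuinely canonical, and that the resulting counts $\varphi_{i,j}$ are additive under tensor products rather than merely subadditive, is the crux of the argument.
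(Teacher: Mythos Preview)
Your overall architecture is right, but there is a genuine gap at the key algebraic step, and it is not the one you flag.

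You propose to work over ``a larger ring $\R$ in which $UV$ is not forced to vanish'' and to recover a standard-form classification by filtering according to $V$-power. This is not what makes the argument go through, and in fact the paper's ring $\X$ is \emph{not} one in which $UV$ survives in any useful sense. The essential point is that the classification in \cite{DHSTmoreconcord} depends on the fact that $\F[U,V]/(UV)$ is built from two ``one-dimensional'' pieces $\F[U]$ and $\F[V]$ whose maximal ideals have trivial product, and that the localized homology has a \emph{single} tower. For a knot in a general homology sphere, passing to $\F[U,V]/(UV)$ destroys the single-tower condition: $H_*(U^{-1}\CFK_\R(Y,K)) \cong \F[U,U^{-1}]\otimes \widehat{\HF}(Y)$ has many towers, so local equivalence is not even well-posed. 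The paper's solution is a specific ring $\X$ (a ``grid ring'') built from two graded valuation rings $\R_U$, $\R_V$ whose maximal ideals again have trivial product, together with a non-obvious base change $U\mapsto U_B+W_{T,0}$, $V\mapsto V_T+W_{B,0}$ from $\F[U,V]$ to $\X$. After this substitution and a change of basis, one recovers a single tower upon localization, and the classification machinery of \cite{DHSTmoreconcord} can be rerun essentially verbatim because the grid-ring axioms isolate exactly the structural features that made the original argument work. Your filtration-by-$V$-power idea does not produce this structure, and without the single-tower property you cannot even define local equivalence, let alone classify.

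A smaller point: your proposed test examples in Seifert-fibered spaces are problematic. The paper actually proves (Proposition~\ref{prop:seifert}) that the sign of $\varphi_{i,j}$ for $j>0$ is constrained for knots in Seifert spaces, so a single Seifert family cannot hit all of $\Z$. The examples that work are Zhou's knots $K_n$ in $Y_n = M_n\#{-M_n}$ (surgery on torus knots), which are not Seifert fibered; their complexes over $\X$ reduce to length-two standard complexes with $\varphi_{n,n-1}(K_n)=-1$ and all other $\varphi_{i,j}$ zero, giving surjectivity directly without a triangular argument.
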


\noindent
In fact, the homomorphisms $\varphi_{i,j}$ are well-defined concordance homomorphisms for knots in any integer homology sphere, not just $Y$ bounding an acyclic $4$-manifold.

To see that the $\varphi_{n,n-1}$ are nonvanishing, we use the knots considered by Zhou in \cite{Zhou} (see Section \ref{sec:computations}); in particular, a consequence of the proof of Theorem~\ref{thm:main} is that the subgroup $\mathbb{Z}^\infty\subset \Chatz/\cCz$ constructed in \cite{Zhou} is a summand.

\begin{corollary}
The group $\Chatz/\cCz$ has a $\Z^\infty$-summand.
\end{corollary}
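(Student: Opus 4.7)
The plan is to deduce the corollary as an immediate formal consequence of Theorem~\ref{thm:main}. That theorem supplies a surjective homomorphism of abelian groups
\[ \Phi \;=\; \bigoplus_{n>1} \varphi_{n,n-1} \colon \Chatz/\cCz \longrightarrow \bigoplus_{n>1} \Z. \]
The codomain is a free abelian group and therefore projective in the category of $\Z$-modules. Consequently any surjection onto it admits a section.

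Explicitly, for each $n>1$ I would choose a lift $x_n \in \Chatz/\cCz$ with $\Phi(x_n)=e_n$, where $e_n$ denotes the $n$-th standard basis vector of $\bigoplus_{n>1}\Z$. The universal property of the free abelian group on $\{e_n\}_{n>1}$ then produces a homomorphism
\[ \sigma \colon \bigoplus_{n>1}\Z \longrightarrow \Chatz/\cCz, \qquad e_n \longmapsto x_n, \]
satisfying $\Phi \circ \sigma = \mathrm{id}$. Hence the short exact sequence
\[ 0 \longrightarrow \ker\Phi \longrightarrow \Chatz/\cCz \xrightarrow{\;\Phi\;} \bigoplus_{n>1}\Z \longrightarrow 0 \]
splits, yielding $\Chatz/\cCz \cong \ker\Phi \,\oplus\, \bigoplus_{n>1}\Z$ and exhibiting $\Z^{\infty}$ as a direct summand.

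There is essentially no obstacle to the corollary itself: the work is entirely contained in the surjectivity assertion of Theorem~\ref{thm:main}, which is proved using the knots of Zhou \cite{Zhou}. The one conceptual point worth emphasizing is the distinction between the direct sum $\bigoplus_{n>1}\Z$ and the direct product $\prod_{n>1}\Z$; only the former is a free abelian group, and it is precisely this feature that allows the splitting argument to go through. Since Theorem~\ref{thm:main} is stated as a surjection onto the direct sum, the splitting and hence the summand conclusion follow.
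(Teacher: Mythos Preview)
Your argument is correct and is precisely the implicit reasoning the paper intends: the corollary is stated without a separate proof because it follows immediately from the surjectivity in Theorem~\ref{thm:main} together with the freeness of $\bigoplus_{n>1}\Z$, exactly as you have written out.
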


The invariants $\varphi_{i,j}$ factor through the local equivalence group of knot Floer complexes (\cite[Theorem 1.5]{Zemkeconnsuminv}, forgetting the involutive part) and are related to stable equivalence from \cite[Theorem 1]{Homsurvey}; equivalently $\nu^+$-equivalence of \cite{KimPark}. To understand this, recall that following \cite[Section 3]{Zemkeconnsuminv}, the knot Floer complex can be viewed as a module over $\F[U, V]$ where $\F$ is the field of two elements. Local equivalence is then an equivalence relation between certain such complexes. One can also consider an analogous notion of local equivalence for complexes defined over other rings; see for example \cite{DHSThomcobord} and \cite{DHSTmoreconcord}.

In certain situations, it is possible to obtain a complete classification of knotlike complexes up to local equivalence. This is (roughly speaking) the approach underlying \cite{DHSThomcobord} and \cite{DHSTmoreconcord}. Here, we carry out this program for a new ring $\X$, which turns out to be useful for studying homology concordance. In fact, we provide a more general class of rings for which the our algebraic classification results hold. This class includes $\X$ together with the ring $\R = \F[U, V]/(UV)$ considered in \cite{DHSTmoreconcord}. Indeed, the bulk of this paper consists of identifying the appropriate structural features of $\R$ that will allow us to generalize the arguments of \cite{DHSTmoreconcord} to the present setting.

Explicitly, $\X$ is defined to be the commutative $\F$-algebra 
\[
\frac{
\F[U_B, \{W_{B,i}\}_{i\in \mathbb{Z}}, V_T, \{W_{T,i}\}_{i\in \mathbb{Z}}]} { (U_BV_{T}, \ U_BW_{B,i}-W_{B,i+1}, \ V_TW_{T,i}-W_{T,i+1})}.
\]
We give this a bigrading $\gr=(\gr_1,\gr_2)\in \mathbb{Z}\times \mathbb{Z}$ by setting
\[
\gr(U_B) = (-2, 0) \qquad \text{and} \qquad \gr(W_{B,i}) = (-2i, -2)
\]   
and
\[
\gr(V_T) = (0, -2) \qquad \text{and} \qquad \gr(W_{T, i}) = (-2, -2i).
\]
The definition of $\X$ is meant to take into account $\R=\F[U,V]/(UV)$-local equivalence from \cite[Definition 1]{Homconcordance} and \cite{DHSTmoreconcord}, while involving the Floer homology of the ambient three-manifold. 

Understanding $\X$ is rather involved; an extended discussion can be found in Section~\ref{sec:preliminary}. For now, we make two remarks. The first is that there is a morphism of unital bigraded $\F$-algebras $\F[U,V]\to \X$ given by
\[U\mapsto U_B+W_{T,0} \quad \text{ and } \quad V\mapsto V_T+W_{B,0}.\] 
Given a knot $K$ in a homology sphere $Y$, we produce an $\X$-complex simply by taking the usual knot Floer complex over $\F[U, V]$ and performing the substitution above: 
\[
\CFK_\X(Y, K) = \CFK(Y, K) \otimes_{\F[U,V]} \X.
\]
The second is that the elements of $\X$ can be put in correspondence with certain lattice points in $\Z \times \Z$. (See Definition~\ref{def:rureformulation} for a precise discussion.) In Section~\ref{sec:standards}, we construct a preferred family of $\X$-complexes, each of which is parameterized by a finite sequence of lattice points, thought of as elements of $\X$. Our main structural theorem is that every knotlike $\X$-complex is locally equivalent to a unique complex in this family. Moreover, the total order introduced by the second author in \cite{Hominfiniterank} extends to induce a total order for complexes over the ring $\X$. We show that this total order can be understood by defining a certain total order $\leqbang$ on $\Z \times \Z - \{(0,0)\}$ and considering the lexicographic order on the set of sequences induced by $\leqbang$. See Definition~\ref{def:latticetotalorder} for a definition of $\leqbang$.

\begin{theorem}\label{thm:localequivlex}
Every knot Floer complex $\CFKX(Y,K)$ for a knot $K$ in an integer homology sphere $Y$ is locally equivalent (up to grading shift) to a standard complex as defined in Definition~\ref{def:standard}. Each standard complex is uniquely determined by a finite sequence of nonzero pairs of integers $(i_k,j_k)_{k=1}^{2n}$ which obeys the symmetry condition $(i_k, j_k) = -(i_{2n+1-k}, j_{2n+1-k})$. Moreover, local equivalence classes over $\X$ are ordered according to the lexicographic order on sequences $(i_k,j_k)_{k=1}^{2n}$ induced by the order $\leqbang$ of Definition~\ref{def:latticetotalorder}.
\end{theorem}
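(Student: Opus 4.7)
The plan is to generalize the classification strategy of \cite{DHSTmoreconcord} from $\R = \F[U,V]/(UV)$ to the ring $\X$, using the axiomatic framework promised in the introduction for a broader class of rings. The proof naturally splits into three parts: existence of a standard representative, uniqueness of that representative, and compatibility with the lexicographic order induced by $\leqbang$.

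For existence, I would proceed by a reduction/induction argument. First, apply the usual change-of-basis cancellation to put an arbitrary knotlike $\X$-complex into a reduced form in which the differential takes values in the maximal ideal of $\X$. Then, exploiting the symmetry of the knot Floer complex together with the bigrading $\gr = (\gr_1, \gr_2)$, I would algorithmically peel off one parameter $(i_k, j_k)$ at a time, analogous to the argument of \cite{DHSTmoreconcord} in the $\R$ setting. The new feature is that the ring $\X$ has infinitely many generators ($U_B$, $V_T$, and the families $W_{B,i}$, $W_{T,i}$), so each outgoing arrow in a reduced complex corresponds to a monomial which, via the correspondence of Definition~\ref{def:rureformulation}, is indexed by a lattice point in $\Z \times \Z$. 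The peeling step records exactly this lattice point as the next entry of the sequence; the symmetry condition $(i_k, j_k) = -(i_{2n+1-k}, j_{2n+1-k})$ is then an immediate consequence of the conjugation symmetry of $\CFKX(Y,K)$.

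For uniqueness and the order statement, I would proceed by constructing, for two standard complexes $\stair(S)$ and $\stair(S')$, a local map $\stair(S) \to \stair(S')$ if and only if $S' \leqbang S$ in the lexicographic order on sequences. Uniqueness is the special case of two-sided local equivalence: if maps exist both ways, then $S \leqbang S'$ and $S' \leqbang S$, forcing $S = S'$. Building these maps requires writing down explicit chain homotopy equivalences generator-by-generator, with the coefficients chosen from $\X$ according to the relative positions of the parameters $(i_k, j_k)$ under $\leqbang$. A careful case split on the signs of the second coordinates $j_k$ (which governs which of $U_B, V_T, W_{B,i}, W_{T,i}$ can appear) is essential here. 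The total order on local equivalence classes then follows because for any two knotlike complexes, their standard representatives $S, S'$ are either equal or comparable, and the comparison is witnessed by an explicit local map.

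The principal obstacle is the combinatorial complexity of the ring $\X$ compared with $\R = \F[U,V]/(UV)$. In $\R$ the outgoing arrows from a reduced generator can only be labeled by powers of $U$ or $V$, which are linearly ordered; in $\X$ the arrows are labeled by lattice points in $\Z \times \Z - \{(0,0)\}$, whose ordering $\leqbang$ is more intricate (and mixes the two axes in a non-obvious way near the points with $j = 0$). Establishing that local maps exist precisely when $S' \leqbang S$ therefore requires a delicate bookkeeping on which monomials in $\X$ can appear as coefficients in a homomorphism between two standard complexes. The abstract axiomatization of the relevant features of $\X$ and $\R$ should let us handle both rings simultaneously, but verifying that $\X$ satisfies these axioms—particularly the compatibility between the bigrading, the total order $\leqbang$, and the conjugation symmetry—will be the technical heart of the argument.
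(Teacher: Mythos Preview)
Your high-level plan—axiomatize the relevant features of $\R$ and $\X$ (what the paper calls \emph{grid rings}) and then rerun the \cite{DHSTmoreconcord} classification—matches the paper's. But the existence argument you sketch is not how that classification actually works, and your logical ordering is inverted in a way that would make the argument circular.

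The paper does not ``algorithmically peel off'' parameters from a reduced complex. It first proves, by a direct argument on arbitrary knotlike complexes (Lemma~\ref{lem:totalorder} and Proposition~\ref{prop:totalorder}), that the partial order on $\KL$ coming from local maps is already total. Only after this does it define invariants $a_i(C)$ (Definition~\ref{def:aiinvariants}) as the maximum $b_i$ for which some standard complex $C(a_1,\dots,a_{i-1},b_i,\dots)$ admits a local map to $C$; this maximum is well-posed precisely because $\leq$ is known to be total. The classification (Theorem~\ref{thm:char}) is then the assertion that the $a_i(C)$ eventually stabilize to $1$, proved via the Merge and Extension Lemmas (Lemmas~\ref{lem:blebangc}, \ref{lem:shortextension}) together with a finiteness argument on extant coefficients (Lemmas~\ref{lem:extant}, \ref{lem:aiwelldefined}, \ref{lem:aizero}). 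Your proposal to derive the total order \emph{from} the classification reverses this dependency, and your ``peeling'' step hides the real difficulty: a reduced generator may have several outgoing arrows, and deciding which one to record first is exactly the problem the $a_i$ machinery—hence the prior total order—is there to solve. You also need to verify separately that $\CFKX(Y,K)$ is a knotlike complex over $\X$ (Lemma~\ref{lem:cfkxknotlike}); this is the point at which the single-tower hypothesis on $\HFm(Y)$ enters.

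Your account of the order on standard complexes is correct in spirit and matches Lemmas~\ref{lem:leqlex} and \ref{lem:>lex}, though you have the inequality reversed: in the paper's conventions a local map $C(S) \to C(S')$ exists iff $S \leqbang S'$. The symmetry condition does indeed come from the conjugation symmetry of knot Floer, routed through the pullback construction of Definition~\ref{def:xi} and Lemma~\ref{ex:symmetricstandard}. (Minor notation: standard complexes are written $C(b_1,\dots,b_n)$ in the paper; $\stair$ denotes the grid ring itself.)
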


Note that if we let $D = (\mathbb{Z}\times \mathbb{Z}_{\geq 0}) - (\mathbb{Z}_{\leq 0} \times \{0\})$ be the region of definition for the indices of the homomorphisms $\varphi_{i,j}$, then $\Z \times \Z - \{(0,0)\} = D \sqcup -D$. Roughly speaking, $\varphi_{i,j}(K)$ will be the signed count of how many parameters $(i_k, j_k)$ in the standard complex representative of $\CFKX(Y, K)$ are equal to $\pm (i, j)$. See Section~\ref{sec:preliminary} for an example.


Our approach to Theorems \ref{thm:main} and \ref{thm:localequivlex} is to work for a certain type of rings, called \emph{grid rings}, and to establish classification results for all rings of this type.  The theorems for knot Floer homology are established as a consequence of these general algebraic results, specialized to the ring $\X$. See Section~\ref{sec:knotlike}.


\subsection{Topological applications}

We now list some connections between our homomorphisms $\varphi_{i,j}$ and other invariants, together with some topological applications.

\begin{restatable}{proposition}{proptau}\label{prop:tau}
	Let $K$ be a knot in an integer homology sphere $Y$. Then we have the following equality relating the Ozsv\'ath-Szab\'o $\tau$-invariant with $\varphi_{i,j}$:
	\[ \tau(Y, K) = \sum_{(i,j) } (i-j)\varphi_{i,j}(K). \]\end{restatable}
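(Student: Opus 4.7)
The plan is to reduce both sides of the identity to explicit expressions in the sequence $(i_k, j_k)_{k=1}^{2n}$ parameterizing the unique standard-complex representative of $\CFKX(Y,K)$ given by Theorem~\ref{thm:localequivlex}, then match them.

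First I would check that $\tau(Y,K)$ depends only on the $\X$-local equivalence class of $\CFKX(Y,K)$. The map $\F[U,V]\to\X$ sending $U\mapsto U_B+W_{T,0}$ and $V\mapsto V_T+W_{B,0}$ is bigraded, so the bigrading on $\X$ descends to a bigrading on $\CFKX(Y,K)$ that refines the usual Alexander/Maslov bigrading. In particular, $\tau$ can be extracted from the $\X$-module together with the bigrading as the maximal Alexander grading of a cycle whose class is nontrivial after inverting the image of $U$. An $\X$-local equivalence is a grading-preserving $\X$-chain map inducing an isomorphism on the appropriate localization, so it preserves $\tau$. The right-hand side is, by the very definition of $\varphi_{i,j}$, a function of the standard-complex representative and hence an invariant of the local equivalence class.

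It therefore suffices to check the identity for a standard complex. The second step is to compute $\tau$ of the standard complex directly from its defining sequence $(i_k, j_k)_{k=1}^{2n}$ and the differentials prescribed by Definition~\ref{def:standard}. A standard complex is a staircase with generators joined by arrows whose labels are determined by the $(i_k,j_k)$; tracking the Alexander grading along the staircase, I expect to identify the cycle that achieves the defining extremum for $\tau$ and to obtain a telescoping formula of the form
\[
\tau \;=\; \sum_{k=1}^{2n} \sigma_k\,(i_k - j_k),
\]
where $\sigma_k\in\{\pm 1\}$ records the orientation (``horizontal'' vs.\ ``vertical'') of the $k$-th arrow. The symmetry $(i_k,j_k) = -(i_{2n+1-k}, j_{2n+1-k})$ will ensure consistency of this sum. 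Finally, unpacking the definition of $\varphi_{i,j}(K)$ as the signed count of appearances of $\pm(i,j)$ among the $(i_k,j_k)$, I would verify by direct bookkeeping that $\sum_{(i,j)}(i-j)\varphi_{i,j}(K)$ reorganizes to the same expression $\sum_k \sigma_k(i_k-j_k)$.

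The main obstacle is the second step. Computing $\tau$ for the staircase requires unpacking Definition~\ref{def:standard} carefully, understanding precisely how each parameter $(i_k,j_k)$ translates into a differential labeled by an element of $\X$, and pinpointing the generator on which the defining extremum of $\tau$ is realized after localizing at the image of $U$. Once the telescoping formula is in hand, the remaining matching with the signed-count definition of $\varphi_{i,j}$ is purely combinatorial.
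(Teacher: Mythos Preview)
Your overall strategy---reduce to the standard complex representative and compute both sides there---is exactly what the paper does. However, you underestimate the first step, and your proposed characterization of $\tau$ is not quite right.

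You assert that $\tau$ ``can be extracted from the $\X$-module together with the bigrading as the maximal Alexander grading of a cycle whose class is nontrivial after inverting the image of $U$.'' The paper's definition of $\tau(Y,K)$ (Definition~\ref{def:tau-1}) is in terms of the filtered complex $\CFK^\infty$ and the $U$-tower in $\HF^+$; translating this into a statement that can be read off from $\CFKX(Y,K)$ is genuine work. The paper devotes Lemma~\ref{lem:next-tau} and Proposition~\ref{prop:tauUV} to reformulating $\tau$ as the \emph{minimal} Alexander grading of a cycle in $\CFK(Y,K)/U$ whose image in $(V^{-1}\CFK(Y,K)/U)_0$ comes from a tower generator of $(V^{-1}\CFK(Y,K))_0$. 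Note the direction (minimal, not maximal) and that the condition involves the tower in $\HF^-(Y)$, not just nontriviality after localization. Only with this reformulation in hand does the paper invoke the bigraded isomorphism $V^{-1}\CFK(Y,K)\cong V_T^{-1}\CFKX(Y,K)$ and Corollary~\ref{cor:splitting} to see that any such cycle must be supported on the generator $x_0$ of the standard summand.

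Once $x_0$ is identified, the paper reads off $\tau$ as half of $\gr_1(x_0')-\gr_2(x_0')=-\gr_2(x_0)$ (the $d(Y)$-shifts cancel), then uses \eqref{eq:gr2diff} together with the symmetry $\sgn(b_{n+1-i})=-\sgn(b_i)$, $\gr_2(b_{n+1-i})=-\gr_1(b_i)$ to reduce to a sum over \emph{odd} indices only:
\[
\tau=\tfrac12\sum_{k\text{ odd}}\bigl(\gr_2(b_k)-\gr_1(b_k)\bigr)=\sum_{k\text{ odd}}\sgn(b_k)\,(i_k-j_k),
\]
which matches $\sum_{(i,j)}(i-j)\varphi_{i,j}(K)$ directly from Definition~\ref{def:homs}. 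Your anticipated formula $\sum_{k=1}^{2n}\sigma_k(i_k-j_k)$ over all $k$ is off by this use of symmetry; you mention the symmetry but do not say how it enters.
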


Similarly, $\varep(Y,K)$ can be determined from the standard complex representative of $\CFKX(Y,K)$ appearing in Theorem \ref{thm:localequivlex}. This leads to a re-proof of the following:

\begin{corollary}{\cite[Corollary 1.9]{HomLidmanLevine}}\label{cor:1.9}
Let $K$ be a knot in an integer homology sphere $Y$.  If $\varep(Y,K)=0$ and $\tau(Y,K)\neq 0$, then $K\subset Y$ is not homology concordant to any knot in $S^3$.
\end{corollary}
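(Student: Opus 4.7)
The plan is to argue by contrapositive: assuming $K \subset Y$ is homology concordant to some knot $K' \subset S^3$, I would show that $\varepsilon(Y,K) = 0$ forces $\tau(Y,K) = 0$.

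First I would use Theorem~\ref{thm:main}. Since the $\varphi_{i,j}$ are homology concordance homomorphisms, $\varphi_{i,j}(Y,K) = \varphi_{i,j}(S^3,K')$ for all admissible $(i,j)$. The theorem then says that all $\varphi_{i,j}$ with $j \neq 0$ vanish on $\cCz$, hence $\varphi_{i,j}(Y,K) = 0$ whenever $j \neq 0$. Combining this with Theorem~\ref{thm:localequivlex} and the interpretation of $\varphi_{i,j}(K)$ as a signed count of parameters $\pm(i,j)$ in the standard complex representative of $\CFKX(Y,K)$, I conclude that every parameter $(i_k, j_k)$ appearing in the standard representative satisfies $j_k = 0$.

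The next step is to exploit the statement (made just before Corollary~\ref{cor:1.9}) that $\varepsilon(Y,K)$ can be determined from the standard complex. Standard complexes whose parameters all lie on the $i$-axis are precisely those arising over the subring $\F[U,V]/(UV) \subset \X$ studied in \cite{DHSTmoreconcord}; on such complexes the analysis of $\varepsilon$ carries over directly from that paper, and $\varepsilon = 0$ characterizes exactly the empty parameter sequence (i.e.\ the trivial standard complex). This will be the main obstacle, since it requires assembling the explicit recipe for $\varepsilon$ from the standard complex and verifying that no new "mixed'' behavior appears in the $\X$-setting when all parameters already have $j_k = 0$.

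Once the standard complex is known to be trivial, every $\varphi_{i,j}(Y,K)$ vanishes, and Proposition~\ref{prop:tau} gives
\[ \tau(Y,K) = \sum_{(i,j)} (i-j)\varphi_{i,j}(Y,K) = 0, \]
completing the contrapositive. The argument reduces the original topological statement to two already-established facts: the invariance of the $\varphi_{i,j}$ on $\cCz$, and the reading of $\varepsilon$ from the standard complex in the $j_k = 0$ regime.
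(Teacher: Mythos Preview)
Your step from ``$\varphi_{i,j}(Y,K)=0$ for all $j\neq 0$'' to ``every parameter $(i_k,j_k)$ in the standard complex has $j_k=0$'' does not follow. The invariant $\varphi_{i,j}$ is only a \emph{signed} count of odd-index parameters equal to $\pm(i,j)$, so cancellation can occur. For instance, the symmetric standard complex
\[
C\bigl(U_BW_{B,0},\ V_TW_{T,0},\ (U_BW_{B,0})^{-1},\ (V_TW_{T,0})^{-1}\bigr)
\]
has $\varphi_{1,1}=1-1=0$ and all other $\varphi_{i,j}=0$, yet every one of its parameters has $j_k=1$. Thus the vanishing of the homomorphisms alone does not pin down the standard complex in the way you need.

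The paper closes this gap by using the parameter-level result rather than the homomorphism-level one: if $(Y,K)$ is homology concordant to a knot in an $L$-space, then (by local equivalence and the structure of $\HFm$ of an $L$-space) each individual odd parameter $|b_{2k-1}|$ has $j_k=0$; this is Theorem~\ref{thm:vanishing}/Proposition~\ref{prop:knots-in-l-spaces}. With that in hand the paper's argument is actually shorter than yours: the characterization of $\varepsilon(Y,K)=0$ says either the standard complex is trivial or its \emph{first} parameter $|b_1|$ has $j_1\neq 0$; the hypothesis $\tau(Y,K)\neq 0$ together with Proposition~\ref{prop:tau} rules out the trivial case; and then $j_1\neq 0$ directly contradicts Proposition~\ref{prop:knots-in-l-spaces}. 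You can repair your argument by invoking Theorem~\ref{thm:vanishing} (the parameter-level vanishing) in place of Theorem~\ref{thm:main}, after which your remaining steps go through.
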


In fact, Corollary 1.5 can be extended to obstruct homology concordances to any knot in an integer homology sphere $L$-space.

We also are also able to show that if $Y$ is a Seifert fiber space, then the $\varphi_{i,j}$ are constrained. Our conventions on the sign of Seifert fibered spaces follow \cite{Karakurt-Lidman-Tweedy}; in particular \emph{positive} Seifert fiber spaces $Y$ bound negative-definite plumbings and have $\mathit{HF}^-_{\mathrm{red}}(Y)$ concentrated in even degrees, and negative Seifert fiber spaces $Y$ have $\mathit{HF}^-_{\mathrm{red}}(Y)$ concentrated in odd degrees. 

\begin{restatable}{proposition}{propseifert}\label{prop:seifert}
Let $K\subset Y$ be a knot in an integer homology sphere. If there is some $(i,j)$ with $j>0$ for which $\varphi_{i,j}(K)>0$, then $K$ is not homology concordant to a knot in any negative Seifert fiber space.  If there is some $(i,j)$ with $j>0$ for which $\varphi_{i,j}(K)<0$, then $K$ is not homology concordant to a knot in any positive Seifert fiber space. 
\end{restatable}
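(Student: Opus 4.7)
The plan is to prove the following claim: for any knot $K'$ in a negative Seifert fiber space $Y'$, we have $\varphi_{i,j}(K') \leq 0$ whenever $j > 0$. The first assertion of the proposition then follows from the homology concordance invariance of $\varphi_{i,j}$ (Theorem~\ref{thm:main}) by contrapositive; the statement for positive Seifert fiber spaces is deduced from the first by mirroring $(Y',K') \mapsto (-Y',-K')$, since the mirror operation negates each parameter $(i_k,j_k)$ of the standard complex (and hence negates $\varphi_{i,j}$) and interchanges positive with negative Seifert fiber spaces.

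To establish the claim, I would analyze the standard complex representative of $\CFKX(Y',K')$ furnished by Theorem~\ref{thm:localequivlex}. By construction, $\varphi_{i,j}(K')$ is a signed count of occurrences of $(i,j)$ versus $-(i,j)$ among the parameters $(i_k,j_k)_{k=1}^{2n}$. It thus suffices to show that, when $Y'$ is a negative Seifert fiber space, every parameter with $j_k>0$ is matched in a sign-compatible way with a parameter that contributes negatively to $\varphi_{i,j}$. The decisive structural input is that $HF^-_{\red}(Y')$ is concentrated in odd Maslov grading for negative Seifert fiber spaces, while the generators $W_{B,i}$ and $W_{T,i}$ of $\X$ geometrically detect precisely the reduced part of the Floer homology of the ambient three-manifold.

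The core step is then a parity argument. The bigrading on $\X$ assigns $W_{B,i}$ the grading $(-2i,-2)$ and $W_{T,i}$ the grading $(-2,-2i)$, and these elements appear in the differentials of the standard complex whenever the parameters $(i_k,j_k)$ have $j_k\neq 0$. Because $HF^-_{\red}(Y')$ has a definite Maslov parity, the generators of the standard complex connected by such $W$-differentials have absolute Maslov gradings constrained modulo $2$. Reading off the resulting sign constraint on the matched pairs $\pm(i_k,j_k)$ under the symmetry $(i_k,j_k)=-(i_{2n+1-k},j_{2n+1-k})$ shows that the signed count $\varphi_{i,j}(K')$ is non-positive for $j > 0$.

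The main obstacle will be making the last step precise: translating the parity constraint on $HF^-_{\red}(Y')$ into the required sign constraint on the standard complex parameters. A natural route is to compute $\varphi_{i,j}(K')$ in terms of $d$-type invariants of large surgeries along $K'$, for which the parity statement for Seifert fiber spaces can be invoked directly (via the structure of plumbed manifolds and the sign conventions of \cite{Karakurt-Lidman-Tweedy}). Alternatively, one can argue entirely on the algebraic side by tracking the induced bigrading on the standard complex through the reduction algorithm that produces it from $\CFKX(Y',K')$.
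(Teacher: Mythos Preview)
Your overall strategy is the right one and matches the paper: use the parity constraint on $\HFm_{\red}(Y')$ for Seifert spaces to force a sign constraint on the standard complex parameters, then take the contrapositive. The mirroring observation for the second statement is also correct.

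However, the proposal has a genuine gap in the ``core step.'' You describe the argument as showing that parameters with $j_k>0$ are ``matched in a sign-compatible way,'' invoking the symmetry $(i_k,j_k)=-(i_{2n+1-k},j_{2n+1-k})$. This is not the mechanism, and in fact that symmetry is unhelpful here: $\varphi_{i,j}$ is defined as a signed count over \emph{odd}-index parameters only, and the symmetry pairs odd indices with even ones. The paper's argument is both more direct and proves something stronger: for $Y$ a negative Seifert space, \emph{every} odd-index parameter $b_{2k-1}$ with $j_k>0$ satisfies $b_{2k-1}\lebang 1$ (and hence contributes $-1$). There is no cancellation; all contributions have the same sign.

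The missing concrete link is equation~\eqref{eq:torsion-towers-from-standards} together with Remark~\ref{rem:fuvtox}. Setting $U_B=1$ in $\CFKX(Y,K)$ identifies $H_*(C_\X/(U_B-1))$ with $\HFm(Y)$ as a graded $\F[W_{B,0}]$-module (via \eqref{eq:invertsetone} and \eqref{eq:hfmiso}). For the standard complex summand, this homology decomposes into one free tower plus torsion towers $\F[W_{B,0}]/(W_{B,0}^{j_k})$, one for each odd-index parameter. One then computes the $\gr_2$-grading shift $\sigma_k$ of the $k$th torsion tower explicitly from \eqref{eq:gr2diff} and checks that $\sigma_k$ is even if and only if $b_{2k-1}\gebang 1$. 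Since the torsion towers (when $j_k>0$) sit inside $\HFm_{\red}(Y)$ up to the even shift $d(Y)$, the Seifert parity hypothesis forces the sign of each $b_{2k-1}$ with $j_k>0$. Your suggested detour through $d$-invariants of large surgeries is unnecessary; the algebraic route is exactly this computation, not a trace through any reduction algorithm.
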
	

The following corollary is immediate from Theorem \ref{thm:main} and Proposition \ref{prop:seifert}:

\begin{corollary}
There exist pairs $(Y, K)$ which are not homology concordant to any knot in a Seifert space.
\end{corollary}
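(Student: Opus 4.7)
The plan is to combine the surjectivity half of Theorem \ref{thm:main} with Proposition \ref{prop:seifert}. The key observation is that every Seifert fiber space (in one of its two orientations) is either positive or negative in the sense of Karakurt--Lidman--Tweedy, so Proposition \ref{prop:seifert} offers two independent sign obstructions: a positive value of some $\varphi_{i,j}$ with $j>0$ rules out homology concordance to any knot in a negative Seifert fiber space, and a negative value of some $\varphi_{i',j'}$ with $j'>0$ rules out homology concordance to any knot in a positive one. To obstruct both at once, I would produce a single pair $(Y,K) \in \Chatz$ that simultaneously carries a positive value in some $\varphi_{i,j}$ with $j>0$ and a negative value in some (possibly different) $\varphi_{i',j'}$ with $j'>0$.

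The first step is to invoke the surjectivity statement
\[ \bigoplus_{n > 1} \varphi_{n, n-1} \co \Chatz/\cCz \to \bigoplus_{n > 1} \Z \]
from Theorem \ref{thm:main} to select a class whose image in the right-hand side has first coordinate $+1$, second coordinate $-1$, and all other coordinates zero. Any representative $(Y,K)$ of this class then satisfies $\varphi_{2,1}(K) = 1$ and $\varphi_{3,2}(K) = -1$, with both indices $(2,1)$ and $(3,2)$ having $j>0$. The second step is to apply Proposition \ref{prop:seifert} twice: the positive value of $\varphi_{2,1}(K)$ rules out homology concordance to any knot in a negative Seifert fiber space, and the negative value of $\varphi_{3,2}(K)$ rules out homology concordance to any knot in a positive Seifert fiber space. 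Since every Seifert fiber space is one of the two, this finishes the argument.

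There is no substantive obstacle once Theorem \ref{thm:main} and Proposition \ref{prop:seifert} are in hand; the only thing to verify is that the freedom supplied by the surjection onto $\bigoplus_{n>1}\Z$ is enough to impose two independent sign conditions, which is immediate from choosing coordinates of opposite signs. If one wants an explicit rather than abstract witness, a natural choice would be a suitable integral linear combination of Zhou's knots from \cite{Zhou} (used in Section \ref{sec:computations} to establish surjectivity), with the signs in the combination arranged to produce the required positive and negative values of $\varphi_{2,1}$ and $\varphi_{3,2}$.
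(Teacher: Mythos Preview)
Your proof is correct and follows essentially the same approach as the paper: use the surjectivity in Theorem~\ref{thm:main} to find a class on which the $\varphi_{n,n-1}$ take prescribed values (with at least one positive and one negative), then apply Proposition~\ref{prop:seifert} to rule out both positive and negative Seifert fiber spaces. The paper's proof is simply a one-sentence version of what you wrote out in detail.
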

\begin{proof}
According to Theorem~\ref{thm:main} we may select a class on which the homomorphisms $\varphi_{n, n-1}$ take any desired set of values.
\end{proof}

We can also bound various genera in line with our results on concordance unknotting number and concordance genus (of knots in $S^3$) in \cite{DHSTmoreconcord}. Define the homology-concordance genus 
\[
g_{H,c}(Y,K)=\min_{(Y',K') \mbox{ homology concordant to } (Y,K)} g_3(Y',K'),
\]
and the homology-concordance unknotting number:
\[
u_{H,c}(Y,K)=\min_{(Y',K') \mbox{ homology concordant to } (Y,K)} u(Y',K').
\]
Note that the homology-concordance unknotting number may be infinite.  Let 
\begin{equation*}
N(Y, K)=\sup_{(i,j)\mid \varphi_{i,j}(K)\neq 0} |i-j|,
\end{equation*}

\begin{proposition}\label{prop:genera-bounds}
The homology concordance genus and homology concordance unknotting numbers satisfy:
\begin{enumerate}
\item $g_{H,c}(K)\geq N(K)/2$,
\item $u_{H,c}(K)\geq N(K)$.
\end{enumerate}
\end{proposition}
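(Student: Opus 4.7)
The plan is to exploit the fact that $N(Y,K)$ is a homology concordance invariant, since each $\varphi_{i,j}$ is a homology concordance homomorphism by Theorem~\ref{thm:main}. Consequently, if one can show that for an \emph{arbitrary} pair $(Y',K')$ the inequalities $g_3(Y',K')\geq N(Y',K')/2$ and $u(Y',K')\geq N(Y',K')$ hold, then both bounds follow upon taking the infimum over the homology concordance class of $(Y,K)$.

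For the genus bound, I would first use Theorem~\ref{thm:localequivlex} to replace $\CFKX(Y',K')$ with its standard-complex representative, which is parametrized by a symmetric sequence $\{(i_k,j_k)\}_{k=1}^{2n}$ of nonzero lattice points. By the definition of $\varphi_{i,j}$ as a signed count of parameters equal to $\pm(i,j)$, the quantity $N(Y',K')$ is exactly the maximum of $|i_k-j_k|$ over $k$. Since the knot Floer complex over $\F[U,V]$ is supported in Alexander gradings of absolute value at most $g_3(Y',K')$, and the Alexander grading corresponds to $(\gr_1-\gr_2)/2$ in the bigrading on $\X$, tracking the Alexander supports of the generators of the standard representative forces $|i_k-j_k|\leq 2g_3(Y',K')$ for every $k$, yielding the first inequality.

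For the unknotting number bound, my plan is to analyze the effect of a single crossing change on the standard-complex representative of $\CFKX$. Following the strategy used in \cite{DHSTmoreconcord} for knots in $S^3$, a crossing change induces a cobordism between the knot Floer complexes whose effect on the standard parameters is controlled: each crossing change alters $|i_k-j_k|$ by at most $1$. Iterating along an optimal unknotting sequence of length $u(Y',K')$ and using that the unknot has the trivial standard complex (so all $\varphi_{i,j}$ vanish) then gives $N(Y',K')\leq u(Y',K')$.

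The main obstacle is the generalization from the ring $\R=\F[U,V]/(UV)$ considered in \cite{DHSTmoreconcord} to the ring $\X$. Specifically, one must verify that the Alexander-filtration argument and the crossing-change inequality continue to control the standard-complex parameters when these are allowed to lie in all of $\Z\times\Z\setminus\{(0,0)\}$ under the total order $\leqbang$, rather than merely in the subset relevant for knots in $S^3$. This generalization should follow from the structural classification of Theorem~\ref{thm:localequivlex} and the local-equivalence framework developed throughout the paper, but the technical core lies in bookkeeping the Alexander and Maslov shifts in the $\X$-setting correctly.
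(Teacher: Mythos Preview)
Your approach to part (1) is correct and essentially matches the paper's argument: the standard-complex generators survive to nonzero classes in $\widehat{\HFK}(Y',K')\cong \CFKX(Y',K')/(\fm_U,\fm_V)$, and the Alexander-grading difference across a $U_B^iW_{B,0}^j$-arrow is exactly $|i-j|$, so the support condition $\widehat{\HFK}(Y',K',s)=0$ for $|s|>g_3(Y',K')$ gives the bound.

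Your approach to part (2), however, has a genuine gap. The assertion that ``each crossing change alters $|i_k-j_k|$ by at most $1$'' is not established anywhere---neither in this paper nor in \cite{DHSTmoreconcord}---and there is no reason to expect the standard-complex parameters to vary so tamely under a single crossing change. (For comparison, invariants like $\tau$ change by at most $1$ because they are computed from a single distinguished generator; the $\varphi_{i,j}$ encode the entire local-equivalence type and are not controlled in this way.) Moreover, your endpoint claim that the unknot has trivial standard complex is problematic in a general homology sphere $Y'$: the unknot complex $\CFK(Y',\mathbb{U})$ is essentially $\CF^-(Y')\otimes_{\F[U]}\F[U,V]$, which is nontrivial whenever $Y'$ is not an $L$-space.

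The paper's actual argument for (2) is quite different. It invokes the result of Alishahi--Eftekhary \cite{AlishahiEftekharyunknotting} giving maps $f\colon\CFK(Y',K')\to\CFK(Y',\mathbb{U})$ and $g$ in the other direction with $f\circ g$ and $g\circ f$ homotopic to multiplication by $U^\ell$ for some $\ell\leq u(Y',K')$. Passing to $\X$ and modding out by $\fm_V$, one then analyzes how such maps interact with the $\R_U$-torsion towers on both sides. The key computation is that for the unknot in $Y'$, every torsion tower in $H_*(\CFKX(Y',\mathbb{U})/\fm_V)$ has ``diagonal'' order $U_B^kW_{B,0}^k$. Comparing this diagonal structure against a tower of order $U_B^iW_{B,0}^j$ on the $K'$-side, and using that $f,g$ compose to $U_B^\ell$, forces $\ell\geq|i-j|$. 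This torsion-order argument is what you are missing; it replaces the unproven step-by-step crossing-change bound entirely.
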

\noindent
Proposition~\ref{prop:genera-bounds} should be compared with \cite[Theorem 1.14]{DHSTmoreconcord}.


\begin{remark}
Following the appearance of this paper on the arXiv, Zhou \cite{Zhou2bridge} used the invariants $\varphi_{i,j}$, blow-downs of two-bridge links, and cables to show that $\Chatz/\cCz$ admits a $\Z^\infty$-summand generated by a family of knots in a single manifold.
\end{remark}

\subsection*{Organization}
In Section \ref{sec:background}, we briefly recall the requisite properties of knot Floer homology. We give a brief primer on $\X$-complexes and their applications (delaying proofs) in Section \ref{sec:preliminary}. In Section \ref{sec:knotlike}, we introduce the notion of a \textit{grid ring} and a \textit{knotlike complex} over a grid ring. This will allow us to define the local equivalence group $\KL$ of knotlike complexes, and show that it is totally ordered. In Section \ref{sec:standards}, we define the \emph{standard complexes} in analogy to the construction over $\R=\F[U,V]/(UV)$ from \cite{DHSTmoreconcord}. In Sections~\ref{sec:ai} and \ref{sec:char}, we dive into the technical heart of the paper and show that every knotlike complex is locally equivalent to a standard complex. We then establish our homomorphisms in Section \ref{sec:homs}. The applications to knot Floer theory are given in Sections~\ref{sec:algebra-for-knot-floer} and \ref{sec:vanishing}, with calculations of the relevant knot Floer complexes in Section~\ref{sec:computations}. \\ 
\\
\noindent
\textbf{Acknowledgements.} Irving Dai was supported by National Science Foundation grant DGE-1148900. Jennifer Hom was supported by National Science Foundation grants DMS-1552285 and DMS-2104144. Matthew Stoffregen was supported by National Science Foundation grant DMS-1952762. Linh Truong was supported by National Science Foundation Grants DMS-1606451, DMS-2005539, and DMS-2104309. The authors would like to thank the Park City Mathematics Institute for hosting us during the July 2019 Research Program: \emph{Quantum Field Theory and Manifold Invariants}, where part of this work was completed. We also thank the MATRIX Institute for hosting three of us during the January 2019 workshop: \emph{Topology of manifolds: Interactions between high and low dimensions}.  We are grateful to Artem Kotelskiy and Hugo Zhou for helpful conversations. Lastly, we thank the referee for many helpful comments.

\section{Background on knot Floer homology}\label{sec:background}

In this section, we give a brief overview of knot Floer homology, primarily to establish notation. We assume that the reader is familiar with knot Floer homology as in \cite{OSknots} and \cite{RasmussenThesis}; see \cite{ManolescuKnotIntro}, \cite{Homsurvey}, and \cite{HomPCMI} for survey articles on this subject. Our conventions mostly follow those in \cite{Zemkeabsgr}; see, in particular, Section 1.5 of \cite{Zemkeabsgr}, see also Section 2 of \cite{DHSTmoreconcord}.

We consider the bigraded ring $\F[U,V]$, with bigrading $\gr=(\gr_U,\gr_V)$, where we call $\gr_U$ the \emph{$U$-grading} and $\gr_V$ the \emph{$V$-grading}, and $\gr(U)=(-2,0)$ and $\gr(V)=(0,-2)$. When other bigraded rings and maps are present, we will often write $\gr_1 = \gr_U$ and $\gr_2 = \gr_V$.

Let $K$ be a nullhomologous knot in a closed, oriented three-manifold $Y$. Then we may associate to $(Y, K)$ a chain complex $\CFK_{\F[U,V]}(Y,K)=\CFK(Y,K)$, called the \emph{knot Floer complex} of the pair $(Y,K)$.  This complex is obtained by considering the free module over $\F[U,V]$ of intersection points of two Lagrangians in a symmetric product of a Riemann surface, coming from a Heegaard diagram along with some analytic input, and where the differential counts isolated holomorphic disks, weighted by their intersection with two basepoints. 

The complex $\CFK(Y,K)$ is naturally bigraded by two gradings, also called $\gr_U$ and $\gr_V$, compatible with the action of $\F[U,V]$ on $\CFK(Y,K)$. The differential $\d$ of $\CFK(Y,K)$ has bidegree $(-1,-1)$.  The Alexander grading of a homogeneous $x\in \CFK(Y,K)$ is defined by $A(x)= (\gr_U(x)-\gr_V(x))/2$. The complex $\CFK(Y,K)$ depends on the data involved in its construction but is an invariant of $(Y,K)$ up to $\F[U,V]$-equivariant homotopy equivalence.  

We now recall some facts from \cite{OSknots}. Firstly, we have the following symmetry property. Let $\overline{\CFK}(Y,K)$ denote the complex obtained by interchanging the roles of $U$ and $V$. (Note that we thus also interchange the values of $\gr_U$ and $\gr_V$.) Then
\[ \CFK(Y,K) \simeq \overline{\CFK}(Y,K). \]
The knot Floer complex behaves nicely with respect to connected sums. Indeed, we have that
\[ \CFK((Y_1,K_1) \# (Y_2,K_2)) \simeq \CFK(Y_1, K_1) \otimes_{\F[U,V]} \CFK(Y_2 K_2). \]
We also have that
\[ \CFK(-(Y,K)) \simeq \CFK(Y, K)^\vee, \]
where $\CFK(Y, K)^\vee = \Hom_{\F[U,V]}(\CFK(Y, K), \F[U,V])$.

In the literature, there is a more common version of the knot Floer complex which takes the form of a filtered chain complex $\CFK^{\infty}(Y,K)$ over $\F[U, U^{-1}]$.  This is generated over $\F$ by tuples $[\bfx,i,j]$ with $\bfx$ an intersection point, and $i,j$ integers so that $A(\bfx)-j+i=0$, where $A$ is the Alexander grading.  In the setting of $\CFK(Y,K)$, the element $[\bfx,i,j] \in \CFK^\infty(Y,K)$ corresponds to $U^{-i}V^{-j}\bfx$. With our present notation, there is thus an identification
\[
\CFK^\infty(Y,K)=((U, V)^{-1}\CFK(Y,K))_0
\]
where the righthand side denotes the $\F[\hat{U}]$-submodule of $(U, V)^{-1}\CFK(Y,K)$ in Alexander grading $0$, where $\smash{\hat{U}=UV}$; note that multiplication by $\smash{\hat{U}}$ preserves the Alexander grading.  On $(U, V)^{-1}\CFK(Y,K)$, by construction $\gr_U=\gr_V$, and so the complex has a natural ``Maslov grading" given by either of $\gr_U$ or $\gr_V$. 

We also consider how $\CFK(Y,K)$ relates to other standard versions of knot Floer homology. First consider the $\F$-vector space $\smash{\widehat\HFK(Y, K)}$, which is defined by not allowing holomorphic disks in the definition of $\d$ to cross either the $w$ or the $z$ basepoint. In our context, this is isomorphic to $H_*(\CFK(Y,K)/(U, V))$, where $(U,V)$ denotes the ideal generated by $U$ and $V$. The Alexander grading on $\smash{\widehat{HFK}}$ is given as before by $\smash{A = (\gr_U-\gr_V)/2}$ and the Maslov grading is given by $M = \gr_U$. 

Next, consider the $\F[U]$-module $\HFK^-(Y, K)$, which is defined by taking the homology of the associated graded complex of $\CFK^-(Y, K)$ with respect to the Alexander filtration. This is equivalent to allowing holomorphic disks to cross the $w$ but not the $z$ basepoint. In our context, this yields $H_*(\CFK(Y,K)/V)$, where the Alexander and Maslov gradings are as before. It is a standard fact that for knots in $S^3$, the $\F[U]$-module $\HFK^-(S^3, K) \cong H_*(\CFK(S^3,K)/V)$ has a single $U$-nontorsion tower.\footnote{By this, we mean that $H_*(\CFK(S^3,K)/V)/U\text{-torsion} \cong \F[U]$. Note, however, that this copy of $\F[U]$ is not required to be generated by an element with $\gr_U = 0$.} 

It will also be convenient to have a description of knot Floer homology once some of the variables are inverted.  Indeed, $\CFK(Y,K)\otimes_{\F[U,V]}  \F[U,V,V^{-1}]$, where $\F[U,V,V^{-1}]$ is an $\F[U,V]$-module in the natural way, is bigraded homotopy equivalent over $\F[U,V,V^{-1}]$ to $\CFm(Y)\otimes_\F \F[V,V^{-1}]$, where $\CFm(Y)$ is regarded as a $\F[U]$-module concentrated in $\gr_V$-grading zero, and $\CFm(Y)\otimes_\F \F[V,V^{-1}]$ is regarded as a $\F[U,V]$-complex by letting $V$ act on the $\F[V,V^{-1}]$ factor.  By symmetry, we similarly have a description of $\smash{\CFK(Y,K)\otimes_{\F[U,V]} \F[U,U^{-1}, V]}$ as bigraded homotopy equivalent to $\smash{\CFm(Y)\otimes_\F \F[U,U^{-1}]}$. 

If $Y$ is an integer homology sphere, then $\CFm(Y)$ is a $\mathbb{Z}$-graded $\F[U]$-chain complex, with 
\[
\CFm(Y)\otimes_{\F[U]}  F[U,U^{-1}]\simeq \F[U,U^{-1}],
\]
where $\simeq$ denotes homotopy equivalence. In particular, if $K\subset Y$ is a knot in an integer homology sphere, then there is an $\F[U,V]$-equivariant homotopy equivalence
\[
\CFK(Y,K)\otimes_{\F[U, V]} \F[U,U^{-1},V,V^{-1}]\simeq \F[U,U^{-1},V,V^{-1}]. 
\] 

The following definition is particularly useful in applications of knot Floer homology to homology concordance:

\begin{definition}\label{def:loceq}
Let $K_1$ and $K_2$ be knots in integer homology three-spheres $Y_1$ and $Y_2$ respectively. We say that $\CFK(Y_1,K_1)$ and $\CFK(Y_2,K_2)$ are \emph{locally equivalent} if there exist absolutely $U$-graded, absolutely $V$-graded $\F[U,V]$-equivariant chain maps
\[ f \co \CFK(Y_1, K_1) \to \CFK(Y_2, K_2) \quad \text{ and }  \quad g \co \CFK(Y_2, K_2) \to \CFK(Y_1, K_1) \]
such that $f$ and $g$ induce homotopy-equivalences
\[
f\otimes \id \co \CFK(Y_1, K_1)\otimes \F[U,U^{-1},V,V^{-1}] \to \CFK(Y_2, K_2)\otimes \F[U,U^{-1},V,V^{-1}] \] and \[ \quad g\otimes \id \co \CFK(Y_2, K_2)\otimes \F[U,U^{-1},V,V^{-1}] \to \CFK(Y_1, K_1)\otimes \F[U,U^{-1},V,V^{-1}] .
\]
\end{definition}

In previous work \cite{DHSTmoreconcord}, the authors studied knot Floer homology over the ring $\R = \F[U, V]/(UV)$ and defined an appropriate notion of local equivalence for such complexes. 

\begin{theorem}[{\cite[Theorem 1.5]{Zemkeconnsuminv}, cf.\ \cite[Theorem 2]{Homconcordance}}] \label{thm:concle}
If $K_1$ and $K_2$ are two knots in $S^3$ which are concordant, then $\CFKUV(K_1)$ and $\CFKUV(K_2)$ are locally equivalent.
\end{theorem}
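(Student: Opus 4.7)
The plan is to apply Zemke's functoriality of knot Floer homology under decorated link cobordisms \cite{Zemkeabsgr,Zemkeconnsuminv}. Given a smooth concordance $C$ from $K_1$ to $K_2$, viewed as a properly embedded annulus in $S^3 \times [0,1]$ decorated by two parallel arcs joining the basepoints, Zemke's construction produces an $\F[U,V]$-equivariant chain map
\[ F_C \co \CFK(K_1) \longrightarrow \CFK(K_2), \]
and the same machine applied to the reverse concordance $\overline{C}$ gives a map $F_{\overline{C}}$ in the opposite direction. After reducing modulo $(UV)$ these yield $\cR$-equivariant maps $f$ and $g$ between the $\cR$-complexes $\CFKUV(K_1)$ and $\CFKUV(K_2)$, which will serve as the ingredients for the local equivalence.

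The first thing I would verify is that $F_C$ and $F_{\overline{C}}$ preserve both absolute gradings. This follows from Zemke's grading-change formula, in which the shifts in $\gr_U$ and $\gr_V$ are expressed in terms of the genus and topological data of the decorated cobordism; for a concordance (genus zero with two parallel dividing arcs) both shifts vanish on the nose. The property is preserved after reduction modulo $(UV)$.

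The next step is the localization statement. Because $K_1, K_2 \subset S^3$, after inverting both $U$ and $V$ each complex $\CFK(K_i) \otimes_{\F[U,V]} \F[U^{\pm 1}, V^{\pm 1}]$ is bigraded homotopy equivalent to the rank-one free module $\F[U^{\pm 1}, V^{\pm 1}]$ concentrated in bigrading $(0,0)$, as recalled in Section~\ref{sec:background}. A bigrading-preserving equivariant chain map between two such complexes is multiplication by a scalar in $\F$, hence either zero or an isomorphism. To rule out the zero case, I would use the composition $F_{\overline{C}} \circ F_C$, which by TQFT functoriality is the cobordism map of $\overline{C}\cup C$. Although this composite is not literally the product annulus, it acts nontrivially on the localized homology because it reduces to a nonzero self-map at the level of $\HFinf(S^3) \cong \F[U,U^{-1}]$; the symmetric argument handles $F_C \circ F_{\overline{C}}$. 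The analogous equivalences after inverting only $U$ or only $V$ (as needed for the $\cR$-version of local equivalence) follow by the same reasoning, using the corresponding one-variable localizations noted in Section~\ref{sec:background}.

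The main obstacle I anticipate is the careful choice of decoration on the concordance annulus and the bookkeeping of Zemke's grading conventions, so that the resulting cobordism maps are genuinely $\F[U,V]$-equivariant and bigrading-preserving. Once that setup is in place, the remainder of the argument is a formal consequence of Zemke's TQFT axioms and the rank-one property of the localized knot Floer complex of a knot in $S^3$.
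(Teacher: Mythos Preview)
Your proposal is correct and follows essentially the same route the paper indicates: the paper does not give a detailed proof but simply notes that the result follows from \cite[Theorem 1.5]{Zemkeconnsuminv} by forgetting the involutive component and quotienting by $UV$ (or alternatively by translating \cite[Theorem 2]{Homconcordance} into the $\cR$-module language). Your sketch via Zemke's decorated-cobordism maps, the grading formula for genus-zero concordances, and the nontriviality argument using the composite $F_{\overline{C}}\circ F_C$ on the localized complex is exactly how one unpacks that citation.
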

\noindent
Theorem~\ref{thm:concle} follows from \cite[Theorem 1.5]{Zemkeconnsuminv} by forgetting the involutive component and quotienting by $UV$, or from \cite[Theorem 2]{Homconcordance} by translating from $\varep$-equivalence and bifiltered chain complexes to local equivalence and $\cR$-modules.

\begin{remark}
Note that $\CFKUV(Y,K)$ is locally equivalent to $\CFKUV(O)$, where $O$ denotes the unknot, if and only if  $\CFKUV(Y, K) \simeq \CFKUV(O) \oplus A$, where $A$ is a chain complex over $\cR$ with $U^{-1}H_*(A) = V^{-1}H_*(A) = 0$. It is straightforward to verify that local equivalence over $\cR$ and $\varep$-equivalence (see \cite[Section 2]{Hominfiniterank}) are the same (after translating between $\cR$-modules and bifiltered chain complexes over $\F[U, U^{-1}]$).
\end{remark}

\section{Overview and preliminary notions}\label{sec:preliminary}

\subsection{Preliminary discussion of $\X$-complexes}

In \cite{DHSTmoreconcord}, the authors defined the notion of a knotlike complex over $\R = \F[U,V]/(UV)$ and gave a classification of such complexes up to local equivalence. The present paper should be thought of as extending this classification to complexes defined over other rings. Our primary example will be the ring $\X$ mentioned in Section~\ref{sec:intro}, since this will have applications to studying knots in homology spheres other than $S^3$. We spend a little time familiarizing the reader with the properties of $\X$, before moving on to consider a general class of rings in the next section. It will be helpful for the reader to have a broad recollection of the ideas of \cite{DHSTmoreconcord}.

We begin by imprecisely recalling some salient features of the case of complexes over $\R$. Although obvious, these should be kept in mind, as they will generalize to properties of complexes over $\X$.
\begin{enumerate}
\item First note that $\R$ contains two subrings $\F[U]$ and $\F[V]$, which have preferred maximal ideals $(U)$ and $(V)$, respectively. (The sense in which these are preferred will soon be made explicit.) The product of these ideals is zero in $\R$. The reader familiar with the results of \cite{DHSTmoreconcord} will broadly recall the importance of this point: arrows in the standard complexes of \cite[Section 4.1]{DHSTmoreconcord} are labeled by homogenous elements of $(U)$ and $(V)$, with the labels of successive arrows being drawn first from one ideal and then the other. For the manipulations in \cite{DHSTmoreconcord}, it is important that the product of $(U)$ and $(V)$ is zero.
\item There is an obvious ordering on the set of homogenous elements of $\F[U]$, where $x$ is larger than $y$ if(f) $x$ divides $y$. A similar statement holds for $\F[V]$. The ordering on the set of standard complexes (see \cite[Section 4.3]{DHSTmoreconcord}) is (roughly speaking) induced from the ordering on the homogenous elements of $\F[U]$ and $\F[V]$. 
\item Let $K$ be a knot in $S^3$ and let $\CFK_{\R}(S^3, K)$ be the knot Floer complex of $K$ over $\R$. Inverting $U$ and taking the homology of the resulting complex yields a single bi-infinite tower $\F[U, U^{-1}]$. The notion of a local map in \cite[Definition 3.3]{DHSTmoreconcord} is dependent on the fact that the localization of the homology at $U$ takes this particularly simple form. A similar statement holds for localizing at $V$.
\end{enumerate}
As we will see, appropriately formalizing each of these properties will suffice to establish a general classification result. To give the reader some intuition, we first draw some parallels between $\X$ and $\R$. Recall:

\begin{definition} \label{def:xring}
Let $\X$ be the $\F$-algebra given by
\[
\X = \frac{
\F[U_B, \{W_{B,i}\}_{i\in \mathbb{Z}}, V_T, \{W_{T,i}\}_{i\in \mathbb{Z}}]} { (U_BV_{T}, \ U_BW_{B,i}-W_{B,i+1}, \ V_TW_{T,i}-W_{T,i+1})},
\]
with a bigrading $\gr = (\gr_1, \gr_2)\in \mathbb{Z}\times \mathbb{Z}$ given by
\[
\gr(U_B) = (-2, 0) \qquad \text{and} \qquad \gr(W_{B,i}) = (-2i, -2)
\]   
and
\[
\gr(V_T) = (0, -2) \qquad \text{and} \qquad \gr(W_{T, i}) = (-2, -2i).
\]
Note that $\X$ has two particular subrings, which we denote by $\R_U$ and $\R_V$. These are
\[
\R_U=\F[U_B,\{W_{B,i}\}_{i\in \Z}]/(\{U_BW_{B,i}=W_{B,i+1}\})
\] 
and
\[
\R_V=\F[V_T,\{W_{T,i}\}_{i\in \Z}]/(\{V_TW_{T,i}=W_{T,i+1}\}).
\]
There is an obvious skew-graded isomorphism between $\R_U$ and $\R_V$, sending $U_B$ to $V_T$ and $W_{B,i}$ to $W_{T,i}$.
\end{definition}

The algebraic structure of $\R_U$ and $\R_V$ is displayed in Figure~\ref{fig:Xring}. Once again, $\R_U$ has a preferred maximal ideal $(U_B)$ and $\R_V$ has a preferred maximal ideal $(V_T)$; the product of these is zero in $\X$. Note that every nontrivial (that is, non-identity) homogeneous element of $\R_U$ is a multiple of $U_B$ (and similarly for $\R_V$), and that the intersection of $\R_U$ and $\R_V$ in $\X$ is a copy of the field $\F$. As in the case of $\F[U]$, the homogenous elements of $\R_U$ are totally ordered by divisibility. Visually, this total order can be seen on the left of Figure~\ref{fig:Xring} as follows: moving along a red arrow goes from one homogenous element to the next-largest homogenous element, with the caveat that all of the elements in the $(n+1)$st row are less than (or equivalently, divisible by) any element in the $n$th row. A similar statement holds for homogenous elements of $\R_V$.

\begin{figure}[h!]
\includegraphics[scale = 0.7]{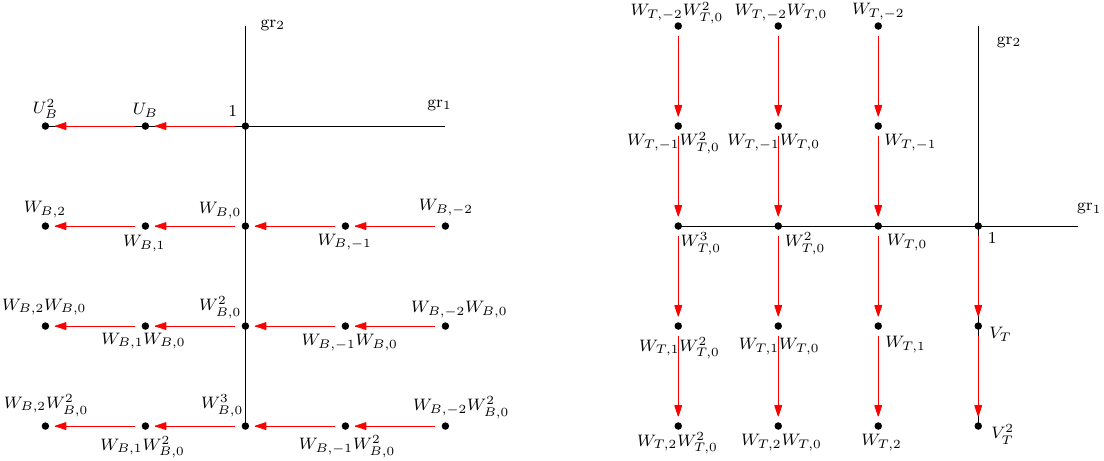}
\caption{The homogenous generators (over $\F$) of $\R_U$ (left) and $\R_V$ (right), displayed in the $(\gr_1, \gr_2)$-plane. Red arrows are drawn as a visual aid to represent the action of $U_B$ (left) and $V_T$ (right), but may also be interpreted as helping describe the total order on the set of homogenous elements. Note (for example) that in $\R_U$, the first row dominates the second row; that is, any $\smash{W_{B, i}}$ is divisible by any $\smash{U_B^j}$.}\label{fig:Xring}
\end{figure}

\begin{definition}\label{def:rureformulation}
We may also write the elements of $\R_U$ uniquely as $\smash{U_B^i W_{B,0}^j}$, where
\begin{equation}\label{eq:ruregion}
(i, j) \in (\Z \times \Z^{\geq 0}) - (\Z^{< 0} \times \{0\}).
\end{equation}
Note that $i$ is required to be non-negative if $j = 0$, but we otherwise allow negative powers of $U_B$. The element $\smash{U_B^i W_{B,0}^j}$ is the unique nonzero element of $\R_U$ in bigrading $(-2i, -2j)$. We thus identify elements of $\R_U$ as points $(i, j)$ in the subregion \eqref{eq:ruregion}; this is (up to a factor of two) just the set of lattice points on the left of Figure~\ref{fig:Xring}, rotated 180 degrees. For elements of $\R_V$, we write $(i, j)$ to represent $\smash{V_T^iW_{T,0}^j}$, where $(i, j)$ similarly lies in \eqref{eq:ruregion}. In this case, $(i, j)$ is the unique nonzero element of $\R_V$ in bigrading $(-2j, -2i)$. The obvious skew-graded isomorphism between $\R_U$ and $\R_V$ sends $(i, j)$ in $\R_U$ to $(i, j)$ in $\R_V$.\footnote{We could instead use the convention that $(i,j)$ in $\R_V$ represents $\smash{V_T^jW_{T,0}^i}$. However, it will be notationally convenient for the elements of $\R_U$ and $\R_V$ to both correspond to points in the same subregion of $\Z \times \Z$.} Note that \eqref{eq:ruregion} is (excluding the origin) exactly the same as the index domain on which the $\varphi_{i,j}$ in Theorem~\ref{thm:main} are defined.
\end{definition}

We also need to consider the \textit{field of homogenous fractions} of $\R_U$ and $\R_V$, which is obtained by inverting all (nonzero) homogenous elements. We denote these by $\mathcal{L}(\R_U)$ and $\mathcal{L}(\R_V)$, respectively. It is straightforward to check that 
\begin{align*}
\mathcal{L}(\R_U)& = \F[U_B, U_B^{-1}, W_{B, 0}, W_{B, 0}^{-1}] \\
\mathcal{L}(\R_V)& = \F[V_T, V_T^{-1}, W_{T, 0}, W_{T, 0}^{-1}].
\end{align*}
As in Definition~\ref{def:rureformulation}, we may think of the elements of $\mathcal{L}(\R_U)$ or $\mathcal{L}(\R_V)$ as points in $\Z \times \Z$, where $(i, j)$ represents $\smash{U_B^i W_{B,0}^j}$ or $\smash{V_T^i W_{T,0}^j}$, respectively. It will be helpful for us to extend the total order of Figure~\ref{fig:Xring} to the non-unit elements of $\mathcal{L}(\R_U)$ and $\mathcal{L}(\R_V)$, and translate this into the setting of Definition~\ref{def:rureformulation}. We give a general explanation of how to extend the total order to the field of fractions in Definition~\ref{def:totalorder}; for now we simply record the result:

\begin{definition}\label{def:latticetotalorder}
Define a total order on $\Z \times \Z - \{(0,0)\}$ as follows. To compare $(i, j)$ and $(k, \ell)$, there are two cases. First suppose that $j$ and $\ell$ are distinct. Then $(i,j) \lebang (k,\ell)$ if(f) $1/j < 1/\ell$, with the interpretation
\[
1/j = \begin{cases}
                        + \infty \text{ if } j = 0, i > 0 \\
                        - \infty \text{ if } j = 0, i < 0
                    \end{cases}
 \text{ and } \quad
1/\ell = \begin{cases}
                        + \infty \text{ if } \ell = 0, k > 0 \\
                        - \infty \text{ if } \ell = 0, k < 0.
                    \end{cases}                    
\]
If $j = \ell$, then $(i,j) \lebang (k,\ell)$ under the following circumstances:
\begin{enumerate}
\item If $j \neq 0$, we require $i < k$.
\item If $j=0$, we require $1/k < 1/i$.
\end{enumerate} 
Note that $1/k < 1/i$ is not always equivalent to $i < k$, as $i$ and $k$ may not be positive.
\end{definition}

Definition~\ref{def:latticetotalorder} is graphically depicted in Figure~\ref{fig:totalorder}. Moving along the red arrows in Figure~\ref{fig:totalorder} enumerates the points of $\Z \times \Z - \{0,0\}$ in descending order, with $(1, 0)$ being the greatest element and $(-1, 0)$ being the least. The solid red arrows go from one lattice point to the next-largest, which always lies immediately to the right. The dashed arrows schematically represent further inequalities between different parts of the diagram. For example, the points on a given horizontal line generally all dominate all the points on the line immediately above it. There are three exceptions/additions to this rule: only the positive $x$-axis dominates the line above it, only the negative $x$-axis is dominated by the line below it, and all horizontal lines above the $x$-axis dominate all horizontal lines below the $x$-axis. Figure~\ref{fig:totalorder} should be compared with Figure~\ref{fig:Xring}.

\begin{figure}[h!]
\includegraphics[scale = 0.8]{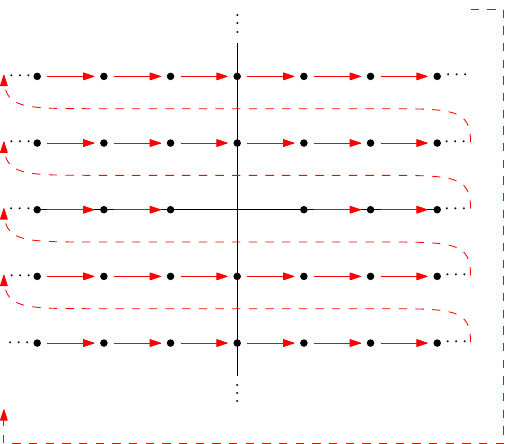}
\caption{The total order $\leqbang$ on $\Z \times \Z - \{(0,0)\}$. Following arrows in the diagram corresponds to decreasing in the total order. Note that $(1,0)$ is the greatest element and $(-1,0)$ is the least.}\label{fig:totalorder}
\end{figure}

Finally, note the existence of a (unital, bigraded) morphism from $\F[U, V]$ to $\X$, given by
\begin{equation}\label{eq:substitution}
\begin{aligned}
&U\mapsto U_B+W_{T,0} \\
&V\mapsto V_T+W_{B,0}.
\end{aligned}
\end{equation}
This will be of importance in the sequel.

\subsection{Homology spheres other than $S^3$: an example} 

We now discuss how to extend the results of \cite{DHSTmoreconcord} to knots in homology spheres other than $S^3$. As stated in Section~\ref{sec:background}, if $K\subset Y$ is a knot in an integer homology sphere, then there is a $\F[U,V]$-equivariant homotopy equivalence
\[
\CFK(Y,K)\otimes_{\F[U, V]} \F[U,U^{-1},V,V^{-1}]\simeq \F[U,U^{-1},V,V^{-1}]. 
\] 
This leads to Definition~\ref{def:loceq}. However, for the definition of local equivalence in \cite[Definition 3.3]{DHSTmoreconcord}, a slightly different localization result is needed. Observe that since $UV = 0$ in $\R$, inverting $U$ in $\R$ sets $V = 0$. It follows from this that
\[
H_*(U^{-1} \CFK_\R(Y,K)) \cong \F[U, U^{-1}] \otimes \HFhat(Y).
\]
If $Y = S^3$, then we obtain a single tower $\F[U, U^{-1}]$, which leads to a clear notion of local equivalence for such complexes over $\R$. However, if $Y$ is not $S^3$, then in general we will have multiple towers. Explicitly, if $Y$ is not $S^3$, then $\CFK_\R(Y, K)$ is \textit{not} always a knot-like complex in the sense of \cite[Definition 3.3]{DHSTmoreconcord}.

We illustrate this with a simple example. Let $M_2$ denote $+1$-surgery on the torus knot $T_{2, 7}$ and let $Y_2 = M_2 \# -M_2$. Let $K_2$ denote the connected sum of the core of surgery in $M_2$ and the unknot in $-M_2$.  (This notation will become clear in Section~\ref{sec:computations}, where this example is generalized.) According to \cite{Zhou}, the knot Floer complex $\CFK(Y_2, K_2)$ is locally equivalent (over $\F[U, V]$) to a complex generated by $x_0$, $x_1$, and $y$, with bigradings 
\begin{align*}
\gr(x_0) &= (2, 0) 
\\
\gr(x_1) &= (0, 2)
\\
\gr(y) &= (3, 3)
\end{align*}
and differential 
\[
\partial x_0  = UV^2 y \qquad \text{and} \qquad \partial x_1 = U^2Vy.
\]
We focus on what happens to this subcomplex after passing to the quotient ring $\R$. The important point is that modulo $(UV)$, the above differential is trivial. Hence localizing with respect to $U$ and taking the homology clearly yields \textit{three} copies of $\F[U, U^{-1}]$, rather than one. (Equivalently, modding out by the ideal $(V)$ and taking the homology gives three nontorsion towers.)

Instead of passing to the quotient ring $\R$, our strategy will instead be to consider what happens when we perform the substitution of the previous subsection. This should be thought of as changing the base ring from $\F[U, V]$ to $\X$ using the morphism (\ref{eq:substitution}). Keeping in mind the relations in $\X$, the differential in the above (sub)complex becomes
\[
\partial x_0 = (U_B W_{B, 0}^2 + W_{T,0} V_T^2) y \qquad \text{and} \qquad \partial x_1 = (U_B^2 W_{B, 0} + W_{T,0}^2 V_T) y.
\]
We can then perform a (bigraded) change-of-basis and set 
\begin{align*}
a_0 &= x_0 + W_{B, -1} x_1 ,
\\
a_1 &= x_1 + W_{T, -1} x_0 ,
\\
b & = y. 
\end{align*}
This turns the differential into 
\[ 
\partial a_0 = W_{T,0} V_T^2 b  \qquad \text{and} \qquad \partial a_1 = U_B^2 W_{B,0} b.
\]
This (sub)complex should be thought of as having two arrows, one decorated with a homogenous element of the ideal $(U_B)$, and the other decorated with a homogenous element of the ideal $(V_T)$. Moreover, suppose that we now invert all homogeneous elements of $\R_U$. (This is the analogue of localizing at $U$ in the case of complexes over $\R$.) Since the product of $(U_B)$ and $(V_T)$ is zero in $\X$, this sets all elements of $(V_T)$ to zero. It is easily checked that the resulting localization has homology consisting of a single copy of $\mathcal{L}(\R_U)$, generated by $a_0$.

The analogy with complexes over $\R$ should now be clear. In order to study knots in general homology spheres, we first perform the change of base ring given by (\ref{eq:substitution}) to obtain the knot Floer complex over $\X$. In Section~\ref{sec:algebra-for-knot-floer} we verify that $\X$-complexes obtained in this way have the appropriate tower behavior after inverting homogenous elements of $\R_U$ (similarly for $\R_V$), and that homology concordances induce the obvious notion of local maps. 

Our subsequent analysis then proceeds almost exactly as in \cite{DHSTmoreconcord}: we show that every such $\X$-complex decomposes into the sum of a part whose homology is acyclic upon inverting $\R_U$ or $\R_V$, and a summand supporting the single tower. Up to a change of basis, this latter piece takes the form of a standard complex as in \cite[Section 4.1]{DHSTmoreconcord}, except that now arrows are decorated by homogenous elements of $(U_B)$ or $(V_T)$, rather than powers of $U$ or $V$. We can thus parameterize this via a sequence of lattice points, as in Definition~\ref{def:rureformulation}. In the above example, we have the standard complex
\[
C(-(2, 1), (2, 1))
\]
obtained by considering the sequence of generators $a_1, b$, and $a_0$. (This ordering is explained in Definition~\ref{def:standard}.) The first entry $-(2, 1)$ corresponds to the arrow decorated with $U_B^2 W_{B,0}$ which goes from $a_1$ to $b$; the negative sign a convention due the direction of the arrow. The second entry $(2,1)$ corresponds to the arrow decorated with $V_T^2 W_{T,0}$ going from $a_2$ to $b$. See Definition~\ref{def:standard} for a precise description of this procedure and its conventions.

The analysis of the total order on the set of standard complexes proceeds the same as before, except that we use the ordering of Definition~\ref{def:latticetotalorder}. Given a standard complex and a choice of decoration, we may then form the (signed) count of arrows in that complex with that decoration. These define the homomorphisms $\varphi_{i,j}$ described in Theorem~\ref{thm:main}. Because of the symmetry of the parameter sequence, by convention we only count odd-index parameters. Hence in the above example, we obtain
\[
\varphi_{2,1}(K) = -1
\]
and all other $\varphi_{i,j}(K) = 0$. See Section~\ref{sec:homs} for further discussion and Section~\ref{sec:computations} for more examples.

In fact, all of our results will hold for complexes over a much more general class of rings. Defining these will take up the bulk of the next section.

\section{Grid rings and their properties}\label{sec:knotlike}

In this section, we define a general class of rings with many of the same properties as $\R$ and $\X$. We then extend the notion of a knotlike complex (as defined in \cite[Definition 3.3]{DHSTmoreconcord}) to complexes over such rings. Our overarching goal will be to show that the set of knotlike complexes modulo an appropriate notion of local equivalence forms a totally ordered abelian group.

\subsection{Graded valuation rings} We begin by generalizing some properties of the rings $\F[U]$ and $\R_U$. Chief among these is the existence of a total order on the set of homogenous elements. We first introduce some preliminary notions:

\begin{definition}\label{def:gradedlocalring}
Throughout this paper, a \emph{graded domain} $R$ will be a $\Z\times \Z$-graded integral domain. We denote the two components of the grading by $\gr = (\gr_1, \gr_2)$ and denote the degree-$(i,j)$ summand of $R$ by $\smash{R^{(i, j)}}$. We furthermore impose the following conditions on $\gr$. Firstly, we require that $\gr_1(x) \equiv \gr_2(x) \bmod{2}$ for any homogenous $x \in R$. Secondly, we require that either:
\begin{enumerate}
\item $\gr$ takes values in $2\Z \times 2\Z$ (in which case the previous condition is of course trivial); or,
\item $-1 = 1$ in $R$.
\end{enumerate} 
By convention, every integral domain is assumed to be unital and commutative.
\end{definition} 

\begin{remark}
The grading conditions on $R$ may seem rather opaque, but will be necessary in order to form tensor products and duals of complexes over $R$. Roughly speaking, the above conventions allow us to view $R$ as being simultaneously commutative and grading-commutative. See Section~\ref{sec:localequiv}.
\end{remark}  


\begin{definition}\label{def:gradedfield}
A \emph{graded field} is a graded domain for which every nonzero homogeneous element has a homogeneous inverse. 
\end{definition}

Note that in general a graded field is not a field, although the degree-$(0,0)$ piece of a graded field is indeed a field.  

\begin{definition}\label{def:homogenouslocalization}
Given a graded domain $R$ and a multiplicative subset $S$ of homogeneous elements in $R$,  
we define the \emph{homogeneous localization} $S^{-1}R$ of $R$ at $S$ to be the set of pairs
\[
(x,y)\in R\times S, 
\] 
 with $y$ homogeneous, subject to the equivalence relation:  
\[
(x,y) \sim (z,w) \mbox{ if } xw-zy = 0.
\]
The product and sum of pairs is defined in the usual way.  Moreover, $S^{-1}R$ admits a $\Z\times \Z$-grading, where homogeneous elements are given by pairs $(x,y)$ with $x,y$ both homogeneous, and with grading given by $\gr(x)-\gr(y)$. In the special case that $S$ consists of all nonzero homogenous elements of $R$, this construction yields the \textit{field of homogenous fractions} of $R$, which we denote by $\loc(R)$. Note that $\loc(R)$ is not in general a field, but rather a graded field in the sense of Definition~\ref{def:gradedfield}.
\end{definition}

\begin{example}
The field of homogenous fractions of $\F[U]$ is given by $\F[U, U^{-1}]$. The field of homogenous fractions of $\R_U$ is given by 
\[
\loc(\R_U) = \F[U_B, U_B^{-1}, W_{B,0}, W_{B,0}^{-1}].
\]
\end{example}

We now formalize the analogue of the total ordering property discussed in Figure~\ref{fig:Xring} and surrounding discussion:

\begin{definition}\label{def:homogenousvaluationring}
Let $R$ be a graded domain. We say that $R$ is a \textit{graded valuation ring} if any one of the following three equivalent conditions hold:
\begin{enumerate}
\item For any nonzero homogenous $x$ in $\loc(R)$, either $x \in R$ or $x^{-1} \in R$ (or both).
\item The set of homogenous ideals in $R$ is totally ordered by inclusion.
\item The set of homogenous elements in $R$, modulo homogenous units in $R$, is totally ordered by divisibility.
\end{enumerate}
\end{definition}
For the equivalence of the three conditions for (ungraded) valuation rings, see \cite{warfield} (also \cite[Lemma 15.119.2]{Stacksproject}). For more details on graded valuation rings, see \cite[Lemma I.3.2]{NO-gradedringtheory}. 

\begin{remark}\label{rem:structure}
Using the divisibility condition, it is easily checked that every finitely-generated homogenous ideal in a graded valuation ring is principal. (Note that $\X$ is neither a PID nor Noetherian.) It turns out that much of the intuition for modules over a principal ideal domain thus carries over to modules over graded valuation rings. In particular, we will need the following structure theorem: suppose that $M$ is a finitely-generated (graded) module over a graded valuation ring $R$. Then $M$ is isomorphic to
\[
M \cong R^m \oplus \left( \bigoplus_{i=1}^n R/(\nu_i) \right)
\]
for some homogenous elements $\nu_i \in R$, just as in the the case of a module over a principal ideal domain. More generally, if $(C, \partial)$ is any free, finitely-generated chain complex over a graded valuation ring $R$, then it is possible to do a homogenous change-of-basis to obtain a basis $\{x_j\}_{j=1}^m \cup \{y_i, z_i\}_{i=1}^n$ such that
\begin{align*}
	&\d x_j = 0, \\
	&\d y_i = \nu_i z_i, \text{ and} \\
	&\d z_i = 0
\end{align*}
for some homogenous $\nu_i \in R$. We refer to such a basis as a \textit{paired basis} (see also Definition~\ref{def:pairedbasis}). The existence of paired bases can be shown via the standard algorithm for putting presentation matrices into Smith normal form.  
\end{remark}

\begin{definition}\label{def:valuationgroup}
Let $R$ be a graded valuation ring. Let 
\[
\Gamma(R) = (\text{homogenous elements of }\loc(R)^{\times})/(\text{homogenous units in } R)
\]
be the set of nonzero homogenous elements in $\loc(R)$, modulo homogenous units in $R$.\footnote{Actually, it is possible to show that in a graded domain, \textit{every} unit is necessarily homogenous.} This forms an abelian group (under multiplication), which we refer to as the \textit{valuation group of} $R$. Throughout this paper, we sometimes abuse notation slightly and identify elements of $\loc(R)^{\times}$ with their classes in $\Gamma(R)$, suppressing the quotient. Write $\Gamma_{\geq 1}(R)$ for the classes in $\Gamma(R)$ coming from homogenous elements of $R$, and $\Gamma_{\leq 1}(R)$ for classes coming from $x \in \loc(R)$ such that $x^{-1} \in R$. We write
\[
\Gamma_{+}(R) =\Gamma_{\geq 1}(R) - \{[1]\} 
\]
and
\[
\Gamma_{-}(R) =\Gamma_{\leq 1}(R) - \{[1]\}. 
\]
\end{definition}

\begin{definition}\label{def:absvalue}
For $x$ in the value group $ \Gamma(R)$, define 
\begin{align*}
|x| = \begin{cases} 
x \quad &\text{if } x \in \Gamma_{\geq 1}(R),
\\
x^{-1} \quad & \text{if } x \in \Gamma_{\leq 1}(R).
\end{cases}
\end{align*}
For $x \neq [1]$, we say that $x$ is \textit{positive} or \textit{negative} if $x$ is in $\Gamma_+(R)$ or $\Gamma_-(R)$, respectively. For such $x$, define
\begin{align*}
\sgn(x) = \begin{cases} 
+1 \quad &\text{if } x \in \Gamma_{+}(R),
\\
-1 \quad & \text{if } x \in \Gamma_{-}(R).
\end{cases}
\end{align*}
\end{definition}

Note that $\Gamma_+(R)$ is totally ordered. We use this to put a total order $\lebang$ on the set $\Gamma_+(R) \cup \Gamma_-(R)$:

\begin{definition}\label{def:totalorder}
Let $x, y \in \Gamma_+(R) \cup \Gamma_-(R)$. We define:
\begin{enumerate}
\item If $x, y \in \Gamma_+(R)$, then $x \leqbang y$ if(f) $xy^{-1} \in R$; that is, $y$ divides $x$ in $R$.
\item If $x, y \in \Gamma_-(R)$, then $x \leqbang y$ if(f) $xy^{-1} \in R$; that is, $|x|$ divides $|y|$ in $R$.
\item If $x \in \Gamma_-(R)$ and $y \in \Gamma_+(R)$, then $x \lebang y$.\footnote{Note that if we were to simply define $x \leqbang y$ if(f) $xy^{-1} \in R$, then $\Gamma_-(R)$ would dominate $\Gamma_+(R)$.}
\end{enumerate}
This defines a total order on the set $\Gamma_+(R) \cup \Gamma_-(R)$. By convention, we place $[1]$ in between these two submonoids in the total order, so that $\leqbang$ defines a total order on all of $\Gamma(R)$. Note that this means when one of $x$ and $y$ is the identity element of $\Gamma(R)$, the order $\leqbang$ coincides with the symbol $\leq$ in Definition~\ref{def:valuationgroup}.
\end{definition}

\begin{remark}\label{rem:totalorder}
Note that Definition~\ref{def:totalorder} is \textit{not} the total order on $\Gamma(R)$ which is usually presented in the literature. This latter order is defined as follows: for $x, y \in \Gamma(R)$, we set $x \leq y$ if(f) $x^{-1}y \in R$. (Contrast with Definition~\ref{def:totalorder}.) This turns $\Gamma(R)$ into a totally ordered abelian group, and is usually what is meant in the literature by the valuation group. However, this is \textit{not} the total order which we will consider in this paper. Instead, the total order of Definition~\ref{def:totalorder} may be obtained by reversing the total order on $\Gamma_+(R)$ and $\Gamma_-(R)$ described above, and then declaring any element of $\Gamma_-(R)$ to be less than any element of $\Gamma_+(R)$. We stress that in contrast to the usual total order, Definition~\ref{def:totalorder} does \textit{not} always interact naturally with the group structure when $x$ and $y$ are not in the same submonoid. Thus, although we refer to $\Gamma(R)$ as the valuation \textit{group}, in our context this is slightly misleading.
\end{remark}

The reason we need a total order on $\Gamma_+(R) \cup \Gamma_-(R)$ is the follows: in our ordering of standard complexes, we must take into account the direction of arrows, as well as their decoration. (See \cite[Definition 4.3]{DHSTmoreconcord}.) It will thus be convenient for us to think of a decoration $x$ on an arrow in the ``negative" direction instead as the decoration $x^{-1}$.

\begin{example}
The ring $\F[U]$ is a graded valuation ring. We have:
\begin{align*}
\Gamma_{\geq 1 } (\F[U]) &= \{ 1, U, U^2, U^3, \dots \},
\\
\Gamma (\F[U]) &= \{ \dots, U^{-3}, U^{-2}, U^{-1}, 1, U^{1}, U^{2}, U^{3}, \dots \}.
\end{align*}
The total order is displayed in Figure~\ref{fig:order}; compare with \cite[Section 4.2]{DHSTmoreconcord}. The ring $\R_U$ is also a graded valuation ring. We have:
\begin{align*}
\Gamma_{\geq 1 } (\R_U) &= \{ U_B^i \}_{i\in \mathbb{Z}^{\geq 0}} \cup \{W_{B,i} W_{B,0}^j  \}_{i \in \mathbb{Z}, j \in \mathbb{Z}^{\geq 0}},
\\
\Gamma (\R_U) &=   \{U_B^i W_{B,0}^j  \}_{i, j \in \mathbb{Z}}.
\end{align*}
The total order is displayed in Figure~\ref{fig:order}. This is precisely the same as Figure~\ref{fig:totalorder} under the identification of $\smash{U_B^i W_{B,0}^j}$ with the lattice point $(i, j)$.
\end{example}

\begin{figure}[h!]
\includegraphics[scale = 0.75]{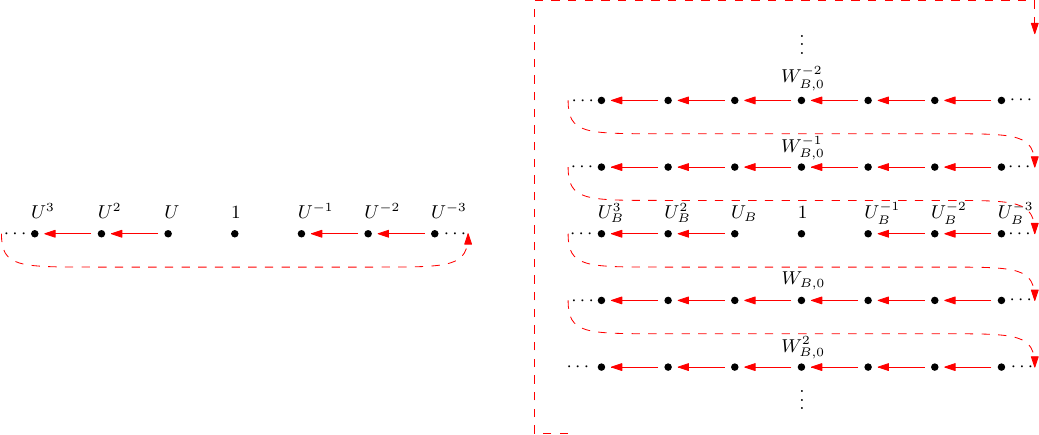}
\caption{Left: total order on $\Gamma(\F[U])$, with largest element $U$. Successively smaller elements are obtained by following the red arrows; the dashed red arrow schematically represents the fact that $\Gamma_+(\F[U])$ dominates $\Gamma_-(\F[U])$. Although $1$ is not drawn within the total ordering, we may place it in the middle of the dashed arrow. Right: total order on $\Gamma(\R_U)$, with largest element $U_B$. Again, $1$ may be placed in the middle of the long dashed arrow running from the bottom to the top of the diagram. For brevity, we have only labeled elements along the primary axes.}\label{fig:order}
\end{figure}

In this paper, we will consider a particular type of graded valuation ring, which we call \textit{type-zero}. 

\begin{definition}\label{def:typezero}
Let $R$ be a graded valuation ring. We say that $R$ is \textit{type-zero} if either of the following two equivalent conditions hold:
\begin{enumerate}
\item The degree-$(0,0)$ piece of $R$ consists precisely of the invertible elements of $R$, together with the zero element. 
\item The unique maximal homogenous ideal $\fm$ of $R$ consists of everything outside the degree-$(0,0)$ piece of $R$; that is,
\[
\fm =  \bigoplus_{(i,j) \neq (0,0)} R^{(i,j)}.
\]
\end{enumerate}
Note that this means the degree-$(0,0)$ piece of $R$ is a field. We denote this field by $\K$ and refer to it as the \textit{residue field} of $R$. If $R$ is type-zero, then $R$ is an algebra over $\K$, and the quotient map from $R$ to $R/\fm \cong \K$ is just projection onto the degree-$(0,0)$ piece of $R$.
\end{definition}

To make sense of the second condition, note that any graded valuation ring $R$ has a unique maximal homogenous ideal $\fm$. (That is, the set of homogenous ideals has a unique maximal element. To see this, consider the union of all nontrivial homogenous ideals in $R$.) We leave the equivalency of the above two conditions as an exercise for the reader. The central point is to show that (in general) if $x$ is a homogenous element of $R$, then $x \in \fm$ if and only if $x$ is not invertible.

\begin{example}\label{ex:typezero}
Both $\F[U]$ and $\R_U$ are type-zero graded valuation rings with residue field $\F$. 
\end{example}

Although the condition of being type-zero might seem rather restrictive, in practice many graded valuation rings fall into this category. For instance, one may consider adjoining (non-invertible) variables to a field in degrees other than $(0,0)$. We close this subsection with a simple fact which will useful in later sections:

\begin{lemma}\label{lem:onedimensional}
Let $R$ be a type-zero graded valuation ring. Then $R$ is a one-dimensional $\K$-vector space in each bigrading.
\end{lemma}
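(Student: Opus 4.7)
The plan is to reduce the statement to a direct application of the defining property of a graded valuation ring combined with the type-zero hypothesis. Specifically, suppose $x, y \in R^{(i,j)}$ are both nonzero. I would pass to the field of homogeneous fractions $\loc(R)$ and consider the ratio $xy^{-1}$, which is a nonzero homogeneous element of $\loc(R)$ of bidegree $(0,0)$.

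By Definition~\ref{def:homogenousvaluationring}(1), either $xy^{-1} \in R$ or $(xy^{-1})^{-1} = yx^{-1} \in R$. In either case, the relevant ratio is a homogeneous element of $R$ of bidegree $(0,0)$, and hence lies in $R^{(0,0)} = \K$. So in the first case there exists $c \in \K$ with $x = cy$, and in the second case there exists $c \in \K$ with $y = cx$. Either conclusion shows that any two nonzero elements of $R^{(i,j)}$ are $\K$-linearly dependent, so $\dim_\K R^{(i,j)} \leq 1$.

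The statement as written asserts that $R^{(i,j)}$ is exactly one-dimensional; I would interpret this (in context) as asserting that $R^{(i,j)}$ is at most one-dimensional, since $R$ need not be nontrivial in every bigrading. No genuine obstacle arises: the whole argument is a two-line deduction from the definitions, with the type-zero hypothesis serving precisely to identify the degree-$(0,0)$ piece of $R$ with the residue field $\K$.
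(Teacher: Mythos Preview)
Your proof is correct and essentially identical to the paper's. The paper invokes condition~(3) of Definition~\ref{def:homogenousvaluationring} (divisibility) to write $x = cy$ with $c \in R$, whereas you invoke the equivalent condition~(1) via $\loc(R)$; in both cases the type-zero hypothesis places $c$ in $\K = R^{(0,0)}$, and your reading of ``one-dimensional'' as ``at most one-dimensional'' matches the paper's intent.
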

\begin{proof}
Let $x$ and $y$ be any two elements in $R^{(i,j)}$. Since $R$ is a graded valuation ring, we have that $x = cy$ for some $c \in R$. Since $x$ and $y$ lie in the same bigrading, $c$ lies in grading $(0,0)$; hence $c \in \K$.
\end{proof}

\subsection{Grid rings} We now introduce the notion of a grid ring. As we will see, these should be thought of as being built from two graded valuation rings, in analogy to the way that $\X$ is built from $\R_U$ and $\R_V$. Roughly speaking, the idea will be to take two type-zero graded valuation rings which share the same residue field and glue them together. 

\begin{definition}\label{def:stairs}
	
Let $\R_U$ and $\R_V$ be two type-zero graded valuation rings with maximal homogenous ideals $\fm_U$ and $\fm_V$.\footnote{We will often abuse notation and use $\R_U$ and $\R_V$ to denote two general type-zero graded valuation rings, rather than the specific graded valuation rings $\R_U$ and $\R_V$ used in the construction of $\X$. In practice, this will cause little confusion.} Suppose that the residue fields $\R_U/\fm_U$ and $\R_V/\fm_V$ are isomorphic. For clarity, we assume both of these admit fixed isomorphisms to some field $\K$, so that we have bigraded quotient maps
\[
f \colon \R_U \rightarrow \K \quad \text{and} \quad g \colon \R_V \rightarrow \K,
\]
where $\K$ is concentrated in degree $(0,0)$. (Note that $\ker f = \fm_U$ and $\ker g = \fm_V$.) We define the \textit{grid ring} $\stair = \stair(\R_U,\R_V)$ to be the pullback
\[
\R_U \times_\K \R_V = \{(r_U, r_V) : f(r_U) = g(r_V)\} \subset \R_U \times \R_V.
\]
This inherits a $\Z \times \Z$ grading, where $\gr (r_U, r_V) = (\gr(r_U), \gr(r_V))$.
\end{definition}

\begin{definition}\label{def:gradingnontrivial}
We say that $\stair(\R_U, \R_V)$ is \textit{grading-nontrivial} if there exists some $\mu \in \R_U$ with $\gr_1(\mu) \neq 0$ and $\nu \in \R_V$ with $\gr_2(\nu) \neq 0$. This (mild) restriction will be useful in Section~\ref{sec:homs}.
\end{definition}

It may not be immediately obvious that our previous examples $\R$ and $\X$ are grid rings; we give a more concrete construction that will make this clear presently. 

\begin{definition}\label{def:maxideal}
Consider the ideals
 \[
 \{(\mu, 0) : \mu \in \fm_U\} \subset \stair(\R_U, \R_V)
 \]
 and
 \[
 \{(0, \nu) : \nu \in \fm_V\} \subset \stair(\R_U, \R_V).
 \]
Viewed as subrings of $\stair$, these are obviously isomorphic to $\fm_U$ and $\fm_V$; by abuse of notation, we denote these by $\fm_U$ and $\fm_V$ also. 
\end{definition}

\begin{remark}
We make little distinction between the ideals $\fm_U \subset \R_U$ and $\fm_U \subset \stair$, relying on context to distinguish these when needed. In particular, given two homogenous elements $x, y \in \fm_U$, the reader may verify that the question of whether $x$ divides $y$ (or, indeed, whether $x$ is a unit multiple of $y$) is independent of whether $x$ and $y$ are considered as elements of $\R_U$ or elements of $\stair$. We thus freely use the fact that homogenous elements of $\fm_U$ (modulo units) are totally ordered by divisibility, without regard to the setting.
\end{remark}

An alternative definition of $\stair(\R_U, \R_V)$ can be given as follows. Keeping in mind that $\R_U$ and $\R_V$ are $\K$-algebras, consider
\[
\R_U \otimes_\K \R_V/(\fm_U\otimes_\K \fm_V).
\]
We claim that this is isomorphic to the grid ring $\stair(\R_U, \R_V)$. Indeed, any element $(r_U, r_V) \in \stair(\R_U, \R_V)$ can be written uniquely as $(k + \mu, k + \nu)$ with $k \in \K$ and $\mu \in \fm_U, \nu \in \fm_V$. Here, we are using that $\K$ appears as a summand of $\R_U$ and $\R_V$. The desired isomorphism sends $(k + \mu, k + \nu)$ to the element
\[
k (1 \otimes 1) + \mu \otimes 1 + 1 \otimes \nu
\]
in the quotient of the above tensor product. The reader may verify that this is a ring isomorphism. While the tensor product construction is rather easier to get a handle on, we have presented Definition~\ref{def:stairs} in the hope that the categorically-minded reader will find the class of grid rings to be a more natural construction and/or indicate possible avenues for generalization outside of the type-zero case.

\begin{remark}\label{rem:sproperties}
It is clear from the tensor product point of view that $\stair$ contains subrings isomorphic to $\R_U$ and $\R_V$, as well as the field $\K$. Note that in general, the pullback of two rings need not contain a copy of either ring. (This is one reason we have imposed the type-zero condition.) Instead, projection onto either coordinate gives quotient maps
\[
p_U \colon \stair(\R_U, \R_V) \rightarrow \R_U \quad \text{and} \quad p_V \colon \stair(\R_U, \R_V) \rightarrow \R_V
\]
whose kernels are $\fm_V$ and $\fm_U$, respectively. We will thus sometimes write
\[
\stair(\R_U, \R_V)/\fm_V \cong \R_U \quad \text{and} \quad \stair(\R_U, \R_V)/\fm_U \cong \R_V.
\]
Similarly, we have a map 
\[
\pi \colon \stair(\R_U, \R_V) \rightarrow \K
\]
given by sending $(r_U, r_V)$ to $f(r_U) = g(r_V)$. The kernel of this is $(\fm_U, \fm_V)$; we thus write
\[
\stair(\R_U, \R_V)/(\fm_U, \fm_V) \cong \K.
\]
Note that $\R_U$, $\R_V$, and $\K$ are thus all $\stair$-modules. (The action of $s \in \stair$ on $r_U \in \R_U$ is given by $p_U(s) \cdot r_U$, and so on.) Clearly, $\fm_V \subset \stair$ acts on $\R_U$ as zero, with similar statements holding for $\R_V$ and $\K$.
\end{remark}

\begin{example}
The discussion of this subsection shows that $\R$ and $\X$ are both grading-nontrivial grid rings. The former is built from the graded valuation rings $\F[U]$ and $\F[V]$; the latter is built from the graded valuation rings $\R_U$ and $\R_V$ of Definiton~\ref{def:xring}. In both cases the field $\K$ is given by $\F$.
\end{example}

\subsection{Complexes over grid rings} We now translate the machinery of local equivalence into the setting of complexes over grid rings. For the remainder of this section, fix a grid ring $\stair = \stair(\R_U,\R_V)$.

\begin{definition}\label{def:scomplex}
Let $\stair$ be a grid ring. Throughout this paper, an \textit{$\stair$-complex} $(C, \partial)$ will mean a free, finitely-generated, $\Z\times\Z$-graded chain complex over $\stair$. We denote the two components of the grading by $\gr = (\gr_1, \gr_2)$. The differential $\partial$ is required to have degree $(-1, -1)$. We also impose the condition that $\gr_1(x) \equiv \gr_2(x) \bmod 2$ for each homogenous $x \in C$; this will be necessary when we construct products and duals of complexes in Section~\ref{sec:localequiv}. Note that due to the grading condition of Definition~\ref{def:gradedlocalring}, this parity condition is preserved by the action of $\stair$.
\end{definition}

Given an $\stair$-complex $C$, we may form the tensor product $C \otimes_\stair \ru$, where $\ru$ is given the action of $\stair$ discussed in Remark~\ref{rem:sproperties}. This may alternatively be thought of as setting $\fm_V$ to be zero inside $C$. We thus sometimes write
\[
C \otimes_\stair \ru \cong C/\fm_V.
\]
The resulting complex is an $\R_U$-module and thus satisfies the structure theorem of Remark~\ref{rem:structure}. This procedure is of course analogous to setting $V = 0$ for a complex over $\R$. We similarly have
\[
C \otimes_\stair \rv \cong C/\fm_U
\]
and
\[
C \otimes_\stair \K \cong C/(\fm_U, \fm_V).
\]

\begin{definition}\label{def:knotlike-complex}
A \emph{left knotlike complex} $C$ is an $\stair$-complex for which 
\[
H_*(C \otimes_\stair \ru)/\ru\text{-torsion} \cong \ru
\]
by an isomorphism which is absolutely $\gr_2$-graded and relatively $\gr_1$-graded. We refer to this as the \textit{$U$-tower condition}. We say that $x \in C$ is a \textit{$U$-tower class} if the image of $x$ in $H_*(C \otimes_\stair \ru)/\ru\text{-torsion}$ generates $\ru$ under the above isomorphism. This means that the image of $x$ in $C \otimes_\stair \ru$ is necessarily a cycle; we will sometimes be imprecise with distinguishing between $x$ and its image in $C \otimes_\stair \ru$, $H_*(C \otimes_\stair \ru)$, or $H_*(C \otimes_\stair \ru)/\ru\text{-torsion}$. Note that any $U$-tower class has $\gr_2 = 0$. A \emph{right knotlike complex} is defined analogously, replacing $U$ with $V$ and $\gr_2$ with $\gr_1$; we similarly define the \textit{$V$-tower condition} and the notion of a \textit{$V$-tower class}. A \emph{totally knotlike complex} is a $\stair$-complex which is both left and right knotlike
\end{definition}

\begin{remark}\label{rem:gradingshift}
To motivate the grading condition in Definition~\ref{def:knotlike-complex}, suppose that $C$ is an $\stair$-complex such that $H_*(C \otimes_\stair \ru)/\ru\text{-torsion}$ and $H_*(C \otimes_\stair \rv)/\rv\text{-torsion}$ are isomorphic to $\ru$ and $\rv$, respectively, but via maps which are only relatively graded. Then there exists a unique grading shift on $C$ that makes the first isomorphism $\gr_2$-preserving (and $\gr_1$-homogenous), and simultaneously makes the second isomorphism $\gr_1$-preserving (and $\gr_2$-homogenous).
\end{remark}

\begin{remark}
The extremely observant reader may wonder whether it is necessary to remember the data of the actual map constituting the isomorphism in Definition~\ref{def:knotlike-complex}. This turns out not to be important due to the fact that any $\R_U$-module isomorphism from $\R_U$ to itself is multiplication by a unit, which necessarily lies in degree $(0,0)$. To see this, observe that any $\R_U$-module map $h$ from $\R_U$ to itself is just multiplication by the element $h(1)$. If $h$ has an inverse, it is clear that $h(1)$ is invertible.
\end{remark}

If we have a morphism $f\colon C_1\to C_2$ of $\stair$-complexes, then we obtain a morphism
\[
f \otimes \id \colon C_1 \otimes_\stair \ru \rightarrow C_2 \otimes_\stair \ru
\]
which we often also denote by $f$. Similar statements hold with $\rv$ in place of $\ru$.

\begin{definition}\label{def:local-map}
A \textit{left local map} of (left) knotlike complexes $C_1\to C_2$ is an $\stair$-equivariant chain map 
	\[f\colon C_1 \to C_2\]
such that $f$ is $\gr_2$-preserving and $\gr_1$-homogenous, and $f$ induces an isomorphism on $H_*(C \otimes_\stair \ru)/\ru\text{-torsion}$. A \textit{right local map} is defined analogously, replacing $U$ with $V$ and $\gr_2$ with $\gr_1$. For $C_1$ and $C_2$ (totally) knotlike complexes, we say that $f$ is \textit{totally local} if it is a local map of both left and right knotlike complexes.
\end{definition}

\begin{definition}\label{def:localequivalence}
We say that two (left) knotlike complexes  $C_1$ and $C_2$ are \emph{left locally equivalent} if there exist (left) local maps $C_1\to C_2$ and $C_2\to C_1$. The set of left local equivalence classes of (left) knotlike complexes admits a partial order $\leq$ by declaring $C_1\leq C_2$ if there exists a (left) local map $C_1\to C_2$. Similar definitions hold for right knotlike complexes.
\end{definition}

The reader familiar with the definitions of \cite[Section 3]{DHSTmoreconcord} will have no trouble checking that the above notions generalize the case of knotlike complexes over $\R$. Note that with our conventions, \cite[Definition 3.3]{DHSTmoreconcord} corresponds to the definition of a right local map. We will generally work with right local maps throughout this paper, and we thus often refer to a right local map simply as a ``local map". Here there may be some confusion, in that we may consider a right local map between two right knotlike complexes, or a right local map between two totally knotlike complexes. In the latter case, however, it turns out that a left or right local equivalence between two totally knotlike complexes is in fact totally local; see Lemma~\ref{lem:loceqsymmetric}.

Given any free, finitely-generated $\stair$-complex $C$, we may consider the image of the action of $(\fm_U, \fm_V)$ on $C$. Since $C$ is a free module and $\fm_U \cap \fm_V = \{0\}$ in $\stair$, every element $x$ in $(\fm_U,\fm_V) \cdot C$ can be uniquely expressed as a sum $x_U+x_V$ for $x_U\in \fm_U \cdot C$ and $x_V\in \fm_V \cdot C$. Usually we will write $(\fm_U, \fm_V)$ to mean the image $(\fm_U,\fm_V) \cdot C$ (and so on), as this will rarely cause confusion.

\begin{definition}\label{def:reduced}
	We say an $\stair$-complex is \emph{reduced} if $\partial \equiv 0 \bmod{(\fm_U,\fm_V)}$. Let $\partial_U = \partial \bmod \fm_U$ and $\partial_V = \partial \bmod \fm_V$.  In a reduced complex, we may write 
	\[
	\partial=\partial_U+\partial_V. 
	\]
	Note that $\partial^2_U = \partial^2_V = 0$. We refer to $\partial_U$ the \emph{$U$-differential} and elements with $\partial_Ux=0$ as $U$-\emph{cycles}, and similarly for $\partial_V$.  
\end{definition}

\begin{remark}
Note that writing $\partial=\partial_U+\partial_V$ is basis-independent, since $C$ is a reduced complex over a grid ring $\stair$, i.e., $\partial \equiv 0 \bmod{(\fm_U,\fm_V)}$ and every element $x$ in $(\fm_U,\fm_V) \cdot C$ can be uniquely expressed as a sum $x_U+x_V$ for $x_U\in \fm_U \cdot C$ and $x_V\in \fm_V \cdot C$
\end{remark}

\begin{lemma}\label{lem:reduced}
	Every free, finitely-generated $\stair$-complex is homotopy equivalent to a reduced $\stair$-complex.
\end{lemma}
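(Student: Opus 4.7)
The plan is a Gaussian elimination (cancellation) argument on pairs of basis elements, driving down the rank of $C$ until what remains is reduced. Since $C$ is finitely generated over $\stair$, termination will be automatic, so the entire content of the proof is extracting a cancellable pair from any failure of reducedness and executing the cancellation in a bigraded, $\stair$-equivariant way.

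The first step is to observe that a failure of reducedness automatically produces a pair that can be cancelled against each other. Fix a homogeneous $\stair$-basis $\{e_i\}$ of $C$ and write $\partial x = \sum_i c_i e_i$ for a basis element $x$. Each coefficient $c_i \in \stair$ is homogeneous of degree $\gr(x) - (1,1) - \gr(e_i)$. Using the tensor-product description $\stair \cong \R_U \otimes_\K \R_V/(\fm_U \otimes_\K \fm_V)$ and the type-zero hypothesis, one checks that $\stair^{(a,b)} \subset (\fm_U,\fm_V)$ whenever $(a,b) \neq (0,0)$, while $\stair^{(0,0)} = \K$. Consequently, if some $c_i \notin (\fm_U,\fm_V)$, then $c_i$ must be homogeneous of degree $(0,0)$, forcing $c_i \in \K^{\times}$, which is in particular a unit in $\stair$. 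Thus non-reducedness of $C$ produces homogeneous basis elements $x, y$ with $\gr(y) = \gr(x) - (1,1)$ and
\[
\partial x = u\, y + \alpha,
\]
where $u \in \K^{\times}$ and $\alpha$ does not involve $y$.

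Next, I apply the standard cancellation (Gaussian elimination) lemma to this pair. Let $\beta_j \in \stair$ denote the coefficient of $y$ in $\partial e_j$ for each other basis element $e_j$. Performing the homogeneous change of basis $e_j \mapsto e_j - u^{-1}\beta_j\, x$ removes $y$ from all differentials except $\partial x$, and a further change $x \mapsto x - u^{-1}\alpha$ (combined with using $\partial^2 = 0$ to determine $\partial y$) realizes $C$ as a direct sum of $\stair$-complexes
\[
C \;\cong\; C' \oplus A, \qquad A \;=\; \bigl(\stair\langle x\rangle \xrightarrow{\,u\,} \stair\langle y\rangle\bigr).
\]
Since $u$ is a unit, $A$ is contractible, so $C \simeq C'$. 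All substitutions are homogeneous because $u$, $\beta_j$, and $\alpha$ are homogeneous; the parity condition $\gr_1 \equiv \gr_2 \bmod 2$ from Definition~\ref{def:scomplex} is preserved because the cancelled pair differs in bigrading by $(1,1)$.

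Finally, iterate on $C'$. Since $\mathrm{rank}_\stair C$ drops by two at each step, the process terminates after finitely many steps at a reduced complex homotopy equivalent to $C$. The only genuinely delicate point in the argument is the initial observation that the obstruction to reducedness lies in degree $(0,0)$ and is therefore a unit; this uses both the type-zero condition on $\R_U, \R_V$ and the fact that $\stair^{(0,0)} = \K$, and is where the hypotheses on grid rings really get used. Everything else is bookkeeping.
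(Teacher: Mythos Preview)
Your approach is essentially identical to the paper's: both proofs cancel a pair of generators connected by a unit coefficient and iterate. Your key observation---that any homogeneous element of $\stair$ not in $(\fm_U,\fm_V)$ must lie in degree $(0,0)$ and hence in $\K^\times$---is exactly the point the paper uses implicitly when it finds a coefficient $c \in \K$.

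One small slip: the second change of basis you write, ``$x \mapsto x - u^{-1}\alpha$,'' does not do what you want (compute $\partial$ of the new element: you get $uy + \alpha - u^{-1}\partial\alpha$, not $uy$). The correct move is to replace $y$ by $y + u^{-1}\alpha = u^{-1}\partial x$, which is precisely what the paper does when it swaps $e_2$ for $\partial e_1$. Since you invoke the standard Gaussian elimination lemma anyway, this is just a bookkeeping typo and does not affect the validity of the argument.
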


\begin{proof}
	Suppose that $C$ is not reduced. Let $\{e_1, \ldots, e_m\}$ be a homogenous basis for $C$. Without loss of generality, we claim that we may assume $\partial e_1 = e_2$. Indeed, since $C$ is not reduced, there is some $i$ such that $\partial e_i$ is not in $(\fm_U, \fm_V)$; let $i = 1$. Then $\partial e_1$ is some $\stair$-linear combination of the $e_i$ with at least one coefficient not contained in $(\fm_U, \fm_V)$. Moreover, it cannot be the case that $\partial e_1 = ce_1 \bmod{(\fm_U,\fm_V)}$ (where $c \in \K$), as this would contradict $\partial^2 = 0$. Hence we can choose the coefficient in question to correspond to some $i \neq 1$; let this be $e_2$. Then $\partial e_1$ is equal to $ce_2$ (where $c \in \K$) plus some $\stair$-linear combination of the other $e_i$. It is then clear that $\{e_1, e_2, e_3, \ldots, e_m\}$ and $\{e_1, \partial e_1, e_3, \ldots, e_m\}$ are related to each other by a homogenous change-of-basis. This establishes the claim.
	
	We now perform a further change-of-basis, as follows. For each $i \geq 2$, write $\partial e_i$ as a linear combination of $e_1$, $\partial e_1$, and the other basis elements $e_j$. By adding multiples of $e_1$ to each such $e_i$, we may assume that $\partial e_1$ does not appear in any $\partial e_i$ with $i \geq 2$. This also shows that $e_1$ does not appear in any $\partial e_i$, since this would contradict $\partial^2 e_i=0$. Denoting this new basis by $\{e_1, \partial e_1, e_3', \ldots, e_m'\}$, it is clear that $\{e_1, \partial e_1\}$ and $\{e_3', \ldots, e_m'\}$ span subcomplexes of $C$. The former is clearly acyclic, and the inclusion and projection maps onto the latter give a homotopy equivalence. Since $C$ is finitely generated, we may iterate this procedure to arrive at a reduced complex.
\end{proof}

\begin{remark}
From now on, we will assume that all of our knotlike complexes are reduced.
\end{remark}

We now establish some convenient formalism that will be useful in the sequel. Let $C$ be a knotlike complex and fix any $\stair$-basis $\cB = \{e_1, \ldots, e_m\}$ for $C$. Then we may identify
\[
C \cong \Span_\K \{e_1, \ldots, e_m\} \otimes_\K \stair.
\]
We likewise identify
\begin{align*}
C/\fm_V &\cong \Span_\K \{e_1, \ldots, e_m\} \otimes_\K \R_U \\
C/\fm_U &\cong \Span_\K \{e_1, \ldots, e_m\} \otimes_\K \R_V
\end{align*}
as well as
\[
C/(\fm_U, \fm_V) \cong \Span_\K \{e_1, \ldots, e_m\}.
\]
Using these isomorphisms, we have obvious inclusion maps from $C/(\fm_U, \fm_V)$ to $C/\fm_V$ and $C/\fm_U$, and from $C/\fm_V$ and $C/\fm_U$ to $C$.\footnote{Here, we are using in the first case that there is an inclusion map from $\K$ into $\R_U$ and $\R_V$, and in the second that there are inclusion maps from $\R_U$ and $\R_V$ to $\stair$. Tensoring these maps with the identity map on $\Span_\K \{e_1, \ldots, e_m\}$ gives the desired maps.} These are sections of the usual quotient maps between these complexes. At risk of overloading the reader with notation, we have labeled each of these maps in the diagram below. Although the quotient maps are canonical, we stress that the maps $i_U$ and $j_U$ (and so on) are not: they depend our choice of $\cB$. Note that $i_U \circ p_U = \id \bmod \fm_V$, although this congruence is an equality on any $\R_U$-linear combination of the elements of $\cB$. Similar statements hold for the other pairs of maps in the diagram.

\[
\begin{tikzcd}
	&& {C/\mathfrak{m}_V} \\
	C && {} && {C/(\mathfrak{m}_U,\mathfrak{m}_V)} \\
	&& {C/\mathfrak{m}_U}
	\arrow["{q_V}"', shift right=4, from=3-3, to=2-5]
	\arrow["{q_U}", shift left=4, from=1-3, to=2-5]
	\arrow["{p_U}", shift left=4, from=2-1, to=1-3]
	\arrow["{p_V}"', shift right=4, from=2-1, to=3-3]
	\arrow["{i_U}", shift right=2, from=1-3, to=2-1]
	\arrow["{i_V}"', shift left=2, from=3-3, to=2-1]
	\arrow["{j_U}", shift right=2, from=2-5, to=1-3]
	\arrow["{j_V}"', shift left=2, from=2-5, to=3-3]
	\arrow["\iota", shift left=1, from=2-5, to=2-1]
	\arrow["\pi", shift left=1, from=2-1, to=2-5]
\end{tikzcd}
\]
\vspace{0.1cm}

Note that (for example) $p_U$ is an $\stair$-linear chain map, while $i_U$ is only $\R_U$-linear. Moreover, $i_U$ is not a chain map, but instead intertwines the differential $\partial_U$ on $C/\fm_V$ with the operator $\partial_U$ on $C$. Similarly, $q_U$ is $\R_U$-linear, while $j_U$ is only $\K$-linear. In this latter case, we consider $C/(\fm_U, \fm_V)$ as a bigraded vector space with trivial differential. Similar statements hold for the other pairs of maps in the diagram.

\begin{definition}\label{def:pairedbasis}
If $C$ is a (reduced) right knotlike complex, then we say that an $\stair$-basis $\cB = \{x, y_i, z_i\}_{i=1}^n$ for $C$ is a \textit{$V$-paired} basis if 
\begin{align*}
	&\d_V x = 0, \\
	&\d_V y_i = \nu_i z_i, \text{ and} \\
	&\d_V z_i = 0
\end{align*}
for some $\nu_i \in \fm_V$. Similarly, if $C$ is a (reduced) left knotlike complex, then we say that $\cB = \{x, y_i, z_i\}_{i=1}^n$ is a \textit{$U$-paired} basis if 
\begin{align*}
	&\d_U x = 0, \\
	&\d_U y_i = \mu_i z_i, \text{ and} \\
	&\d_U z_i = 0.
\end{align*}
for some $\mu_i \in \fm_U$.
\end{definition}

The fact that $V$- and $U$-paired bases exist is a straightforward consequence of the structure theorem for modules over graded valuation rings: 

\begin{lemma}\label{lem:pairedbasis}
Let $C$ be a right knotlike complex which is reduced. Then there exists a $V$-paired basis for $C$. Similarly, if $C$ is a left knotlike complex which is reduced, then there exists a $U$-paired basis for $C$.
\end{lemma}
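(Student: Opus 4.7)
The two statements are symmetric (swap the roles of $U$ and $V$), so I will focus on the $V$-paired case; $C$ is a reduced right knotlike complex. My plan is to first produce a paired basis on the quotient $C/\fm_U$ and then lift it to an $\stair$-basis of $C$. Specifically, since $C$ is free over $\stair$, the quotient $C/\fm_U \cong C\otimes_\stair \R_V$ is a free, finitely generated chain complex over $\R_V$. Because $C$ is reduced, its differential $\partial = \partial_U + \partial_V$ descends mod $\fm_U$ to the $\fm_V$-part $\partial_V$. Since $\R_V$ is a graded valuation ring, Remark~\ref{rem:structure} produces a paired basis $\{\bar{x}_j,\bar{y}_i,\bar{z}_i\}$ of $C/\fm_U$ with $\bar\partial_V \bar{x}_j = 0$, $\bar\partial_V \bar{y}_i = \nu_i \bar{z}_i$, $\bar\partial_V \bar{z}_i = 0$, and $\nu_i \in \fm_V$ (since the original differential on $C$ takes values in $(\fm_U,\fm_V)\cdot C$). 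The right knotlike hypothesis forces the free part of $H_*(C/\fm_U)$ to have rank one, hence there is exactly one unpaired $\bar{x}$.

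Next I lift this basis to $C$. Choose any $\stair$-basis $\{e_k\}$ of $C$ and let $A$ be the invertible matrix over $\R_V$ carrying $\{\bar{e}_k\}$ to $\{\bar{x},\bar{y}_i,\bar{z}_i\}$. Using the subring inclusion $\R_V \hookrightarrow \stair$ from Remark~\ref{rem:sproperties}, lift $A$ entry-by-entry to a matrix $\tilde{A}$ with $p_V(\tilde{A}) = A$. The main technical step is verifying that $\tilde{A}$ is invertible over $\stair$. An element of the pullback $\stair = \R_U \times_\K \R_V$ is a unit iff both coordinate projections are units in $\R_U$ and $\R_V$. By construction $p_V(\det \tilde A) = \det A \in \R_V^\times$. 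For the other projection: because $\R_V$ is type-zero, $\det A$ lies in $\K^\times$, and the inclusion $\R_V \hookrightarrow \stair$ sends elements of $\K$ diagonally to $\stair$, so $p_U(\det \tilde{A}) = \pi(\det A) = \det A \in \K^\times \subset \R_U^\times$. Hence $\det \tilde{A}$ is a unit in $\stair$, and the new basis $\{x, y_i, z_i\}$ obtained by applying $\tilde A$ to $\{e_k\}$ is an $\stair$-basis of $C$ reducing mod $\fm_U$ to the paired basis of $C/\fm_U$.

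Finally, I read off the paired structure on $C$. The identities $\bar\partial_V \bar{y}_i = \nu_i \bar{z}_i$, $\bar\partial_V \bar{x} = 0$, $\bar\partial_V \bar{z}_i = 0$ say that $\partial y_i - \nu_i z_i$, $\partial x$, and $\partial z_i$ all lie in $\fm_U \cdot C$. Since $C$ is reduced, $\partial$ splits uniquely as $\partial_U + \partial_V$ with $\partial_V$ landing in $\fm_V \cdot C$; thus the differences above correspond exactly to $\partial_U$-terms, and the $\partial_V$ components yield precisely $\partial_V y_i = \nu_i z_i$, $\partial_V x = 0$, $\partial_V z_i = 0$, as desired. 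The main obstacle is the invertibility check in the middle step, which is the only place the specific structure of $\stair$ as a pullback over a shared residue field $\K$ and the type-zero hypothesis are essentially used; everything else is a direct application of the graded Smith normal form recalled in Remark~\ref{rem:structure}.
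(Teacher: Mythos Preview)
Your proof is correct and follows essentially the same approach as the paper: both pass to the quotient $C/\fm_U$, invoke the structure theorem over the graded valuation ring $\R_V$ to obtain a paired basis there, and then lift back to an $\stair$-basis of $C$ using the inclusion $\R_V \hookrightarrow \stair$. The paper packages the lifting step via the section map $i_V$ (and checks that $i_V(\cB')$ is a basis by observing that each original $e_i$ is an $\R_V$-combination of the new elements), while you make the same lift explicit as a change-of-basis matrix and verify invertibility by computing $\det\tilde A$; these are two phrasings of the same argument.
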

\begin{proof}
Consider the quotient $C/\fm_U$. Since $\R_V$ is a graded valuation ring, we can find an $\R_V$-basis $\cB' = \{x', y_i', z_i'\}_{i=1}^n$ of $C/\fm_U$ which is paired in the sense of Remark~\ref{rem:structure}. Set $\cB = i_V(\cB')$, where $i_V$ is the map constructed using any arbitrary $\stair$-basis $\{e_i\}_{i=1}^m$ for $C$. It is straightforward to check that $\cB$ is an $\stair$-basis for $C$. Explicitly, each $p_V(e_i)$ is an $\R_V$-linear combination of the elements of $\cB'$; applying $i_V$ to both sides then shows that each $e_i$ is a linear combination of the elements of $\cB$. Since $i_V$ intertwines the differential $\partial_V$ on $C/\fm_U$ with the operator $\partial_V$ on $C$, this completes the proof. The claim for $\partial_U$ is similar.
\end{proof}

Note that if $C$ is a totally knotlike complex, then there is both a $U$-paired basis for $C$ and a $V$-paired basis for $C$, but in general these will not be the same. Finally, we also record the following straightforward fact:

\begin{lemma}\label{lem:finitedimensional}
Let $C$ be an $\stair$-complex with an $\stair$-basis $\{e_i\}_{i=1}^n$ of length $n$. Then $C$ is a $\K$-vector space of dimension at most $2n$ in each bigrading.
\end{lemma}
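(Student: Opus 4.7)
The plan is to reduce the statement to a purely ring-theoretic bound on the $\K$-dimension of each bigrading of $\stair$ itself, and then to deduce that bound from the pullback description of $\stair$ together with Lemma~\ref{lem:onedimensional}.

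First I would observe that, as $C$ is a finitely-generated free graded $\stair$-module, we may take the $\stair$-basis $\{e_i\}_{i=1}^n$ to consist of homogeneous elements (this is the standard fact for graded free modules, and is used implicitly elsewhere in the paper, e.g.\ in the proof of Lemma~\ref{lem:reduced}). Under such a choice, there is a bigrading-preserving $\stair$-module isomorphism
\[
C \;\cong\; \bigoplus_{i=1}^{n} \stair[\gr(e_i)],
\]
where $[\gr(e_i)]$ denotes the appropriate bigrading shift. Taking the degree-$(a,b)$ summand,
\[
C^{(a,b)} \;\cong\; \bigoplus_{i=1}^{n} \stair^{\,(a,b)-\gr(e_i)},
\]
so it suffices to prove $\dim_{\K} \stair^{(c,d)} \le 2$ for every $(c,d) \in \Z \times \Z$.

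For this, I would use the description $\stair = \R_U \times_\K \R_V$ of Definition~\ref{def:stairs}. Since $\R_U$ and $\R_V$ are type-zero, we have graded $\K$-vector space decompositions $\R_U = \K \oplus \fm_U$ and $\R_V = \K \oplus \fm_V$, and the normal form for pullback elements $(k+\mu,\, k+\nu)$ (as used in the paper's discussion of the tensor product description of $\stair$) gives a grading-preserving $\K$-linear isomorphism
\[
\stair \;\cong\; \K \,\oplus\, \fm_U \,\oplus\, \fm_V.
\]
Now Lemma~\ref{lem:onedimensional} applied to $\R_U$ and $\R_V$ shows that $\fm_U^{(c,d)}$ and $\fm_V^{(c,d)}$ are each at most one-dimensional over $\K$, while $\K^{(c,d)}$ is one-dimensional when $(c,d)=(0,0)$ and zero otherwise. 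In the case $(c,d) = (0,0)$, the $\fm_U$ and $\fm_V$ summands vanish (they are concentrated in nonzero bigradings by the type-zero hypothesis), so $\dim_\K \stair^{(0,0)} = 1$. For $(c,d) \neq (0,0)$, the $\K$ summand vanishes and we are left with at most two one-dimensional pieces. In either case $\dim_\K \stair^{(c,d)} \le 2$, and summing over $i$ yields the desired bound $\dim_\K C^{(a,b)} \le 2n$.

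There is no real obstacle here; the only subtlety is keeping track of which bigrading contributions come from which summand of the pullback, and ensuring the grading conventions on $\stair$ match those used in Lemma~\ref{lem:onedimensional}. Both are immediate once the pullback decomposition of $\stair$ as a graded $\K$-vector space is written down explicitly.
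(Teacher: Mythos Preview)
Your proof is correct and follows essentially the same approach as the paper: both reduce to the fact that $\stair$ is at most two-dimensional over $\K$ in each bigrading, with at most one contribution from $\R_U$ and one from $\R_V$ via Lemma~\ref{lem:onedimensional}. The paper states this more tersely (``at most one $\R_U$-multiple of $e_i$ and at most one $\R_V$-multiple of $e_i$''), while you make the graded $\K$-splitting $\stair \cong \K \oplus \fm_U \oplus \fm_V$ explicit before applying the lemma.
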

\begin{proof}
Let $e_i$ be any basis element and fix any bigrading. It is clear from Lemma~\ref{lem:onedimensional} that up to multiplication by $\K$, there is at most one $\R_U$-multiple of $e_i$ and at most one $\R_V$-multiple of $e_i$ which can lie in this bigrading. 
\end{proof}

\subsection{The local equivalence group of knotlike complexes}\label{sec:localequiv}
In this subsection (and throughout the rest of the paper) we will refer to totally knotlike complexes simply as knotlike complexes. However, for the moment it will be convenient for us to consider \textit{right} local maps and \textit{right} local equivalences between such complexes; we thus refer to these simply as local maps and local equivalences. This apparent asymmetry will be resolved in Section~\ref{sec:char}, where we show that a right local equivalence between two totally knotlike complexes is necessarily left local.

Our goal will be to show that knotlike complexes modulo local equivalence form an abelian group, with the group operation induced by tensor product. Moreover, we will show that the partial order $\leq$ on knotlike complexes is, in fact, a total order. We begin with some routine formalism:

\begin{definition}
Let $C_1$ and $C_2$ be knotlike complexes. The \emph{product} of $C_1$ and $C_2$ is the usual tensor product complex $C_1 \otimes_\stair C_2$. We suppress the subscript on the tensor product when no confusion is possible. We remind the reader (who may be used to working over $\F=\Z/2\Z$) that the differential on this complex is given by
\[
\partial(a \otimes b) = \partial a \otimes b + (-1)^{|a|} a \otimes \partial b
\]
where $|a|$ is either component of $\gr(a)$. Here, we use the condition that $\gr_1 \equiv \gr_2 \bmod$ $2$ for any $\stair$-complex, as discussed in Definition~\ref{def:scomplex}. Note that in order to make the tensor product differential an $\stair$-module map, we utilize the second grading requirement in Definition~\ref{def:gradedlocalring}.
\end{definition}

\begin{lemma}\label{lem:product-check}
The product of two knotlike complexes is a knotlike complex and the product of two local maps is a local map.
\end{lemma}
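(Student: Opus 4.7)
The plan is to verify two things: that $C_1 \otimes_\stair C_2$ is a knotlike complex in the sense of Definition~\ref{def:knotlike-complex}, and that the tensor product of two local maps is a local map. The routine checks that $C_1 \otimes_\stair C_2$ is an $\stair$-complex---that is, free, finitely generated, bigraded, with parity condition $\gr_1 \equiv \gr_2 \bmod 2$ and differential of bidegree $(-1,-1)$---follow directly from the definitions. In particular, the grading hypotheses of Definition~\ref{def:gradedlocalring} on $\stair$ and the parity condition in Definition~\ref{def:scomplex} on the complexes are precisely what make the tensor differential a well-defined $\stair$-module map of degree $(-1,-1)$ squaring to zero. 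The substantive content lies in verifying the tower conditions.

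The main computational input is the natural identification
\[
(C_1 \otimes_\stair C_2) \otimes_\stair \ru \;\cong\; (C_1 \otimes_\stair \ru) \otimes_\ru (C_2 \otimes_\stair \ru)
\]
as bigraded complexes of $\ru$-modules, which follows from associativity of tensor product together with the observation that $C_2/\fm_V$ is already annihilated by $\fm_V$, so tensoring with it over $\stair$ is the same as tensoring over $\ru$ after reducing the first factor mod $\fm_V$. An analogous identification holds over $\rv$.

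I would then apply a Künneth theorem over the graded valuation ring $\ru$. Since submodules of free finitely generated modules over $\ru$ are free (by the structure theorem recorded in Remark~\ref{rem:structure}), the usual short exact Künneth sequence holds for free complexes. Because each $H_*(C_i \otimes_\stair \ru)/\text{torsion} \cong \ru$ by hypothesis, a direct computation shows that the non-torsion summand of $H_*$ of the tensor product complex is $\ru \otimes_\ru \ru \cong \ru$, while all other summands (from $\Tor_1$ and torsion parts) remain torsion. To verify the grading condition, fix $U$-tower classes $x_i \in C_i$; each has $\gr_2(x_i) = 0$ and satisfies $\partial x_i \in \fm_V \cdot C_i$, so $x_1 \otimes x_2$ descends to a cycle of $\gr_2$-degree zero in $(C_1 \otimes C_2)/\fm_V$, and under the Künneth identification its class corresponds to $[x_1] \otimes [x_2]$, which generates the non-torsion summand. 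This gives the required isomorphism preserving $\gr_2$ absolutely. The right knotlike condition is verified symmetrically using $V$-tower classes and the analogous identification over $\rv$.

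For the statement about local maps, observe that $f_1 \otimes f_2$ is $\stair$-equivariant with the correct bigrading properties. Reducing modulo $\fm_V$ gives the tensor product of the induced chain maps on $C_i/\fm_V$; by naturality of the Künneth short exact sequence, this induces an isomorphism on the non-torsion portion of homology since each $f_i$ does so individually. The only mild obstacle I anticipate is justifying the Künneth formula over $\ru$, but this reduces to the standard PID-style argument given the structure theorem for finitely generated modules over a graded valuation ring.
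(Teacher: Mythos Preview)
Your argument is correct and follows essentially the same structure as the paper's proof: both begin with the identification $(C_1 \otimes_\stair C_2) \otimes_\stair \ru \cong (C_1/\fm_V) \otimes (C_2/\fm_V)$ and then verify that the class $[x_1 \otimes x_2]$ (for $x_i$ tower classes) generates the non-torsion part in the correct grading. The difference is one of packaging. The paper chooses $V$-paired bases $\{x^i, y^i_j, z^i_j\}$ for the $C_i$ via Lemma~\ref{lem:pairedbasis} and checks directly that $x^1 \otimes x^2$ is the unique unpaired generator in the tensor product; this is effectively a by-hand computation of the relevant K\"unneth decomposition. Your route through the abstract K\"unneth sequence over $\ru$ is cleaner conceptually but leans on the (non-canonical) splitting of that sequence to conclude that $[x_1 \otimes x_2]$ actually \emph{generates} the free summand rather than merely being non-torsion---this splitting does hold over a graded valuation ring by the same argument as over a PID (submodules of finitely generated free modules are free, Remark~\ref{rem:structure}), so there is no gap, but it is worth making explicit. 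Both approaches rest on the same structure theorem, and for the local-map statement both you and the paper defer to naturality.
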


\begin{proof}
Let $C_1$ and $C_2$ be knotlike complexes. Then $C_1\otimes_\stair C_2$ is free and finitely-generated. To check the $V$-tower condition, note that
\[
(C_1\otimes_\stair C_2)\otimes_{\stair} \ru=(C_1\otimes_{\stair} \ru)\otimes_{\stair} (C_2\otimes_{\stair} \ru).
\]
Choose $V$-paired bases $\cB_1 = \{x^1, y^1_i, z^1_i\}_{i=1}^{n_1}$ and $\cB_2 = \{x^2, y^2_i, z^2_i\}_{i=1}^{n_2}$ for $C_1$ and $C_2$. Then it is easily checked that $[x^1\otimes x^2]$ generates $H_*((C_1\otimes_{\stair}C_2)\otimes\ru)/\ru\text{-torsion}$. Moreover, $\gr_1(x^1 \otimes x^2) = \gr_1(x^1) + \gr_1(x^2) = 0$. A similar computation holds for the $U$-tower condition. Hence $C_1\otimes_\stair C_2$ is a knotlike complex. The claim about local maps is standard.
\end{proof}

\begin{definition}
Let $C_1$ and $C_2$ be two $\stair$-complexes. The \emph{complex of graded maps} from $C_1$ to $C_2$ is the graded chain complex whose underlying module is given by
\[
\Hom_\stair(C_1, C_2) = \bigoplus_{\vec{n} \in \Z \times \Z} \Hom_{\vec{n}}(C_1,C_2),
\]
where $\Hom_{\vec{n}}(C_1,C_2)$ is the set of $\stair$-module maps from $C_1$ to $C_2$ of bidegree $\vec{n}$. This has a $\Z \times \Z$-grading given by the degree shift $\vec{n}$. The differential is defined to be
\[
\partial_\Hom f = \partial f-(-1)^{|f|} f\partial
\]
where $|f|$ is either component of $\gr(f)$. Here, we use the condition that $\gr_1 \equiv \gr_2 \bmod$ $2$ for any $\stair$-complex, as discussed in Definition~\ref{def:scomplex}. Note that in order to make the dual differential an $\stair$-module map, we utilize the second grading requirement in Definition~\ref{def:gradedlocalring}.
\end{definition}

\begin{definition}\label{def:dual}
Let $C$ be a $\stair$-complex. The \emph{dual} of $C$ is the complex
\[
C^\vee = \Hom_\stair(C, \stair)
\]
where $\stair$ is considered as a complex with trivial differential. For an explicit construction of $C^\vee$, fix any $\stair$-basis $\{e_1, \ldots, e_m\}$ for $C$. Then $C^\vee$ is free with basis $\{e_1^\vee, \ldots, e_m^\vee\}$, where $\gr(e_i^\vee) = - \gr(e_i)$. The differential on $C^\vee$ is given by
\[
\partial^\vee e_i^\vee = \sum_j c_j e_j^\vee
\]
where $e_j^\vee$ appears in the above sum if and only if $e_i$ appears in $\partial e_j$. The above coefficient $c_j$ is $\smash{-(-1)^{|e_i|}}$ times the coefficient of $e_i$ in $\partial e_j$.
\end{definition}

\begin{lemma}\label{lem:dual-check}
The dual of a knotlike complex is a knotlike complex and the dual of a local map is a local map. Moreover, if $C$ is a knotlike complex, then $C \otimes_\stair C^\vee$ is locally equivalent to the trivial complex $\stair$.
\end{lemma}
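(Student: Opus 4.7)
The plan is to establish the three assertions in sequence, all using a common toolkit: the paired bases from Lemma \ref{lem:pairedbasis}, the compatibility of dualization with reduction modulo $\fm_U$ or $\fm_V$ (which holds because $C$ is free over $\stair$, so both $(C/\fm_V)^\vee$ and $C^\vee/\fm_V$ agree on the dual-basis presentation and the dual differential formula in Definition \ref{def:dual} depends only on the coefficients of $\d$), and the standard evaluation/coevaluation machinery.

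For the first claim, suppose $C$ is totally knotlike and fix a $U$-paired basis $\{x, y_i, z_i\}$ with $\d_U y_i = \mu_i z_i$. Then $C/\fm_V$ splits as an $\R_U$-chain complex into $\R_U\cdot x \oplus \bigoplus_i \bigl(\R_U\cdot y_i \xrightarrow{\mu_i} \R_U\cdot z_i\bigr)$, and its dual — identified with $C^\vee/\fm_V$ — is $\R_U\cdot x^\vee \oplus \bigoplus_i \bigl(\R_U\cdot z_i^\vee \xrightarrow{\pm \mu_i} \R_U\cdot y_i^\vee\bigr)$. Taking homology and quotienting torsion yields $\R_U$ generated by $[x^\vee]$, and $\gr_2(x^\vee) = -\gr_2(x) = 0$, verifying the $U$-tower condition. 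The $V$-tower condition follows by a symmetric argument using a $V$-paired basis. For the second claim, a local map $f\colon C_1 \to C_2$ dualizes to an $\stair$-equivariant, bigrading-preserving map $f^\vee\colon C_2^\vee \to C_1^\vee$; after reducing modulo $\fm_U$ (say), the identification above realizes $f^\vee$ as the $\R_V$-dual of a map inducing an isomorphism modulo torsion, and diagonalizing via paired bases shows that this dualization preserves the iso-on-tower property.

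For the third claim, I would fix a $V$-paired basis $\cB = \{x, y_i, z_i\}$ of $C$ and introduce the evaluation and coevaluation maps
\[
\mathrm{ev}\colon C \otimes_\stair C^\vee \to \stair, \quad a \otimes \varphi \mapsto \varphi(a),
\qquad
\mathrm{coev}\colon \stair \to C \otimes_\stair C^\vee, \quad 1 \mapsto \sum_{e \in \cB} e \otimes e^\vee.
\]
Both are $\stair$-equivariant chain maps of bidegree $(0,0)$. The class $x \otimes x^\vee$ is the $V$-tower generator of $C \otimes_\stair C^\vee$ (as in Lemma \ref{lem:product-check}), and $\mathrm{ev}(x \otimes x^\vee) = 1$, so $\mathrm{ev}$ is right local. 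For $\mathrm{coev}$, I would verify that modulo $\fm_U$ and modulo $\R_V$-torsion its image equals $[x \otimes x^\vee]$. A direct computation on a $V$-paired basis yields
\[
\d_V(y_i \otimes z_i^\vee) = \nu_i \bigl(z_i \otimes z_i^\vee \pm y_i \otimes y_i^\vee\bigr),
\]
so each pair $z_i \otimes z_i^\vee \pm y_i \otimes y_i^\vee$ is $\nu_i$-torsion; since $\nu_i \in \fm_V$ is a non-unit in $\R_V$, these pairs vanish in $H_*(C \otimes C^\vee \otimes_\stair \R_V)$ modulo torsion. Thus $[\mathrm{coev}(1)] = [x \otimes x^\vee]$ in the quotient, so $\mathrm{coev}$ is right local; having maps in both directions yields the required local equivalence.

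The main technical obstacle is the sign bookkeeping in the last step: the dual differential formula of Definition \ref{def:dual} introduces signs $-(-1)^{|\cdot|}$, and I would need to carefully use the grading parity constraint in Definition \ref{def:gradedlocalring} (either gradings lie in $2\Z \times 2\Z$ or $-1 = 1$) to ensure that the pairs $y_i\otimes y_i^\vee + z_i \otimes z_i^\vee$ actually assemble into cycles killed by $\nu_i$. Once the signs are handled, the rest is a direct verification.
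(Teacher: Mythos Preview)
Your approach is essentially the paper's: paired bases for the first two claims, and the coevaluation element $\sum_{e\in\cB} e\otimes e^\vee$ for the third. The paper even gives (essentially) your torsion computation implicitly, noting that choosing a $V$-paired basis makes $x_0=\sum e\otimes e^\vee$ a $V$-tower generator. The one substantive difference is how you get the reverse map $C\otimes C^\vee \to \stair$: you build an explicit evaluation map, whereas the paper simply dualizes the coevaluation, using the already-established fact that duals of local maps are local together with $(C\otimes C^\vee)^\vee \cong C^\vee\otimes C \cong C\otimes C^\vee$.

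The paper's trick is cleaner here because your formula $\mathrm{ev}(a\otimes\varphi)=\varphi(a)$ is \emph{not} a chain map in case (1) of Definition~\ref{def:gradedlocalring}. With the paper's conventions one computes $\mathrm{ev}\bigl(\partial(a\otimes\varphi)\bigr) = \bigl(1-(-1)^{|a|+|\varphi|}\bigr)\varphi(\partial a)$; when $\varphi(\partial a)\neq 0$ it lies in $\stair$ in bidegree with component $|a|+|\varphi|-1$, which in case (1) forces $|a|+|\varphi|$ odd, giving $2\varphi(\partial a)$ rather than $0$. The grading parity constraint you invoke does not rescue this. The fix is standard---insert the Koszul sign $(-1)^{|a||\varphi|}$, or equivalently evaluate on $C^\vee\otimes C$ and precompose with the (signed) swap---but it is a genuine correction to what you wrote, not just bookkeeping. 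Also note that you assert $\mathrm{coev}$ is a chain map without verifying that $\sum e\otimes e^\vee$ is a cycle; the paper carries out this sign check explicitly, and it is the main place where the conventions of Definitions~\ref{def:gradedlocalring} and~\ref{def:scomplex} are actually used.
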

\begin{proof}
Let $C$ be a knotlike complex. The fact that $C^\vee$ satisfies the $V$-tower condition is clear from choosing a $V$-paired basis for $C$ and applying the explicit construction in Definition~\ref{def:dual}; the $U$-tower condition is similar. The claim regarding local maps is standard. We thus show that $C \otimes_\stair C^\vee$ is locally trivial. We first produce a local map from $\stair$ to $C \otimes_\stair C^\vee$. Fix any basis $\{e_1, \ldots, e_m\}$ for $C$ and consider the element
\[
x_0 = \sum_{i=1}^m e_i \otimes e_i^\vee.
\]
We claim that this element is a cycle in $C \otimes_\stair C^\vee$. 

Indeed, for any $j$ and $k$, consider the coefficient of $e_j \otimes e_k^\vee$ in $\partial x_0$. A consideration of the tensor product rule for $\partial$ immediately shows that any contribution to this coefficient can only come from the two terms $\partial (e_j \otimes e_j^\vee)$ and $\partial (e_k \otimes e_k^\vee)$ in $\partial x_0$. Moreover, this only happens when $e_j$ appears in $\partial e_k$. Thus, suppose that $e_j$ appears in $\partial e_k$ with coefficient $c$. Then (keeping track of all signs) the first term contributes a coefficient of $(-1)^{|e_j|}(-1)(-1)^{|e_j|}c = -c$, while the second term just contributes a coefficient of $c$. These cancel, so $x_0$ is a cycle in $C \otimes_\stair C^\vee$. We can easily guarantee that $x_0$ is a $V$-tower generator by choosing $\{e_i\}_{i=1}^m$ to be a $V$-paired basis. Since $x_0$ has degree $(0,0)$, we obtain a local map from $\stair$ to $C \otimes_\stair C^\vee$ by sending the generator of $\stair$ to $x_0$. Dualizing this construction yields a local map from $(C \otimes_\stair C^\vee)^\vee \cong C^\vee \otimes_\stair C \cong C \otimes_\stair C^\vee$ to $\stair^\vee \cong \stair$.
\end{proof}

\begin{remark}
Lemma \ref{lem:dual-check} may also be proved in a basis-free manner, using the contraction map; see, for example, \cite[Proof of Lemma 3.14]{DHSThomcobord} for the analogous basis-free proof in the involutive setting.
\end{remark}

Since tensor products preserve local equivalence, we are finally ready to make the following definition:

\begin{definition}
Let $\KL$ denote the set of (right) local equivalence classes of (totally) knotlike complexes over $\stair$, with the operation induced by $\otimes$.
\end{definition}

\begin{proposition}\label{prop:ab-group}
The pair $(\KL, \otimes)$ forms an abelian group.  
\end{proposition}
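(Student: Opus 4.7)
The plan is to check each group axiom in turn, with most of the work having already been set up in Lemmas \ref{lem:product-check} and \ref{lem:dual-check}. The only subtlety will be verifying that the swap map witnessing commutativity is a well-defined chain map, since this is where the sign conventions and the parity condition $\gr_1 \equiv \gr_2 \bmod 2$ of Definition \ref{def:scomplex} get used.

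First I would verify that $\otimes$ descends to a well-defined operation on $\KL$. Suppose $C_1 \sim C_1'$ and $C_2 \sim C_2'$, witnessed by mutually local maps $f_1,g_1$ and $f_2,g_2$ respectively. Then $f_1 \otimes f_2 \colon C_1 \otimes C_2 \to C_1' \otimes C_2'$ and $g_1 \otimes g_2$ are local by (the proof of) Lemma \ref{lem:product-check}: the image of a $V$-tower generator $x^1 \otimes x^2$ is a $V$-tower generator in the target, and similarly for $U$-towers. The output is a knotlike complex by the same lemma.

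Next I would verify the group axioms. Associativity follows from the standard associator of tensor product, which is an $\stair$-linear chain isomorphism of degree $(0,0)$ and hence a totally local equivalence. For the identity, note that $\stair$ viewed as an $\stair$-complex with zero differential is trivially totally knotlike (the generator $1$ serves as both $U$- and $V$-tower class and has bigrading $(0,0)$), and the canonical isomorphism $C \otimes \stair \cong C$ gives the identity axiom. Inverses come from Lemma \ref{lem:dual-check}: since $C \otimes C^\vee$ is locally equivalent to $\stair$, the class $[C^\vee]$ is a two-sided inverse of $[C]$ in $\KL$.

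The one step that requires care is commutativity. I would define the swap $\tau \colon C_1 \otimes C_2 \to C_2 \otimes C_1$ by
\[ \tau(a \otimes b) = (-1)^{|a||b|} b \otimes a \]
on homogeneous elements, where $|a|,|b|$ are either component of the bigrading (these two choices agree modulo $2$ by Definition \ref{def:scomplex}). This is $\stair$-linear because the second alternative in Definition \ref{def:gradedlocalring} (either $\gr$ lands in $2\Z \times 2\Z$, or $-1=1$ in $R$) forces the signs to behave correctly under multiplication by elements of $\stair$. A direct computation using $|a \otimes b| \equiv |a|+|b| \bmod 2$ together with the tensor product differential shows $\tau$ is a chain map of bidegree $(0,0)$, and $\tau$ sends a $V$-tower (resp.\ $U$-tower) generator to a $V$-tower (resp.\ $U$-tower) generator, so it is a totally local isomorphism. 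I expect this to be the only step that is not entirely routine, precisely because it is the only place where the grading conventions really play a role; everything else is formal.
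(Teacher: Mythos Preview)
Your proposal is correct and follows the same approach as the paper, which simply writes ``This follows immediately from Lemma~\ref{lem:product-check} and Lemma~\ref{lem:dual-check}.'' You have filled in strictly more detail than the paper does, most notably the explicit verification of commutativity via the Koszul swap map, which the paper leaves entirely implicit.
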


\begin{proof}
This follows immediately from Lemma~\ref{lem:product-check} and Lemma~\ref{lem:dual-check}.
\end{proof}

In later sections, it will also be helpful to have the following subclass of knotlike complexes.

\begin{definition}\label{def:xi}
Suppose there exists a skew-graded isomorphism of $\K$-algebras $\xi \co \R_U \rightarrow \R_V$.  Define a skew-graded involution on $\stair$ by sending $x \otimes y \in \R_U \otimes \R_V$ to $\xi^{-1}(y) \otimes \xi(x) \in \stair$; by abuse of notation we denote this by $\xi: \stair \to \stair$. Note that $\xi$ interchanges $\R_U \subset \stair$ and $\R_V \subset \stair$. We define the \textit{pullback complex $\xi^*(C)$ (of $C$ along $\xi$)} as follows.  The $\F$-chain complex of $\xi^*(C)$ is exactly $C$, but where the $(i,j)$-graded component $\xi^*(C)_{i,j}$ is now $C_{j,i}$, and where $z\in \stair$ acts on $c\in \xi^*(C)$ by $c\mapsto \xi(z) \cdot c$, in terms of the action of $\stair$ on $C$.  It is clear that if $C$ is right knotlike, then $\xi^*(C)$ is left knotlike, and so on.
\end{definition} 

The pullback of $C$ should be thought of as reflecting $C$ across the line $\gr_1 = \gr_2$ and then turning all of the $\R_U$-decorations into $\R_V$-decorations (and vice-versa) via $\xi$. In the usual knot Floer setting, this corresponds to reflection across the diagonal, combined with the map interchanging $U$ and $V$. 

\begin{definition}\label{def:symmetric}
We say that a totally knotlike complex $C$ is \textit{symmetric} (with respect to $\xi$) if $\xi^*(C)$ is isomorphic to $C$ via an $\stair$-equivariant, grading-preserving isomorphism. Note that $\xi^*(C)$ is tautologically isomorphic to $C$ via a $\stair$-skew-equivariant (via $\xi$), skew-graded isomorphism.
\end{definition}

Unfortunately, the property of being symmetric is not preserved under local equivalence, as one can always add an asymmetric summand that does not change the local equivalence class. Thus it is helpful to introduce the following weaker notion:

\begin{definition}\label{def:locallysymmetric}
We say that a totally knotlike complex $C$ is \textit{locally symmetric} (with respect to $\xi$) if $\xi^*(C)$ is locally equivalent to $C$. It is easily checked that the set of locally symmetric complexes is a subgroup of $\KL$, which we denote by $\KLs$. Here, we use the fact (to be shown in Lemma~\ref{lem:loceqsymmetric}) that a right-local equivalence between totally knotlike complexes is in fact also left-local. 
\end{definition}

\subsection{A total order on $\KL$}\label{sec:totalorder}


We now establish the central result of this section, which is that $\KL$ is a totally ordered abelian group. This depends on the following sequence of technical lemmas, regarding a special choice of lift from $C/(\fm_U, \fm_V)$ to $C$. 

Let $\cB$ and $\cB'$ be two bases for $\cB$. Recall that a choice of lift from, say, $C/(\fm_U, \fm_V)$ to $C/\fm_U$ depends on a choice of a basis. In what follows, we will consider lifts between $C/(\fm_U, \fm_V)$, $C/\fm_U$, $C/\fm_V$, and $C$ defined using both $\cB$ and $\cB'$. The quotient and section maps we will consider are recorded below, specifying which bases we are using in each lift:

\[
\begin{tikzcd}
	&& {C/\mathfrak{m}_V} \\
	C && {} && {C/(\mathfrak{m}_U,\mathfrak{m}_V)} \\
	&& {C/\mathfrak{m}_U}
	\arrow["{q_V}"', shift right=4, from=3-3, to=2-5]
	\arrow["{q_U}", shift left=4, from=1-3, to=2-5]
	\arrow["{p_U}", shift left=4, from=2-1, to=1-3]
	\arrow["{p_V}"', shift right=4, from=2-1, to=3-3]
	\arrow["{i_{U, \cB}}", shift right=2, from=1-3, to=2-1]
	\arrow["{i_{V, \cB'}}"', shift left=2, from=3-3, to=2-1]
	\arrow["{j_{U, \cB'}}", shift right=2, from=2-5, to=1-3]
	\arrow["{j_{V, \cB}}"', shift left=2, from=2-5, to=3-3]
	\arrow["\iota", shift left=1, from=2-5, to=2-1]
	\arrow["\pi", shift left=1, from=2-1, to=2-5]
\end{tikzcd}
\]
\vspace{0.1cm}

Here, we define the lift $\iota$ to be the composition $i_{U, \cB} \circ j_{U, \cB'}$, which we show below is equal to the composition $i_{V, \cB'} \circ j_{V, \cB}$. Roughly speaking, the reader should think that in order to specify a lift $\iota \colon C/(\fm_U, \fm_V) \to C$, we must pin down the $\fm_U$ and $\fm_V$ indeterminancy. We use the basis $\cB$ to pin down the $\fm_V$ indeterminancy and use the basis $\cB'$ to pin down the $\fm_U$ indeterminancy; the equality $i_{U, \cB} \circ j_{U, \cB'} = i_{V, \cB'} \circ j_{V, \cB}$ is the claim that these indeterminacies can be pinned down in either order. The following lemma makes this precise: 

\begin{lemma}\label{lem:liftinglemma}
Let $\cB$ and $\cB'$ be two bases for $C$. Then $i_{U, \cB} \circ j_{U, \cB'} = i_{V, \cB'} \circ j_{V, \cB}$. Furthermore, $\iota=i_{U, \cB} \circ j_{U, \cB'} = i_{V, \cB'} \circ j_{V, \cB}$ is uniquely characterized by the property that for any $v \in C/(\fm_U, \fm_V)$, when $\iota(v)$ is expressed in terms of $\cB$ the coefficients are all in $\cR_U$, and that when $\iota(v)$ is expressed in terms of $\cB'$ the coefficients are all in $\cR_V$.
\end{lemma}

Note that na\"ively, one would expect to need coefficients in all of $\stair$, rather than just $\cR_U$, in order to express $\iota(v)$ in terms of $\cB$, and similarly for $\cB'$.

\begin{proof}
The proof consists of two main steps:
\begin{enumerate}
	\item\label{it:liftinglemma1} We first show that $i_{U, \cB} \circ j_{U, \cB'} (v)$ is an $\cR_U$-linear combination of elements in $\cB$ and that it is also an $\cR_V$-linear combination of elements in $\cB'$.
	\item\label{it:liftinglemma2} We then show that any lift $\iota$ satisfying $\iota  (v)$ is an $\cR_U$-linear combination of elements in $\cB$ and an $\cR_V$-linear combination of elements in $\cB'$ must be equal to $i_{V, \cB'} \circ j_{V, \cB}$.
\end{enumerate}
Let $\cB = \{e_i \}$ and $\cB' = \{e'_i \}$. 

To show \eqref{it:liftinglemma1}, first express $v$ as a $\K$-linear combination of elements in $\pi(\cB')$:
\[ v = \sum b_i \pi(e'_i). \]
Then 
\[ j_{U, \cB'} (v) =  \sum b_i p_U(e'_i) \]
by the definition of $j_{U, \cB'}$, and 
\[ i_{U, \cB} \circ j_{U, \cB'} (v) = \sum b_i e'_i + \mu_V \cdot c \]
for some $\mu_V \in \fm_V$ and $c \in C$, since $i_{U, \cB} \circ p_U = \id_C \mod \fm_V$. (Note that a priori one may expect the right-hand side to be $\sum b_i e'_i + \sum \mu_{V,i} c_i$ with $\mu_{V,i} \in \fm_V$ and $c_i \in C$. Since $\cR_V$ is a graded valuation ring, the $\mu_{V,i}$ are totally ordered by divisibility and we may take $\mu_V$ to the minimum of the $\mu_{V,i}$ with respect to divisibility and $c = \sum  \mu_V^{-1}\mu_{V,i} c_i$.) Writing $c$ as an $\stair$-linear combination $\sum d_i e'_i$ of elements in $\cB'$ and using the fact that $\fm_U \cdot \fm_V  = 0$, it follows that $i_{U, \cB} \circ j_{U, \cB'} (v)$ is an $\cR_V$-linear combination of elements in $\cB'$.

We now consider $v$ in terms of $\pi(\cB)$, i.e., $v=\sum a_i \pi(e_i)$, where $a_i \in \K$. We have 
\[ j_{U, \cB'} (v) =  \sum a_i p_U(e_i) + \mu_U \cdot c' \] 
for some $\mu_U \in \fm_U$ and $c' \in C/\fm_V$, since $j_{U, \cB'} \circ q_U = \id_{C/\fm_V} \mod \fm_U$. Writing $c'$ as an $\cR_U$-linear combination $\sum d'_i p_U(e_i)$ of elements in $p_U(\cB)$, we have that
\[ i_{U, \cB} \circ j_{U, \cB'} (v) =  \sum a_i e_i + \mu_U \cdot \sum d'_i e_i, \]
since $i_{U, \cB} \circ p_U$ is the identity on each $e_i$. Thus, $i_{U, \cB} \circ j_{U, \cB'} (v)$ is an $\cR_U$-linear combination of elements in $\cB$. This completes the proof of \eqref{it:liftinglemma1}.

To show \eqref{it:liftinglemma2}, suppose that $\iota(v)$ is an $\cR_V$-linear combination of elements in $\cB'$ and an $\cR_U$-linear combination of elements in $\cB$, in which case we can write
\begin{equation}\label{eqn:iotapi}
\iota (v) =  \sum b_i e'_i + \mu_V \cdot \sum d_i e'_i =  \sum a_i e_i + \mu_U \cdot \sum d'_i e_i,
\end{equation}
where $a_i, b_i \in \K$, $d_i \in \cR_V$ and $d'_i \in \cR_U$.
We have that 
\begin{equation}\label{eqn:piiotapi}
 	p_V \circ \iota (v) =  \sum a_i p_V(e_i) = j_{V, \cB} (v)
\end{equation}
where the first equality follows from quotienting by $\fm_U$ and the second equality follows from the definition of $j_{V, \cB}$. We then have
\begin{align*}
	 i_{V, \cB'} \circ  j_{V, \cB} (v) &= i_{V, \cB'} \circ p_V \circ \iota (v) \\
	 		&= i_{V, \cB'} \circ p_V \Big( \sum b_i e'_i + \mu_V \cdot \sum d_i e'_i \Big) \\
	 		&= \sum b_i e'_i + \mu_V \cdot \sum d_i e'_i							
\end{align*} 
the first equality follows from applying $i_{V, \cB'}$ to \eqref{eqn:piiotapi}, the second equality from \eqref{eqn:iotapi}, and the third equality from the fact that $i_{V, \cB'} \circ p_V$ is the identity on each $e'_i$. Hence $i_{V, \cB'} \circ  j_{V, \cB} = \iota$, as desired.
\end{proof}

Next, we prove that a basis always lifts to a basis.
\begin{lemma}\label{lem:basislift}
Let $\cB_0$ be a basis for $C/(\fm_U, \fm_V)$. Any graded lift of $\cB_0$ to $C$ is a basis for $C$.
\end{lemma}

\begin{proof}
Let  $\tilde{\cB_0} = \{f_i\}$ be a lift of $\cB_0$ to $C$ and $\cB = \{ e_i \}$ a basis for $C$. Note that both $\pi(\tilde{\cB_0})$ and $\pi(\cB)$ are bases for $C/(\fm_U, \fm_V)$. Since $\cB$ is a basis for $C$, there is a matrix $A$ with coefficients in $\stair$ taking $\{e_i\}$ to $\{f_i\}$. We wish to show that $A$ is invertible. 

We first observe that $\det A \neq 0$. Indeed, $\pi(A)$ is a matrix taking $\{\pi(e_i)\}$ to $\{\pi(f_i)\}$, and $\det(\pi(A)) \neq 0$ since $\pi(\tilde{\cB_0})$ and $\pi(\cB)$ are bases for $C/(\fm_U, \fm_V)$.

We now show that $\gr (\det A) = (0, 0)$. The following is a straightforward linear algebra argument. By possibly reordering, we may assume that $\gr (\pi(e_i)) = \gr (\pi(f_i))$, and hence $\gr(e_i) = \gr(f_i)$. Now, we consider the grading of the entry $a_{ij}$ in $A$: it is clear that $\gr (a_{ij}) = \gr(e_i) - \gr (f_j)$, or equivalently, $\gr (a_{ij}) = \gr(e_i) - \gr (e_j)$. We recall that $\det A = \sum_{\sigma \in S_n} (\sgn (\sigma)) \prod_i a_{i \sigma(i)}$, and observe that 
\begin{align*}
	\gr \Big(\prod_i a_{i \sigma(i)}\Big) &= \sum_i \gr (a_{i \sigma(i)}) \\
					&= \sum_i (\gr(e_i) - \gr(e_{\sigma(i)}).
\end{align*}
Since $\sigma$ is a permutation, it follows that the last sum is zero, as desired. It follows that $A$ is invertible, since our rings are type-zero. 
\end{proof}

The next lemma shows that if we choose $\cB$ to be a $V$-paired basis and we project and then lift $\cB$, it remains a $V$-paired basis. (The proof similarly works for a $U$-paired basis $\cB'$, but we do not need that fact.)

\begin{lemma}\label{lem:liftsimp}
Let $\cB$ be a $V$-paired basis for $C$. Let $\iota$ be as in Lemma \ref{lem:liftinglemma}, defined with respect to $\cB$ and some other basis $\cB'$. Then $\iota \circ \pi (\cB)$ is still a $V$-paired basis.
\end{lemma}

\begin{proof}
By Lemma \ref{lem:basislift}, we have that $\iota \circ \pi (\cB)$ is still a basis for $C$. We must show that it is still $V$-paired.

Let $e_i \in \cB$. Then by Lemma \ref{lem:liftinglemma}, we have $\iota \circ \pi(e_i) = e_i + \mu_U c$ for some $\mu_U \in \fm_U$ and $c \in C$. Suppose $\d_V e_i = \mu_V e_j$ for some $\mu_V \in \fm_V$ and $e_j \in \cB$. (We allow $\mu_V$ to be zero.) Then
\begin{align*}
	\d_V (\iota \circ \pi(e_i)) & = \d_V ( e_i + \mu_U c) \\
			&= \d_V (e_i) \\
			&= \mu_V e_j \\
			&= \mu_V \cdot \iota \circ \pi(e_j) 
\end{align*}
where the second equality follows from the fact that $C$ is reduced and $\fm_U \cdot \fm_V = 0$, and the last equality follows from the fact that $\iota \circ \pi (e_j) = e_j \mod \fm_U$ and that $\fm_U \cdot \fm_V = 0$.
\end{proof}

Armed with these lemmas, we are now ready to prove the following technical result:

\begin{lemma}\label{lem:totalorder}
Let $C$ be a knotlike complex. Suppose there does not exist a local map $f \co \stair \to C$. Then there exists a local map $g \co C \to \stair$.
\end{lemma}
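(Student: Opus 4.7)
The plan is to construct $g$ via the pullback factorization $\stair = \ru \times_\K \rv$ of Remark~\ref{rem:sproperties}. Specifying an $\stair$-linear chain map $g : C \to \stair$ is equivalent to giving an $\ru$-linear chain map $\bar g_U : C/\fm_V \to \ru$ together with an $\rv$-linear chain map $\bar g_V : C/\fm_U \to \rv$, whose reductions modulo $\fm_U$ and $\fm_V$ agree as a single $\K$-linear functional $\pi : C/(\fm_U, \fm_V) \to \K$. For $g$ to be right local, $\bar g_V$ must induce an isomorphism on $H_*(-)/\rv$-torsion. First, I would choose a $V$-paired basis $\{x, y_i, z_i\}$ of $C$ via Lemma~\ref{lem:pairedbasis}, where $x$ represents the $V$-tower generator with $\gr_1(x) = 0$. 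I define $\bar g_V$ by $\bar g_V(x) = 1$ and $\bar g_V(y_i) = \bar g_V(z_i) = 0$; the relation $\partial_V y_i = \nu_i z_i$ immediately shows $\bar g_V$ is an $\rv$-chain map inducing the required isomorphism, and its reduction $\pi$ sends $x \mapsto 1$ and kills the remaining basis elements.

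The main step is constructing a compatible $\bar g_U$. Writing $\bar g_U(e_\alpha) = \pi(e_\alpha) + s_\alpha$ with $s_\alpha \in \fm_U \subset \ru$, the chain-map condition reduces to the linear system
\[
m_{\alpha, x} + \sum_\beta m_{\alpha,\beta} s_\beta = 0 \quad \text{in } \ru \quad \text{for all } \alpha,
\]
where $\partial_U e_\alpha = \sum_\beta m_{\alpha,\beta} e_\beta$ with $m_{\alpha,\beta} \in \fm_U$. The critical algebraic observation is that $\fm_U \cdot \fm_V = 0$ in $\stair$, which implies that for any $\mu \in \fm_U$ and $w \in C$, we have $\partial(\mu w) = \mu \partial_U w$ and $p_U(\mu w) = 0$. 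Consequently $x - \mu w$ is still a $V$-tower representative in the correct bigrading, with $\partial(x - \mu w) = \partial_U x - \mu \partial_U w$. The hypothesis that no local map $\stair \to C$ exists then translates into the non-exactness condition $\partial_U x \notin \fm_U \cdot \partial_U(C)$ (together with analogous statements after further modifications of $x$ by $\fm_U$-multiples of torsion generators); these non-exactness constraints are precisely what must drive the solvability of the obstruction system.

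I would solve the system iteratively, stratified by the filtration of $\ru$ by powers of $\fm_U$. At each stage the graded-valuation structure of $\ru$ (Definition~\ref{def:homogenousvaluationring}) together with the type-zero property (Definition~\ref{def:typezero}) reduces the computation to a finite-dimensional $\K$-linear problem. The primary obstruction at each layer is forced to vanish because a nonzero obstruction would correspond to a forbidden modification $x \mapsto x - \mu w$ producing a $V$-tower cycle in $C$, contradicting the hypothesis. I expect the main technical obstacle to be organizing this iteration coherently across bigradings and tracking solvability uniformly; the total order on $\Gamma_+(\ru)$ of Definition~\ref{def:totalorder} gives a well-ordering on which to induct, and Lemma~\ref{lem:finitedimensional} bounds the dimension of each homogeneous stratum. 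Once $\bar g_U$ is produced, the pair $(\bar g_U, \bar g_V)$ assembles via the pullback into the desired right local map $g : C \to \stair$.
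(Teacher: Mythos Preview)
Your pullback reduction is correct and elegant: specifying $g\colon C\to\stair$ is indeed equivalent to giving compatible chain maps $\bar g_U\colon C/\fm_V\to\ru$ and $\bar g_V\colon C/\fm_U\to\rv$, and your construction of $\bar g_V$ from the $V$-paired basis is fine. This is a genuinely different framing from the paper's, which instead builds an explicit $\stair$-subcomplex $C'\subset C$ containing $\im\partial$ but not $x$, and takes $g$ to be the quotient map.

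The gap is in your construction of $\bar g_U$. Two of the tools you invoke fail for the rings of interest. First, the total order $\leqbang$ on $\Gamma_+(\ru)$ is \emph{not} a well-ordering: in the key example $\R_U$ of Definition~\ref{def:xring}, the chain $U_B \gebang U_B^2 \gebang U_B^3 \gebang \cdots$ has no minimum, so transfinite induction along $\leqbang$ is unavailable. Second, the $\fm_U$-adic filtration is not separated (for instance $W_{B,0}\in\bigcap_n (U_B^n)$), so an iterative scheme ``stratified by powers of $\fm_U$'' need not converge. More substantively, the asserted dictionary between a nonvanishing obstruction and the existence of a $V$-tower cycle is not established. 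The condition you isolate, $\partial_U x\notin \fm_U\cdot\partial_U(C)$, does follow from the hypothesis, but it is not equivalent to solvability of your system: since $\ru$ is a domain, any chain map $\bar g_U$ must vanish on the \emph{saturation} $T_U=\{w: \mu w\in\im\partial_U\text{ for some }\mu\}$, and an element of $q_U(T_U)$ with nonzero $x$-coefficient \emph{and} nonzero $y_i$-coefficient obstructs your naive $\pi$ without producing a cycle by your modification $x\mapsto x-\mu w$.

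The paper handles exactly this subtlety by working with $T_U$ rather than $\im\partial_U$, and by treating the $\partial_U$- and $\partial_V$-sides simultaneously at the level of $C/(\fm_U,\fm_V)$: the key claim is $\pi(x)\notin q_U(T_U)+\pi(Z)$, which is what the hypothesis actually buys. From there the paper extends to an explicit $\ru$-basis of $C/\fm_V$ (using freeness of the quotient $(C/\fm_V)/\Span(\cB_2)$) and lifts back to $C$. If you want to salvage your approach, you would need to replace the iterative scheme by a single finite argument exploiting the one-dimensionality of each graded piece (Lemma~\ref{lem:onedimensional}) and allow yourself to adjust $\bar g_V(y_i)$---not just set it to zero---so that the resulting $\pi$ kills $q_U(T_U)$; but making that precise essentially reproduces the paper's Step~1.
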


In the lemma statement and proof, when we refer to a local map, we mean a right local map.

\begin{proof}
We begin by applying Lemma~\ref{lem:pairedbasis} to obtain a $V$-paired basis 
\[
\cB =\{x, y_i, z_i\}_{i = 1}^n
\]
and a $U$-paired basis
\[
\cB' =\{x', y'_i, z'_i\}_{i = 1}^n
\]
for $C$. A rough roadmap of the proof will be as follows:
\begin{itemize}
	\item begin by building a special basis for $Z=\Span_\K\{ \pi(z_i), \pi(z'_i)\}$,
	\item use the hypothesis (i.e., the lack of a local map $f \co \stair \to C$) to show that $\pi(x)$ is not in $Z$ and adjoin $\pi(x)$ to this basis,
	\item use these facts to build a special basis $\cB_0$ for $C/(\fm_U, \fm_V)$,
	\item lift $\cB_0$ to a basis for $C$, using the lift from Lemma \ref{lem:liftinglemma} with respect to $\cB$ and $\cB'$ above, and
	\item use $\iota(\cB_0)$ to define a local map $C \to \stair$.
\end{itemize}

We now proceed to implement this strategy.\\
\\
\noindent
\textit{Step 1: Working in $C/(\fm_U, \fm_V)$.} We begin by building a basis for the $\K$-vector space $Z=\Span_\K\{ \pi(z_i), \pi(z'_i)\}$. Since $\cB=\{x, y_i, z_i\}$ is a basis for $C$, it follows that $\pi(\cB)$ is a basis for $C/(\fm_U, \fm_V)$. Take the linearly independent set $\{\pi(z_i)\}_{i=1}^{n}$, and adjoin elements of the form $\pi(z'_j)$ to obtain a basis $\{\pi(z_i), \pi(z'_{j_s})\}$ for $Z$. Here, the $\{\pi(z'_{j_s})\}$ form a subset of $\{\pi(z'_{j})\}$, where $s = 1, \dots, r$, for some $r \leq n$. \\
\\
\noindent
\textit{Relation with $\pi(x)$:} We now show that $\pi(x)$ does not lie in $Z$. Indeed, suppose to the contrary that $\pi(x) = \sum a_i \pi(z_i) + \sum b_s \pi(z'_{j_s})$. Apply $\iota$ to both sides. (Here, $\iota$ may be taken to be any lift to $C$; if the reader prefers, they may take $\iota$ as in Lemma \ref{lem:liftinglemma}.)  Then 
\[ x -  \sum a_i z_i + \mu_U w_1 =  \sum b_s z'_{j_s} + \mu_V w_2 \]
for some $\mu_U \in \fm_U, w_1 \in C, \mu_V \in \fm_V$, and $w_2 \in C$. Now, the left-hand side is a $\d_V$-cycle, since (a) $x$ and $z_i$ are $\d_V$-cycles, and (b) $\d_V(\mu_U w_1)= \mu_U \d_V w_1 = 0$ using the fact that $C$ is reduced and $\fm_U \cdot \fm_V = 0$. The right-hand side is a $\d_U$-cycle, since (a) the $z'_{j_s}$ are $\d_U$-cycles, and (b) $\d_U(\mu_V w_2) = \mu_V \d_U(w_2) = 0$ using the fact that $C$ is reduced and $\fm_U \cdot \fm_V = 0$. Hence $x - \sum a_i z_i + \mu_U w_1 $ is a cycle in $C$ which is easily checked to be non-$\cR_V$-torsion. We may then construct a (right) local map $f \colon \stair \to C$ by mapping a generator of $\stair$ to $x - \sum a_i z_i + \mu_U w_1 $, a contradiction. 

Extend the linearly independent set $\{ \pi(x), \pi(z_i), \pi(z'_{j_s}) \}$ to a basis 
\[ \cB_0=\{ \pi(x), \pi(z_i), \pi(z'_{j_s}), t_k \} \]
for $C/(\fm_U, \fm_V)$.
\\
\\
\noindent
\textit{Step 2: Working in $C$.} Let $\cB$ and $\cB'$ be the $V$- and $U$-paired bases, respectively, for $C$. Let $\iota \colon C/(\fm_U, \fm_V) \to C$ be the lift specified by Lemma \ref{lem:liftinglemma} with respect to $\cB$ and $\cB'$. Now Lemma \ref{lem:basislift} implies that $\iota(\cB_0)$ is a basis for $C$.

We claim that $\im \d$ is contained in the $\stair$-span of 
\[ \cB_1 = \{ \iota \circ \pi(z_i), \iota \circ\pi(z'_{j_s})\}. \]
Indeed, $\im \d_V$ is contained in the span of $\{ \iota \circ \pi(z_i) \}$ since (a) Lemma \ref{lem:liftinglemma} implies that $\iota \circ \pi(z_i) = z_i + \mu_U w$ for some $\mu_U \in \fm_U$ and $w \in C$, (b) the $z_i$ span $\im \d_V$, and (c) $C$ is reduced and $\fm_U \cdot \fm_V = 0$.

Similarly, we claim that $\im \d_U$ is contained in the span of $\{ \iota \circ \pi(z_i), \iota \circ\pi(z'_{j_s})\}$. By construction, 
\[ \Span_\K \{ \pi(z'_i)\} \subseteq \Span_\K \{ \pi(z_i), \pi(z'_{j_s})\}, \] 
so
\[ \pi(z'_i) = \sum a_i \pi(z_i) + \sum b_s \pi(z'_{j_s}), \]
for some $a_i, b_s \in \K$. Then 
\[ \sum a_i \iota \circ \pi(z_i) + \sum b_s \iota \circ \pi(z'_{j_s}) = \iota \circ \pi(z'_i) = z'_i + \mu_V w \]
for some $\mu_V \in \fm_V$ and $w \in C$, where the first equality follows from the $\K$-linearity of $\iota$ and the second equality from Lemma \ref{lem:liftinglemma}. The proof now follows as in the $\im \d_V$ case; namely, the $z'_i$ span $\im \d_U$, the complex $C$ is reduced, and $\fm_U \cdot \fm_V = 0$. Hence $\im \d \in \Span_\stair \cB_1$.  

To complete the proof, we finally let 
\[ C' = \Span_\stair \{ \iota \circ \pi(z_i), \iota \circ\pi(z'_{j_s}), \iota(t_k)\}. \]
We have seen that $\im \d \subset \Span_\stair \cB_1 \subset C'$ and that $\iota \circ \pi (x)$ is not contained in $C'$. By Lemma \ref{lem:liftsimp}, the element $\iota \circ \pi (x)$ is a $V$-tower class. It is then clear that up to grading shift, $C/C' \cong \stair$ (with the class of $\iota \circ \pi (x)$ generating the copy of $\stair$) and that the quotient map from $C$ to $C/C'$ is the desired local map.
\end{proof}


\begin{proposition}\label{prop:totalorder}
	The relation $\leq$ defines a total order on $\KL$ compatible with the group structure.
\end{proposition}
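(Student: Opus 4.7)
The plan is to verify each of the three claims --- that $\leq$ is well-defined, that it is total, and that it is compatible with $\otimes$ --- by appealing to the lemmas already in place. The only substantive content is totality, which will follow from Lemma~\ref{lem:totalorder} via a standard dualization trick.

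First, well-definedness, reflexivity, and transitivity are routine: the identity is a local map, and the composition of two local maps is a local map (since the property of being a local map depends only on behaviour under the quotients $\otimes_{\stair}\ru$ and $\otimes_{\stair}\rv$, and composing isomorphisms on $H_*/\text{torsion}$ gives an isomorphism). Antisymmetry on local equivalence classes holds by the very definition of local equivalence in Definition~\ref{def:localequivalence}: if $[C_1]\leq[C_2]$ and $[C_2]\leq[C_1]$, then by definition $C_1$ and $C_2$ are locally equivalent, so $[C_1]=[C_2]$ in $\KL$.

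For compatibility with the group structure, suppose $[C_1]\leq[C_2]$ and let $C_3$ be any knotlike complex. Choose a local map $f\colon C_1\to C_2$. Then $f\otimes \id_{C_3}\colon C_1\otimes C_3\to C_2\otimes C_3$ is a local map by Lemma~\ref{lem:product-check}, so $[C_1\otimes C_3]\leq[C_2\otimes C_3]$. Thus $\leq$ is translation-invariant, which is the relevant compatibility condition for an abelian group.

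The main obstacle is totality, and this is where Lemma~\ref{lem:totalorder} does the work. Given two knotlike complexes $C_1$ and $C_2$, consider the complex $C_1\otimes C_2^\vee$, which is knotlike by Lemma~\ref{lem:product-check} and Lemma~\ref{lem:dual-check}. Applying Lemma~\ref{lem:totalorder} to $C_1\otimes C_2^\vee$, one of the following holds:
\begin{enumerate}
\item There is a local map $\stair \to C_1 \otimes C_2^\vee$. Tensoring with $C_2$ yields a local map
\[
C_2 \simeq \stair \otimes C_2 \to C_1 \otimes C_2^\vee \otimes C_2 \simeq C_1,
\]
where we have used Lemma~\ref{lem:dual-check} to identify $C_2^\vee\otimes C_2$ with $\stair$ up to local equivalence, and Lemma~\ref{lem:product-check} to guarantee that tensoring with $C_2$ preserves local maps. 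This gives $[C_2]\leq[C_1]$.
\item There is a local map $C_1 \otimes C_2^\vee \to \stair$. Tensoring with $C_2$ and using the same two lemmas gives a local map
\[
C_1 \simeq C_1 \otimes C_2^\vee \otimes C_2 \to \stair \otimes C_2 \simeq C_2,
\]
so $[C_1]\leq[C_2]$.
\end{enumerate}
In either case we obtain a comparison between $[C_1]$ and $[C_2]$, so $\leq$ is a total order on $\KL$. Combined with the previous paragraphs, this proves that $\leq$ is a translation-invariant total order on the abelian group $\KL$.
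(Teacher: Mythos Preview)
Your proof is correct and follows essentially the same approach as the paper's: apply Lemma~\ref{lem:totalorder} to $C_1\otimes C_2^\vee$, then tensor with $C_2$ and use Lemma~\ref{lem:dual-check} to cancel the dual. You have spelled out a few routine points (reflexivity, transitivity, antisymmetry) that the paper omits, but the substantive argument is identical.
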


\begin{proof}[Proof of Proposition \ref{prop:totalorder}]
	We need to show totality of $\leq$. Let $C_1$ and $C_2$ be two knotlike complexes. Consider $C_1 \otimes C_2^\vee$. By Lemma \ref{lem:totalorder}, we have that either $C_1 \otimes C_2^\vee \geq \stair$ or  $C_1 \otimes C_2^\vee \leq \stair$. By tensoring with $C_2$, this shows that either $C_1 \geq C_2$ or $C_1 \leq C_2$, as desired. Since the product of local maps is local, we have that if $C_1 \leq C_2$, then $C_1 \otimes C_3 \leq C_2 \otimes C_3$; hence $\leq$ is compatible with the group structure.
\end{proof}

	
	

\section{Standard complexes and their properties}\label{sec:standards}
In this section, we define an important family of knotlike complexes called standard complexes. The reader should compare this section with \cite[Section 4]{DHSTmoreconcord}, which carries out a special case of this construction. 


\subsection{Standard complexes}\label{subsec:standards}
Throughout, fix a grid ring $\stair = \stair(\R_U, \R_V)$.

\begin{definition}\label{def:standard}
Let $n \in \N$ be even. Let $(b_1, \ldots, b_n)$ be a sequence such that 
\begin{enumerate}
\item $b_i \in \Gamma_+(\R_U) \cup \Gamma_-(\R_U)$ for $i$ odd; and,
\item $b_i \in \Gamma_+(\R_V) \cup \Gamma_-(\R_V)$ for $i$ even.
 \end{enumerate}
 The \emph{standard complex of type $(b_1, \dots, b_n)$}, denoted by $C(b_1, \dots, b_n)$, is the knotlike complex freely generated over $\cS$ by
\[ \{x_0, x_1, \dots, x_n\}. \]
We call $n$ the \emph{length} of the standard complex and $\{x_i\}_{i = 0}^n$ the \emph{preferred basis}, or sometimes the \textit{preferred generators}. The differential on $C = C(b_1, \dots, b_n)$ is defined as follows. Each $b_i$ is an equivalence class in either $\Gamma(\R_U)$ or $\Gamma(\R_V)$; choose explicit representatives for the $b_i$ in either $\loc(\R_U)$ or $\loc(\R_V)$, as appropriate. By abuse of notation, we denote these also by $b_i$. For each $i  > 0$ odd, define
\begin{align*}
&\d_U x_{i-1} = |b_i| x_{i}  \quad \text{ if } b_i \lebang 1 \\
&\d_U x_i = b_i x_{i-1} \ \ \quad \text{ if } b_i \gebang 1, 
\end{align*}
while for $i > 0$ even, define
\begin{align*}
&\d_V x_{i-1} = |b_i| x_{i} \quad \text{ if } b_i \lebang 1 \\
&\d_V x_{i} = b_i x_{i-1} \ \ \quad \text{ if } b_i \gebang 1. 
\end{align*}
All other differentials are zero. Note that each $|b_i|$ lies in $(\fm_U, \fm_V)$, so $C$ is reduced. It is easily checked that the isomorphism type of $C$ does not depend on the choice of representative for each $b_i$, since any two representatives differ by multiplication by an element of $\K^\times$. We will often think of the $b_i$ as elements of $\loc(\R_U)$ or $\loc(\R_V)$, defined up to multiplication by $\K^\times$. Note that $x_0$ generates $H_*(C \otimes_\X \R_V)/\R_V\text{-torsion}$, and similarly $x_n$ generates $H_*(C \otimes_\X \R_U)/\R_U\text{-torsion}$. Following Remark~\ref{rem:gradingshift}, there is a unique grading on $C$ which makes it into a totally knotlike complex; this has $\gr_1(x_0) = 0$ and $\gr_2(x_n) = 0$. Explicitly, the definition of $\partial$ implies
\begin{align}\label{eq:gr1diff}
\gr_1(x_n) - &\gr_1(x_0) = \sum_{i=1}^n \sgn (b_i) + \sum_{i=1}^n \gr_1(b_i)
\end{align}
and
\begin{align}\label{eq:gr2diff}
\gr_2(x_n) - \gr_2(x_0) = \sum_{i=1}^n \sgn (b_i) + \sum_{i=1}^n \gr_2(b_i).
\end{align}
Hence the grading shift in question is such that $\gr_1(x_n)$ is the right-hand side of \eqref{eq:gr1diff}, while $\gr_2(x_0)$ is the negative of the right-hand side of \eqref{eq:gr2diff}.
\end{definition}

The reader should think of the generators of $C(b_1, \dots, b_n)$ as being connected by arrows recording the action of the differential. Each pair of generators $x_i$ and $x_{i+1}$ are connected by $|b_{i+1}|$-arrows. The direction of each arrow is determined by whether $b_{i+1}\in \Gamma_+(\R_U)$ or $\Gamma_-(\R_U)$ (for $i$ even) or $b_{i+1}\in \Gamma_+(\R_V)$ or $\Gamma_-(\R_V)$ (for $i$ odd). If $b_{i+1}$ is positive, then the arrow goes from $x_{i+1}$ to $x_i$, and if $b_{i+1}$ is negative, then the arrow goes from $x_i$ to $x_{i+1}$. 

\begin{definition}
We define the trivial standard complex $C(0) \cong \cS$ to be the complex generated over $\cS$ by a single element $x_0$ in grading $(0,0)$.
\end{definition}

\begin{lemma}\label{lem:dual}
The dual of $C(b_1, \dots, b_n)$ is the standard complex $C(b_1^{-1}, \dots, b_n^{-1})$.
\end{lemma}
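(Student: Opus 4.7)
The plan is to exhibit an explicit grading-preserving chain isomorphism
\[
\Phi\colon C(b_1^{-1}, \ldots, b_n^{-1}) \longrightarrow C(b_1, \ldots, b_n)^\vee
\]
sending each preferred generator $y_j$ of the left-hand side to the dual generator $x_j^\vee$ of the right-hand side. Before doing this, I would first note that $C(b_1^{-1}, \ldots, b_n^{-1})$ is a valid standard complex: inversion sends $\Gamma_+(\R_U) \cup \Gamma_-(\R_U)$ to itself and $\Gamma_+(\R_V) \cup \Gamma_-(\R_V)$ to itself, so the required parity condition on the sequence is preserved.

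Next, I would compute the dual differential $\partial^\vee$ directly from Definition~\ref{def:dual}. The key observation is that the sign $-(-1)^{|e_i|}$ appearing there is always $+1$ in our setting: in case (1) of Definition~\ref{def:gradedlocalring} all bigradings lie in $2\Z \times 2\Z$, and in case (2) we have $-1 = 1$ in $\stair$. Thus dualization simply reverses each arrow in $C(b_1,\ldots,b_n)$ while preserving its label $|b_i|$. Combining this with the identities $|b_i^{-1}| = |b_i|$ and $\sgn(b_i^{-1}) = -\sgn(b_i)$ shows that $y_j \mapsto x_j^\vee$ is a chain map: for each $i$ with $b_i \lebang 1$, the original relation $\partial_\bullet x_{i-1} = |b_i| x_i$ dualizes to $\partial^\vee x_i^\vee = |b_i| x_{i-1}^\vee$, which matches the relation $\partial_\bullet y_i = |b_i^{-1}| y_{i-1}$ in $C(b_1^{-1},\ldots,b_n^{-1})$ (since $b_i^{-1} \gebang 1$); the case $b_i \gebang 1$ is symmetric. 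Here $\partial_\bullet$ denotes $\partial_U$ or $\partial_V$ depending on the parity of $i$, and we use the fact that $b_i \in \Gamma(\R_U)$ iff $b_i^{-1} \in \Gamma(\R_U)$.

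Finally, I would check the grading match. By construction $\gr(x_j^\vee) = -\gr(x_j)$, so it suffices to verify that this agrees with the grading on $y_j$ specified by Definition~\ref{def:standard}. For the anchor generators this is immediate: $\gr_1(y_0) = 0 = -\gr_1(x_0)$ and $\gr_2(y_n) = 0 = -\gr_2(x_n)$. For the other two anchors, applying \eqref{eq:gr1diff} and \eqref{eq:gr2diff} to both complexes together with the identity
\[
\sgn(b_i^{-1}) + \gr_k(b_i^{-1}) \;=\; -\sgn(b_i) - \gr_k(b_i) \qquad (k=1,2)
\]
gives $\gr_1(y_n) = -\gr_1(x_n)$ and $\gr_2(y_0) = -\gr_2(x_0)$. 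The intermediate gradings on both sides are determined by the arrow labels via telescoping the same relations, so they agree as well.

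There is no real obstacle here; the lemma is essentially bookkeeping. The only genuine subtlety is the sign $-(-1)^{|e_i|}$ in $\partial^\vee$, and this is handled uniformly by the two cases of Definition~\ref{def:gradedlocalring}.
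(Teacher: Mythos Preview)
Your overall approach is correct and matches the paper's (terse) proof: compute the dual via Definition~\ref{def:dual} and observe that reversing each arrow with the same label $|b_i|$ produces exactly the standard complex on the inverted parameters. The grading check is fine.

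However, your sign argument has a gap. In case~(1) of Definition~\ref{def:gradedlocalring} it is the \emph{ring} whose gradings lie in $2\Z\times 2\Z$; the \emph{complex} gradings are only required to satisfy $\gr_1\equiv\gr_2\bmod 2$ (Definition~\ref{def:scomplex}). In a standard complex one has $\gr(x_0)\in 2\Z\times 2\Z$, but each arrow shifts the bigrading by $(-1,-1)+\gr(b_i)$, which is odd--odd in case~(1), so $|x_j|\equiv j\bmod 2$ and the sign $-(-1)^{|x_j|}$ genuinely alternates. (Even if all $|x_j|$ were even, note that $-(-1)^{\text{even}}=-1$, not $+1$.) So in case~(1) the dual differential picks up honest signs.

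The fix is precisely what the paper's one-line proof points to: standard complex parameters are only defined up to $\K^\times$, so any sign $\pm 1$ on an arrow can be absorbed either into the choice of representative for $b_i^{-1}$ or, equivalently, by replacing $\Phi(y_j)=x_j^\vee$ with $\Phi(y_j)=\epsilon_j x_j^\vee$ for suitable $\epsilon_j\in\{\pm 1\}$ chosen inductively along the chain. With that correction your argument goes through.
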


\begin{proof}
This is a straightforward consequence of the explicit procedure of Definition~\ref{def:dual}, keeping in mind that standard complex parameters are defined up to multiplication by elements of $\K^\times$.
\end{proof}
\noindent
Note that the negative signs $-b_i$ in \cite[Lemma 4.9]{DHSTmoreconcord} correspond to inverses $b_i^{-1}$ in Lemma~\ref{lem:dual}, as we use multiplicative rather than additive notation.

\begin{lemma}\label{ex:symmetricstandard}
Let $\xi$ be a skew-graded involution of $\stair$, as in Definition~\ref{def:xi}. Then $C(b_1, \ldots, b_n)$ is symmetric with respect to $\xi$ if and only if $b_i = \xi(b_{n+1-i}^{-1})$ for each $i$. Moreover, $C(b_1, \ldots, b_n)$ is symmetric if and only if it is locally symmetric.
\end{lemma}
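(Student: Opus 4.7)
The plan is to compute $\xi^*(C(b_1,\dots,b_n))$ explicitly as a standard complex, and then to invoke the uniqueness of standard complex representatives within an isomorphism (and local-equivalence) class.

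First I would unpack Definition~\ref{def:xi}: in $\xi^*(C)$ the bigrading is swapped, and an element $z \in \stair$ acts as $\xi(z)$ acts in $C$. Since $\xi$ is an involution, each differential of $C = C(b_1,\dots,b_n)$ translates into the corresponding differential in $\xi^*(C)$ with $b_i$ replaced by $\xi(b_i)$. Because $\xi$ swaps $\R_U \leftrightarrow \R_V$ while preserving the positive/negative submonoids of Definition~\ref{def:valuationgroup}, the ring-membership parities of the arrows are also swapped; reindexing the preferred generators by $y_j = x_{n-j}$ and using that $n$ is even restores the alternating pattern required by Definition~\ref{def:standard}. Tracking how arrow directions reverse under this reindexing, I would verify that the $j$th parameter of $\xi^*(C)$ in the new basis equals $\xi(b_{n+1-j}^{-1})$, so that
\[
\xi^*\bigl(C(b_1,\dots,b_n)\bigr) \;\cong\; C\bigl(\xi(b_n^{-1}),\,\xi(b_{n-1}^{-1}),\,\dots,\,\xi(b_1^{-1})\bigr).
\]
The grading normalizations of Definition~\ref{def:standard} are automatically compatible: the bigrading swap sends the leftmost generator $x_0$ of $C$ (with $\gr_1 = 0$) to the rightmost generator $y_n$ of $\xi^*(C)$ (with $\gr_2 = 0$), and vice versa.

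By definition, $C(b_1,\dots,b_n)$ is symmetric precisely when the two sides of the above display are isomorphic as graded $\stair$-complexes; by the uniqueness of standard complex parameters (as provided by Theorem~\ref{thm:localequivlex} and its grid-ring generalization, established later in Sections~\ref{sec:ai}--\ref{sec:char}), this holds iff the parameter sequences agree, i.e.\ $b_i = \xi(b_{n+1-i}^{-1})$ for every $i$. For the second claim, it is immediate that symmetric implies locally symmetric. Conversely, if $C(b_1,\dots,b_n)$ is locally symmetric then $C(\xi(b_n^{-1}),\dots,\xi(b_1^{-1}))$ is locally equivalent to $C(b_1,\dots,b_n)$; appealing to the uniqueness of the standard representative within a local-equivalence class yields the same parameter identity, so $C$ is symmetric.

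The main obstacle is the careful bookkeeping of the first step: one must correctly track how $\xi$, the grading swap, the relabeling $y_j = x_{n-j}$, and the reversal of arrow directions interact to place $\xi^*(C)$ in standard form. Once the explicit formula for the parameters of $\xi^*(C)$ is in hand, the rest of the argument is a direct invocation of uniqueness.
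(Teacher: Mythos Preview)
Your proposal is correct and follows essentially the same approach as the paper: compute $\xi^*(C(b_1,\dots,b_n)) \cong C(\xi(b_n^{-1}),\dots,\xi(b_1^{-1}))$ via the relabeling $y_j = x_{n-j}$, and then invoke the uniqueness of standard complex parameters to obtain both the ``only if'' direction and the equivalence of symmetric with locally symmetric. The only minor difference is that the paper's forward reference for uniqueness is to the immediately following subsection (Proposition~\ref{prop:lex}), where it is already shown that two standard complexes are locally equivalent iff their parameter sequences coincide, rather than to the later Sections~\ref{sec:ai}--\ref{sec:char}.
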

\begin{proof}
It is straightforward to check that the pullback complex of $C(b_1, \ldots, b_n)$ is the standard complex $C(\xi(b_n^{-1}), \ldots, \xi(b_1^{-1}))$. Hence the equality $b_i = \xi(b_{n+1-i}^{-1})$ certainly implies that our complex is symmetric. The converse (as well as the second claim) follows immediately from the fact that the $b_i$ completely parameterize standard complexes up to local equivalence; this is established in the next subsection.
\end{proof}

\subsection{Ordering standard complexes}\label{subsec:ordering-standard}
We now show that the total order on the set of standard complexes can be understood explicitly in terms of the parameters $b_i$. To do this, we consider the lexicographic order on the set of parameter sequences induced by $\leqbang$. We take the convention that in order to compare two sequences of different lengths, we append sufficiently many trailing copies of $1$ to the shorter sequence so that the sequences have the same length. 

Explicitly, given two (distinct) sequences $(a_1, \dots, a_m)$ and $(b_1, \dots, b_n)$, we first locate the first index $i$ such that the parameters $a_i$ and $b_i$ differ. Again, to be pedantic: we may either interpret $a_i$ and $b_i$ as equivalence classes in the same valuation group, or we may consider them as actual elements of $\stair$. In the latter case, the notion $a_i = b_i$ should be interpreted as up to multiplication by an element of $\K^\times$. If one sequence appears as a prefix of the other, then one of $a_i$ and $b_i$ will be equal to $1$. We then declare $(a_1, \dots, a_m) \lebang (b_1, \dots, b_n)$ if and only if $a_i \lebang b_i$.

The central claim of this subsection is that the lexicographic order on the set of standard complexes coincides with the total order of Proposition~\ref{prop:totalorder}. Note that implicitly, this means two standard complexes are locally equivalent if and only if they have the same parameter sequence.

\begin{proposition}\label{prop:lex}
Standard complexes are ordered lexicographically as sequences with respect to the total order on $\KL$.
\end{proposition}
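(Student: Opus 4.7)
The plan is to prove Proposition~\ref{prop:lex} by constructing local maps between standard complexes that reflect the lexicographic order on their parameter sequences. Since $\leq$ is a total order on $\KL$ by Proposition~\ref{prop:totalorder}, it suffices to prove two things: (i) whenever $(a_1, \ldots, a_m) \leqbang (b_1, \ldots, b_n)$ in lex order, there exists a local map $C(a_1, \ldots, a_m) \to C(b_1, \ldots, b_n)$; and (ii) distinct parameter sequences yield distinct classes in $\KL$, so that strict lex inequalities translate into strict $\KL$ inequalities. Statement (ii) is the uniqueness component of Theorem~\ref{thm:localequivlex}; it will follow from the classification developed in Sections~\ref{sec:ai} and~\ref{sec:char}, but can also be argued directly by tracing how the $V$-tower generators $x_0$ and $U$-tower generators $x_n$ must be sent under any hypothetical local equivalence.

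For the construction of a right local map $f \colon C(a_1, \ldots, a_m) \to C(b_1, \ldots, b_n)$, let $x_i$ and $y_i$ denote the preferred generators of the source and target, respectively. Let $k$ be the smallest index at which the two (padded) sequences disagree. Define $f(x_i) = y_i$ for $i < k$; this intertwines the differentials on the initial agreeing segment because $a_i = b_i$ there. At index $k$, the inequality $a_k \lebang b_k$ translates, via Definition~\ref{def:totalorder}, into a divisibility statement in $\R_U$ or $\R_V$ (depending on the parity of $k$). The definition of $f(x_k)$ splits into three cases based on the signs of $a_k$ and $b_k$: when $a_k, b_k \in \Gamma_+$, the relation $b_k \mid a_k$ gives $a_k = c b_k$ for some homogeneous $c$, and we set $f(x_k) = c \cdot y_k$; when $a_k, b_k \in \Gamma_-$, the relation $|a_k| \mid |b_k|$ gives $|b_k| = c \cdot |a_k|$, and we again set $f(x_k) = c \cdot y_k$; when $a_k \in \Gamma_-$ and $b_k \in \Gamma_+$, the arrows at index $k$ point in opposite directions in the two complexes, and we set $f(x_k) = 0$. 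In each case, a direct computation using the definition of $\d$ in Definition~\ref{def:standard} verifies that $f \d x_k = \d f(x_k)$. For indices $i > k$, the map $f$ is extended recursively by zero or by suitable multiples of the $y_i$, using the paired-basis structure of standard complexes (Lemma~\ref{lem:pairedbasis}) to maintain the chain map condition globally.

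To verify that $f$ is a local map, observe that $f$ is $\gr_1$-preserving with the appropriate $\gr_2$-shift, using the grading formulas \eqref{eq:gr1diff} and \eqref{eq:gr2diff}. The required isomorphism on $H_*(C(a_1, \ldots, a_m) \otimes_\stair \rv)/\rv\text{-torsion}$ follows from $f(x_0) = y_0$ together with the fact that $x_0$ and $y_0$ generate the $V$-towers on their respective sides. The main obstacle is the case analysis extending $f$ past index $k$: the parameter data of the two complexes beyond $k$ is essentially independent, so the extension must be arranged carefully so that any further discrepancies between source and target are absorbed into the kernel of $f$. This is particularly delicate in the mixed-sign case, where the arrow directions at index $k$ differ and one must track $f$ on all subsequent generators simultaneously. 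The analysis is algorithmic once one exploits the fact that each generator in a standard complex participates in at most one $U$-arrow and at most one $V$-arrow, but verifying compatibility in each sign configuration is the most laborious part of the argument.
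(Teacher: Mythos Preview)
Your construction of the local map in direction (i) is essentially the paper's Lemma~\ref{lem:leqlex}, but you are over-complicating the extension past index $k$. The paper simply sets $f(x_i) = 0$ for all $i > k$, and this already gives a chain map. The reason is the key structural feature of a grid ring: $\fm_U \cdot \fm_V = 0$. In each of your three cases, $f(x_k)$ is either zero or a $\fm_U$-multiple (resp.\ $\fm_V$-multiple) of $y_k$, depending on the parity of $k$; the arrow connecting $x_k$ to $x_{k+1}$ is decorated by an element of $\fm_V$ (resp.\ $\fm_U$), so the relevant compatibility $\d f(x_k) = f(\d x_k)$ involves a product lying in $\fm_U \cdot \fm_V = 0$. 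There is no recursive case analysis needed beyond index $k$.

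Your plan for (ii), on the other hand, has a genuine circularity problem. You propose to invoke the classification in Sections~\ref{sec:ai} and~\ref{sec:char} to conclude that distinct parameter sequences give distinct classes in $\KL$. But the uniqueness statement in Theorem~\ref{thm:char} ultimately rests on knowing that the invariants $a_i(C)$ of Definition~\ref{def:aiinvariants} recover the parameters of a standard complex, and this in turn requires knowing that $C(c_1,\ldots) \leq C(b_1,\ldots)$ forces the appropriate lexicographic inequality---which is precisely what Proposition~\ref{prop:lex} asserts. Your alternative, ``tracing how the $V$-tower generators $x_0$ and $U$-tower generators $x_n$ must be sent,'' is too vague as stated; the $V$-tower generator alone does not determine the parameters.

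The paper avoids this by proving the converse direction directly. Lemma~\ref{lem:localsupport} shows inductively that any local map $f \colon C(a_1,\ldots,a_m) \to C(b_1,\ldots,b_n)$ with $a_i = b_i$ for $i \leq k$ must have $f(x_i)$ supported by $y_i$ for all $i \leq k$. Lemma~\ref{lem:>lex} then uses this to show that if $(a_1,\ldots,a_m) \gebang (b_1,\ldots,b_n)$ strictly, no local map can exist: at the first index $k$ where $a_k \gebang b_k$, the support condition on $f(x_{k-1})$ forces an incompatibility with the differential. Combining Lemmas~\ref{lem:leqlex} and~\ref{lem:>lex} yields the proposition without forward reference.
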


\begin{proof}[Proof of Proposition \ref{prop:lex}]
The proof of Proposition~\ref{prop:lex} is very similar to the proof of \cite[Proposition 4.10]{DHSTmoreconcord}; we will not reproduce all of the details here, but we list some constituent lemmas from which the proof follows. Use Lemma~\ref{lem:leqlex} and Lemma~\ref{lem:>lex} below.
\end{proof}

\begin{lemma}\label{lem:leqlex}
Let $(a_1, \dots, a_m) \leqbang (b_1, \dots, b_n)$ in the lexicographic order. Then $C(a_1, \dots, a_m) \leq C(b_1, \dots, b_n)$ in $\KL$.
\end{lemma}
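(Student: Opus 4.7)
The plan is to construct an explicit right local map $f \co C(a_1,\dots,a_m) \to C(b_1,\dots,b_n)$ sending the source $V$-tower generator $x_0$ to a unit multiple of the target $V$-tower generator $y_0$. Since the $V$-towers are generated (up to $\rv$-torsion) by these distinguished elements, any $\stair$-equivariant, $\gr_1$-preserving, $\gr_2$-homogeneous chain map with this property is automatically a right local map, so the whole problem reduces to producing such an $f$. Let $k$ be the least index at which the two sequences disagree, using the convention that shorter sequences are padded by trailing $1$'s. For $i<k$ the parameters coincide, and we set $f(x_i) = y_i$; the chain map identity on $x_0,\dots,x_{k-2}$ is then immediate because the corresponding arrows agree in source and target.

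The heart of the argument is the case analysis at index $k$. By Definition~\ref{def:totalorder}, the hypothesis $a_k \lebang b_k$ forces one of the following: (i) both $a_k$ and $b_k$ lie in $\Gamma_+$ of the ring determined by the parity of $k$, so $b_k$ divides $a_k$ and we may write $a_k = c\, b_k$; (ii) both lie in $\Gamma_-$, which is dual to (i); or (iii) $a_k \in \Gamma_- \cup \{1\}$ while $b_k \in \Gamma_+ \cup \{1\}$, in which case the $k$-th arrows in source and target point in opposite directions. In case (i) we set $f(x_k) = c\, y_k$; the chain map condition at $x_k$ then reads $c\,b_k\,y_{k-1} = a_k y_{k-1}$, which holds by definition of $c$. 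Case (ii) is handled symmetrically, or alternatively by dualizing via Lemma~\ref{lem:dual} and reducing to case (i). In case (iii) one sets $f(x_j) = 0$ for all $j \geq k$; this is compatible with the differential precisely because the opposite arrow directions prevent the two sides from producing conflicting terms, and because $\fm_U \cdot \fm_V = 0$ in $\stair$ kills any spurious cross-contributions between $\partial_U$ and $\partial_V$. Throughout, $\gr_1$-preservation and $\gr_2$-homogeneity are read off from the grading formulas \eqref{eq:gr1diff}--\eqref{eq:gr2diff}, using that $a_i = b_i$ for $i < k$ and that the multiplier $c$ is a homogeneous element.

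The main obstacle is the propagation in case (i), where after defining $f(x_k) = c\, y_k$ one must produce a coherent definition on $x_{k+1}, \dots, x_m$. The naive choice $f(x_{k+1}) = 0$ may fail chain map compatibility if the next source arrow forces $c\, y_k$ to be a boundary; resolving this requires an inductive extension in which the ``defect'' arising from the factor $c$ is absorbed into $f(x_{k+1}), f(x_{k+2}),\ldots$ as a correction term, or else shown to vanish outright by a parity-of-arrow argument on the standard complex. Once this bookkeeping is completed, the explicit verification that $f$ is a chain map and respects the $V$-tower yields the desired local map, so $C(a_1,\dots,a_m) \leq C(b_1,\dots,b_n)$ in $\KL$.
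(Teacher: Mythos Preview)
Your approach is essentially the paper's: define $f(x_i)=y_i$ for $i<k$, $f(x_i)=0$ for $i>k$, and handle $f(x_k)$ by the three-way case split. The ``propagation obstacle'' you worry about in case~(i) does not exist, and no inductive correction terms are needed. The point you are missing is that the multiplier $c=a_kb_k^{-1}$ always lands in the maximal ideal of the \emph{parity-$k$} subring: if $k$ is odd then $c\in\fm_U$, if $k$ is even then $c\in\fm_V$ (it is nontrivial because $a_k\neq b_k$). The arrow connecting $x_k$ and $x_{k+1}$, and likewise $y_k$ and $y_{k+1}$, is of the \emph{opposite} type. Hence every potentially problematic term at index $k$ or $k+1$ is a product of an element of $\fm_U$ with an element of $\fm_V$, which vanishes in $\stair$. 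Concretely, if $a_{k+1}\gebang 1$ then $f(\partial x_{k+1})=a_{k+1}\,c\,y_k=0$ since $a_{k+1}\in\fm_V$ and $c\in\fm_U$ (or vice versa), matching $\partial f(x_{k+1})=0$; the other subcases are identical. This is precisely the ``parity-of-arrow'' argument you gesture at---commit to it and the proof is complete with $f(x_i)=0$ for all $i>k$.

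Two minor points: your case~(ii) is not really handled ``by dualizing via Lemma~\ref{lem:dual}'', since dualizing reverses the lexicographic inequality; rather, the same formula $f(x_k)=a_kb_k^{-1}y_k$ works directly in both~(i) and~(ii), as $a_k\lebang b_k$ in $\Gamma_\pm$ exactly says $a_kb_k^{-1}\in R$. Also, your invocation of $\fm_U\cdot\fm_V=0$ in case~(iii) is misplaced: there all $f(x_j)$ for $j\geq k$ are zero, so no cross-terms arise; the chain-map check at $x_{k-1}$ works because $a_k\lebang 1$ forces $\partial_* x_{k-1}=|a_k|x_k\mapsto 0$ while $b_k\gebang 1$ forces $\partial_* y_{k-1}=0$ on the same side.
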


\begin{proof}
Assume that $(a_1, \dots, a_m) \lebang (b_1, \dots, b_n)$. Let $k$ be such that $a_i = b_i$ for $1 \leq i < k$ and $a_k \lebang b_k$; we assume for simplicity that $k \leq \min\{m, n\}$. For $\{x_i\}_{i=0}^m$ and $\{y_i\}_{i = 0}^n$ the preferred basis elements of $C(a_1, \dots, a_m)$ and $C(b_1, \dots, b_n)$, define 
\[ 
f \colon C(a_1, \dots, a_m) \to C(b_1, \dots, b_n) 
\]
by
\begin{align*}
	f(x_i) = \begin{cases}
		y_i &\text{ if } 0 \leq i < k \\
		0 &\text{ if } i > k.
	\end{cases}
\end{align*}
In order to define $f(x_k)$, there is some casework:
\begin{enumerate}
	\item If $a_k \lebang b_k \lebang 1$, define $f(x_k) = a_kb_k^{-1} y_k$. 
	\item If $a_k \lebang 1 \lebang b_k$, define $f(x_k) = 0$. 
	\item If $1 \lebang a_k \lebang b_k$, (again) define $f(x_k) = a_kb_k^{-1} y_k$. 
\end{enumerate}
The reader may verify that this makes $f$ into a chain map which is evidently local. The case $k > \min \{ m ,n \}$ follows via similar casework. 
\end{proof}

\begin{lemma}\label{lem:localsupport}
Let $C(a_1, \dots, a_m)$ and $C(b_1, \dots, b_n)$ be standard complexes with preferred bases $\{x_i\}_{i=0}^m$ and $\{ y_i\}_{i=0}^n$, respectively. Suppose that $a_i = b_i$ for all $1 \leq i \leq k$ and that $f \co C(a_1, \dots, a_m) \to C(b_1, \dots, b_n)$ is a local map. Then $f(x_i)$ is supported by $y_i$ for all $0 \leq i \leq k$.
\end{lemma}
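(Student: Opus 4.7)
The plan is to proceed by induction on $i$, establishing that when we write $f(x_i) = \sum_j d_j^{(i)} y_j$ in the preferred basis of $C(b_1, \ldots, b_n)$, the diagonal coefficient $d_i^{(i)}$ lies in $\K^\times$ for every $0 \le i \le k$. A preliminary grading computation is key: since $f$ is $\gr_1$-preserving and $\gr_2$-homogeneous of some fixed shift $t$, and since $a_j = b_j$ for $j \le k$ forces the cumulative grading changes along the first $k$ arrows to agree in the two complexes, one has $\gr(x_i) - \gr(y_i) = \gr(x_0) - \gr(y_0)$ for all $i \le k$. The locality of $f$ (together with $\R_V^\times = \K^\times$, which holds because $\R_V$ is type-zero) forces $t = \gr_2(y_0) - \gr_2(x_0)$, and substituting shows $d_i^{(i)} \in \stair^{(0,0)} = \K$. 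Each diagonal coefficient is therefore either zero or a scalar unit in $\K^\times$.

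For the base case $i = 0$, I would reduce $f$ modulo $\fm_U$ to get an $\R_V$-equivariant chain map that induces an isomorphism on $H_*(-)/\R_V\text{-torsion}$. In the target $C(b_1, \ldots, b_n)/\fm_U$, the non-torsion homology is generated by $[y_0]$, while all $\R_V$-torsion cycles and $\partial_V$-boundaries involve only $y_j$ with $j \ge 1$. The $y_0$-coefficient of $f(x_0) \bmod \fm_U$, which is simply $d_0^{(0)}$ since $d_0^{(0)} \in \K$, must therefore equal the $\K^\times$-scalar representing the induced iso; this gives $d_0^{(0)} \in \K^\times$.

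For the inductive step, assume $d_i^{(i)} \in \K^\times$ for some $i+1 \le k$. Applying the chain map equation to the arrow labeled $a_{i+1} = b_{i+1}$ yields either $\partial f(x_i) = |b_{i+1}| f(x_{i+1})$ (if $b_{i+1}$ is negative) or $\partial f(x_{i+1}) = b_{i+1} f(x_i)$ (if $b_{i+1}$ is positive). The crucial step is to reduce this equation modulo $\fm_V$ when $i+1$ is odd (so that the arrow in question is a $\partial_U$-arrow surviving the quotient) and modulo $\fm_U$ when $i+1$ is even. In the reduced complex, all ``wrong-parity'' neighbor arrows vanish, so the only surviving contribution to the coefficient of $y_{i+1}$ (respectively $y_i$) comes from the $y_i$-term of $f(x_i)$ (respectively the $y_{i+1}$-term of $f(x_{i+1})$). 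This yields an equation $|b_{i+1}| d_{i+1}^{(i+1)} = |b_{i+1}| d_i^{(i)}$ or $b_{i+1} d_{i+1}^{(i+1)} = b_{i+1} d_i^{(i)}$ in the graded domain $\R_U$ or $\R_V$; cancellation gives $d_{i+1}^{(i+1)} = d_i^{(i)} \in \K^\times$. The main hurdle is that $\stair$ itself is not a domain, so cancellation of $b_{i+1}$ over $\stair$ is not directly valid; passing to the appropriate quotient $\R_U$ or $\R_V$ simultaneously eliminates the interfering neighbor-arrow contributions and moves the argument into a graded domain, where cancellation is justified.
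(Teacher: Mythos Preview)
Your proof is correct; the paper omits its own argument here and simply refers to \cite[Lemma~4.2]{DHSTmoreconcord}, and your induction (pinning the diagonal coefficient to $\K$ via the grading, base case from locality, inductive step by reducing the chain-map identity modulo $\fm_V$ or $\fm_U$ to land in a graded domain) is precisely the standard approach used there.
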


\begin{proof}
Omitted; see \cite[Lemma 4.2]{DHSTmoreconcord}.
\end{proof}

\begin{lemma}\label{lem:>lex}
Let $(a_1, \dots, a_m) \gebang (b_1, \dots, b_n)$ in the lexicographic order. Then there is no local map from $C(a_1, \dots, a_m)$ to $C(b_1, \dots, b_n)$.
\end{lemma}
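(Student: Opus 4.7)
The plan is to argue by contradiction: suppose that a local map
\[
f \colon C(a_1, \ldots, a_m) \to C(b_1, \ldots, b_n)
\]
exists despite $(a_1, \ldots, a_m) \gebang (b_1, \ldots, b_n)$, and derive an impossibility at the first index $k$ where the two sequences disagree (where necessarily $a_k \gebang b_k$). I adopt the padding convention that any shorter sequence is extended by trailing copies of $1$. Applying Lemma~\ref{lem:localsupport} at index $k-1$, where the sequences still agree, I obtain that $f(x_i)$ is supported by $y_i$ for every $0 \leq i \leq k-1$; in particular $f(x_{k-1}) = c\, y_{k-1} + r$, where $c \in \K^\times$ and $r$ is a homogeneous $\stair$-linear combination of preferred generators $y_j$ with $j \neq k-1$.

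The next step is to compare the two sides of the chain-map equation $f\partial = \partial f$ applied to the endpoint of the arrow labeled $a_k$ in $C(a_1,\ldots,a_m)$. I would split into three cases according to the signs of $a_k$ and $b_k$: (i) both in $\Gamma_-$, so the $a_k$- and $b_k$-arrows both point ``forward'' from index $k-1$ to $k$; (ii) $b_k \lebang 1 \lebang a_k$, so the arrows point in opposite directions; and (iii) both in $\Gamma_+$, so the arrows both point ``backward'' from index $k$ to $k-1$. In each case, after isolating the coefficient of the appropriate generator (typically $y_k$) and using that $a_k \gebang b_k$ translates into $|b_k|$ strictly dividing $|a_k|$ in either $\R_U$ or $\R_V$, the chain-map equation forces an equality of the form
\[
(\text{non-unit of }\R_U \text{ or }\R_V) \cdot (\text{something}) = (\text{element of }\K^\times)
\]
modulo the appropriate ideal, which is the sought contradiction.

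The main obstacle is the remainder term $r$: a priori, $\partial r$ could contribute to the very coefficient I wish to isolate and thus cancel the leading obstruction. To handle this, I would reduce the chain-map equation modulo either $\fm_U$ or $\fm_V$, chosen to kill the type of differential arrow that does \emph{not} connect the preferred generators at indices $k-1$ and $k$. Since $\fm_U \cdot \fm_V = 0$ in $\stair$ and since the arrows in a standard complex alternate between $U$- and $V$-type, this reduction annihilates the problematic contributions of $r$, reducing the problem to a clean comparison of $|a_k|$ with $|b_k|$ in a single graded valuation ring. Throughout, the convention that parameter sequences are compared modulo $\K^\times$ ensures that the obstruction is well-defined. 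The overall strategy parallels \cite[Lemma 4.11]{DHSTmoreconcord}, with the multiplicative notation for divisibility and the submonoid distinction $\Gamma_\pm$ from Definition~\ref{def:totalorder} replacing the additive notation and signed exponents of $U$ and $V$ used there.
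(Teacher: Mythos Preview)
Your proposal is correct and follows precisely the approach the paper intends: the paper's own proof reads ``Omitted; see \cite[Lemma~4.3]{DHSTmoreconcord}. Use Lemma~\ref{lem:localsupport},'' and your contradiction argument---applying Lemma~\ref{lem:localsupport} at index $k-1$ and then doing the three-case sign comparison of $a_k$ and $b_k$ at the first index of disagreement---is exactly that argument, transported to the grid-ring setting. The only phrasing worth tightening is that reducing modulo $\fm_V$ does not itself kill the contribution of $r$; rather, after reduction one uses the alternating structure to see that $y_{k-1}$ is the \emph{unique} preferred generator whose $\partial_U$ hits $y_k$ (and vice versa in the $\Gamma_+$ case), and $y_{k-1}$ is absent from $r$ by construction.
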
 

\begin{proof}
Omitted; see \cite[Lemma 4.3]{DHSTmoreconcord}. Use Lemma~\ref{lem:localsupport}.
\end{proof}

\subsection{Semistandard complexes}
We will also find it useful to have the following generalization of standard complexes:

\begin{definition}\label{def:semis}
Let $n \in \N$ be odd. Let $(b_1, \ldots, b_n)$ be a sequence such that:
\begin{enumerate}
\item $b_i \in \Gamma_+(\R_U) \cup \Gamma_-(\R_U)$ for $i$ odd; and,
\item $b_i \in \Gamma_+(\R_V) \cup \Gamma_-(\R_V)$ for $i$ even.
 \end{enumerate}
The \emph{semistandard complex of type $(b_1, \dots, b_n)$}, denoted by $C'(b_1, \dots, b_n)$, is defined using the obvious generalization of Definition~\ref{def:standard}. We may also (somewhat artificially) define this as the subcomplex of the standard complex $C(b_1, \dots, b_n, b_{n+1})$ generated by $\{x_0, x_1, \dots, x_n\}$, for any choice of $b_{n+1} \gebang 1$. (Up to grading shift, the choice of $b_{n+1}$ clearly does not affect the definition.) We again call $n$ the \emph{length} of the semistandard complex and $\{x_i\}_{i = 0}^n$ the \emph{preferred basis}. For concreteness, we normalize gradings so that $\gr(x_0) = (0,0)$.
\end{definition}

We usually use the symbol $'$ to distinguish semistandard complexes from standard complexes, but may omit it in situations where our arguments apply equally well to semistandard complexes as standard complexes. Note, however, that a semistandard complex $C'$ is \textit{not} a knotlike complex of any kind. Indeed it is easily checked that $H_*(C' \otimes_\stair \R_V)/\R_V\text{-torsion}$ is isomorphic to two copies of $\R_V$. Hence we cannot use Remark~\ref{rem:gradingshift} to normalize gradings on semistandard complexes, in contrast to Definition~\ref{def:standard}.

\begin{definition}
Let $C'$ be a semistandard complex and $C$ be a standard complex. Let $f$ be a $\gr_1$-preserving, $\gr_2$-homogenous, $\cS$-equivariant chain map
\[ 
f \co C' \to C. 
\]
We say $f$ is \emph{local} if the class of $f(x_0)$ generates $H_*(C \otimes_\stair \R_V)/\R_V\text{-torsion}$.
\end{definition}

\subsection{Short maps}
In this subsection, we introduce several technical results that will be useful later in the paper. These deal with maps from standard or semistandard complexes which satisfy the chain map condition on all generators except for (possibly) the last:

\begin{definition}
Let $C_1=C(b_1, \dots, b_n)$ be a standard or semistandard complex and $C_2$ be a knotlike complex. Let $f \co C_1 \rightarrow C_2$ be a $\gr_1$-preserving, $\gr_2$-homogeneous $\cS$-module map. We call $f$ a \emph{short map} if:
\begin{enumerate}
\item $f \d (x_i) = \d f (x_i) $ for $0 \leq i \leq n-1$; and,
\item\label{it:shortmap} $f \d_V(x_n) = \d_V f(x_n) $ in the case that $C_1$ is standard, or \\
$f \d_U(x_n) = \d_U f(x_n) $ in the case that $C_1$ is semistandard.
\end{enumerate}
 
Note that condition \eqref{it:shortmap} is strictly weaker than $f \d (x_n) = \d f (x_n)$, and indeed a short map $f$ will fail to be a chain map if $f \d_U(x_n) \neq \d_U f(x_n)$ in the case $C_1$ is standard, or $f \d_V(x_n) \neq \d_V f(x_n)$ in the case that $C_1$ is semistandard. More informally, a short map $f$ can fail to be a chain map at $x_n$ in the direction away from $x_0$.
 
We indicate the presence of a short map by the notation
\[ 
f \co C_1 \leadsto C_2. 
\]
Moreover, we say that $f$ is a \emph{short local map} if the homology class of $f(x_0)$ generates $H_*(C_2 \otimes_\stair \R_V)/\R_V\text{-torsion}$. In the case where $C_1$ is standard, this is the same as saying $f$ induces an isomorphism on $H_*(C_i \otimes_\stair \R_V)/\R_V \text{-torsion}$.
\end{definition}

We now come to two important technical lemmas. For the proof of the first, it will be useful to have a construction reminiscent of the pullback of a standard complex:

\begin{definition}
Let $C = C(b_1, \ldots, b_n)$ be a standard or semistandard complex. We define the \textit{reverse} of $C$ to be the standard or semistandard complex
\[
r(C) = C(b_n^{-1}, \ldots, b_1^{-1}).
\]
Here, there is a slight discrepancy in that if $C$ is a standard complex over $\stair(\R_U, \R_V)$, then the first parameter in the reversed complex lies in $\fm_V$, rather than $\fm_U$. Hence technically we must modify Definition~\ref{def:standard} in the obvious way; alternatively, we may formally consider $r(C)$ to be over the grid ring $\stair(\R_V, \R_U)$.
\end{definition}

To understand the reversed complex, observe that the basis $\{x_i\}_{i=0}^n$ of $C$ is enumerated in a preferred order, from $x_0$ to $x_n$. If we traverse the arrows of $C$ in the reverse order, from $x_n$ to $x_0$, then we obtain the sequence $\smash{(b_n^{-1}, \ldots, b_1^{-1})}$. Hence we may equally well view $C$ as being constructed from the sequence $\smash{(b_n^{-1}, \ldots, b_1^{-1})}$ as in Definition~\ref{def:standard} or Definition~\ref{def:semis}, except with the grading convention that the initial generator of the new construction has fixed grading $\gr(x_n)$. Thus, up to grading shift, $C$ and $r(C)$ are actually exactly the same complex; we write the latter when we wish to think of the generators of $C$ as being enumerated in the reverse order. Note that a homogenous chain map from $C_1$ to $C_2$ thus gives a homogenous chain map from $r(C_1)$ to $r(C_2)$ (and vice-versa), simply by shifting the grading.

The next lemma is a technical result; see e.g.\ the discussion preceding \cite[Lemma 5.11]{DHSThomcobord} for motivation.

\begin{lemma}[Merge Lemma]\label{lem:blebangc}
Let $C_1 = C(a_1, \ldots, a_m)$ and $C_2 = C(b_1, \ldots, b_n)$ be two standard complexes or two semistandard complexes with preferred bases $\{x_i\}_{i=0}^m$ and $\{y_i\}_{i=0}^n$. Let $C$ be any knotlike complex, and suppose we have two short maps
\[ 
f \co C_1 \leadsto C \quad \text{ and } \quad g \co C_2 \leadsto C.
\]
Assume that $\gr f(x_m)=\gr g(y_n)$, and suppose we have the inequality $r(C_1) > r(C_2)$. Then for any  $c_1$ and $c_2$ in $\K$, there is a short map 
\[
h \colon C_2 \leadsto C
\]
with the property that 
\[
h(y_n) = c_1 f(x_m) + c_2 g(y_n).
\] 
Moreover, if $g$ is local and $c_2 \neq 0$, then $h$ is local.
\end{lemma}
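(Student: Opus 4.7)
The strategy is to construct an auxiliary short map $F \colon C_2 \leadsto C$ satisfying $F(y_n) = f(x_m)$ and then take $h := c_1 F + c_2 g$.  Because the short-map chain conditions are $\K$-linear in the map, this combination is automatically a short map; by construction $h(y_n) = c_1 f(x_m) + c_2 g(y_n)$, and the hypothesis $\gr f(x_m) = \gr g(y_n)$ ensures bihomogeneity of the correct bidegree.  For the locality claim, the plan is to arrange that $F(y_0)$ lies in the $\R_V$-torsion of $C \otimes_{\stair} \R_V$, so that when $g$ is local and $c_2 \neq 0$, the contribution of $c_2 g(y_0)$ to the $V$-tower generator dominates.

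To produce $F$, I invoke Lemma~\ref{lem:leqlex} (or its obvious semistandard analogue) applied to the reversed complexes: the hypothesis $r(C_2) \leqbang r(C_1)$ supplies an $\stair$-linear chain map $\Psi \colon C_2 \to C_1$ which, viewed as a local map $r(C_2) \to r(C_1)$, sends $y_n$ to a unit multiple of $x_m$.  After rescaling so that $\Psi(y_n) = x_m$, set $F := f \circ \Psi$.  Then $F$ is $\stair$-equivariant and bihomogeneous: the grading check uses the equality $\gr f(x_m) = \gr g(y_n)$ together with the matching parameter decorations in the ``overlap'' region of the reversed sequences.

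The core technical task is to verify the short-map chain conditions for $F$.  From the explicit formula for $\Psi$ in the proof of Lemma~\ref{lem:leqlex}, the image $\Psi(y_j)$ is one of the following: zero; a scalar multiple of some $x_i$ with $1 \leq i \leq m-1$; $x_m$ itself (only at $j = n$); or (at the transition index, where the reversed sequences first disagree) an expression dictated by the divisibility $a_{m-k+1}^{-1} \geqbang b_{n-k+1}^{-1}$ of Definition~\ref{def:totalorder}.  In the ``generic'' cases where $\Psi(y_j)$ lands in $\Span_\stair\{x_1, \dots, x_{m-1}\}$, the condition $F \partial y_j = \partial F(y_j)$ reduces via $\Psi$'s chain-map property to the corresponding condition for $f$ at an interior $x_i$, which holds because $f$ is short; the partial condition at $y_n$ reduces directly to the partial condition for $f$ at $x_m$, since the final parameters $a_m$ and $b_n$ agree (they form the first overlap entry of the reversed sequences).

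The main obstacle is the degenerate cases where $\Psi$ unavoidably hits $x_0$ or interacts with the ``non-short'' direction of $\partial x_m$---cases arising when the transition index $k$ of the lexicographic comparison coincides with $m$ and the two complexes have different lengths.  Here $f$'s failure to be a chain map at $x_0$ (or in the relevant direction at $x_m$) introduces an error term that $F = f \circ \Psi$ does not automatically cancel.  The plan is to absorb this error by modifying $F$ on $y_0$ alone---a value which is unconstrained by the short-map conditions---using the divisibility $a_{m-k+1}^{-1} \geqbang b_{n-k+1}^{-1}$ to ensure that the correction is a well-defined $\stair$-module element.  The detailed casework splits according to the sign combinations of $a_{m-k+1}$ and $b_{n-k+1}$ in $\Gamma_\pm(\R_U)$ or $\Gamma_\pm(\R_V)$, in the same spirit as the arguments of \cite[Lemma 4.9]{DHSTmoreconcord} and the closely related \cite[Lemma 5.11]{DHSThomcobord}; it is routine but tedious.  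Arranging this correction to land in $\R_V$-torsion simultaneously yields the locality statement.
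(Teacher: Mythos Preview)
Your approach is essentially identical to the paper's: you set $h = c_1(f\circ\Psi) + c_2 g$ with $\Psi\colon C_2\to C_1$ the chain map coming from Lemma~\ref{lem:leqlex} applied to the reversed complexes, which is exactly the paper's $h = (c_1 f \oplus c_2 g)\circ(j_1\oplus\id)$ with $j_1=\Psi$. The paper then asserts shortness in one line (``easy to check, using the fact that $j$ is a chain map and $f$ and $g$ are short'') and handles locality by the same casework on the transition index that you sketch.

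Two remarks on your ``main obstacle'' paragraph. First, the worry about the non-short direction of $\partial x_m$ is unnecessary: the explicit form of $\Psi$ shows that $\Psi(y_j)$ is supported on $\{x_0,\dots,x_{m-1}\}$ for every $j<n$, so $x_m$ is hit only by $y_n$, and at $y_n$ the partial condition required for $h$ is exactly the one $f$ already satisfies at $x_m$ (this uses only that an $\stair$-equivariant chain map commutes with $\partial_U$ and $\partial_V$ separately, not that $a_m=b_n$).

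Second, your concern about $x_0$ is a genuine subtlety---the short-map hypothesis places no condition on $f$ at $x_0$, and the paper does not address this either. But your proposed remedy cannot work. The only configuration in which $\Psi(y_j)$ hits $x_0$ for some $1\le j\le n-1$ is when $m<n$ and $r(C_1)$ is (essentially) a prefix of $r(C_2)$; then the generator in question is $y_{n-m}$, an \emph{interior} generator at which the short-map condition for $h$ is required. Modifying $F$ at $y_0$ has no effect on the condition at $y_{n-m}$, so the error cannot be absorbed there.
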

\begin{proof}
The proof of this lemma is essentially equivalent to the proof of Lemma \cite[Lemma~6.8]{DHSTmoreconcord}, but with some notational improvements to make it more transparent. The map $h$ will be defined as a composition
\[
h = (c_1f\oplus c_2g)\circ j 
\] 
where
\[
j \colon C_2 \to C_1 \oplus C_2.
\]
We define the map $j$ itself as the direct sum of two maps $j = j_1 \oplus j_2$. The second summand $j_2$ is just the identity map on $C_2$. For the first summand, consider the reversed complexes $r(C_1)$ and $r(C_2)$. By Lemma~\ref{lem:leqlex} and its analog for semistandard complexes, the inequality $r(C_2) < r(C_1)$ gives a homogenous chain map from $r(C_2)$ to $r(C_1)$. (This map is in fact local, but this will not be relevant here.) By the discussion preceding the lemma, we thus obtain a homogenous chain map $j_1 \co C_2 \rightarrow C_1$ by shifting the grading.

An examination of the proof of Lemma~\ref{lem:leqlex} shows that $j_1(y_n) = x_m$. More generally, if $k$ is the least index for which $a_{m-k} \neq b_{n-k}$, then the proof of Lemma~\ref{lem:leqlex} (combined with some careful tracking of indices) shows that
\[
j_1(y_{n-i}) = \begin{cases}
		x_{m-i} &\text{ if } 0 \leq i < k + 1\\
		0 &\text{ if } i > k + 1.
	\end{cases}
\]
It is easy to check that $h$ is a short map, using the fact that $j$ is a chain map and $f$ and $g$ are short. Note that $j_1$ and $j_2$ are homogenous maps which in general will not have the same grading shift. However, the compositions $f \circ j_1$ and $g \circ j_2$ do have the same grading shift, as can be seen by evaluating them both on $y_n$ and using the fact that $\gr(f(x_m)) = \gr(g(y_n))$. In particular, $h$ is relatively graded with the same grading shift as $g \circ j_2 = g$, so $h$ is $\gr_1$-preserving and $\gr_2$-homogenous. It remains to show that if $g$ is local, then $h$ satisfies the $V$-tower condition. This will require some casework depending on the value of $k$.

If $k < n - 1$, then $j(y_0) = 0 \oplus y_0$ by the above. Composing with $c_1f \oplus c_2g$ shows that $h(y_0)$ is a $V$-tower class. If $k = n - 1$, then a further examination of the proof of Lemma~\ref{lem:leqlex} (again combined with careful tracking of indices) shows that $j_1(y_0)$ is either zero or equal to $\smash{b_1^{-1} a_{m-n+1} x_{m-n}}$, where $\smash{b_1^{-1} a_{m-n+1}}$ is an element of $\fm_U$. It follows that
\[
h(y_0) = c_1f(j_1(y_0)) + c_2g(y_0) = c_2g(y_0) \bmod \fm_U.
\]
The fact that $g$ is local thus implies that $h$ is local. Finally, suppose $k = n$. Then $m > n$ and $a_{m-n} \lebang 1$. In this case we have $j_1(y_0) = x_{m-n}$. Noting that $m = n \bmod 2$, the fact that $a_{m-n} \lebang 1$ means that some $\fm_V$-multiple of $x_{m-n}$ lies in the image of $\partial_V$. It follows from this that 
\[
h(y_0) = c_1f(j_1(y_0)) + c_2g(y_0) = c_1f(x_{m-n}) + c_2g(y_0)
\]
is a $V$-tower class, being the sum of a $V$-tower class with a $V$-torsion class. This completes the proof.
\end{proof}

The next lemma is a generalization of \cite[Lemma~4.19]{DHSTmoreconcord} to complexes over $\stair$. Note that there is an error in the grading argument used in the proof in \cite{DHSTmoreconcord}; fortunately, the proof below does not rely on this grading argument, and thus provides a corrected proof of \cite[Lemma~4.19]{DHSTmoreconcord}.

\begin{lemma}[Extension Lemma]\label{lem:shortextension}
Let $C(b_1, \ldots, b_n)$ be a standard or semistandard complex, and let $C$ be any knotlike complex. Suppose we have a short map
\[ f \co C(b_1, \dots, b_n) \leadsto C. \]
Then there exists some complex of the form $C(b_1, \ldots, b_n, b_{n+1}, \ldots, b_m)$ which admits a genuine chain map
\[ g \co C(b_1, \dots, b_n, b_{n+1}, \dots, b_m) \to C. \]
This complex can be chosen to be either standard or semistandard. Moreover, if $f$ is local, then $g$ is local.
\end{lemma}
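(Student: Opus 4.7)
The plan is to adapt the argument of \cite[Lemma~4.19]{DHSTmoreconcord}, with the graded valuation rings $\R_U$ and $\R_V$ playing the roles that $\F[U]$ and $\F[V]$ played there. I treat the standard case with $n$ even; the semistandard case is obtained by swapping $U$ and $V$. Since $f$ already commutes with $\partial$ on $x_1,\ldots,x_{n-1}$ and with $\partial_V$ on $x_n$ by hypothesis, and since $\partial_U x_n = 0$ in the standard complex, the only obstruction to $f$ being a genuine chain map is the element
\[
e = \partial_U f(x_n) \in \fm_U C.
\]
Because $C$ is reduced and $\fm_U\fm_V = 0$ in $\stair$, the expansion of $\partial^2=0$ splits into components lying in $\fm_U^2 C$, $\fm_V^2 C$, and $\fm_U\fm_V C = 0$, giving $\partial_U^2=\partial_V^2=\partial_U\partial_V=\partial_V\partial_U=0$. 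Hence $e$ is automatically a $\partial$-cycle.

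The core step is to perform one extension. Expanding $e$ in an $\stair$-basis for $C$ and using that $\R_U$ is a graded valuation ring, I extract a homogeneous greatest common divisor $\mu\in\fm_U$ of the coefficients and write $e = \mu w$ for a uniquely determined $w\in C$. Setting $b_{n+1} = \mu^{-1}$ so that $|b_{n+1}| = \mu$ and $b_{n+1}\lebang 1$, and extending the preferred basis by a new generator $x_{n+1}$ with $f(x_{n+1}) = w$, I pass to the semistandard complex $C(b_1,\ldots,b_{n+1})$. The equation $\mu w = e$ is precisely the chain map condition at $x_n$ in this extended complex. Moreover, from $\mu\,\partial_U w = \partial_U e = 0$, together with the facts that $\R_U$ is a domain and that $C$ is free as an $\stair$-module, I conclude that $\partial_U w = 0$. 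Thus the short condition at $x_{n+1}$ holds, and the new defect is $\partial_V w \in \fm_V C$.

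Iterating with the roles of $U$ and $V$ interchanged produces a sequence of extensions whose defects alternate between $\fm_U C$ and $\fm_V C$. The main obstacle is termination: one must verify that this process halts after finitely many steps. I would establish this by fixing paired bases for $C/\fm_V$ and $C/\fm_U$ via Lemma~\ref{lem:pairedbasis} and showing that each extension step strictly reduces a well-founded invariant of the defect, such as the ordered tuple of gradings of the torsion summands in which it has nonzero projection. Since $C$ is finitely generated over $\stair$, both quotient complexes have only finitely many such torsion summands, so the iteration must terminate with zero defect, at which point we obtain the desired chain map $g\co C(b_1,\ldots,b_m)\to C$. For the locality claim, the construction never modifies $f$ on $x_0$; thus if $[f(x_0)]$ generated $H_*(C\otimes_\stair\R_V)/\R_V\text{-torsion}$ before the extension, it still does afterward, so $g$ is local whenever $f$ is.
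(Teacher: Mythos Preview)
Your one-step extension is set up correctly and matches the paper's: you extract a factorization $\partial_U f(x_n)=\mu w$, extend by $b_{n+1}=\mu^{-1}$ with $f(x_{n+1})=w$, and the torsion-freeness argument giving $\partial_U w=0$ is fine. The gap is entirely in termination. You say you ``would establish this'' by showing each step ``strictly reduces a well-founded invariant of the defect, such as the ordered tuple of gradings of the torsion summands,'' but you neither define such an invariant nor prove it decreases. This is precisely the kind of argument the paper warns against: immediately before its proof it notes that the grading argument in \cite[Lemma~4.19]{DHSTmoreconcord} contained an error, and that the present proof is written so as not to rely on it. Over a general grid ring there is no evident reason the gradings of the successive $w$'s should decrease in any well-founded order; for instance in $\X$ the ideal $\fm_U$ contains elements $W_{B,i}$ of grading $(-2i,-2)$ for every $i\in\Z$.

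The paper's termination mechanism is genuinely different and rests on two ideas you did not use. First, at each step one checks whether the coefficient $\mu$ is \emph{non-maximal} in $\fm_U$. If so, one picks any $\mu'\gebang\mu$ and sets $f(x_{n+1})=(\mu/\mu')\,s$ instead; since $\mu/\mu'\in\fm_U$ and $\fm_U\fm_V=0$, this image is automatically a $\partial$-cycle, so the extension is already a genuine chain map and the process halts in one step. Second, if the process fails to halt, then every step is forced to use the (unique up to units) maximal elements of $\fm_U$ and $\fm_V$. In that case the images $t_i$ of the successive final generators may be chosen to lie outside $(\fm_U,\fm_V)$, hence in gradings drawn from a finite set, so by Lemma~\ref{lem:finitedimensional} they eventually satisfy a nontrivial $\K$-linear relation. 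Because the reversed complexes $r(C_i)$ are nondecreasing (the appended parameters are maximal), the Merge Lemma (Lemma~\ref{lem:blebangc}) lets one combine the corresponding short maps into a single short map whose final generator maps to zero, which is then a genuine chain map. Your proposal is missing both of these ingredients.
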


\begin{proof}
We consider the case when $C(b_1, \ldots, b_n)$ is a standard complex; the proof for semistandards is similar. Consider $f(x_n)$. If $\d_U f(x_n) = 0$, then $f$ is already a chain map. If desired, we may extend the domain of $f$ to the semistandard complex $C(b_1, \dots, b_n, 1/\mu)$ for any $\mu \in \fm_U$ by setting $f(x_{n+1}) = 0$.  We thus suppose that $\d_U f(x_n) = \mu s$ for some $s \in C$ and $\mu \in \fm_U$.  We consider several cases. Suppose $\mu$ is not maximal in $\fm_U$; i.e., there exists some $\mu' \in \fm_U$ with $\mu' \gebang \mu \gebang 1$. Then we define a short map 
\[
f' \co C'(b_1, \dots, b_n, 1/\mu') \leadsto C
\]
by setting $f'(x_i) = f(x_i)$ for $0 \leq i \leq n$ and $f'(x_{n+1}) = (\mu/\mu') s$. It is straightforward to check that this is in fact already a chain map from a semistandard complex using the fact that $\mu/\mu' \in \fm_U$; if desired, we may extend the domain of $f'$ to the standard complex $C(b_1, \dots, b_n, 1/\mu', 1/\nu)$ for any $\nu \in \fm_V$ by setting $f'(x_{n+2}) = 0$. Then $f'$ then provides the chain map in question.  

We thus suppose that $\mu$ is maximal in $\fm_U$. In this case, we extend the domain of $f$ to $C'(b_1, \dots, b_n, 1/\mu)$ by defining $f'(x_{n+1}) = s$. We now have the following cases:
\begin{enumerate}
\item Suppose $\d_V s = 0$. Then $f'$ is in fact already a chain map from a semistandard complex. If desired, we can further extend the domain of $f'$ to the standard complex $C(b_1, \dots, b_n, 1/\mu, 1/\nu)$ for any $\nu \in \fm_V$ by setting $f'(x_{n+2}) = 0$. Then $f'$ then provides the chain map in question.  
\item Suppose $\d_V s = \nu t$ for some $t \in C$ and $\nu$ which is not maximal in $\fm_V$. Then we proceed as in the beginning of the proof, replacing the role of $\fm_U$ with $\fm_V$. This produces the desired chain map.
\item  Suppose $\d_V s= \nu t$ for some $t \in C$ and $\nu$ which is maximal in $\fm_V$. Then we extend the domain of $f'$ to obtain a short map
\[
f''\colon C(b_1,\ldots, b_n, 1/\mu ,1/\nu)\leadsto C
\]
by defining $f''(x_{n+2}) = t$.
\end{enumerate}
Note that in the final case, $f''$ is not in general a chain map. We thus proceed as in the beginning of the proof again, except replacing $f$ with $f''$. This either produces the desired chain map, or once again leads to the final possibility above. In the latter case, we iterate this procedure. If this continues indefinitely, we obtain a sequence of short maps
\[
f_i \co C_i=C(b_1,\ldots,b_n,1/\mu,1/\nu, \ldots, 1/\mu,1/\nu) \leadsto C.
\]
Here, $\mu$ is the maximal element of $\fm_U$, and similarly for $\nu$; note that up to multiplication by $\K^\times$, $\mu$ and $\nu$ are unique (if they exist). The pair $(1/\mu,1/\nu)$ appears $i$ times at the end of the parameter sequence for $C_i$.

Consider the sequence of elements $t_i = f_i(x_{n+2i}) \in C$. This is formed by taking the image under $f_i$ of the final generator of each $C_i$. We claim that there must exist a nontrival $\K$-linear relation among the $t_i$. To see this, fix an $\stair$-basis $\{e_k\}$ of $C$. Now, the only condition on $t_i$ is that $\nu t_i = \partial_V s_i$, where $s_i = f_i(x_{n+2i-1})$. Hence without loss of generality, we may assume that at each stage we have chosen $t_i$ to lie in the $\R_V$-span of $\{e_k\}$, as any coefficients in $\fm_U$ may be discarded without changing this condition. Moreover, $t_i$ cannot be in the image of $\fm_V$, as this would imply the second alternative in the casework above. Thus, each $t_i$ is a linear combination of the $\{e_k\}$, with at least one nonzero coefficient being drawn from $\K$; this means that each $t_i$ must have the same grading as some $e_k$. By Lemma~\ref{lem:finitedimensional}, $C$ is a finite-dimensional $\K$-vector space in each grading. Since the set of $e_k$ is finite, all the $t_i$ lie in a single finite-dimensional $\K$-vector space, which gives the claim.

Now select any subset of indices $\{i_1, i_2, \ldots, i_r\}$, $i_1<i_2< \dots < i_r$, for which the $t_i$ have a $\K$-linear relation $c_{i_1}t_{i_1} + \cdots + c_{i_r}t_{i_r} = 0$ with $c_{i_1}$ nonzero. Note that
\[
r(C_1) \le r(C_2) \le r(C_3) \le \cdots
\]
due to the fact that $\mu$ and $\nu$ are maximal. Then we may repeatedly apply the merge lemma to obtain a short map $g \co C_{i_1} \leadsto C$ whose final generator maps to
\[
g(x_{n + 2i_1}) = \sum_{i=1}^{r} c_{i_j} t_{i_j} = 0.
\]
This is clearly a genuine chain map. If desired, we may truncate $C_{i_1}$ by deleting the last parameter; the restriction of $g$ is a chain map from a semistandard complex. Finally, note that if $f$ is local, then each $f_i$ is local. By the merge lemma we then have that $g$ is local. This completes the proof.
\end{proof}
\noindent

\section{Numerical invariants $a_i$}\label{sec:ai}

In this section, we define a sequence of invariants $a_i(C)$, lying in the valuation groups associated to our grid rings, for any knotlike complex $C$. These are analogous (up to a sign) to the invariants defined in \cite[Section 3]{Hominfiniterank}; the construction here is almost identical to \cite[Section 5]{DHSTmoreconcord}. In the following section, we will observe that the $a_i$ are equivalent to describing a standard complex representative of $C$.  However, the proofs in \cite[Section~5]{DHSTmoreconcord}, as well as their analogs in \cite{DHSThomcobord}, do not directly translate to this setting as they utilize specific grading properties of the rings used therein.

\begin{definition}\label{def:aiinvariants}
Let $C$ be a knotlike complex. Define
\[ a_1(C) = \max \{ b_1\in \Gamma(\R_U) \mid C(b_1, \dots, b_n) \leq C \}. \]
Here, the maximum is taken using the total order on $\Gamma(\R_U)$. We define $a_k(C)$ for $k \geq 2$ inductively, as follows. Suppose that we have already defined $a_i = a_i(C)$ for $1 \leq i \leq k$. If $a_k(C) = 1$, define $a_{k+1}(C) = 1$. Otherwise, define
\[ a_{k+1}(C) = \max \{ b_{k+1} \in \Gamma(\R_*) \mid C(a_1, \dots, a_k, b_{k+1}, \dots b_n) \leq C \}, \]
where $*$ is either of $U$ or $V$, according to the parity of $k+1$.  
\end{definition}

We stress that the complexes appearing in Definition~\ref{def:aiinvariants} are required to be standard (rather than semistandard), and that we take the convention of appending trailing copies of $1$. This means that if $k$ is even, then $a_{k+1}(C) = 1$ if $C(a_1, \ldots, a_k) \leq C$ and no larger standard complex of the form $C(a_1, \dots, a_k, b_{k+1}, \dots b_n)$ is less than or equal to $C$. In such a situation, note that all further invariants of $C$ are also equal to $1$. For $k$ odd, we cannot have $a_{k+1}(C) = 1$ unless $a_k(C) = 1$.

In general, it is not clear that $a_i(C)$ is well-defined, since \textit{a priori} we may have to consider (infinite) subsets of $\Gamma(\R_U)$ or $\Gamma(\R_V)$ that may not achieve a maximum. Our goal for this subsection will be to show that this does not occur. We begin with the following lemma:

\begin{lemma}\label{lem:notinmumv}
Let $C$ be any knotlike complex. Let $C(b_1, \ldots, b_n)$ be any standard or semistandard complex which admits a short local map to $C$:
\[
f \co C(b_1, \ldots, b_n) \leadsto C. 
\]
Suppose $f(x_i) \in (\fm_U, \fm_V)$ for some $i$. Then there exists a complex of the form $C(b_1, \ldots, b_{j-1}, c_j)$, where $1 \leq j \leq i$ and $c_j \gebang b_j$, which also admits a short local map to $C$
\[
g \co C(b_1, \ldots, b_{j-1}, c_j) \leadsto C.
\]
Here, the complex $C(b_1, \ldots, b_{j-1}, c_j)$ is standard or semistandard depending on the parity of $j$.\footnote{We cannot in general specify which of these alternatives hold; however, due to the extension lemma, this will not really be important for the usage of Lemma~\ref{lem:notinmumv} in this paper.}
\end{lemma}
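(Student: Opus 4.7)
The strategy is to take $i_0$ to be the smallest index in $\{1,\dots,i\}$ with $f(x_{i_0})\in(\fm_U,\fm_V)\cdot C$, set $j=i_0$, and build the new short map by restricting $f$ to the first $i_0$ generators while carefully modifying the image of the final generator. The argument falls back to smaller $j<i_0$ in degenerate situations, which is why the lemma permits the flexibility $1\leq j\leq i$.

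First I would verify $i_0\geq 1$. If $f(x_0)\in(\fm_U,\fm_V)\cdot C$, then $p_V(f(x_0))\in C/\fm_U$ would lie in $\fm_V\cdot(C/\fm_U)$, so its class in $H_*(C/\fm_U)/\R_V\text{-torsion}\cong \R_V$ would be divisible by a nonunit of $\R_V$; since a generator of $\R_V$ cannot be so divisible, this contradicts the $V$-tower property that comes with locality of $f$. Setting $g(x_k)=f(x_k)$ for $k<i_0$, all chain-map conditions at generators with index $<i_0-1$ (and the ``incoming'' half of the condition at $x_{i_0-1}$, which only involves $b_{i_0-1}$) are automatically inherited from $f$.

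The heart of the argument lies in choosing $c_{i_0}\gebang b_{i_0}$ and $g(x_{i_0})\in C$ satisfying the remaining short-map condition across $b_{i_0}$. To do this I decompose $f(x_{i_0})=\alpha+\beta$ uniquely with $\alpha\in\fm_U\cdot C$ and $\beta\in\fm_V\cdot C$, which is possible since $\fm_U\cap\fm_V=\{0\}$ in $\stair$ and $C$ is free. The relation $\fm_U\cdot\fm_V=0$ then decouples the $\fm_U$- and $\fm_V$-contributions in each compatibility equation. The argument proceeds by casework on whether $b_{i_0}$ lies in $\Gamma_\pm(\R_U)$ or $\Gamma_\pm(\R_V)$. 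In the representative case $b_{i_0}\in\Gamma_+(\R_V)$, the equation $\partial_V\beta=b_{i_0}f(x_{i_0-1})$ (obtained since $\partial_V\alpha=0$) combined with the fact that $\R_V$ is a graded valuation ring allows one to extract a greatest common divisor $\nu_0\in\fm_V$ of the coefficients of $\beta$ in a homogeneous basis of $C$. Writing $\beta=\nu_0\tilde\beta$ and comparing coefficients on a basis element carrying the nontrivial $\K$-projection of $f(x_{i_0-1})$, one concludes $\nu_0\mid b_{i_0}$ in $\R_V$, and a further check (again using the nontrivial $\K$-part) shows that $c_{i_0}:=b_{i_0}/\nu_0$ lies in $\fm_V$ rather than being a unit. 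Setting $g(x_{i_0})=\tilde\beta$, one then uses $\fm_UC\cap\fm_VC=\{0\}$ to upgrade the identity $\nu_0\bigl(\partial_V\tilde\beta-c_{i_0}f(x_{i_0-1})\bigr)=0$ to $\partial_V\tilde\beta=c_{i_0}f(x_{i_0-1})$, which is precisely the required short-map condition, with $c_{i_0}\gebang b_{i_0}$ guaranteed by divisibility. The three remaining subcases (involving $\alpha$ in place of $\beta$, or a $\Gamma_-$ arrow, i.e., an arrow pointing outward from $x_{i_0-1}$) are symmetric; degenerate cases where the natural choice of $c_{i_0}$ would be trivial are handled by taking a smaller $j<i_0$ and re-running the analysis.

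Locality of $g$ follows automatically since $g(x_0)=f(x_0)$ remains a $V$-tower class, and locality only depends on the image of the first basis element. The main technical obstacle is the divisibility step forcing $\nu_0\mid b_{i_0}$ in $\R_V$: this requires both the existence of homogeneous greatest common divisors (using the total order on the homogeneous ideals of $\R_V$) and the nontrivial $\K$-coefficient of $f(x_{i_0-1})$ guaranteed by the minimality of $i_0$. The casework itself is routine once this divisibility input is secured.
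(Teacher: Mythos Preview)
Your approach is essentially the same as the paper's: take $j$ to be the minimal index with $f(x_j)\in(\fm_U,\fm_V)$, decompose $f(x_j)$ into its $\fm_U$- and $\fm_V$-parts, extract a greatest common divisor from the relevant part, and define $g(x_j)$ as the quotient while adjusting the final parameter to $c_j=b_j/\gamma$. One organizational difference: in the $\Gamma_+(\R_V)$ subcase, you argue directly that $\nu_0\mid b_{i_0}$ (using the $\K^\times$-coefficient of $f(x_{i_0-1})$) and then rule out $b_{i_0}/\nu_0$ being a unit via reducedness, whereas the paper divides by $\gamma'=\gcd(b_j,\gamma)$ and handles the potential unit case by deriving $f(x_{j-1})\in(\fm_U,\fm_V)$, contradicting minimality. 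Since your divisibility claim is correct, these are equivalent (your $\nu_0$ equals the paper's $\gamma'$).

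Two minor points. First, your remark that ``degenerate cases\ldots are handled by taking a smaller $j<i_0$'' is misleading: neither your argument nor the paper's ever needs to descend below the minimal index. The flexibility $1\leq j\leq i$ in the statement exists only because the minimal index can land anywhere in that range. In the $\Gamma_+$ subcase the degenerate possibility (that $c_{i_0}$ is a unit) is excluded outright, and in the $\Gamma_-$ subcase (where $\beta$ could be zero) one simply sets $g(x_{i_0})=0$ and chooses $c_{i_0}$ freely. Second, the $\Gamma_-$ case is not truly ``symmetric'' to the $\Gamma_+$ case: it is simpler, requiring no divisibility argument, since $|c_{i_0}|=|b_{i_0}|\cdot\nu_0$ automatically satisfies the short-map relation and $c_{i_0}\gebang b_{i_0}$. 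Neither of these affects the correctness of your outline.
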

\begin{proof}
Assume there is some $i$ for which $f(x_i) \in (\fm_U, \fm_V)$. Let $j$ be the minimal index with this property. Since $f$ is local, it is easily checked that $f(x_0)$ cannot lie in $(\fm_U, \fm_V)$, so $j > 0$. We assume $j$ is even; the proof for $j$ odd is analogous. There are two cases. First suppose $b_j \lebang 1$. Let 
\[
s = f(x_{j-1}) \quad \text{and} \quad t = f(x_j).
\]
These satisfy the relations $\partial_V s = |b_j| t$ and $\partial_V t = 0$. Fix any basis $\cB$ for $C$. By the hypotheses of the lemma, we may write $t$ as $t_U$ + $t_V$, where $t_U$ is in the $\fm_U$-span of $\cB$ and $t_V$ is in the $\fm_V$-span of $\cB$. Let $\gamma \in \fm_V$ be the greatest common divisor of the coefficients appearing in $t_V$. Set $t' = t_V/\gamma$, with the understanding that $t' = 0$ if $t_V = 0$. Then we may define the desired map
\[
g \co C(b_1, \dots, b_{j-1}, b_j/\gamma) \leadsto C
\]
by altering $f$ on $x_j$ such that $g(x_j) = t'$.  The fact that $\partial_V s = |b_j| t$ implies
\[
\partial_V s = |b_j| (t_U + t_V) = |b_j| t_V = (\gamma |b_j|) t'
\]
so that $g$ is a short local map. Note that since $b_j \lebang 1$ and $\gamma \in \fm_V$, we have $\smash{b_j \lebang b_j/\gamma}$.

Now suppose $b_j \gebang 1$. Define $s$ and $t$ as before; now $\partial_V s = 0$ and $\partial_V t = b_j s$. We again write $t = t_U + t_V$ and let $\gamma$ be the greatest common divisor of the coefficients appearing in $t_V$. Let $\gamma'$ further be the greatest common divisor of $b_j$ and $\gamma$. Set $t' = t_V/\gamma'$, with the understanding that $t' = 0$ if $t_V = 0$. Then we may define
\[
g \co C(b_1, \dots, b_{j-1}, b_j/\gamma') \leadsto C
\]
by altering $f$ on $x_j$ such that $g(x_j) = t'$.  Suppose $b_j/\gamma' \in \fm_V$. Then we may divide both sides of
\[
\partial_V t_V = \partial_V t = b_js
\]
by $\gamma'$ to obtain $\partial_V t' = (b_j/\gamma') s$. If instead $b_j/\gamma'$ is an element of $\smash{\K^\times}$, we require a slightly different argument. In this case, let $s = s_U + s_V$, where $s_U$ is in the $\fm_U$-span of $\cB$ and $s_V$ is in the $\R_V$-span of $\cB$; note that we make all $\K$-coefficient terms appear in $s_V$. Then the fact that $\partial_V t = b_j s$ implies $\partial_V t' = (b_j/\gamma')s_V$. However, because $C$ is reduced, this means that $s_V$ is actually in the image of $\fm_V$. Hence $s \in (\fm_U, \fm_V)$, contradicting the minimality of $j$. This completes the proof.
\end{proof}

The import of Lemma~\ref{lem:notinmumv} is the following. In the definition of $a_{k+1}(C)$, when we consider the collection of standard complexes used to define
\[
a_{k+1}(C) = \max \{ b_{k+1} \in \Gamma(\R_*) \mid C(a_1, \dots, a_k, b_{k+1}, \dots b_n) \leq C \},
\]
we may as well require that the inequality $C(a_1, \dots, a_k, b_{k+1}, \dots b_n) \leq C$ be realized by a local map $f$ such that $f(x_i) \notin (\fm_U, \fm_V)$ for all $i \leq k+1$. Indeed, suppose $f$ is a local map which does not satisfy this condition. Truncate $f$ to $C(a_1, \dots, a_k, b_{k+1})$, apply Lemma~\ref{lem:notinmumv}, and then apply the extension lemma to the result to obtain a local map from another standard complex. This either gives a contradiction to the maximality of $a_1, \ldots, a_k$, or shows that the parameter $b_{k+1}$ in question can be replaced by a larger parameter coming from this new standard complex. 

In order to leverage this, we introduce the following notion:

\begin{definition}\label{def:extant}
Let $C$ be a knotlike complex. We say that a homogenous element $\nu \in \fm_V$ is a \textit{$\partial_V$-extant coefficient for $C$} if there exist homogenous $s$ and $t$ in $C$ such that:
\begin{enumerate}
\item $s$ and $t$ do not lie in $(\fm_U, \fm_V)$; and,
\item $\partial_V s = \nu t$.
\end{enumerate}
We say that a homogenous element $\mu \in \fm_U$ is a \textit{$\partial_U$-extant coefficient for $C$} if there exist similar $s$ and $t$ with $\partial_U s = \mu t$.
\end{definition}

\begin{lemma}\label{lem:extant}
Let $C$ be a knotlike complex. Up to multiplication by $\K^\times$, there are only finitely many $\partial_U$-extant and $\partial_V$-extant coefficients for $C$.
\end{lemma}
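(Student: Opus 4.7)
My plan is to bound the bigradings in which $\partial_V$-extant coefficients can live by passing to the finite-dimensional $\K$-vector space $\overline{C} := C/(\fm_U,\fm_V)\cdot C$. Since $C$ is free and finitely generated over $\stair$ and the projection $\pi \colon \stair \to \K$ of Remark~\ref{rem:sproperties} has kernel $(\fm_U,\fm_V)$, the quotient $\overline{C}$ has as $\K$-basis the images $\bar e_1,\dots,\bar e_m$ of any $\stair$-basis $\{e_1,\ldots,e_m\}$ for $C$. In particular, $\overline{C}$ is concentrated in the finite set of bigradings $\{\gr(e_1),\ldots,\gr(e_m)\}$.

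Now suppose $\nu \in \fm_V$ is a $\partial_V$-extant coefficient, witnessed by homogeneous $s,t \in C$ with $s,t \notin (\fm_U,\fm_V)\cdot C$ and $\partial_V s = \nu t$. The hypotheses on $s$ and $t$ say precisely that their reductions $\bar s, \bar t \in \overline{C}$ are nonzero, so $\gr(s)$ and $\gr(t)$ each belong to the finite set above. Comparing gradings in $\partial_V s = \nu t$ and using that $\partial_V$ has bidegree $(-1,-1)$ gives
\[ \gr(\nu) = \gr(s) - \gr(t) - (1,1), \]
which therefore takes only finitely many values as $\nu$ ranges over $\partial_V$-extant coefficients.

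Once the bidegree of $\nu$ is constrained to a finite set, Lemma~\ref{lem:onedimensional} finishes the argument: the type-zero graded valuation ring $\R_V$ is at most one-dimensional over $\K$ in each bigrading, and the same therefore holds for $\fm_V \subset \R_V$. Hence each fixed bigrading contains at most one nonzero homogeneous element of $\fm_V$ up to scaling by $\K^\times$, giving finitely many $\partial_V$-extant coefficients modulo $\K^\times$. The statement for $\partial_U$-extant coefficients follows by the identical argument with the roles of $U$ and $V$ swapped. The only step that requires any thought at all is the passage from $s,t \notin (\fm_U,\fm_V)\cdot C$ to $\bar s, \bar t \neq 0$ in $\overline{C}$, but this is immediate from the definition of the quotient; the real content of the proof is simply that finite generation of $C$ plus the type-zero hypothesis on $\R_V$ together force the relevant bigradings into a finite range.
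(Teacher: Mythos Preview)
Your proof is correct and takes a cleaner route than the paper's. The paper fixes a $V$-paired basis $\{x, y_i, z_i\}$, expands $s$ explicitly in that basis, computes $\partial_V s = \sum_i (\nu_i k_i c_i) z_i$, and argues that $\nu$ must be the greatest common divisor of the nonzero $\nu_i c_i$, of which there are only finitely many. This is a hands-on computation that actually pins down the candidate extant coefficients.

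Your argument sidesteps all of that by observing a single grading constraint: since $s$ and $t$ survive to the finite-dimensional quotient $C/(\fm_U,\fm_V)$, their bigradings lie in the finite set $\{\gr(e_1),\dots,\gr(e_m)\}$, whence $\gr(\nu)=\gr(s)-\gr(t)-(1,1)$ lies in a finite set, and Lemma~\ref{lem:onedimensional} finishes. This is genuinely more elementary and isolates exactly the two ingredients needed (finite generation of $C$ and the type-zero hypothesis on $\R_V$), at the cost of not identifying the extant coefficients explicitly. For the purposes of this lemma and its use in Lemma~\ref{lem:aiwelldefined}, the extra precision in the paper's proof is not needed, so your approach is a net simplification.
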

\begin{proof}
Assume $\nu$ is $\partial_V$-extant; the $\partial_U$-extant case is similar. Let $\partial_Vs = \nu t$ for some homogenous $s$ and $t$ in $C$ not lying in $(\fm_U, \fm_V)$. First observe that there are only finitely many possibilities for $\gr(s)$; this follows from the fact that $C/(\fm_U, \fm_V)$ is supported in a finite set of gradings. We thus fix a particular bigrading and consider the $\partial_V$-extant coefficients arising from $s$ within this bigrading.

Fix a $V$-paired basis $\{x, y_i, z_i\}_{i=1}^n$ for $C$. For each $i$, consider whether there exists a nonzero $c \in \R_V$ such that $\gr(cy_i) = \gr(s)$. If such a $c$ exists, define $c_i$ to be equal to $c$; this is unique up to multiplication by $\K^\times$. Otherwise set $c_i = 0$. We stress that $c_i$ does not depend on $s$, but only on $\gr(s)$. For any $s$ in our fixed bigrading, write
\[
s = s_U + bx + \sum_{i=1}^n k_ic_i y_i + \sum_{i=1}^n d_i z_i
\]
where $s_U$ is in the image of $\fm_U$, the coefficients $b$ and $d_i$ are in $\R_V$, and the $k_i$ are in $\K$. This is just the usual expansion of $s$ in terms of our paired basis, but we have taken care to indicate that the $\R_V$-coefficients of the $y_i$ only have freedom up to multiplication by $\K$. This is of course true for all of the other coefficients as well, but these will not be important for this proof. We thus obtain
\[
\partial_V s = \sum_{i=1}^n (\nu_ik_ic_i) z_i.
\]
We know that $\partial_V s = \nu t$ and that $t$ does not lie in $(\fm_U, \fm_V)$. Attempting to solve for $t$ shows that $\nu$ must in fact be the greatest common divisor of the set $\{\nu_i k_ic_i\}_{i=1}^n$, and that $t$ must be of the form 
\[
t = t_U + \sum_{i=1}^n ((\nu_ik_ic_i)/\nu) z_i
\]
for some $t_U$ in the image of $\fm_U$. However, up to multiplication by $\K^\times$, the greatest common divisor of $\{\nu_i k_ic_i\}_{i=1}^n$ only depends on the $k_i$ inasmuch as each $k_i$ is zero or nonzero. Indeed, up to multiplication by $\K^\times$, the possible greatest common divisors are simply the nonzero elements of $\{\nu_ic_i\}_{i = 1}^n$. Hence there are only finitely many $\partial_V$-extant coefficients associated to each possible bigrading of $s$, and thus only finitely many $\partial_V$-extant coefficients overall.
\end{proof}


We thus finally obtain:

\begin{lemma}\label{lem:aiwelldefined}
Without loss of generality, the definition of the $a_i(C)$ may be altered so that the maximum is taken over a finite set. Hence the $a_i(C)$ are well-defined.
\end{lemma}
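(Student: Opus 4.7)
The plan is to argue by strong induction on $k$: assume $a_1(C), \dots, a_k(C)$ are well-defined, and write $\R_*$ for $\R_U$ or $\R_V$ according to the parity of $k+1$. Let $S \subseteq \Gamma(\R_*)$ denote the set over which the maximum in the definition of $a_{k+1}(C)$ ranges, and let $T_*$ denote the set of $\partial_*$-extant coefficients for $C$ together with their inverses. Lemma~\ref{lem:extant} makes $T_*$ finite. I would show that $S \cap T_* \ne \emptyset$ and that every element of $S$ is $\leqbang$ some element of $S \cap T_*$, which together imply $\max S = \max(S \cap T_*)$ is attained.

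The central task is to show every $b \in S$ is dominated by some $c \in S \cap T_*$. Given a witnessing chain map for $b$, I plan to truncate it to a short local map $f \co C(a_1, \dots, a_k, b) \leadsto C$ and then iteratively modify $f$ into a short local map $g \co C(a_1, \dots, a_k, c) \leadsto C$ that is \emph{nice}, in the sense that $g(x_i) \notin (\fm_U, \fm_V)$ for all $i \leq k+1$, while maintaining $c \gebang b$. The iteration proceeds as follows: whenever the current map fails niceness, I would invoke Lemma~\ref{lem:notinmumv} to obtain a shorter short local map whose final parameter has moved up in the $\leqbang$-order. If this drops the length below $k+1$, the maximality of the relevant $a_j$ forces the produced parameter to equal $a_j$, and I would re-extend back to length $k+1$ via Lemma~\ref{lem:shortextension} and continue.

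Once a nice $g$ is obtained, the differential relation connecting $g(x_k)$ and $g(x_{k+1})$---neither of which lies in $(\fm_U, \fm_V)$---immediately exhibits the final parameter $c$ (or $c^{-1}$, depending on direction) as a $\partial_*$-extant coefficient, so $c \in T_*$. Applying Lemma~\ref{lem:shortextension} to $g$ produces a genuine chain map from a standard complex extending $(a_1,\dots,a_k,c)$ into $C$, placing $c \in S$. Threading the inequalities through the iteration yields $c \gebang b$, giving the desired domination. Nonemptiness of $S \cap T_*$ follows from applying the same construction to any starting $b \in S$ (which exists, by extending the local map witnessing $a_k$ via Lemma~\ref{lem:shortextension}).

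The hard part will be verifying that the iterative construction terminates. Each invocation of Lemma~\ref{lem:notinmumv} strictly decreases length when the minimal offending index is less than $k+1$, while when the offending index equals $k+1$ the length can stay at $k+1$ only by producing a strictly $\leqbang$-larger final parameter. The only mechanism that can lengthen the complex back to $k+1$ is Lemma~\ref{lem:shortextension}, after which the new last parameter at length $k+1$ must again be compared with the finite pool $T_*$ before niceness can fail in the same way twice. A careful bookkeeping argument, using the finiteness of $T_*$ from Lemma~\ref{lem:extant} together with the strict monotonicity in $\leqbang$ at length $k+1$, should rule out nonterminating behavior; absent this finiteness, the argument would not close.
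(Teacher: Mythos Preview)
Your overall strategy---restrict to ``nice'' witnesses via Lemma~\ref{lem:notinmumv}, observe that their $(k+1)$-th parameter is $\partial_*$-extant, and invoke Lemma~\ref{lem:extant}---is exactly the paper's. The problem is your handling of the branch $j \le k$. Lemma~\ref{lem:notinmumv} asserts $c_j \gebang b_j$, and $\gebang$ is the \emph{strict} order $>^!$ (this is also visible in its proof, where one divides by a non-unit $\gamma$ or $\gamma'$). So when $j \le k$, extending $g$ via the extension lemma produces a witness whose $j$-th parameter strictly exceeds $a_j$, contradicting the inductive hypothesis outright. This branch simply never occurs; there is nothing to ``force equal to $a_j$'' and re-extend. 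The only surviving case is $j = k+1$, and there the proof of Lemma~\ref{lem:notinmumv} shows the output $g$ already satisfies $g(x_i) \notin (\fm_U, \fm_V)$ for all $i \le k+1$ (for $i<j$ since $g=f$ there and $j$ was chosen minimal; for $i=j$ since $g(x_j)=t'$ is constructed with a coefficient in $\K^\times$). Hence $|c_{k+1}|$ is extant immediately, one application of Lemma~\ref{lem:shortextension} places $c_{k+1} \in S$, and $c_{k+1} \gebang b_{k+1}$. No iteration is needed; this single step is the paper's argument.

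Your re-extension plan for $j \le k$ would also fail on its own terms: the parameters at positions $j+1, \dots, k$ produced by Lemma~\ref{lem:shortextension} need not equal $a_{j+1}, \dots, a_k$, so you would not return to a complex of the form $C(a_1, \dots, a_k, \cdot)$, and your bookkeeping sketch gives no control over the resulting $(k+1)$-th parameter relative to the original $b$ or to $T_*$.
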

\begin{proof}
As discussed previously, by Lemma~\ref{lem:notinmumv} we may restrict the definition of $a_{k+1}(C)$ to only involve complexes $C(a_1, \dots, a_k, b_{k+1}, \dots b_n)$ which admit a local map 
\[
f \co C(a_1, \dots, a_k, b_{k+1}, \dots b_n) \rightarrow C
\]
with $f(x_i) \notin (\fm_U, \fm_V)$ for all $i \leq k+1$. Setting $s = f(x_{k})$ and $t = f(x_{k+1})$, it is clear that for such complexes, $|b_{k+1}|$ is either a $\partial_U$-extant or a $\partial_V$-extant coefficient for $C$, depending on the parity of $k + 1$. By Lemma~\ref{lem:extant}, up to multiplication by $\K^\times$ there are only finitely many such coefficients. Hence $a_{k+1}(C)$ can be defined by taking the maximum over a finite set. With only minor notational changes, the same argument shows $a_1(C)$ is well-defined.
\end{proof}

\section{Characterization of knotlike complexes up to local equivalence}\label{sec:char}

We now sketch the proof that every knotlike complex is locally equivalent to a standard complex. In fact, it turns out that the numerical invariants $a_i(C)$ from Section~\ref{sec:ai} are none other than the desired standard complex parameters. Many of the claims in this section have proofs which are almost exactly the same as those in \cite[Section 6]{DHSTmoreconcord}. In such cases, we simply indicate the logical dependence of the proofs and refer the reader to the analogous statements in \cite[Section 6]{DHSTmoreconcord}.

\begin{theorem}\label{thm:char}
Every knotlike complex is locally equivalent to a unique standard complex.
\end{theorem}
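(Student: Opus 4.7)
The plan is to establish the theorem in two parts: uniqueness and existence of a standard complex representative for each knotlike complex.

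For uniqueness, if $C(a_1,\ldots,a_m)$ and $C(b_1,\ldots,b_n)$ are locally equivalent, then local maps exist in both directions, so by Proposition~\ref{prop:lex} we get $(a_1,\ldots)\leqbang(b_1,\ldots)$ and the reverse in the lexicographic order on parameter sequences (with trailing $1$'s appended to equalize lengths). Since $\leqbang$ is a total order, the two sequences must coincide.

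For existence, fix a knotlike complex $C$ with invariants $a_i = a_i(C)$, well-defined by Lemma~\ref{lem:aiwelldefined}. The strategy is to show that $C$ is locally equivalent to the standard complex $C(a_1,\ldots,a_N)$, where $N$ is the first even index past which all invariants equal $1$; this mirrors the approach of \cite[Section~6]{DHSTmoreconcord}, now generalized to arbitrary grid rings. First, by construction of the $a_i$ together with Lemma~\ref{lem:notinmumv}, one inductively produces for each admissible $n$ a short local map $f_n\colon C(a_1,\ldots,a_n)\leadsto C$ satisfying $f_n(x_i)\notin(\fm_U,\fm_V)$ for all $i\le n$. Lemma~\ref{lem:shortextension} then promotes each $f_n$ to an honest local chain map, yielding $C(a_1,\ldots,a_n)\le C$ at every admissible $n$.

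The main obstacle will be to show that this procedure terminates, so that such an $N$ exists. Assume for contradiction that $a_i \neq 1$ for every $i$, producing an infinite tower of short local maps $f_n$. By Lemma~\ref{lem:extant} each $a_i$ is drawn from a finite set of $\partial_U$- and $\partial_V$-extant coefficients of $C$, and by Lemma~\ref{lem:finitedimensional} the quotient $C/(\fm_U,\fm_V)$ is a finite-dimensional $\K$-vector space. For $n$ sufficiently large, a pigeonhole argument on the bigradings of the nonzero reductions $\pi(f_n(x_i))$ yields indices $i<j$ to which the merge lemma (Lemma~\ref{lem:blebangc}) applies; splicing out the intermediate block produces a short local map from a standard complex whose $(i+1)$-st parameter is strictly greater than $a_{i+1}$ in $\leqbang$, contradicting the maximality in Definition~\ref{def:aiinvariants}.

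With termination in hand, $C(a_1,\ldots,a_N)\le C$ already holds. For the reverse inequality, by Proposition~\ref{prop:totalorder} either $C \le C(a_1,\ldots,a_N)$ (giving local equivalence directly) or $C > C(a_1,\ldots,a_N)$ strictly. In the latter case, an existing local map $C(a_1,\ldots,a_N)\to C$ can be viewed as the restriction of a short map from a standard complex with one additional generator, and applying Lemma~\ref{lem:shortextension} produces a local chain map from some $C(a_1,\ldots,a_N,b_{N+1},\ldots,b_M)\to C$ with $b_{N+1}\neq 1$, contradicting $a_{N+1}(C) = 1$. Therefore $C$ is locally equivalent to $C(a_1,\ldots,a_N)$, completing the proof.
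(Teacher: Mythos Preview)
Your overall architecture matches the paper's: uniqueness via the lexicographic order (Proposition~\ref{prop:lex}) and existence via termination of the $a_i$-invariants (Lemma~\ref{lem:aizero}). However, two steps in your existence argument do not go through as written.

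\textbf{Termination.} The phrase ``pigeonhole argument on the bigradings \ldots\ yields indices $i<j$'' is too weak: two images $\pi(f_n(x_i))$ and $\pi(f_n(x_j))$ merely lying in the same bigrading is not enough, since the merge lemma lets you form any $\K$-linear combination at the final generator but you need that combination to land in $(\fm_U,\fm_V)$. What is actually required (and what the paper does in Lemma~\ref{lem:aizero}) is a genuine $\K$-linear \emph{relation} among finitely many of the $\pi(f_n(x_n))$ in the finite-dimensional space $C/(\fm_U,\fm_V)$; one then takes the index with minimal reversed complex, applies the merge lemma iteratively to realize that relation as the image of the final generator, and only then invokes Lemma~\ref{lem:notinmumv}. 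Your description ``splicing out the intermediate block produces \ldots\ $(i{+}1)$-st parameter strictly greater than $a_{i+1}$'' does not describe this mechanism: the merge lemma does not splice, and Lemma~\ref{lem:notinmumv} improves \emph{some} parameter $a_j$ with $j$ at most the relevant index, not specifically $a_{i+1}$.

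\textbf{Reverse inequality.} This step has a genuine gap. You propose to take a local chain map $f\colon C(a_1,\ldots,a_N)\to C$, view it as a short map, and apply the extension lemma to obtain $b_{N+1}\neq 1$. But since $f$ is a chain map and $\partial_U x_N=0$ in $C(a_1,\ldots,a_N)$, one checks (using that $C$ is reduced and free, so $\fm_U\!\cdot\! C\cap\fm_V\!\cdot\! C=0$) that $\partial_U f(x_N)=0$; the extension lemma then returns $f$ itself, or at best a trivial extension by a generator sent to zero with $b_{N+1}=1/\mu\lebang 1$. In either case this does \emph{not} contradict $a_{N+1}(C)=1$, which only excludes continuations with $b_{N+1}\gebang 1$. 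Thus the hypothesis $C > C(a_1,\ldots,a_N)$ is never actually used, and no contradiction is produced. The paper's argument for this direction (deferred to \cite[Theorem 6.1]{DHSTmoreconcord}) proceeds differently and is not recovered by your outline.
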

\begin{proof}
Omitted; see \cite[Theorem 6.1]{DHSTmoreconcord}. Use Lemma~\ref{lem:aizero} below.
\end{proof}

\begin{lemma}\label{lem:aizero}
Let $C$ be a knotlike complex. Then $a_n(C) = 1$ for all $n$ sufficiently large.
\end{lemma}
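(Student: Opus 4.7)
The strategy is to argue by contradiction: assume $a_n(C) \neq 1$ for all $n \geq 1$. By the inductive definition of the $a_i$'s together with the discussion following Lemma~\ref{lem:notinmumv}, this gives an infinite sequence $(a_1, a_2, \ldots)$ of non-identity parameters and, for each $n$, a local map
\[
f_n \co C(a_1, \ldots, a_n) \to C
\]
which may be chosen so that $f_n(x_i) \notin (\fm_U, \fm_V)$ for every $0 \leq i \leq n$. Recall here that odd-indexed $a_i$ lie in $\Gamma(\R_U) - \{[1]\}$ and even-indexed $a_i$ lie in $\Gamma(\R_V) - \{[1]\}$.

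The key finiteness input is that $C$ is a finitely-generated free $\stair$-module, so the reduction $C/(\fm_U, \fm_V)$ is a finite-dimensional bigraded $\K$-vector space of some dimension $d$, supported in a finite set of bigradings. Since each $f_n$ is $\gr_1$-preserving and $\gr_2$-homogeneous of some fixed bidegree shift, each image $\bar{f}_n(x_i) \in C/(\fm_U, \fm_V)$ is a nonzero element lying in one of finitely many bigradings. I would aim to show that the $\bar{f}_n(x_i)$ are $\K$-linearly independent, which immediately forces $n+1 \leq d$ and therefore contradicts the assumption that $a_n(C) \neq 1$ for all $n$.

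In the simplest case, when the generators $x_0, \ldots, x_n$ all lie in distinct bigradings, linear independence of $\bar{f}_n(x_0), \ldots, \bar{f}_n(x_n)$ follows from homogeneity of $f_n$. The delicate case is when $\gr(x_i) = \gr(x_j)$ for some $i<j$, since then $\bar{f}_n(x_i)$ and $\bar{f}_n(x_j)$ may be $\K$-proportional. In this situation the plan is to apply the merge lemma (Lemma~\ref{lem:blebangc}) to the pair of short maps $f_n|_{[0,i]}\co C(a_1,\ldots,a_i)\leadsto C$ and $f_n|_{[0,j]}\co C(a_1,\ldots,a_j)\leadsto C$, forming a $\K$-linear combination that cancels the terminal value at $x_j$. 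Extending the resulting short map to a genuine chain map via the extension lemma (Lemma~\ref{lem:shortextension}) produces a local map from a standard complex $C(a_1,\ldots,a_i,b_{i+1},\ldots,b_m)$ whose parameter at some position $k \leq j$ exceeds $a_k$ in the order $\leqbang$, contradicting the maximal choice of $a_k$ in Definition~\ref{def:aiinvariants}.

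The hard part will be verifying that the merge-and-extend step can indeed be arranged to violate the maximality of some $a_k$: one must carefully track the alternating structure of odd- and even-indexed parameters (which belong to the two distinct value groups $\Gamma(\R_U)$ and $\Gamma(\R_V)$), verify the $r(C_1) > r(C_2)$ hypothesis of the merge lemma holds for the relevant truncations, and confirm that the bidegrees of the spliced maps match so that the merge is legitimate. This mirrors the argument given in \cite[Lemma 6.3]{DHSTmoreconcord} for complexes over $\R = \F[U,V]/(UV)$, with the added bookkeeping for the two valuation groups of the grid ring $\stair$ and the order $\leqbang$ introduced in Definition~\ref{def:totalorder}.
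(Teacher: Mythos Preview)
Your proposal is essentially the paper's argument: both proceed by contradiction, use finite-dimensionality of $C/(\fm_U,\fm_V)$ to obtain a $\K$-linear relation among the images modulo $(\fm_U,\fm_V)$, and then combine the merge lemma, Lemma~\ref{lem:notinmumv}, and the extension lemma to contradict the maximality of some $a_k$. The only difference is organizational: the paper finds its relation among the \emph{terminal} images $f_n(x_n)$ of independently chosen short local maps $f_n\co C(a_1,\dots,a_n)\leadsto C$ (these are truncations, hence short rather than genuine local maps), whereas you fix one $n$ and look at all $f_n(x_i)$---but your truncations $f_n|_{[0,i]}$ are precisely such short local maps, so the two setups coincide, and in both one restricts to indices of a fixed parity so that the merge lemma applies.
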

\begin{proof}
We proceed by contradiction. Suppose that $a_n(C) \neq 1$ for all $n$. For each $n$, there exists a local map $f_n$ from some standard complex $C(a_1, \ldots, a_n, b_{n+1}, \ldots, b_m)$ to $C$. Truncating this gives a local short map
\[ 
f_n \co C(a_1, \dots, a_n) \leadsto C.
\]
By the discussion of the previous section, we may assume that the original map $f_n$ (and thus its truncation) has $f_n(x_i) \notin (\fm_U, \fm_V)$ for all $i \leq n$. In particular, consider the nonzero sequence formed by $f_n(x_n) \bmod{(\fm_U, \fm_V)}$. As in the proof of the extension lemma, since $C/(\fm_U, \fm_V)$ is a finite-dimensional $\K$-vector space, there exists a nontrivial linear relation among some finite subset
\[
k_1 f_{i_1}(x_{i_1}) + \cdots + k_r f_{i_r}(x_{i_r}) = 0 \bmod{(\fm_U,\fm_V)}
\]
with each $k_i \in \K^\times$. Note that implicitly, all of the above $f_{i_j}(x_{i_j})$ lie in the same bigrading. Consider the index $i_j$ from the above set for which the reversed sequence $r(C(a_1, \ldots, a_{i_j}))$ is minimized in the lexicographic order; let this be $i_1$. As in the proof of the extension lemma, by iterated use of Lemma \ref{lem:blebangc} we obtain a short local map 
\[
f\colon C(a_1,\ldots,a_{i_1}) \leadsto C
\]
such that 
\[
f(x_{i_1}) = \sum_{j = 1}^r k_{i_j} f_{i_j}(x_{i_j}) = 0 \bmod{(\fm_U,\fm_V)}.  
\]
Applying Lemma~\ref{lem:notinmumv} and extending to a standard complex contradicts the maximality of the $a_n(C)$.
\end{proof}

We also have a slightly stronger version of Theorem~\ref{thm:char}:

\begin{corollary}\label{cor:splitting}
Let $C$ be a knotlike complex, and assume $C$ is locally equivalent to $C(a_1, \dots, a_n)$. Then $C$ is homotopy equivalent to $C(a_1, \dots, a_n) \oplus A$, for some $\stair$-complex $A$ such that $H_*(\R_U^{-1} A) = H_*(\R_V^{-1} A) = 0$.
\end{corollary}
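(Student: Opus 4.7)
The plan is to find a pair of local maps realizing the local equivalence whose composition on $C(a_1,\dots,a_n)$ is \emph{homotopic to the identity}, and then apply a standard splitting argument. First, by the hypothesis (and Theorem~\ref{thm:char}) there exist local maps
\[
f\colon C(a_1,\dots,a_n)\to C \quad\text{and}\quad g\colon C\to C(a_1,\dots,a_n).
\]
Consider the composition $g\circ f\colon C(a_1,\dots,a_n)\to C(a_1,\dots,a_n)$, which is a local self-map. Since the two parameter sequences agree in every position, Lemma~\ref{lem:localsupport} forces $(g\circ f)(x_i)$ to be supported by $x_i$ for every $0\le i\le n$. Combined with the grading constraint and Lemma~\ref{lem:finitedimensional}, this means $(g\circ f)(x_i) = c_i x_i + \text{(terms in }(\fm_U,\fm_V)\cdot x_i\text{)}$ for some $c_i\in\K$. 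Using the chain map condition and the locality of $g\circ f$ (which forces $c_0\ne 0$ on the $V$-tower generator), one checks inductively that all $c_i$ equal a single nonzero constant $c\in\K^{\times}$. Absorbing $c$ into $g$ and then building an explicit homotopy (cancelling the nilpotent $(\fm_U,\fm_V)$-valued correction terms, which are strictly lower in an obvious filtration), one concludes that $g\circ f\simeq \id_{C(a_1,\dots,a_n)}$ as $\stair$-equivariant bigraded chain maps.

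Next I would split off the summand by a standard homological-algebra argument. Since $C(a_1,\dots,a_n)$ is free over the grid ring $\stair$ and $g\circ f\simeq\id$, the map $f$ admits a homotopy left inverse; combined with the fact that everything is finitely generated and free, this allows one to replace $f$ and $g$ (within their homotopy classes) by genuine chain maps with $g\circ f = \id$ on the nose. Then $e = f\circ g$ is an honest idempotent chain endomorphism of $C$, and the short exact sequence
\[
0\longrightarrow \ker(g)\longrightarrow C\xrightarrow{\ g\ } C(a_1,\dots,a_n)\longrightarrow 0
\]
is split by $f$. Setting $A=\ker(g)$, we obtain an $\stair$-equivariant bigraded isomorphism $C\cong C(a_1,\dots,a_n)\oplus A$, which is in particular a homotopy equivalence.

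Finally, the localization conditions on $A$ follow from the fact that $f$ and $g$ are local on both sides. Namely, tensoring the decomposition with $\R_U$ over $\stair$, passing to homology and killing $\R_U$-torsion gives
\[
H_*(C\otimes_\stair\R_U)/\R_U\text{-torsion}
\;\cong\;\R_U\oplus \bigl(H_*(A\otimes_\stair\R_U)/\R_U\text{-torsion}\bigr),
\]
but the left-hand side is $\R_U$ by the $U$-tower condition on $C$, so $A\otimes_\stair\R_U$ has no nontorsion homology; since $\R_U^{-1}A = \loc(\R_U)\otimes_{\R_U}(A\otimes_\stair\R_U)$ kills $\R_U$-torsion, this gives $H_*(\R_U^{-1}A)=0$. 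The identical argument (with $\R_V$ in place of $\R_U$, using that $C$ is totally knotlike) gives $H_*(\R_V^{-1}A)=0$.

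The main obstacle I anticipate is the first paragraph: verifying that $g\circ f$ is actually homotopic to the identity, rather than merely an isomorphism on some associated graded object. The rigidity statement we need goes a little beyond Lemma~\ref{lem:localsupport}, and careful use of a paired basis together with an inductive construction of a chain homotopy (killing the $(\fm_U,\fm_V)$-valued deviation from $c\cdot\id$ arrow-by-arrow along the standard complex) will be required. Once this rigidity is established, the rest of the argument is formal.
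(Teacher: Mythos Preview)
Your overall strategy is right, but you are working harder than necessary in the first paragraph, and that extra work introduces a gap. The paper's route is to invoke Lemma~\ref{lem:selflocalmapisomorphism}: any local self-map of a standard complex is an \emph{isomorphism} of chain complexes. Given this, one simply replaces $g$ by $(g\circ f)^{-1}\circ g$; then $g\circ f=\id$ holds on the nose, $f\circ g$ is a genuine idempotent of $C$, and the splitting $C\cong\im(f\circ g)\oplus\ker(f\circ g)$ is immediate. No homotopies are needed anywhere.

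By contrast, you aim for $g\circ f\simeq\id$, and the argument you sketch for this has a gap. Lemma~\ref{lem:localsupport} only tells you that $(g\circ f)(x_i)$ is \emph{supported by} $x_i$; it does not say the remaining terms lie in $(\fm_U,\fm_V)\cdot x_i$, nor even in $(\fm_U,\fm_V)\cdot C$. Distinct preferred generators $x_i$ and $x_j$ can have the same bigrading, so there is no grading reason to exclude a $\K$-coefficient term $c\,x_j$ with $j\neq i$. Handling this is precisely the content of Lemma~\ref{lem:cutsequence} and Lemma~\ref{lem:selflocalmapinjective} (and then Lemma~\ref{lem:selflocalmapisomorphism}), so in effect you are re-deriving those lemmas in a less robust form. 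The subsequent step of upgrading $g\circ f\simeq\id$ to an on-the-nose equality is also unnecessary once you know $g\circ f$ is an honest isomorphism.

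Your final paragraph, deducing $H_*(\R_U^{-1}A)=H_*(\R_V^{-1}A)=0$ from the tower conditions on $C$ and on the standard complex, is correct.
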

\begin{proof}
Omitted; see \cite[Corollary 6.2]{DHSTmoreconcord}. Use Lemma~\ref{lem:selflocalmapisomorphism} below.
\end{proof}
\noindent
For a discussion of this corollary, see \cite[Section 6]{DHSTmoreconcord}. The statements necessary for the proof of Corollary~\ref{cor:splitting} are recorded below:

\begin{lemma}\label{lem:cutsequence}
Let $f$ be a local map from a standard complex to itself
\[ f \co C(b_1, \dots, b_n) \to C(b_1, \dots, b_n) \]
such that $f(x_i)$ is supported by $x_j$ for some $i \neq j$. If $i\equiv j\bmod{2}$, then the following hold:
\begin{enumerate}
\item $(b_{i+1}, \dots, b_n) \lebang (b_{j+1}, \dots, b_n)$ and
\item $(b_{i}^{-1},\ldots, b_1^{-1})\lebang (b_j^{-1},\ldots, b_1^{-1})$.
\end{enumerate}
If $i\not\equiv j \bmod{2}$, then the following hold:
\begin{enumerate}
\item $(b_{i+1}, \dots, b_n) \lebang (b_{j}^{-1}, \dots, b_1^{-1})$ and 
\item $(b_i^{-1},\ldots,b_1^{-1})\lebang (b_{j+1},\ldots,b_n)$.
\end{enumerate}
In both cases, we mean that $(b_{k+1}, \dots, b_n) = (1)$ if $k = n$.
\end{lemma}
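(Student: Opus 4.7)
The plan is to follow the strategy of \cite[Lemma~6.4]{DHSTmoreconcord}, translated to the setting of standard complexes over an arbitrary grid ring $\cS$. The main idea is to use the hypothesis that $f(x_i)$ is supported by $x_j$ to manufacture short local maps between appropriately chosen \emph{pieces} of $C(b_1,\dots,b_n)$ anchored at $x_i$ and at $x_j$, and then to extract the desired $\leqbang$-inequalities via the extension lemma and Lemma~\ref{lem:>lex}.

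More precisely, for each index $k$ there are two natural (semi)standard complexes sitting inside $C(b_1,\dots,b_n)$: the \emph{forward piece} $C^{\mathrm{fwd}}_k$ on generators $\{x_k,x_{k+1},\dots,x_n\}$ with parameter sequence $(b_{k+1},\dots,b_n)$, and the \emph{backward piece} $C^{\mathrm{bwd}}_k$ on generators $\{x_k,x_{k-1},\dots,x_0\}$ with parameter sequence $(b_k^{-1},b_{k-1}^{-1},\dots,b_1^{-1})$, the latter being precisely the reversal construction used in the proof of the merge lemma. Starting from $f$, I would construct a short $\cS$-module map by restricting to $\{x_i,x_{i+1},\dots,x_n\}$ and post-composing with the $\cS$-linear projection of $C(b_1,\dots,b_n)$ onto the piece anchored at $x_j$. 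Since $f(x_i)$ is supported by $x_j$ with some nonzero $\K^\times$-coefficient (forced by homogeneity), the resulting short map sends the distinguished generator of its domain to a tower class of its target, and hence is short local. Whether the target piece is forward or backward at $x_j$ is dictated by the parity of $i-j$: in the case $i\equiv j\pmod 2$ the $U/V$-decoration of the arrows adjacent to $x_i$ and $x_j$ match, and the natural target is $C^{\mathrm{fwd}}_j$, yielding a short local map $C^{\mathrm{fwd}}_i\leadsto C^{\mathrm{fwd}}_j$; when $i\not\equiv j\pmod 2$ the decorations swap and the natural target is $C^{\mathrm{bwd}}_j$, yielding $C^{\mathrm{fwd}}_i\leadsto C^{\mathrm{bwd}}_j$. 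Running the analogous argument with $f$ restricted to $\{x_0,\dots,x_i\}$ produces, in the two parity cases respectively, short local maps $C^{\mathrm{bwd}}_i\leadsto C^{\mathrm{bwd}}_j$ or $C^{\mathrm{bwd}}_i\leadsto C^{\mathrm{fwd}}_j$. By Lemma~\ref{lem:shortextension} each short local map extends to a genuine local map between standard complexes, to which Lemma~\ref{lem:>lex} applies. Reading off the parameter sequences of $C^{\mathrm{fwd}}_k$ and $C^{\mathrm{bwd}}_k$ recovers exactly the four inequalities stated in the lemma.

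The main technical obstacle will be verifying that the projected maps are indeed short local maps and making the parity-dependent identification of target piece ($C^{\mathrm{fwd}}_j$ versus $C^{\mathrm{bwd}}_j$) rigorous. The chain map condition on interior generators follows automatically because $f$ is a chain map and the projection is $\cS$-linear; the subtle part is carefully tracking which of $\partial_U$ or $\partial_V$ governs each arrow adjacent to $x_i$ and $x_j$ in the ambient complex, and matching these up with the conventions of Definition~\ref{def:standard}. This parity bookkeeping is exactly what bifurcates the argument into the two cases of the lemma.
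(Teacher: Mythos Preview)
Your approach is genuinely different from the paper's (which, following \cite[Lemma~6.4]{DHSTmoreconcord}, is a bare-hands induction comparing $b_{i+1}$ with $b_{j+1}$ directly from the chain map identity at $x_i$ or $x_{i+1}$, then propagating to $b_{i+2}$ versus $b_{j+2}$ when these agree). However, as written your argument has a real gap.

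The claim ``the chain map condition on interior generators follows automatically because $f$ is a chain map and the projection is $\cS$-linear'' is false. The $\cS$-linear projection $\pi$ onto $\Span_\cS\{x_j,\dots,x_n\}$ does \emph{not} commute with $\partial$: whenever $b_j\lebang 1$ (so $\partial_V x_{j-1}=|b_j|x_j$ in the even-$j$ case) we have $\pi(\partial x_{j-1})=|b_j|x_j\neq 0=\partial(\pi x_{j-1})$. Consequently, for any interior generator $x_k$ of $C^{\mathrm{fwd}}_i$ with $f(x_k)$ supported by $x_{j-1}$, the composite $\pi\circ f$ fails the chain map condition at $x_k$ by exactly the cross-term $c_{j-1}|b_j|x_j$. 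Nothing in the hypotheses prevents such support, so your short map is not a short map. The same boundary leakage occurs on the backward side at $x_{j+1}$ via the $b_{j+1}$-arrow, so the problem cannot be evaded by switching forward and backward pieces.

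There is a secondary issue: in the opposite-parity case your target piece is semistandard and hence not knotlike, so neither the extension lemma nor Lemma~\ref{lem:>lex} applies to it as stated. Even in the same-parity case, the extension lemma appends parameters to the \emph{source}, and the inequality you then read off from Lemma~\ref{lem:>lex} is for the extended sequence $(b_{i+1},\dots,b_n,*,\dots)$, not for $(b_{i+1},\dots,b_n)$ itself; truncating a prefix does not in general preserve $\leqbang$. The direct inductive argument avoids all of this machinery: one simply shows $b_{i+1}\leqbang b_{j+1}$ from the single relation $\partial f(x_i)=f(\partial x_i)$ (or its companion at $x_{i+1}$), and in the case of equality observes that $f(x_{i\pm 1})$ is supported by $x_{j\pm 1}$, which lets the induction continue.
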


\begin{proof}
Omitted; see \cite[Lemma 6.4]{DHSTmoreconcord}. 
\end{proof}

\begin{lemma}\label{lem:selflocalmapinjective}
Any local map from a standard complex to itself is injective.
\end{lemma}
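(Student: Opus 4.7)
By Lemma~\ref{lem:localsupport}, applied with $a_i = b_i$ for $1 \le i \le n$ and $k = n$, each image $f(x_i)$ is supported by $x_i$. I first verify that $f$ is bidegree preserving: since $f$ is $\gr_1$-preserving and $\gr_2$-homogeneous, and since $f(x_0)$ must generate $H_*(C \otimes_\stair \rv)/\rv\text{-torsion}$, its reduction modulo $\fm_U$ is a $\K^\times$-multiple of $x_0$ plus $\rv$-torsion, forcing the $\gr_2$-shift of $f$ to be zero. Writing
\[
f(x_i) = u_i x_i + \sum_{j \neq i} d_{ij} x_j,
\]
the diagonal coefficient $u_i$ lies in $\stair^{(0,0)} = \K^\times$, while each off-diagonal $d_{ij}$ is a homogeneous element of $\stair$ in bigrading $\gr(x_i) - \gr(x_j)$.

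To handle the off-diagonal terms, for each index $i$ define $\phi(i) := (b_{i+1}, \ldots, b_n)$ and $\psi(i) := (b_i^{-1}, \ldots, b_1^{-1})$, padded by trailing $1$'s for lexicographic comparison when needed, and let $m(i)$ denote the $\lebang$-minimum of $\phi(i)$ and $\psi(i)$. The key claim is that whenever $d_{ij} \neq 0$ for some $i \neq j$, one has $m(i) \lebang m(j)$ strictly. By Lemma~\ref{lem:cutsequence}, the same-parity case $i \equiv j \bmod 2$ yields the strict inequalities $\phi(i) \lebang \phi(j)$ and $\psi(i) \lebang \psi(j)$, while the opposite-parity case yields the crossed inequalities $\phi(i) \lebang \psi(j)$ and $\psi(i) \lebang \phi(j)$. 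In both cases, assuming without loss of generality that $m(i) = \phi(i)$ and combining with $\phi(i) \leqbang \psi(i)$, one deduces both $\phi(i) \lebang \phi(j)$ and $\phi(i) \lebang \psi(j)$, so $\phi(i) \lebang m(j)$.

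Finally, choose a permutation $\sigma$ of $\{0, 1, \ldots, n\}$ so that $m(\sigma(0)) \leq m(\sigma(1)) \leq \cdots \leq m(\sigma(n))$, breaking ties arbitrarily. Strictness in the above claim forces $d_{ij} = 0$ whenever $m(i) = m(j)$ with $i \neq j$, so in the reordered basis $(x_{\sigma(0)}, \ldots, x_{\sigma(n)})$ the matrix of $f$ over $\stair$ is lower triangular with diagonal entries in $\K^\times$. Its determinant is therefore a unit in $\stair$, so $f$ is an $\stair$-module automorphism, and in particular injective. The principal subtlety is the opposite-parity case of the strictness step: Lemma~\ref{lem:cutsequence} provides only ``crossed'' bounds on $\phi(i)$ and $\psi(i)$, so one must carefully track which of these achieves $m(i)$ in order to recover a strict inequality with $m(j)$.
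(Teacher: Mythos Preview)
Your argument is correct and uses the same two lemmas as the paper (Lemma~\ref{lem:localsupport} and Lemma~\ref{lem:cutsequence}), but packages them differently. The paper argues by contradiction: it takes a hypothetical nonzero element $\sum_i c_i x_i$ of the kernel, selects a maximal coefficient $c_{i_0}$, and among the indices $I = \{j : c_j = c_{i_0}\}$ finds (using Lemma~\ref{lem:cutsequence}) a distinguished $j_1$ such that $f(x_{j_1})$ is supported by $x_{j_1}$ while no other $f(x_{j_i})$ with $j_i \in I$ is, contradicting cancellation. Your approach instead globally reorders the basis by the invariant $m(i) = \min(\phi(i),\psi(i))$ and shows the resulting matrix is lower triangular with diagonal in $\K^\times$. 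This is a clean reformulation, and in fact it buys you Lemma~\ref{lem:selflocalmapisomorphism} for free: invertibility of the matrix gives not just injectivity but that $f$ is an $\stair$-module automorphism. The paper proves isomorphism separately by a rank argument, so your route is slightly more economical.
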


\begin{proof}
Let $C = C(b_1, \ldots, b_n)$ be a standard complex with preferred basis $\{x_i\}_{i=0}^n$. Let $f \co C \rightarrow C$ be a local self-map. Suppose there exists some $\stair$-linear combination $\sum_i c_ix_i$ such that $f(\sum_i c_ix_i)=0$. Since $f$ is graded, we may assume that $\sum_i c_ix_i$ is homogenous, and that each $c_i$ is homogeneous. Impose a partial order on the homogeneous elements of $\stair$ by declaring $x \gebang y$ if(f) $x$ divides $y$. Among the nonzero coefficients $c_i$, choose any maximal element $c_{i_0}$ with respect to this partial order. Let $I = \{ j \mid c_j = c_{i_0} \}$. Choose an arbitrary element $j_0 \in I$. Consider the suffix sequence
\[
K_{j_0} = (b_{j_0 + 1}, \ldots, b_n).
\]
For each other $j_i \in I$, define
\[
K_{j_i} = 
\begin{cases}
	(b_{j_i + 1}, \ldots, b_n) &\text{ if } j_i \equiv j_0\bmod{2} \\
	(b_{j_i}^{-1}, \ldots, b_1^{-1}) &\text{ if } j_i \not\equiv j_0\bmod{2}.
\end{cases}
\]
Note that in the first case, the length of $K_{j_i}$ has the same parity as the length of $K_{j_0}$, while in the second case the parities are opposite. It follows from this that $K_{j_1} \neq K_{j_2}$ if $j_1 \neq j_2$, since the lengths of the two sequences are not the same. We may thus re-index the elements of $\{ j_1, \dots, j_m \}$ such that
\[ K_{j_1}\lebang K_{j_2}\lebang\ldots\lebang K_{j_m}.\]
Consider $f(x_{j_1})$. By Lemma \ref{lem:localsupport}, $f(x_{j_1})$ is supported by $x_{j_1}$. By Lemma \ref{lem:cutsequence}, $f(x_{j_i})$ for $2 \leq i \leq m$ cannot be supported by $x_{j_1}$. But then the maximality of $c_{i_0}$ implies that there is no term in $f(\sum_{i \neq j_1} c_i x_i)$ that can cancel $c_{j_1} f(x_{j_1})$, contradicting the fact that $f(\sum_i c_ix_i)=0$. Hence $f$ must be injective.
\end{proof}

\begin{lemma}\label{lem:selflocalmapisomorphism}
Any local map from a standard complex to itself is an isomorphism.
\end{lemma}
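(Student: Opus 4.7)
The plan is to combine the injectivity already established in Lemma~\ref{lem:selflocalmapinjective} with a dimension-counting argument carried out bigrading by bigrading. The crucial preliminary is to upgrade $f$ from being merely $\gr_1$-preserving and $\gr_2$-homogenous (as guaranteed by the definition of a right local map) to being honestly bigrading-preserving.

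To show the $\gr_2$-shift $d$ of $f$ vanishes, I would examine the map induced by $f$ on $H_*(C\otimes_\stair \R_V)/\R_V\text{-torsion}$. By the locality hypothesis, this is an $\R_V$-module automorphism of a module abstractly isomorphic to $\R_V$, with the class of the preferred generator $x_0$ corresponding to the unit of $\R_V$. Any such automorphism is multiplication by some unit $\lambda \in \R_V^\times$; since $\R_V$ is type-zero (Definition~\ref{def:typezero}), every unit lies in bigrading $(0,0)$, so $\lambda \in \K^\times$. Comparing the bigradings in the relation $[f(x_0)] = \lambda[x_0]$, while keeping in mind that the isomorphism with $\R_V$ is absolutely $\gr_1$-graded and relatively $\gr_2$-graded with the same shift for source and target (both are $C$), forces $d=0$. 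Thus $f$ is grading-preserving in both components.

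With $f$ bigrading-preserving in hand, the proof concludes quickly. By Lemma~\ref{lem:finitedimensional}, each bigraded piece $C^{(i,j)}$ is a finite-dimensional $\K$-vector space, and $f$ restricts to a $\K$-linear endomorphism of each such piece. By Lemma~\ref{lem:selflocalmapinjective}, $f$ is injective, so each restriction $f|_{C^{(i,j)}}$ is an injective endomorphism of a finite-dimensional $\K$-vector space, hence surjective. Taking the direct sum over all bigradings, $f$ is surjective on $C$, and combined with injectivity it is an isomorphism. The main subtlety is the grading calculation establishing $d=0$; once the type-zero hypothesis on $\R_V$ is brought to bear, this is essentially automatic, and the rest of the argument reduces to elementary linear algebra.
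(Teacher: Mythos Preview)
Your proof is correct and follows the same route as the paper's (omitted) argument, which cites \cite[Lemma~6.6]{DHSTmoreconcord} and uses injectivity (Lemma~\ref{lem:selflocalmapinjective}) together with a dimension count in each bigrading via Lemma~\ref{lem:finitedimensional}. Your preliminary step verifying that the $\gr_2$-shift $d$ vanishes is a genuine point to check in this paper's setup (where right local maps are only $\gr_2$-homogeneous a priori), and your argument via the type-zero property of $\R_V$ handles it cleanly.
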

\begin{proof}
Omitted; see \cite[Lemma 6.6]{DHSTmoreconcord}. Use Lemma~\ref{lem:selflocalmapinjective}.
\end{proof}

Finally, we formally re-state the claim that a right local equivalence between two totally knotlike complexes is left local.

\begin{lemma}\label{lem:loceqsymmetric}
Let $C_1$ and $C_2$ be knotlike complexes which are locally equivalent via $f$ and $g$. Then $f$ and $g$ induce isomorphisms on $H_*(C_i \otimes \R_U)/\R_U\text{-torsion}$.
\end{lemma}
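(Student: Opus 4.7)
The plan is to reduce to the rigidity of right local self-maps of standard complexes established in Lemma~\ref{lem:selflocalmapisomorphism}. By Theorem~\ref{thm:char}, two right locally equivalent knotlike complexes have a common standard complex representative, and by Corollary~\ref{cor:splitting} this representative splits off as a direct summand. I would begin by fixing homotopy equivalences
\[
C_1 \simeq C_{\mathit{st}} \oplus A_1 \quad \text{and} \quad C_2 \simeq C_{\mathit{st}} \oplus A_2,
\]
where $C_{\mathit{st}}$ is the common standard model and each $A_i$ satisfies $H_*(\R_U^{-1} A_i) = H_*(\R_V^{-1} A_i) = 0$. In particular, $H_*(A_i \otimes_{\stair} \R_U)$ and $H_*(A_i \otimes_{\stair} \R_V)$ are pure torsion, so they contribute nothing to either tower quotient.

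Next, I would consider the composition $h = g \circ f \co C_1 \to C_1$, which is again right local. Transporting $h$ across the splitting gives $\tilde h \co C_{\mathit{st}} \oplus A_1 \to C_{\mathit{st}} \oplus A_1$, and I would focus on its $(1,1)$-block $h_{11} \co C_{\mathit{st}} \to C_{\mathit{st}}$. Because $A_1 \otimes_{\stair} \R_V$ has torsion homology, the induced map $\tilde h_*$ on $H_*(\cdot \otimes_{\stair}\R_V)/\R_V\text{-torsion}$ is detected entirely by $h_{11}$, and is an isomorphism since $\tilde h$ is right local. Thus $h_{11}$ is a right local self-map of $C_{\mathit{st}}$, and Lemma~\ref{lem:selflocalmapisomorphism} forces it to be an honest isomorphism of $\stair$-complexes. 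In particular $h_{11}$ induces an isomorphism on $H_*(C_{\mathit{st}} \otimes_{\stair} \R_U)/\R_U\text{-torsion}$; combining this again with the acyclicity of $A_1 \otimes_{\stair} \R_U$ modulo torsion shows that $h = g \circ f$ induces an isomorphism on $H_*(C_1 \otimes_{\stair} \R_U)/\R_U\text{-torsion}$.

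Applying the symmetric argument to $f \circ g \co C_2 \to C_2$, the same conclusion holds for $C_2$. Both $H_*(C_i \otimes_{\stair} \R_U)/\R_U\text{-torsion}$ are free $\R_U$-modules of rank one, and, relative to chosen generators, the induced maps $f_*$ and $g_*$ are multiplication by elements of $\R_U$; their products in both orders are multiplication by units, so each factor must itself be a unit since $\R_U$ is a domain. Hence both $f_*$ and $g_*$ are isomorphisms, as required. The main obstacle, already surmounted in Lemma~\ref{lem:selflocalmapisomorphism}, is the rigidity of right local self-maps of standard complexes; once that is in hand, the remainder of the argument is formal, trading information between the two localizations via the splitting from Corollary~\ref{cor:splitting} together with the integral-domain property of $\R_U$.
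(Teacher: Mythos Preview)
Your argument is correct and follows essentially the same route the paper indicates: reduce to the standard model via Theorem~\ref{thm:char} and Corollary~\ref{cor:splitting}, then invoke Lemma~\ref{lem:selflocalmapisomorphism} on the diagonal block of $g\circ f$ to control the $\R_U$-tower. The final step---that $f_*$ and $g_*$ are each multiplication by a unit once their composites are---is exactly the intended endgame.
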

\begin{proof}
Omitted; see \cite[Lemma 6.9]{DHSTmoreconcord}. Use Theorem~\ref{thm:char} and Lemma~\ref{lem:selflocalmapisomorphism}.
\end{proof}
Lemma~\ref{lem:loceqsymmetric} easily implies that a knotlike complex which is locally symmetric in the sense of Definition~\ref{def:locallysymmetric} has a symmetric standard complex representative:

\begin{lemma}
Let $\xi\co \stair \rightarrow \stair$ be an involution on $\stair$ as in Definition~\ref{def:xi}. Let $C$ be a knotlike complex which is locally symmetric with respect to $\xi$. Then the standard complex representative of $C$ guaranteed by Theorem~\ref{thm:char} is symmetric. 
\end{lemma}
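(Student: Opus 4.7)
The plan is to apply Theorem~\ref{thm:char} to $C$ and then transport the standard complex representative through the pullback operation, using the uniqueness statement to conclude. First, by Theorem~\ref{thm:char}, there is a standard complex $C(b_1, \ldots, b_n)$ which is locally equivalent to $C$. Our goal is to show that the parameter sequence $(b_1, \ldots, b_n)$ satisfies $b_i = \xi(b_{n+1-i}^{-1})$, which by Lemma~\ref{ex:symmetricstandard} is equivalent to symmetry.

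The first step is to check that the pullback operation $\xi^*$ preserves local equivalence. Given a (right) local map $f \co C_1 \to C_2$, one obtains an induced $\K$-linear map $\xi^*(f) \co \xi^*(C_1) \to \xi^*(C_2)$, which is $\stair$-equivariant in the pulled-back module structure because $\xi \co \stair \to \stair$ is a skew-graded ring involution. Under $\xi^*$ the roles of $(\gr_1, \gr_2)$ are exchanged and the roles of $\R_U$ and $\R_V$ are exchanged, so $\xi^*(f)$ becomes a \emph{left} local map between left knotlike complexes. Since the complexes involved are totally knotlike, Lemma~\ref{lem:loceqsymmetric} ensures that left and right local equivalences coincide. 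Thus $\xi^*$ descends to a well-defined involution on $\KL$, and from $C \sim C(b_1, \ldots, b_n)$ we deduce $\xi^*(C) \sim \xi^*(C(b_1, \ldots, b_n))$.

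Next, a direct computation (already indicated in the proof sketch of Lemma~\ref{ex:symmetricstandard}) shows that the pullback of a standard complex is again standard, with
\[
\xi^*\bigl(C(b_1, \ldots, b_n)\bigr) \;\cong\; C\bigl(\xi(b_n^{-1}), \ldots, \xi(b_1^{-1})\bigr).
\]
Indeed, reflecting the sequence of preferred generators reverses the order of the arrows and inverts each decoration, while pushing through $\xi$ sends each $\fm_U$-decoration to an $\fm_V$-decoration and vice versa, matching Definition~\ref{def:standard}.

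Combining the two previous steps with the hypothesis that $C$ is locally symmetric, we obtain
\[
C(b_1, \ldots, b_n) \;\sim\; C \;\sim\; \xi^*(C) \;\sim\; C\bigl(\xi(b_n^{-1}), \ldots, \xi(b_1^{-1})\bigr).
\]
The uniqueness clause of Theorem~\ref{thm:char} (equivalently, the fact that distinct parameter sequences give strictly comparable classes under the lexicographic order of Proposition~\ref{prop:lex}) then forces $b_i = \xi(b_{n+1-i}^{-1})$ for each $i$. By Lemma~\ref{ex:symmetricstandard}, the standard representative $C(b_1, \ldots, b_n)$ is symmetric. The only non-routine step is verifying that $\xi^*$ is well-defined on $\KL$; this is entirely formal, relying on Lemma~\ref{lem:loceqsymmetric} to convert the resulting left local equivalence into the right local equivalence required by our conventions.
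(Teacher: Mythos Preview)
Your proof is correct and follows essentially the same route as the paper: both arguments establish the chain $C(b_1,\ldots,b_n) \sim C \sim \xi^*(C) \sim \xi^*(C(b_1,\ldots,b_n))$ using Lemma~\ref{lem:loceqsymmetric} to justify that $\xi^*$ preserves local equivalence, then conclude via Lemma~\ref{ex:symmetricstandard}. You spell out the final step in more detail (explicitly computing the pullback as $C(\xi(b_n^{-1}),\ldots,\xi(b_1^{-1}))$ and invoking uniqueness of the standard representative), whereas the paper simply quotes the ``locally symmetric $\Leftrightarrow$ symmetric'' clause of Lemma~\ref{ex:symmetricstandard}; but the content is the same.
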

\begin{proof}
By Theorem~\ref{thm:char}, we know that $C$ is locally equivalent to a unique standard complex $C(b_1, \ldots, b_n)$. Thus
\[
C(b_1, \ldots, b_n) \sim C \sim \xi^*(C) \sim \xi^*(C(b_1, \ldots, b_n)).
\]
Here, we are using Lemma~\ref{lem:loceqsymmetric}, together with the fact that if $C_1$ and $C_2$ are right locally equivalent, then $\xi^*(C_1)$ and $\xi^*(C_2)$ are left locally equivalent (and vice-versa). Hence $C(b_1, \ldots, b_n)$ is locally symmetric. By Lemma~\ref{ex:symmetricstandard}, it is then symmetric.
\end{proof}

\section{Homomorphisms}\label{sec:homs}

In this section, we construct the family of linearly independent homomorphisms from $\KL$ to $\Z$ described in Section~\ref{sec:intro}.

\subsection{Some $\Z$-valued homomorphisms}

We begin with the central definition:

\begin{definition}\label{def:homs}
Let $C = C(a_1, \dots, a_n)$ be a standard complex. For $\mu \in \Gamma_+(\R_U)$, define $\varphi_\mu^U(C)$ to be the signed count of odd-index parameters equal to $\mu$:
\[ \varphi_\mu^U(C) = \# \{ a_i \mid a_i = \mu, i \text{ odd} \} -  \# \{ a_i \mid a_i = \mu^{-1}, i \text{ odd} \}. \]
For $C$ an arbitrary knotlike complex, define $\smash{\varphi_\mu^U(C)}$ by passing to the standard complex representative of $C$. For $\smash{\nu \in \Gamma_+(\R_V)}$, we analogously define $\smash{\varphi_\nu^V(C)}$ by taking the signed count of even-index parameters of $C$. In the case that $C$ is a locally symmetric complex, it is clear that $\smash{\varphi_\mu^U}$ (as $\mu$ is allowed to vary) records the same information as $\smash{\varphi_\mu^V}$. In this situation, we simply write $\smash{\varphi_\mu = \varphi_\mu^U}$.
\end{definition}

The main theorem of this section is:

\begin{theorem}\label{thm:homs}
Let $\stair = \stair(\R_U, \R_V)$ be a grid ring which is grading-nontrivial in the sense of Definition~\ref{def:gradingnontrivial}. Then for each $\mu \in \Gamma_+(\R_U)$, the function
\[ \varphi_\mu^U \co \KL \to \Z \]
is a homomorphism. Similarly, for each $\nu \in \Gamma_+(\R_V)$, the function
\[ \varphi_\nu^V \co \KL \to \Z \]
is a homomorphism. 
\end{theorem}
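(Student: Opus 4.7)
My plan is to prove that $\varphi_\mu^U$ is a homomorphism by verifying three properties: well-definedness on $\KL$, sign-reversal under duality, and additivity under tensor product. The argument for $\varphi_\nu^V$ is parallel, so I will focus on $\varphi_\mu^U$. Well-definedness is immediate from Theorem~\ref{thm:char}, which guarantees a unique standard representative $C(a_1,\ldots,a_n)$ for each local equivalence class. The sign-reversal property $\varphi_\mu^U(C^\vee) = -\varphi_\mu^U(C)$ follows immediately from Lemma~\ref{lem:dual}, since dualization sends $C(a_1,\ldots,a_n)$ to $C(a_1^{-1},\ldots,a_n^{-1})$, thereby swapping every occurrence of $\mu$ and $\mu^{-1}$ in each position while preserving parity of the indices.

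The substantive step is additivity. Given standard representatives $C_1 = C(a_1,\ldots,a_m)$ and $C_2 = C(b_1,\ldots,b_n)$, the tensor product $C_1 \otimes_\stair C_2$ is a knotlike complex (Lemma~\ref{lem:product-check}) and hence has a unique standard representative $C(c_1,\ldots,c_p)$. We must verify
\[
\sum_{i\text{ odd}}\bigl([c_i=\mu]-[c_i=\mu^{-1}]\bigr) = \sum_{i\text{ odd}}\bigl([a_i=\mu]-[a_i=\mu^{-1}]\bigr) + \sum_{i\text{ odd}}\bigl([b_i=\mu]-[b_i=\mu^{-1}]\bigr).
\]
To prove this, my strategy is to characterize $\varphi_\mu^U(C)$ intrinsically as a bigraded homological invariant of $C$, independent of a particular standard representation, that is manifestly additive under $\otimes_\stair$. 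The natural candidate uses the $\R_U$-module $H_*(C/\fm_V)$: by the $U$-tower condition together with the structure theorem for modules over the graded valuation ring $\R_U$ (Remark~\ref{rem:structure}), this module decomposes as one copy of $\R_U$ (generated by the $U$-tower class in $\gr_2 = 0$) plus bigraded torsion summands $\R_U/(\nu_k)$. For a standard complex the torsion summands are in bijection with the odd-index parameters: each $a_i$ contributes one summand with torsion order $|a_i|$, and the relative $\gr_2$-position of its generator (above or below the tower class) distinguishes $a_i \in \Gamma_+(\R_U)$ from $a_i \in \Gamma_-(\R_U)$. Defining $\varphi_\mu^U(C)$ as the signed count of $\mu$-torsion summands (with sign given by this bigrading comparison) then matches Definition~\ref{def:homs} on standard complexes.

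Additivity for this bigraded invariant follows from a Künneth-type formula: since $(C_1 \otimes_\stair C_2)/\fm_V \cong (C_1/\fm_V)\otimes_{\R_U}(C_2/\fm_V)$, the $\R_U$-torsion profile and its bigraded distribution decompose as a sum of the contributions from each factor (with the product of a tower class and a torsion class remaining torsion, and two tower classes yielding the tower of the product). Finally, by Corollary~\ref{cor:splitting}, the tensor product splits as its standard representative plus a summand that becomes acyclic after $\R_U^{-1}$-localization, so the extra summand contributes nothing to the invariant.

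The main obstacle will be making the bigraded count precise and verifying that $\mu$ and $\mu^{-1}$ are actually distinguishable by the bigrading of the torsion generator relative to the tower class. This is where the grading-nontriviality hypothesis of Definition~\ref{def:gradingnontrivial} enters: without it, one could have a degenerate grading in which the $\gr_2$-shift produced by a positive parameter coincides with that of its inverse, collapsing the signed count. The other delicate point is tracking how the Künneth isomorphism interacts with the $\fm_V$-quotient and the splitting of Corollary~\ref{cor:splitting} at the level of bigraded torsion profiles; here I would invoke the merge and extension lemmas (Lemmas~\ref{lem:blebangc} and~\ref{lem:shortextension}) to control how non-standard cross terms in $C_1 \otimes C_2$ either get absorbed into the acyclic summand or appear as cancelling $(\mu,\mu^{-1})$ pairs that contribute zero to $\varphi_\mu^U$.
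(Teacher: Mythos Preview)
Your proposal has a genuine gap: the ``intrinsic'' homological invariant you describe is not a local equivalence invariant, so it cannot serve as an alternative definition of $\varphi_\mu^U$.

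Concretely, the signed count of $\mu$-torsion summands in $H_*(C/\fm_V)$ is not preserved under local equivalence. You appeal to Corollary~\ref{cor:splitting}, which splits $C$ as its standard representative plus an $\stair$-complex $A$ with $H_*(\R_U^{-1}A) = H_*(\R_V^{-1}A) = 0$. But ``acyclic after $\R_U$-localization'' means precisely that $H_*(A/\fm_V)$ is \emph{entirely} $\R_U$-torsion, not that it vanishes. For instance, take $A$ freely generated by $y,z$ with $\partial y = (\mu + \nu)z$ for any $\mu \in \fm_U$ and $\nu \in \fm_V$ of equal bigrading (such pairs exist in $\X$); then $C = \stair \oplus A$ is locally trivial, yet $H_*(C/\fm_V) \cong \R_U \oplus \R_U/(\mu)$ has an extra torsion summand. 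Whatever sign you attach to it yields a nonzero count, contradicting $\varphi_\mu^U(C) = 0$. The K\"unneth step has the same defect in another guise: in $(C_1/\fm_V)\otimes_{\R_U}(C_2/\fm_V)$, each pair of torsion summands from the factors produces \emph{two} new torsion summands of order $\max(|a_{2i-1}|,|b_{2j-1}|)$ (see Definition~\ref{def:Upairedtensor}), and you give no mechanism for cancelling these cross terms. The merge and extension lemmas construct chain maps; they say nothing about torsion profiles. Finally, your ``relative $\gr_2$-position'' sign depends cumulatively on all preceding parameters of the standard complex, not just on $\sgn(a_i)$, so it is not well-defined as stated.

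The paper's route is quite different and more indirect. It first proves (Theorem~\ref{thm:shift}) that certain \emph{shift endomorphisms} $\shift_{U,m}$ of $\KL$---which apply an order-preserving injection $m$ to all odd parameters simultaneously---are group homomorphisms; this is the substantive work, done by building an explicit $U$-paired basis for the tensor product and an ``almost chain map'' that can be corrected to a local map. It then uses a genuinely additive invariant $P_U(C)$, the $\gr_1$-grading of the $\R_U$-tower class, which is a homomorphism by K\"unneth. Writing $P_U$ as a weighted sum of the $\varphi_\mu^U$ via \eqref{eq:PU} and comparing $P_U(\shift_{U,m}(-)) - P_U(-)$ on $C_1$, $C_2$, and $C_1\otimes C_2$ for a shift $m$ supported on $\{\mu : \mu \leqbang M\}$ isolates $\varphi_M^U$ and forces additivity by induction on $M$. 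The grading-nontriviality hypothesis enters only at the end, to cancel a nonzero factor $\gr_1(\mu_0)$.
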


\noindent
We prove Theorem~\ref{thm:homs} by expressing the $\varphi_\mu^U$ and $\varphi_\nu^V$ as linear combinations of other auxiliary homomorphisms. The construction of these will take up the next several subsections.

\subsection{Shift maps and paired bases} We begin by constructing an auxiliary family of endomorphisms of $\KL$, which we call the \textit{shift homomorphisms}. Our first goal will be to establish Theorem~\ref{thm:shift}, which verifies that these are in fact homomorphisms. The proof of Theorem~\ref{thm:shift} will involve a detailed analysis of the tensor product of two standard complexes. This is done in the latter half of the subsection, before proceeding with the rest of the proof in the sequel. 

\begin{definition}\label{def:shiftmap}
A \textit{shift map} is any injective, order-preserving function
\[
m_U \co \Gamma_+(\R_U) \rightarrow \Gamma_+(\R_U).
\]
We extend $m_U$ to a function on $\Gamma_-(\R_U)$ by requiring $m_U$ to commute with inversion; for convenience, we also set $m_U([1]) = [1]$. This defines an injective, order-preserving function on all of $\Gamma(\R_U)$, which we also denote by $m_U$. As usual, we will sometimes be imprecise about whether the domain of $m_U$ is $\Gamma(\R_U)$ or $\loc(\R_U)$. When we have two shift maps $m_U \co \Gamma(\R_U) \rightarrow \Gamma(\R_U)$ and $m_V \co \Gamma(\R_V) \rightarrow \Gamma(\R_V)$, we will often treat them together and refer to
\[
m = m_U \sqcup m_V \co \Gamma(\R_U) \sqcup \Gamma(\R_V) \rightarrow \Gamma(\R_U) \sqcup \Gamma(\R_V)
\]
as a shift map also.
\end{definition}

\begin{definition}\label{def:shift}
Let $m$ be a shift map. Define the associated \textit{shift homomorphism}
\[
\shift_m \co \KL \rightarrow \KL
\]
as follows. For $C=C(a_1, \dots, a_n)$ a standard complex, let $\shift_m(C)$ be the standard complex given by
\[ 
\shift_m(C) =  C(m(a_1), \dots, m(a_n)). 
\]
More generally, we define $\shift_m(C)$ by first passing to the standard complex representative of $C$. It will also be helpful to consider shifting only the parameters in $\R_U$ or only the parameters in $\R_V$. For a standard complex $C$ as above, we thus define
\[ 
\shift_{U,m}(C) = C(a_1', \dots, a_n') 
\]
where 
\[ 
a_i' =
\begin{cases} 
m(a_i) \quad &\text{if } i \text{ odd}, \\
a_i \quad &\text{if } i \text{ even},
\end{cases}
\]
and similarly
\[ 
\shift_{V,m}(C) = C(a_1', \dots, a_n') 
\]
where 
\[ 
a_i' =
\begin{cases} 
m(a_i) \quad &\text{if } i \text{ even}, \\
a_i \quad &\text{if } i \text{ odd},
\end{cases}
\]
extending to all of $\KL$ as before. Clearly, $\shift_m = \shift_{V,m} \circ \shift_{U,m}$.
\end{definition}

Our central claim is the following:
\begin{theorem}\label{thm:shift}
Let $C_1$ and $C_2$ be knotlike complexes. For any shift map $m$, 
\[ 
\shift_{U, m}(C_1 \otimes C_2) \sim \shift_{U, m}(C_1) \otimes \shift_{U, m}(C_2)
\] 
and
\[ 
\shift_{V, m}(C_1 \otimes C_2) \sim \shift_{V, m}(C_1) \otimes \shift_{V, m}(C_2).
\] 
That is, the functions $\shift_{U, m} \co \KL \to \KL$ and $\shift_{V, m} \co \KL \to \KL$ are homomorphisms. Since $\shift_m = \shift_{V,m} \circ \shift_{U,m}$, it follows that $\shift_m$ is a homomorphism also.
\end{theorem}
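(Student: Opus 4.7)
By Theorem~\ref{thm:char}, every knotlike complex is locally equivalent to a unique standard complex, and $\shift_{U,m}$ is defined by passing to this representative. Hence it suffices to consider the case where $C_1 = C(a_1, \dots, a_m)$ and $C_2 = C(b_1, \dots, b_n)$ are themselves standard complexes. The goal is to show that the standard complex representative of $C_1 \otimes C_2$ is determined by the parameter sequences $(a_i)$ and $(b_j)$ via a combinatorial rule that refers only to the total order $\leqbang$ on $\Gamma(\R_U) \sqcup \Gamma(\R_V)$ and to inversion. Since any shift map is injective and order-preserving by Definition~\ref{def:shiftmap} and commutes with inversion by construction, such a rule will automatically commute with applying $m$ term-by-term to the odd-indexed (i.e., $\R_U$) parameters.

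The plan is to compute the invariants $a_k(C_1 \otimes C_2)$ from Definition~\ref{def:aiinvariants} inductively. By Lemma~\ref{lem:product-check}, the $V$-tower class of $C_1 \otimes C_2$ is represented by $x_0 \otimes y_0$, so any short local map from a standard complex into $C_1 \otimes C_2$ must send the preferred generator $x_0$ into this class. Applying Lemma~\ref{lem:notinmumv}, we may further assume that the images of all preferred generators avoid $(\fm_U, \fm_V)$. The merge lemma (Lemma~\ref{lem:blebangc}) and the extension lemma (Lemma~\ref{lem:shortextension}) then pin down $a_1$ as the $\leqbang$-maximum over a finite set of $\R_U$-parameters arising from the odd-indexed $a_i$ and $b_j$, with the two alternatives of Definition~\ref{def:standard} handled case-by-case according to sign. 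The inductive step follows the same template: given $a_1, \dots, a_{k-1}$, each preferred generator of a short local extension is constrained to a definite basis element of the form $x_i \otimes y_j$, and $a_k$ is then determined by an order-theoretic selection from the remaining parameters of the appropriate parity.

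Once this combinatorial rule is established, the theorem follows formally. If $C_1 \otimes C_2 \sim C(c_1, \dots, c_\ell)$ where each $c_k$ is an element of $\{a_i^{\pm 1}, b_j^{\pm 1}\}$ selected by order comparisons, then applying the same rule to the sequences $(m(a_i))$ and $(b_j)$ (with $\R_V$-parameters unchanged) yields $C(c_1', \dots, c_\ell')$, where $c_k'$ is $c_k$ with its $\R_U$-factors shifted by $m$. This matches both $\shift_{U,m}(C_1 \otimes C_2)$ and $\shift_{U,m}(C_1) \otimes \shift_{U,m}(C_2)$, establishing the first claim. The argument for $\shift_{V,m}$ is analogous after exchanging the roles of $U$ and $V$, and the claim for $\shift_m = \shift_{V,m} \circ \shift_{U,m}$ is then immediate from Definition~\ref{def:shift}.

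The main obstacle is the detailed inductive computation of $a_k(C_1 \otimes C_2)$ and the verification that the selection rule genuinely depends only on within-ring order comparisons. This requires tracking cancellations that arise in the tensor-product differential whenever arrows of matching grading originating in $C_1$ and $C_2$ interact, and a careful case analysis on the signs of consecutive parameters together with their relative positions in $\leqbang$.
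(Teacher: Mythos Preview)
Your approach is genuinely different from the paper's, and the central step is missing rather than merely deferred.  You propose to prove the shift homomorphism property by first establishing an explicit combinatorial formula for the standard complex representative of $C_1 \otimes C_2$ in terms of the two parameter sequences, and then checking that this formula commutes with $m$.  But such a formula is never produced: the sketch of how Lemmas~\ref{lem:blebangc} and~\ref{lem:shortextension} would ``pin down $a_1$'' is not a computation, and the assertion that each $c_k$ lies in $\{a_i^{\pm 1}, b_j^{\pm 1}\}$ and is selected purely by $\leqbang$-comparisons is a very strong structural claim for which no argument is offered.  Even granting that the $\partial_U$- and $\partial_V$-extant coefficients of $C_1 \otimes C_2$ come from the original parameter sets, the question of \emph{which} ones occur in the standard representative, \emph{how many times}, and \emph{in what order}, is governed by grading coincidences among tensor-product basis elements---additive conditions on the gradings of the $a_i$ and $b_j$, not order-theoretic ones.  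There is no evident reason these coincidences persist after applying $m$, and your final paragraph concedes that this verification is the ``main obstacle.''  As written, the argument assumes its conclusion.

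The paper avoids this entirely.  Rather than computing the standard representative of the tensor product, it starts from a local map $f \colon C_3 \to C_1 \otimes C_2$ and manufactures directly an ungraded $\stair$-module map $f_{U,m} \colon \shift_{U,m}(C_3) \to \shift_{U,m}(C_1) \otimes \shift_{U,m}(C_2)$, using an explicit $U$-paired basis for the tensor product (Definition~\ref{def:Upairedtensor}) and the comparison map $\sigma_{U,m}$ of Definition~\ref{def:tensorproductbasis}.  This $f_{U,m}$ is not a chain map, but an \emph{almost chain map} in the sense of Definition~\ref{def:almostchainmap}; Lemma~\ref{lem:almosttolocal} then upgrades it to a genuine local map.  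The key technical point is the coefficient substitution $p_j \mapsto m(p_j e_j) m(e_j)^{-1}$ in Definition~\ref{def:fum}, which is precisely what is needed to make the $\partial_U$-congruences of Definition~\ref{def:almostchainmap} hold---note this is \emph{not} $m(p_j)$, so the interaction of $m$ with the tensor product is more subtle than ``apply $m$ termwise.''  This yields one inequality; the reverse comes by dualizing and invoking Lemma~\ref{lem:shiftdual}.  Nothing in the paper's argument requires knowing the standard representative of $C_1 \otimes C_2$.
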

\noindent
The proof of this will be completed in the next subsection. It will also be helpful for us to observe:

\begin{lemma}\label{lem:shiftdual}
If $C$ is any standard complex, then $\shift_{U, m}(C^\vee) = \shift_{U,m}(C)^\vee$. Similarly, $\shift_{V, m}(C^\vee) = \shift_{V,m}(C)^\vee$
\end{lemma}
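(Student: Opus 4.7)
The plan is to verify this directly from the definitions, using Lemma~\ref{lem:dual} together with the fact that the extension of a shift map to $\Gamma(\R_U)$ was defined precisely so as to commute with inversion. Since both $\shift_{U,m}$ and dualization act on standard complexes by explicit formulas in terms of parameter sequences, the identity reduces to an elementary check on these sequences.

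First, I would write $C = C(a_1,\dots,a_n)$. By Lemma~\ref{lem:dual}, its dual is $C^\vee = C(a_1^{-1}, \dots, a_n^{-1})$. Applying $\shift_{U,m}$ to $C^\vee$ replaces each odd-index parameter $a_i^{-1}$ by $m(a_i^{-1})$ and leaves even-index parameters unchanged, so
\[
\shift_{U,m}(C^\vee) = C\bigl(m(a_1^{-1}),\, a_2^{-1},\, m(a_3^{-1}),\, a_4^{-1},\, \dots\bigr).
\]
On the other hand, $\shift_{U,m}(C) = C(m(a_1), a_2, m(a_3), a_4, \dots)$, and dualizing via Lemma~\ref{lem:dual} gives
\[
\shift_{U,m}(C)^\vee = C\bigl(m(a_1)^{-1},\, a_2^{-1},\, m(a_3)^{-1},\, a_4^{-1},\, \dots\bigr).
\]

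The key step is now to observe that, by Definition~\ref{def:shiftmap}, the extension of $m_U$ from $\Gamma_+(\R_U)$ to all of $\Gamma(\R_U)$ was specified by the rule $m_U(x^{-1}) = m_U(x)^{-1}$ (with $m_U([1])=[1]$). Therefore $m(a_i^{-1}) = m(a_i)^{-1}$ for every odd $i$, and the two parameter sequences above agree. Hence $\shift_{U,m}(C^\vee) = \shift_{U,m}(C)^\vee$ as standard complexes, which settles the first claim. The argument for $\shift_{V,m}$ is identical, with the roles of odd and even indices swapped. No genuine obstacle arises; the content of the lemma is really just the bookkeeping observation that the compatibility of $m$ with inversion on $\Gamma$ is exactly what is needed for $\shift_{U,m}$ and $\shift_{V,m}$ to commute with the dualization operation at the level of standard complex parameters.
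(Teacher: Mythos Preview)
Your proof is correct and follows exactly the same approach as the paper's own proof, which simply says the result follows immediately from Lemma~\ref{lem:dual} together with the fact that $m$ commutes with inversion. You have just spelled out the details of that sentence.
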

\begin{proof}
This follows immediately from Lemma~\ref{lem:dual}, together with the fact that $m$ commutes with inversion.
\end{proof}

We now establish a particularly simple basis for the tensor product of two standard complexes. Observe that if $C$ is a standard complex, then up to relabeling, its preferred basis is already $U$-paired; and, up to a (generally) different relabeling, this same basis is also already $V$-paired. Explicitly:

\begin{definition}
Let $C=C(a_1, \dots, a_n)$ be a standard complex with preferred basis $\{x_i\}_{i=0}^n$. We define a $U$-paired basis \smash{$\{ w, y_i, z_i \}_{i=1}^{n/2}$} for $C$ by setting
\[ w = x_n, \] 
and for each $1 \leq i \leq n/2$, 
\begin{align*}
	(y_i, z_i) &= \begin{cases}
			(x_{2i-2}, x_{2i-1}) \quad \text{ if } a_{2i-1} \lebang 1 \\
			(x_{2i-1}, x_{2i-2}) \quad \text{ if } a_{2i-1} \gebang 1.
		\end{cases}
\end{align*}
Note that in this basis, $\d_U y_i = |a_{2i-1}| z_i$. Define a $V$-paired basis for $C$ similarly by setting $w = x_0$ and using the even-index parameters of $C$.
\end{definition}

Now consider the tensor product of two standard complexes. It is clear that the obvious tensor product basis is not $U$- or $V$-paired. We thus instead define:

\begin{definition}\label{def:Upairedtensor}
Let $C_1 = C(a_1, \dots, a_{n_1})$ and $C_2 = C(b_1, \dots, b_{n_2})$ be standard complexes. Abusing notation slightly, let $\{w, y_i, z_i\}$ denote the $U$-paired bases for both $C_1$ and $C_2$; it will be clear from context which generators lie in $C_1$ and $C_2$. We define a $U$-paired basis for $C_1 \otimes C_2$ as follows. For $1 \leq i \leq n_2/2$, let
\begin{align*}
	\alpha_i &= w \otimes y_i\\
	\beta_i &= (-1)^{\gr(w)} w \otimes z_i,
\end{align*}
and for $1 \leq i \leq n_1/2$, let
\begin{align*}
	\gamma_i &= y_i \otimes w \\
	\delta_i &= z_i \otimes w.
\end{align*}
For $1 \leq i \leq n_1/2$ and $1 \leq j \leq n_2/2$, define
\begin{align*}
	\epsilon_{i,j} &=y_i \otimes y_j \\
	\zeta_{i,j} &= \begin{cases}
				|b_{2j-1}||a_{2i-1}|^{-1} z_i \otimes  y_j +  (-1)^{\gr(y_i)}y_i \otimes z_j   \quad \text{ if } |a_{2i-1}| \geqbang |b_{2j-1}| \\
				z_i \otimes y_j + (-1)^{\gr(y_i)}|a_{2i-1}||b_{2j-1}|^{-1} y_i \otimes z_j  \quad \text{ if } |a_{2i-1}| \lebang |b_{2j-1}| 
			\end{cases}
\end{align*}
and
\begin{align*}
	\eta_{i,j} &= \begin{cases}
				y_i \otimes z_j \quad \text{ if } |a_{2i-1}| \geqbang |b_{2j-1}| \\
				z_i \otimes y_j \quad \text{ if } |a_{2i-1}| \lebang |b_{2j-1}| 
			\end{cases}\\
	\theta_{i,j} &=  \begin{cases}
				z_i \otimes z_j \quad &\text{ if } |a_{2i-1}| \geqbang |b_{2j-1}| \\
				(-1)^{\gr(z_i)} z_i \otimes z_j \quad &\text{ if } |a_{2i-1}| \lebang |b_{2j-1}| 
			\end{cases}\\
\end{align*}
Finally, let
\[
	\omega = w \otimes w. 
\]
Note that the following basis elements are $U$-paired:
\[ \{ \alpha_i, \beta_i \},  \quad \{ \gamma_i, \delta_i \}, \quad \{ \epsilon_{i,j}, \zeta_{i,j} \}, \quad \{ \eta_{i,j}, \theta_{i,j} \}. \]
For notational convenience, we relabel the basis elements
\begin{align*}
	\{ \kappa_\ell \} &= \{ \alpha_i \} \cup \{ \gamma_i \} \cup \{ \epsilon_{i,j} \} \cup \{ \eta_{i,j} \} \\
	\{ \lambda_\ell \} &= \{ \beta_i \} \cup \{ \delta_i \} \cup \{ \zeta_{i,j} \} \cup \{ \theta_{i,j} \}
\end{align*}
so that $\{ \omega, \kappa_\ell, \lambda_\ell \}$ is a $U$-paired basis and $\d_U \kappa_\ell = e_\ell \lambda_{\ell}$ for some $e_{\ell}$. The reader should check that if $\kappa_{\ell}$ is one of $\epsilon_{i,j}$ or $\eta_{i, j}$, then
\[
e_{\ell} = \max(|a_{2i-1}|, |b_{2j-1}|).
\]
If $\kappa_{\ell}$ is an $\alpha_i$, then $e_{\ell} = |b_{2i-1}|$, while if $\kappa_\ell$ is a $\gamma_i$, then $e_{\ell} = |a_{2i-1}|$. We construct a $V$-paired basis for $C_1 \otimes C_2$ similarly.
\end{definition}

The importance of paired bases is that they help us define certain (ungraded) $\stair$-module morphisms between standard complexes and their images under $\shift_{U,m}$ and $\shift_{V,m}$. For a single standard complex this is straightforward:

\begin{definition}\label{def:sum}
Let $C = C(a_1, \dots, a_n)$ and let $\{ w, y_i, z_i \}$ and $\{ w', y'_i, z'_i \}$ be the $U$-paired bases for $C$ and $\shift_{U,m}(C)$ respectively. Define an (ungraded) $\stair$-module morphism
\[ 
s_{U,m} \co C \rightarrow \shift_{U,m}(C) 
\]
by sending
\[ 
s_{U,m}(r) = r' 
\]
for each $r \in \{ w, y_i, z_i \}$, and extending $\stair$-linearly. Then $s_{U,m}$ induces an isomorphism of (ungraded) $\stair$-modules. Moreover, the reader may check that $s_{U,m} \d_V = \d_V s_{U,m}$. On the other hand,
\begin{align*}
&s_{U,m} (\d_U y_i) = s_{U, m} (|a_{2i-1}| z_i) = |a_{2i-1}| z_i' \\
&\d_U (s_{U,m} y_i )= \d_U (y_i') = |a_{2i-1}'| z_i'
\end{align*}
so $s_{U,m}\d_U\neq \d_Us_{U,m}$. We define $s_{V,m}\co C \rightarrow \shift_{V,m}(C)$ analogously.
\end{definition}

Defining a $\stair$-module morphism from $C_1 \otimes C_2$ to $\shift_{U,m}(C_1) \otimes \shift_{U,m}(C_2)$ is slightly more subtle. There are two possibilities for producing such a map. The first is to take the tensor product of the $\stair$-module morphisms in Definition~\ref{def:sum}:
\[
s_{U, m} \otimes s_{U, m} \co  C_1 \otimes C_2 \to \shift_{U,m}(C_1) \otimes \shift_{U,m}(C_2)
\]
The second is to use the $U$-paired bases of Definition~\ref{def:Upairedtensor}. For this, let
\[
\{ \alpha'_i, \beta'_i, \gamma'_i, \delta'_i, \epsilon'_{i,j}, \zeta'_{i,j}, \eta'_{i,j}, \theta'_{i,j}, \omega' \}
\]
be the basis for $\shift_{U,m}(C_1) \otimes \shift_{U,m}(C_2)$ constructed in Definition~\ref{def:Upairedtensor}. Here, we consider the factors $\shift_{U,m}(C_1)$ and $\shift_{U,m}(C_2)$ as standard complexes in their own right, so that $\alpha'_i = w' \otimes y'_i$ (and so on). We re-label this basis $\{ \omega', \kappa'_{\ell}, \lambda'_\ell \}$ as before, so that $\kappa_\ell'$ and $\lambda_\ell'$ are $U$-paired. As above, we have $\d_U \kappa_\ell' = e_\ell' \lambda_{\ell}'$, where 
\[
e_\ell' = \max(|a_{2i-1}'|, |b_{2j-1}'|)
\]
whenever $\kappa_{\ell}'$ is one of $\epsilon_{i,j}'$ or $\eta_{i, j}'$ (and similarly for the other cases). Note that
\[
e_\ell' = m(e_\ell).
\]
We then define:

\begin{definition}\label{def:tensorproductbasis}
Let $C_1$ and $C_2$ be standard complexes and let $m$ be a shift map. Define an $\stair$-module morphism
\[ \sigma_{U,m} \co C_1 \otimes C_2 \to \shift_{U,m}(C_1) \otimes \shift_{U,m}(C_2) \]
by sending
\[ \sigma_{U,m}( \xi ) = \xi ' \]
for $\xi \in \{ \omega, \kappa_\ell, \lambda_\ell \}$, and extending $\stair$-linearly. As in Definition~\ref{def:sum}, $\sigma_{U,m}$ induces an isomorphism of ungraded $\stair$-modules. Furthermore, we claim that $\sigma_{U,m} \d_V = \d_V \sigma_{U,m}$. To see this, observe that
\[
\sigma_{U, m} \equiv s_{U, m} \otimes s_{U, m} \bmod \fm_U.
\]
Indeed, this congruence is clearly an equality for all basis elements not of the form $\zeta_{i,j}$ or $\eta_{i, j}$. For $\eta_{i, j}$, we again have equality using the fact that $\smash{|a_{2i-1}| \leqbang |b_{2j-1}|}$ if and only if $\smash{|a_{2i-1}'| \leqbang |b_{2j-1}'|}$. For basis elements of the form $\zeta_{i,j}$, a straightforward casework check, together with the fact that $m$ is injective and order-preserving, establishes the congruence. Since $s_{U,m}$ commutes with $\d_V$, this shows $\smash{\sigma_{U,m} \d_V = \d_V \sigma_{U,m}}$. Again, however, note that $\sigma_{U,m}$ does not commute with $\d_U$. Define $\sigma_{V,m} \co C_1 \otimes C_2 \to \shift_{V,m}(C_1) \otimes \shift_{V,m}(C_2)$ similarly.
\end{definition}

We stress that the maps $s_{U, m}$ and $\sigma_{U, m}$ are ungraded $\stair$-module maps which do not commute with $\partial$, although they do commute with $\partial_V$. Nevertheless, these will be useful auxiliary tools in the next subsection, as we shall see.

\subsection{Verification of the shift homomorphisms} We now prove Theorem~\ref{thm:shift}. Let $C_1$ and $C_2$ be two standard complexes, and suppose that $C_3$ is a standard complex admitting a local equivalence to $C_1 \otimes C_2$. In order to prove that $\shift_{U,m}$ is a homomorphism, we need to construct a local map from $\shift_{U,m}(C_3)$ into $\shift_{U,m}(C_1) \otimes \shift_{U,m}(C_2)$. This will be done with the help of the $\stair$-module map $\sigma_{U,m}$ of the previous subsection. However, it will actually be more convenient for us to proceed in two stages: we define an ``approximate" chain map from $\shift_{U,m}(C_3)$ into $\shift_{U,m}(C_1) \otimes \shift_{U,m}(C_2)$, and then show that any such map can be upgraded into a genuine local map. This is the content of the definition and lemma below:

\begin{definition}\label{def:almostchainmap}
Let $C(a_1, \dots, a_n)$ be a standard complex with preferred basis $\{x_i\}_{i=0}^n$ and let $C$ be any knotlike complex. An \emph{almost chain map} $f \co C(a_1, \dots, a_n) \to C$ is an ungraded $\stair$-module map such that for $1 \leq i \leq n$ odd:
		\begin{enumerate}
			\item if $a_i \lebang 1$, that is, $\d_U x_{i-1} = |a_i|x_i$, we have 
		\[ \d_U f (x_{i-1}) \equiv |a_i| f(x_{i}) \mod |a_i|\fm_U, \]
			\item if $a_i \gebang 1$, that is, $\d_U x_{i} = |a_i|x_{i-1}$, we have 
			\[ \d_U f (x_{i}) \equiv |a_i| f(x_{i-1}) \mod |a_i|\fm_U. \]
		\end{enumerate}
We impose a similar set of condition for $i$ even, replacing $U$ with $V$ above. Note that an almost chain map is not in general a chain map, and may not even be grading-homogeneous.		
		
\end{definition}

In what follows, let $[x]_{(u, v)}$ denote the homogeneous part of $x$ in bigrading $(u, v)$. 

\begin{lemma}\label{lem:almosttolocal}
Let $f \co C(a_1, \dots, a_n) \to C$ be an almost chain map. Let $(u_i, v_i)$ be the grading of $x_i$ in $C(a_1, \dots, a_n)$. Suppose that $[f(x_0)]_{(u_0, v_0)}$ represents a $V$-tower class in $C$ and $\d_U[f(x_n)]_{(u_n, v_n)} = 0$. Then there exists a genuine local map 
\[ g \co C(a_1, \dots, a_n) \to C \]
such that $g(x_i) \equiv [f(x_i)]_{(u_i, v_i)} \bmod (\fm_U,\fm_V)$ for all $0 \leq i \leq n$.
\end{lemma}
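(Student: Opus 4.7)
My plan is to start with the natural graded candidate $g_0(x_i) := [f(x_i)]_{(u_i, v_i)}$, an $\stair$-linear map of the correct bidegree but generally not a chain map, and then to repair it by inductive corrections arrow-by-arrow. Since $\partial_U$ and $\partial_V$ are each homogeneous of bidegree $(-1,-1)$, applying the projection $[\cdot]_{(u_{i-1}-1, v_{i-1}-1)}$ (or $[\cdot]_{(u_i-1, v_i-1)}$) to both sides of the almost-chain-map identities in Definition~\ref{def:almostchainmap} will show that, at each arrow $i$, the residual discrepancy between $\partial_* g_0(x_{i-1})$ and $|a_i|\, g_0(x_i)$ (or between $\partial_* g_0(x_i)$ and $|a_i|\, g_0(x_{i-1})$, in the other case) is of the form $|a_i|\, \rho_i$ for some $\rho_i \in \fm_* \cdot C$, where $*$ is $U$ or $V$ according to the parity of $i$. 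The correction needed to eliminate this discrepancy at each arrow will thus always be an element of $\fm_* \cdot C$.

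Next, I plan to iterate along the arrows of $C(a_1, \ldots, a_n)$, at each step modifying the appropriate endpoint of the arrow --- either $g(x_i)$ in the case where the arrow points into $x_i$, or $g(x_{i-1})$ (retroactively) in the case where the arrow points outward from it --- to satisfy its chain-map condition. The crucial identity that decouples these corrections is $\fm_U \cdot \fm_V = 0$ in $\stair$: a modification to $g(x_j)$ by an element of $\fm_U \cdot C$ has no effect on any $\partial_V$-condition involving $x_j$, because the coefficient $|a_k| \in \fm_V$ of any adjacent $\partial_V$-arrow annihilates the modification, and any $\partial_V$ applied to the modification lies in $\fm_U \cdot \fm_V \cdot C = 0$; the symmetric statement holds with $U$ and $V$ interchanged. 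Consequently successive corrections never interfere, and we obtain a map $g$ which intertwines $\partial$ with every arrow of $C(a_1, \ldots, a_n)$ and satisfies $g(x_i) \equiv [f(x_i)]_{(u_i, v_i)} \bmod (\fm_U, \fm_V) \cdot C$.

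I then plan to verify the full chain-map condition at the terminal generator $x_n$. The arrow-$n$ equation is already satisfied by construction, but $\partial g(x_n)$ has an additional ``non-arrow'' component --- namely $\partial_U g(x_n)$, since $n$ is even for a standard complex and arrow $n$ is a $\partial_V$-arrow --- that must also be shown to vanish. This is precisely what the hypothesis $\partial_U[f(x_n)]_{(u_n, v_n)} = 0$ provides: the only correction added to $g(x_n)$ lies in $\fm_V \cdot C$ and is therefore annihilated by $\partial_U$ modulo $\fm_U \cdot \fm_V = 0$, so that $\partial_U g(x_n) = \partial_U g_0(x_n) = 0$. Finally, locality of $g$ follows because $g(x_0) - [f(x_0)]_{(u_0, v_0)} \in (\fm_U, \fm_V) \cdot C$: its image in $C/\fm_U$ differs from the tower representative only by an element of $\fm_V \cdot (C/\fm_U)$, which lies in strictly smaller $\gr_2$ than the tower generator in bigrading $(u_0, v_0)$, and hence does not alter the generator status of $g(x_0)$ in the tower quotient $H_*(C \otimes_\stair \R_V)/\R_V\text{-torsion}$.

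The main obstacle I anticipate is the careful bookkeeping at each inductive step: for each arrow one must verify both that the residual error lies in $|a_i| \cdot \fm_* \cdot C$ (so that division by $|a_i|$ yields a legal correction inside $\fm_* \cdot C$) and that the chosen correction indeed preserves all previously satisfied arrow conditions. It is the identity $\fm_U \cdot \fm_V = 0$ that ultimately renders the construction noninteracting; without it one would have feedback between consecutive corrections and the scheme would fail to close up at $x_n$.
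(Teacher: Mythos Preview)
Your approach is essentially the paper's: start from the homogeneous parts $[f(x_i)]_{(u_i,v_i)}$ and add corrections $p_i \in \fm_U \cdot C$ (coming from the $\partial_U$-arrows) and $q_i \in \fm_V \cdot C$ (coming from the $\partial_V$-arrows), with $\fm_U \cdot \fm_V = 0$ guaranteeing that these do not interfere. The paper packages this as an ansatz $g(x_i) = [f(x_i)]_{(u_i,v_i)} + p_i + q_i$ and solves for the $p_i$ and $q_i$ separately, rather than iterating arrow-by-arrow, but the content is the same.

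There is one flaw in your locality argument. The correction $g(x_0) - [f(x_0)]_{(u_0,v_0)}$ is homogeneous of bigrading $(u_0, v_0)$, since $g$ is graded; so its image in $C/\fm_U$ does \emph{not} lie in strictly smaller $\gr_2$. (Nor is it true in general that elements of $\fm_V$ have negative $\gr_2$: already in $\X$ one has $\gr_2(W_{T,-1}) = 2$.) The correct argument is simpler and mirrors your treatment of $x_n$: since $x_0$ is adjacent only to the $\partial_U$-arrow at index~$1$, no $\fm_V$-correction is ever applied to it, so in fact $g(x_0) \equiv [f(x_0)]_{(u_0,v_0)} \bmod \fm_U$, and locality follows immediately from the hypothesis. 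The same observation --- together with the fact that a tower-class representative is a $\partial_V$-cycle, hence $\partial_V[f(x_0)]_{(u_0,v_0)} = 0$ --- also supplies the chain-map condition $\partial_V g(x_0) = 0$ at $x_0$, which your outline does not address.
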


\begin{proof}
For each $0 \leq i \leq n$, consider the ansatz:
\[
g(x_i) = [f(x_i)]_{(u_i, v_i)} + p_i + q_i
\]
where $p_i$ and $q_i$ are undetermined elements with grading $(u_i, v_i)$ in the images of $\fm_U$ and $\fm_V$, respectively. In order to determine $p_i$ and $q_i$, we substitute our ansatz into the chain map condition for $g$. For simplicity, assume $i$ is odd and $a_i \lebang 1$. Then $\d_U x_{i-1} = |a_i| x_i$ and $\d_U x_i = 0$. Using Definition~\ref{def:almostchainmap}, write
\[
\d_U f (x_{i-1}) = |a_i| f(x_{i}) + |a_i|\mu \eta_i
\]
for some (possibly non-homogeneous) element $\eta_i \in C$ and $\mu \in \fm_U$.  Note that since $\d_U^2 = 0$, we have $\d_U f(x_i) + \mu \d_U \eta_i = 0$. We now compute: 
\begin{align*}
g(\d_Ux_{i-1}) &= |a_i| g(x_i) \\
&= |a_i| ([f(x_i)]_{(u_i, v_i)}  + p_i + q_i) \\
&= |a_i| [f(x_i)]_{(u_i, v_i)} + |a_i| p_i \\
\d_U g(x_{i-1}) &= \d_U ([f(x_{i-1})]_{(u_{i-1}, v_{i-1})} + p_{i-1} + q_{i-1})\\
&= |a_i|[f(x_i)]_{(u_i, v_i)} + |a_i|\mu [\eta_i]_{(u_i,v_i)-\gr(\mu)} + \d_Up_{i-1}.
\end{align*}
We likewise compute
\begin{align*}
g(\d_Ux_i) &= g(0) = 0 \\
\d_U g(x_i) &= \d_U([f(x_i)]_{(u_i, v_i)} + p_i + q_i) \\
&= \d_U [f(x_i)]_{(u_i, v_i)} + \d_U p_i.
\end{align*}
Examining the desired equality $g(\d_Ux_{i-1}) = \d_U g(x_{i-1})$, we see that it suffices to set $p_{i-1} = 0$ and $p_i = \mu[\eta_i]_{(u_i,v_i)-\gr(\mu)}$. Note that indeed $p_{i-1}$ and $p_i$ are in the image of $\fm_U$. The equality $g(\d_Ux_i) = \d_U g(x_i)$ then follows from the fact that $\d_U f(x_i) + \mu\d_U \eta_i = 0$. Doing this for each $i$ odd shows that we may choose the $p_i$ such that $g$ commutes with $\d_U$; the condition on $f(x_n)$ is used to show we may set $p_n = 0$. The analogous argument when $i$ is even allows us to choose $q_i$ such that $g$ commutes with $\d_V$. (Here we use the fact that the $p_i$ are in $\fm_U$, so that they do not enter into the equations used to determine the $q_i$.) The condition on $f(x_0)$ corresponds to checking locality of $g$.
\end{proof}

In order to construct a local map from $\shift_{U,m}(C_3)$ into $\shift_{U,m}(C_1) \otimes \shift_{U,m}(C_2)$, it thus suffices to instead construct an almost chain map satisfying the conditions of Lemma~\ref{lem:almosttolocal}. We do this below:


\begin{definition}\label{def:fum}
Let $f$ be a local map from the standard complex $C_3$ to $C_1 \otimes C_2$, where $C_1$ and $C_2$ are each standard complexes.   Let $m$ be a shift map. Define
\[ f_{U,m} \co \shift_{U,m}(C_3) \to \shift_{U,m}(C_1) \otimes \shift_{U,m}(C_2) \]
as follows. Let $\{ w, y_i, z_i \}$ and $\{ w', y'_i, z'_i \}$ be the $U$-paired bases for $C_3$ and $\shift_{U,m}(C_3)$, respectively. For $r' \in \{ w', z'_i\}$, let
\[ f_{U,m} (r') = \sigma_{U,m} f(r), \]
where $r \in \{w, z_i\}$ is the corresponding basis element in $C_3$. To define $f_{U,m}(y'_i)$, first write $f(y_i)$ in terms of the $U$-paired basis $\{\omega, \kappa_j, \lambda_j \}$ for $C_1 \otimes C_2$. We separate the coefficients for the $\kappa_j$ into summands coming from $\K$, $\fm_U$, and $\fm_V$, so that 
\begin{equation}\label{eq:fyi}
f(y_i) = \sum_{j \in J_1}k_j \kappa_j + \sum_{j \in J_2} p_j \kappa_j + \sum_{j \in J_3} q_j \kappa_j + \sum_j  P_j \lambda_j + Q \omega
\end{equation}
For some index sets $J_1, J_2$, and $J_3$. Here, $k_j \in \K$, $p_j\in\fm_U$, and $q_j \in \fm_V$, while $P_j, Q \in \stair$. We define:
\begin{align}\label{eq:fyiprime} 
	f_{U,m} (y'_i) &=  \sigma_{U,m} \Big( \sum_{j \in J_1} k_j \kappa_j + \sum_{j \in J_2} m(p_je_j)m(e_j)^{-1} \kappa_j + \sum_{j \in J_3} q_j \kappa_j + \\
	&\qquad \qquad \sum_j P_j \lambda_j + Q\omega \Big). \nonumber
\end{align}
Here, $e_j$ is as in Definition~\ref{def:tensorproductbasis}; that is, $\partial_U \kappa_j = e_j \lambda_j$. Note that $e_j \gebang p_je_j$, so the fact that $m$ is order-preserving shows that $m(e_j)$ divides $m(p_je_j)$ in $\fm_U$.
\end{definition}

For convenience, we formally record the following identities:
\begin{lemma} 
Let $f$ and $C_3 = C(c_1, \dots, c_{n})$ be as above. Then
\begin{equation}\label{eq:sumfz}
	|c_{2i-1}| \sigma_{U,m} f( z_i) = \sum_{j \in J_1}k_je_j  \sigma_{U,m} (\lambda_j) +  \sum_{j \in J_2} p_je_j  \sigma_{U,m} (\lambda_j)
\end{equation}
for each $z_i$, and
\begin{equation}\label{eq:sigmaf}
	\sigma_{U,m} f(r) \equiv f_{U,m} s_{U,m}(r) \mod \fm_U.
\end{equation}
for all $r \in \{ w, y_i, z_i \}$. 
\end{lemma}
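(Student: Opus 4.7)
The plan is to derive \eqref{eq:sumfz} by applying $\d_U$ to the expansion \eqref{eq:fyi} and using the chain map property of $f$, and then to obtain \eqref{eq:sigmaf} by a direct bookkeeping comparison between the definitions of $f_{U,m}$ and $\sigma_{U,m}\circ f$.

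First I would prove \eqref{eq:sumfz}. Since $f$ is a chain map and $\d_U y_i = |c_{2i-1}| z_i$ in the $U$-paired basis of $C_3$, we have $\d_U f(y_i) = |c_{2i-1}| f(z_i)$. Applying $\d_U$ to the right-hand side of \eqref{eq:fyi} and using that $\d_U \kappa_j = e_j \lambda_j$, $\d_U \lambda_j = 0$, and $\d_U \omega = 0$ (the last because $\omega = w\otimes w$ with $w$ a $U$-tower generator in each factor), we obtain
\[
\d_U f(y_i) = \sum_{j\in J_1} k_j e_j \lambda_j + \sum_{j\in J_2} p_j e_j \lambda_j + \sum_{j\in J_3} q_j e_j \lambda_j.
\]
The key observation is that $q_j\in\fm_V$ and $e_j\in\fm_U$, so $q_je_j = 0$ in $\stair$ because $\fm_U\cdot\fm_V = 0$. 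Thus the $J_3$ terms drop out, and applying the $\stair$-linear map $\sigma_{U,m}$ to both sides yields \eqref{eq:sumfz}.

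Next I would prove \eqref{eq:sigmaf}. For $r = w$ or $r = z_i$, the identity holds on the nose by the definition $f_{U,m}(r') = \sigma_{U,m}f(r)$. The only case requiring work is $r = y_i$. Comparing \eqref{eq:fyi} (after applying $\sigma_{U,m}$) with \eqref{eq:fyiprime}, every term except those indexed by $J_2$ matches identically; in the $J_2$ summand the coefficient $p_j\in\fm_U$ is replaced by $m(p_je_j)m(e_j)^{-1}$, which also lies in $\fm_U$ because $m$ is order-preserving and $p_je_j \lebang e_j$ in $\Gamma_+(\R_U)$, so $m(p_je_j) \lebang m(e_j)$. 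Consequently the difference
\[
\sigma_{U,m} f(y_i) - f_{U,m}(y_i') = \sum_{j\in J_2}\bigl(p_j - m(p_je_j)m(e_j)^{-1}\bigr)\sigma_{U,m}(\kappa_j)
\]
is an $\fm_U$-linear combination of basis elements, hence vanishes modulo $\fm_U$. Since $s_{U,m}(y_i) = y_i'$, this gives \eqref{eq:sigmaf}.

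There is no serious obstacle here; the whole proof is a careful unpacking of the definitions, and the only mild subtlety is to notice that the $J_3$ terms automatically kill in \eqref{eq:sumfz} because of the defining relation $\fm_U\cdot\fm_V=0$ in the grid ring $\stair$, while the $J_2$ terms disappear mod $\fm_U$ in \eqref{eq:sigmaf} precisely because the modification built into the definition of $f_{U,m}$ preserves membership in $\fm_U$.
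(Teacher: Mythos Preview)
Your proof is correct and follows essentially the same route as the paper: for \eqref{eq:sumfz} you apply $\d_U$ to \eqref{eq:fyi}, use the chain-map identity $\d_U f(y_i) = |c_{2i-1}| f(z_i)$, and then apply $\sigma_{U,m}$; for \eqref{eq:sigmaf} you note that only the $J_2$ summand differs and that both coefficients there lie in $\fm_U$. The only cosmetic difference is that you spell out explicitly why the $J_3$ terms die (via $\fm_U\cdot\fm_V=0$) and display the difference $\sigma_{U,m}f(y_i)-f_{U,m}(y_i')$, whereas the paper states these steps more tersely.
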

\begin{proof}
To establish the first claim, apply $\sigma_{U,m}$ to both sides of
\[ 
f(|c_{2i-1}| z_i) = f \d_U(y_i) = \d_U f(y_i) = \sum_{j \in J_1}k_j e_j\lambda_j +  \sum_{j \in J_2} p_je_j \lambda_j. 
\]
For the second claim, note that the right-hand side is just $f_{U,m}(r')$. Thus, definition~\ref{def:fum} immediately gives equality if $r = w$ or $z_i$. If $r = y_i$, then we obtain the left-hand side of \eqref{eq:sigmaf} by applying $\sigma_{U,m}$ to \eqref{eq:fyi}. Comparing this with \eqref{eq:fyiprime} gives the congruence, keeping in mind the fact that $p_j$ and $m(p_je_j)m(e_j)^{-1}$ are both elements of $\fm_U$. 
\end{proof}

We now turn to the central technical lemma of this section:

\begin{lemma}\label{lem:fUalmost}
Let $C_1$, $C_2$, and $C_3$ be standard complexes, let 
\[
f \co C_3 \to C_1 \otimes C_2
\]
be a local map, and let $m$ be a shift map. Then
\[ 
f_{U,m} \co \shift_{U,m}(C_3) \to \shift_{U,m}(C_1) \otimes \shift_{U,m}(C_2) 
\]
is an almost chain map.
\end{lemma}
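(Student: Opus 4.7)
The plan is to verify the almost chain map conditions of Definition~\ref{def:almostchainmap} by separately handling $\partial_U$ and $\partial_V$ compatibility on the preferred basis of $\shift_{U,m}(C_3)$. Since this preferred basis coincides up to relabeling with the $U$-paired basis $\{w',y_i',z_i'\}$ on which $f_{U,m}$ is defined, I will work with the latter. The odd-index parameters of $\shift_{U,m}(C_3)$ become $m(c_{2i-1})$, so the $\partial_U$ conditions involve shifted elements; the even-index parameters $c_{2i}$ are unchanged and govern the $\partial_V$ conditions.

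First I will handle the $\partial_V$ conditions, which turn out to hold on the nose. By construction, $f_{U,m}(w')=\sigma_{U,m}f(w)$ and $f_{U,m}(z_i')=\sigma_{U,m}f(z_i)$, and $\sigma_{U,m}$ commutes with $\partial_V$. On $y_i'$, the difference between $f_{U,m}(y_i')$ and $\sigma_{U,m}f(y_i)$ is a sum of terms of the form $(p_j-m(p_je_j)m(e_j)^{-1})\sigma_{U,m}(\kappa_j)$, each with coefficient in $\fm_U$; here I use that $m$ is order-preserving and $e_j$ divides $p_je_j$ in $\R_U$. Because $\partial_V$ of any basis element lies in the image of $\fm_V$, and $\fm_U\cdot\fm_V=0$ in $\stair$, this correction contributes zero after applying $\partial_V$. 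The desired $\partial_V$ condition then reduces to the chain map property of $f$, together with the observation that any residual $\fm_U$-correction on the target side is killed by the $|c_{2k}|\in\fm_V$ factor.

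The substantive step is the $\partial_U$ verification. I will expand $\partial_Uf(y_i)=|c_{2i-1}|f(z_i)$ in the $U$-paired basis of $C_1\otimes C_2$ using $\partial_U\kappa_j=e_j\lambda_j$, $\partial_U\lambda_j=\partial_U\omega=0$, and $q_je_j=0$, to obtain
\[
|c_{2i-1}|f(z_i)=\sum_{j\in J_1}k_je_j\lambda_j+\sum_{j\in J_2}p_je_j\lambda_j,
\]
so that $|c_{2i-1}|$ divides each coefficient $k_je_j$ or $p_je_j$ in $\stair$. Applying $\partial_U$ to $f_{U,m}(y_i')$ via $\partial_U\sigma_{U,m}(\kappa_j)=m(e_j)\sigma_{U,m}(\lambda_j)$ yields
\[
\partial_Uf_{U,m}(y_i')=\sum_{j\in J_1}k_jm(e_j)\sigma_{U,m}(\lambda_j)+\sum_{j\in J_2}m(p_je_j)\sigma_{U,m}(\lambda_j).
\]
Comparing term by term with $|m(c_{2i-1})|\sigma_{U,m}f(z_i)$, the required identity reduces to showing that for any $x\in\fm_U$ divisible by $|c_{2i-1}|$, the element $m(x)-|m(c_{2i-1})|\cdot(x/|c_{2i-1}|)$ lies in $|m(c_{2i-1})|\fm_U$.

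To see this, order-preservation of $m$ implies $|m(c_{2i-1})|$ divides $m(x)$, so both summands are $|m(c_{2i-1})|$-multiples with quotients in $\R_U=\K\oplus\fm_U$. These quotients lie in $\fm_U$ whenever $x$ and $|c_{2i-1}|$ are strictly ordered in $\Gamma_+(\R_U)$, and then their difference automatically lies in $\fm_U$; in the boundary case where $x=|c_{2i-1}|$ as elements of $\Gamma_+(\R_U)$, the two quotients agree with a consistent choice of representatives for $m$. The hard part will be this final coefficient-matching step: the shift $m$ is not multiplicative, so one cannot naively commute $m$ past products, and the specific form of the correction in Definition~\ref{def:fum}---replacing $p_j$ by $m(p_je_j)m(e_j)^{-1}$ inside $f_{U,m}(y_i')$---must be shown to be exactly the adjustment needed to force the $\partial_U$ condition modulo $|m(c_{2i-1})|\fm_U$.
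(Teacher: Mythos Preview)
Your proof is correct and follows the same overall approach as the paper's: the $\partial_V$-condition via the congruence $f_{U,m}(r')\equiv\sigma_{U,m}f(r)\bmod\fm_U$ together with $\fm_U\cdot\fm_V=0$, and the $\partial_U$-condition by expanding $\partial_Uf_{U,m}(y_i')$ and $|m(c_{2i-1})|\,\sigma_{U,m}f(z_i)$ in the paired basis and comparing $\lambda_j'$-coefficients term by term. Your organization of the $\partial_U$ step is slightly more direct---you work with the quotient $x/|c_{2i-1}|\in\R_U$ and split only on $[x]=[|c_{2i-1}|]$ versus strict divisibility---whereas the paper first performs an outer case split on whether $m(|c_{2i-1}|)\leqbang|c_{2i-1}|$ or $m(|c_{2i-1}|)\geqbang|c_{2i-1}|$ before making the same coefficient comparison; your remark about a ``consistent choice of representatives for $m$'' corresponds to the paper's ``tautologically $m(e_j)=\gamma e_j$'' and carries the same implicit convention.
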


\begin{proof}
Let $\{ w, y_i, z_i \}$ and $\{ w', y'_i, z'_i \}$ be the $U$-paired bases for $C_3 = C(c_1, \dots, c_{n})$ and $\shift_{U,m}(C_3)$, respectively. We show that
\begin{equation}\label{eq:Ualmostchain}
	\d_U f_{U,m} (y'_i) \equiv m(|c_{2i-1}|)  \sigma_{U,m} f(z_i) \mod m(|c_{2i-1}|)\fm_U, 
\end{equation}
and that 
\begin{equation}\label{eq:Valmostchain}
	\d_V f_{U,m}(r') = f_{U,m} \d_V (r')
\end{equation}
for all $r' \in \{ w', y'_i, z'_i \}$. To prove \eqref{eq:Ualmostchain}, we apply $\partial_U$ to \eqref{eq:fyiprime}. This gives:
\begin{align}
	\d_U f_{U,m} (y'_i) &=  \d_U  \Big(   \sum_{j \in J_1} k_j \sigma_{U,m}(\kappa_j) + \sum_{j \in J_2}m(p_je_j)m(e_j)^{-1} \sigma_{U,m}(\kappa_j) \Big) \nonumber \\
			&= \sum_{j \in J_1}  k_jm(e_j) \lambda'_j+ \sum_{j \in J_2}m(p_je_j) \lambda'_j.\label{eq:target}
\end{align}
On the other hand, according to \eqref{eq:sumfz}, we have
\[
|c_{2i-1}| \sigma_{U,m} f( z_i) = \sum_{j \in J_1}k_je_j \lambda'_j +  \sum_{j \in J_2} p_je_j \lambda'_j.
\]
Note that $|c_{2i-1}|$ divides each coefficient $e_j$ and $p_je_j$ appearing in this sum, since the $\lambda'_j$ generators are not in the image of $\fm_U$. We consider two possibilities:
\begin{enumerate}
\item Suppose that $m(|c_{2i-1}|) \leqbang |c_{2i-1}|$. Write $m(|c_{2i-1}|) = \gamma |c_{2i-1}|$ for some $\gamma \in \R_U$. Multiplying both sides of \eqref{eq:sumfz} by $\gamma$, we obtain
\begin{equation}\label{eq:case1}
m(|c_{2i-1}|) \sigma_{U,m} f( z_i) = \sum_{j \in J_1}k_j(\gamma e_j) \lambda'_j +  \sum_{j \in J_2} (p_j \gamma e_j) \lambda'_j.
\end{equation}
We show this is congruent to \eqref{eq:target} modulo $m(|c_{2i-1}|)\fm_U$. Fix $j \in J_1$. We know $e_j \leqbang |c_{2i-1}|$. If $e_j = |c_{2i-1}|$, then tautologically $m(e_j) = \gamma e_j$ and the two terms in \eqref{eq:target} and \eqref{eq:case1} corresponding to $j$ are exactly the same. If $e_j \lebang |c_{2i-1}|$, then using the fact that $m$ is order-preserving, we see that $m(e_j)$ is in $m(|c_{2i-1}|) \fm_U$. Since $\gamma e_j \lebang \gamma |c_{2i-1}| = m(|c_{2i-1}|)$, this holds for $\gamma e_j$ also, so the two terms in \eqref{eq:target} and \eqref{eq:case1} corresponding to $j$ are both congruent to zero modulo $m(|c_{2i-1}|) \fm_U$. A similar argument holds for the terms coming from $j \in J_2$.
\item Suppose that $m(|c_{2i-1}|) \gebang |c_{2i-1}|$. Write $\gamma m(|c_{2i-1}|) = |c_{2i-1}|$ for some $\gamma \in \fm_U$. Note that $\gamma$ must divide each coefficient $e_j$ and $p_je_j$, since $\gamma$ divides $|c_{2i-1}|$. Write
\begin{equation}\label{eq:case2}
m(|c_{2i-1}|) \sigma_{U,m} f( z_i) = \sum_{j \in J_1}k_j(e_j/\gamma) \lambda'_j +  \sum_{j \in J_2} (p_je_j)/\gamma \lambda'_j.
\end{equation}
Fix $j \in J_1$. If $e_j = |c_{2i-1}|$, then tautologically $m(e_j) = e_j/\gamma$. If $e_j \lebang |c_{2i-1}|$, then $m(e_j)$ is in $m(|c_{2i-1}|) \fm_U$ as before; moreover, $e_j/\gamma \lebang |c_{2i-1}|/\gamma = m(|c_{2i-1}|)$, so this holds for $e_j/\gamma$ also. Hence the two terms in \eqref{eq:target} and \eqref{eq:case1} corresponding to $j$ are both congruent to zero modulo $m(|c_{2i-1}|) \fm_U$. A similar argument holds for $j \in J_2$.
\end{enumerate}
We now consider \eqref{eq:Valmostchain}. We have
\begin{align*}
	\d_V f_{U,m}(r')  &\equiv \d_V \sigma_{U,m} f(r) \mod \fm_U \\
			&\equiv \sigma_{U,m} f \d_V (r) \mod \fm_U \\
			&\equiv  f_{U,m} s_{U,m} \d_V (r) \mod \fm_U \\
			&\equiv f_{U,m} \d_V (s_{U,m} (r)) \mod \fm_U \\
			&\equiv f_{U,m} \d_V (r') \mod \fm_U
\end{align*}
for any $r' \in \{ w', y'_i, z'_i \}$, where the first equivalence is Definition~\ref{def:sum}, the second holds since $\d_V$ commutes with $\sigma_{U,m}$ and $f$, the third holds by \eqref{eq:sigmaf}, and the fourth is due to the fact that $\d_V$ and $s_{U,m}$ commute.
\end{proof}

Having completed the bulk of the work, we leave the verification of the remaining hypotheses of Lemma~\ref{lem:almosttolocal} to the reader. The proofs of these are exactly the same as those of \cite[Lemma 8.19]{DHSTmoreconcord} and \cite[Lemma 8.20]{DHSTmoreconcord}. The result is summarized in the following:

\begin{lemma}\label{lem:shiftlocalmap}
Let $C_1$, $C_2$, and $C_3$ be standard complexes, let $f \co C_3 \to C_1 \otimes C_2$ be a  local map, and let $m$ be a shift map.   There exists a local map
\[ g_U \co \shift_{U,m} (C_3) \to \shift_{U,m} (C_1) \otimes \shift_{U,m} (C_2). \]
Similarly, there exists a local map
\[ g_V \co \shift_{V,m} (C_3) \to \shift_{V,m} (C_1) \otimes \shift_{V,m} (C_2). \]
\end{lemma}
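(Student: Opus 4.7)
The plan is to construct $g_U$ by applying Lemma~\ref{lem:almosttolocal} to the map $f_{U,m}$ from Definition~\ref{def:fum}, with the symmetric argument (reversing the roles of $U$ and $V$ and using $\sigma_{V,m}$, $s_{V,m}$, $f_{V,m}$) producing $g_V$. By Lemma~\ref{lem:fUalmost}, we already know that $f_{U,m}$ is an almost chain map from $\shift_{U,m}(C_3)$ to $\shift_{U,m}(C_1) \otimes \shift_{U,m}(C_2)$. It remains to verify the two grading/locality hypotheses of Lemma~\ref{lem:almosttolocal}: namely, if $(u'_i, v'_i)$ denotes the bigrading of the preferred generator $x'_i$ in $\shift_{U,m}(C_3)$, we must check that the $(u'_0, v'_0)$-homogeneous part of $f_{U,m}(x'_0)$ represents an $\R_V$-tower class in $\shift_{U,m}(C_1) \otimes \shift_{U,m}(C_2)$, and that $\d_U$ annihilates the $(u'_n, v'_n)$-homogeneous part of $f_{U,m}(x'_n)$.

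For the first condition, write $x_0 = y_{1}$ or $z_{1}$ depending on the sign of $a_1$, and inspect Definition~\ref{def:fum}: in either case $f_{U,m}(x'_0)$ is obtained from $f(x_0)$ by applying $\sigma_{U,m}$ together with a substitution of the $\fm_U$-coefficients via $m$. Since $\sigma_{U,m} \equiv s_{U,m} \otimes s_{U,m} \bmod \fm_U$ and $s_{U,m}$ sends the $V$-tower generator of each factor to the $V$-tower generator of its image, the degree-$(u'_0, v'_0)$ part of $f_{U,m}(x'_0)$ reduces, modulo $\fm_U$, to the image of the $V$-tower class $f(x_0)$ under the tensor product of $V$-tower maps, hence represents a $V$-tower class. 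The only delicate point is that the substitution $p_j e_j \mapsto m(p_j e_j)m(e_j)^{-1}$ in Definition~\ref{def:fum} preserves membership in $\fm_U$, since $m$ is order-preserving, and so does not affect the mod-$\fm_U$ reduction. For the second condition, the element $f_{U,m}(x'_n) = \sigma_{U,m}f(w)$ by definition (recall $w = x_n$ in the $U$-paired basis of $C_3$), and one computes $\d_U f_{U,m}(x'_n)$ directly using that $\sigma_{U,m}$ intertwines $\partial_V$ and that $f(w)$ is a $U$-cycle (up to the appropriate paired-basis bookkeeping), exactly as in \cite[Lemma~8.20]{DHSTmoreconcord}.

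Granted these two verifications, Lemma~\ref{lem:almosttolocal} yields a genuine chain map $g_U \co \shift_{U,m}(C_3) \to \shift_{U,m}(C_1) \otimes \shift_{U,m}(C_2)$ with $g_U(x'_i) \equiv [f_{U,m}(x'_i)]_{(u'_i, v'_i)} \bmod (\fm_U, \fm_V)$ for all $i$. Since the homogeneous part of $f_{U,m}(x'_0)$ represents a $V$-tower class modulo $(\fm_U,\fm_V)$, the map $g_U$ is local. Exchanging the roles of $U$ and $V$ throughout (and using the analogous $V$-paired tensor product basis, the map $\sigma_{V,m}$, and the analogue of Definition~\ref{def:fum}) yields $g_V$.

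The main obstacle I expect is the grading bookkeeping in the verification that the degree-$(u'_0, v'_0)$ part of $f_{U,m}(x'_0)$ is nonzero modulo $(\fm_U,\fm_V)$ and represents the correct $V$-tower class: the parameters of $C_3$ are shifted by $m$, so $(u'_0, v'_0)$ differs from $(u_0, v_0)$, and one must carefully check that under the substitution in Definition~\ref{def:fum}, every term contributing modulo $(\fm_U,\fm_V)$ to $f_{U,m}(x'_0)$ lands in the correct bigrading; this is where the grading-nontriviality of $\stair$ (Definition~\ref{def:gradingnontrivial}) and the fact that $m$ is order-preserving and commutes with inversion are used.
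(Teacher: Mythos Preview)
Your approach is correct and matches the paper's: apply Lemma~\ref{lem:almosttolocal} to the almost chain map $f_{U,m}$ from Lemma~\ref{lem:fUalmost}, after verifying the two hypotheses on $[f_{U,m}(x'_0)]$ and $[f_{U,m}(x'_n)]$ (this is precisely the content of \cite[Lemmas~8.19 and 8.20]{DHSTmoreconcord}, to which the paper simply defers).

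Two small corrections. First, in the verification that $\d_U[f_{U,m}(x'_n)]_{(u'_n,v'_n)}=0$, the relevant property of $\sigma_{U,m}$ is not that it intertwines $\d_V$ but that it sends $U$-cycles to $U$-cycles: since $x_n = w$, we have $f_{U,m}(x'_n) = \sigma_{U,m}f(x_n)$, and $f(x_n)$ is a $U$-cycle because $f$ is a chain map; writing $f(x_n)$ in the $U$-paired basis $\{\omega,\kappa_\ell,\lambda_\ell\}$, any $\kappa_\ell$-coefficient must lie in $\fm_V$, and this persists after applying $\sigma_{U,m}$. Second, your final remark about grading-nontriviality (Definition~\ref{def:gradingnontrivial}) is a red herring here: that hypothesis is needed only later, in the proof of Theorem~\ref{thm:homs}, to cancel the factor $\gr_1(\mu_0)$; it plays no role in this lemma or in the verification of the hypotheses of Lemma~\ref{lem:almosttolocal}.
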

\begin{proof}
Omitted; see \cite[Lemma 7.21]{DHSTmoreconcord} and \cite[Lemma 7.25]{DHSTmoreconcord}.
\end{proof}

We now finally turn to the proof of Theorem \ref{thm:shift}:

\begin{proof}[Proof of Theorem \ref{thm:shift}]
Let $C_3$ be a standard knotlike complex with a local equivalence $C_3 \sim C_1 \otimes C_2$ which is realized by a local map $f \co C_3 \to  C_1 \otimes C_2$. Without loss of generality, assume $C_1$ and $C_2$ are standard complexes.  By Lemma~\ref{lem:shiftlocalmap}, there exists a local map $g_U \co \shift_{U,m} (C_3) \to \shift_{U,m} (C_1) \otimes \shift_{U,m} (C_2)$. Hence
\begin{equation}\label{eq:shiftineq}
	\shift_{U,m} (C_3) \leq \shift_{U,m} (C_1) \otimes \shift_{U,m} (C_2).
\end{equation}
Dually, there exists a local equivalence $C^\vee_3 \sim C^\vee_1 \otimes C^\vee_2$, and the same reasoning shows
\begin{equation}\label{eq:shiftineqdual}
	\shift_{U,m} (C^\vee_3) \leq \shift_{U,m} (C^\vee_1) \otimes \shift_{U,m} (C^\vee_2). 
\end{equation}
Dualizing \eqref{eq:shiftineqdual}, applying Lemma~\ref{lem:shiftdual}, and combining with \eqref{eq:shiftineq}, we conclude that
\[ \shift_{U,m} (C_1) \otimes \shift_{U,m} (C_2) \leq \shift_{U,m} (C_3) \leq \shift_{U,m} (C_1) \otimes \shift_{U,m} (C_2). \]
Thus we obtain a local equivalence
\[ \shift_{U,m} (C_3) \sim \shift_{U,m} (C_1) \otimes \shift_{U,m} (C_2). \]
A similar argument holds for $\shift_{V, m}$. Since $\shift_m = \shift_{V,m} \circ \shift_{U,m}$, the desired statement for the composition holds also.
\end{proof}

\subsection{Proof of Theorem \ref{thm:homs}}

We now finally prove Theorem~\ref{thm:homs} by writing the $\varphi_\mu^U$ in terms of the shift homomorphisms discussed above. These are tied together via the following auxiliary homomorphism:

\begin{definition}\label{def:P}
Let $C$ be a knotlike complex. Recall from Definition~\ref{def:knotlike-complex} that 
\[
H_*(C \otimes_\stair \ru)/(\ru\text{-torsion}) \cong \ru
\]
via an absolutely $\gr_2$-graded, relatively $\gr_1$-graded isomorphism. Define $P_U(C) \in \Z$ to be the $\gr_1$-grading of the element $1 \in \ru$ under the above isomorphism. Similarly, define $P_V(C) \in \Z$ to be the $\gr_2$-grading of the element $1 \in \rv$ under the isomorphism
\[
H_*(C \otimes_\stair \rv)/(\rv\text{-torsion}) \cong \rv.
\]
\end{definition}


It is clear that $P_U$ and $P_V$ are invariants of the local equivalence class of $C$. The fact that these are homomorphisms follows from proof of Lemma~\ref{lem:product-check}: if $C_1$ and $C_2$ are knotlike complexes, then $C_1 \otimes C_2$ is a knotlike complex, and the isomorphism $H_*((C_1 \otimes C_2) \otimes_\stair \ru)/(\ru\text{-torsion}) \cong \ru$ has $\gr_1$-grading shift given by the sum of the $\gr_1$-grading shifts for $C_1$ and $C_2$.

By Corollary~\ref{cor:splitting}, any knotlike complex $C$ is homotopy equivalent to a direct sum $C(a_1, \dots, a_n) \oplus A$ such that
\[
H_*(A \otimes_\stair \ru)/(\ru\text{-torsion}) = H_*(A \otimes_\stair \rv)/(\rv\text{-torsion}) = 0.
\]
It follows that for the purposes of computing $P_U$ and $P_V$, we may replace $C$ with its standard complex representative $C(a_1, \dots, a_n)$. A direct analysis in this case then shows that $P_U(C) = \gr_1(x_n)$, where $x_n$ is the final standard complex generator. Similarly, $P_V(C) = \gr_2(x_0)$. Thus we have:

\begin{lemma}
Let $C$ be a knotlike complex with standard complex representative $C(a_1, \ldots, a_n)$. Then
\begin{equation}\label{eq:PU}
P_U(C) = \sum_{\mu \in \Gamma_+(\R_U)} \gr_1(\mu) \varphi_\mu^U(C) + \sum_{\nu \in \Gamma_+(\R_V)} \gr_1(\nu) \varphi_\nu^V(C) + \sum_{i=1}^n \sgn a_{i}
\end{equation}
and
\begin{equation}\label{eq:PV}
- P_V(C) = \sum_{\mu \in \Gamma_+(\R_U)} \gr_2(\mu) \varphi_\mu^U(C) + \sum_{\nu \in \Gamma_+(\R_V)} \gr_2(\nu) \varphi_\nu^V(C) + \sum_{i=1}^n \sgn a_{i}.
\end{equation}
\end{lemma}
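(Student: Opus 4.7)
\medskip

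\noindent
\textbf{Proof proposal.} The plan is to reduce the equations to a direct computation using the explicit grading formulas for standard complexes given in Definition~\ref{def:standard}, together with the identification of $P_U(C)$ and $P_V(C)$ as the gradings of the terminal generators of the standard complex representative.

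First, I would invoke Corollary~\ref{cor:splitting} to write $C \simeq C(a_1,\ldots,a_n) \oplus A$ with $A$ having $\R_U$- and $\R_V$-torsion homology, so that every $\R_U$-tower generator of $H_*(C)$ is supported in the standard summand, and similarly for $\R_V$. Combined with the fact that on $C(a_1, \ldots, a_n)$ the $\R_U$-tower is generated by $x_n$ and the $\R_V$-tower is generated by $x_0$, this immediately gives $P_U(C) = \gr_1(x_n)$ and $P_V(C) = \gr_2(x_0)$. Applying the grading normalization $\gr_1(x_0)=0$ and $\gr_2(x_n)=0$ from Definition~\ref{def:standard} together with equations~\eqref{eq:gr1diff} and~\eqref{eq:gr2diff} yields
\[
P_U(C) = \sum_{i=1}^n \sgn(a_i) + \sum_{i=1}^n \gr_1(a_i),
\qquad
-P_V(C) = \sum_{i=1}^n \sgn(a_i) + \sum_{i=1}^n \gr_2(a_i).
\]

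Next, I would split each sum $\sum_{i=1}^n \gr_k(a_i)$ according to the parity of $i$. For $i$ odd, each $a_i$ lies in $\Gamma_+(\R_U) \cup \Gamma_-(\R_U)$, and $\gr_k(a_i^{-1}) = -\gr_k(a_i)$ for either component $k \in \{1,2\}$. Grouping terms by absolute value and comparing with Definition~\ref{def:homs},
\[
\sum_{\substack{1 \leq i \leq n \\ i \text{ odd}}} \gr_k(a_i)
= \sum_{\mu \in \Gamma_+(\R_U)} \gr_k(\mu)\bigl(\#\{a_i=\mu,\, i \text{ odd}\} - \#\{a_i=\mu^{-1},\, i \text{ odd}\}\bigr)
= \sum_{\mu \in \Gamma_+(\R_U)} \gr_k(\mu)\,\varphi_\mu^U(C).
\]
The analogous computation for $i$ even gives $\sum_{i \text{ even}} \gr_k(a_i) = \sum_{\nu \in \Gamma_+(\R_V)} \gr_k(\nu)\,\varphi_\nu^V(C)$.

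Finally, summing the odd- and even-indexed contributions and substituting into the formulas for $P_U(C)$ and $-P_V(C)$ produces the desired identities \eqref{eq:PU} and \eqref{eq:PV}. There is no real technical obstacle; the argument is essentially bookkeeping that leverages Corollary~\ref{cor:splitting} to reduce to the explicit grading formula for standard complexes and the definition of $\varphi_\mu^U$, $\varphi_\nu^V$ as signed parameter counts.
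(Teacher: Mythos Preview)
Your proposal is correct and follows essentially the same approach as the paper: reduce to the standard complex summand via Corollary~\ref{cor:splitting} to identify $P_U(C)=\gr_1(x_n)$ and $P_V(C)=\gr_2(x_0)$, then invoke \eqref{eq:gr1diff}--\eqref{eq:gr2diff} and regroup $\sum_i \gr_k(a_i)$ using the definition of $\varphi_\mu^U,\varphi_\nu^V$ as signed parameter counts. The paper's proof is terser (it simply asserts the regrouping step is immediate), but your explicit bookkeeping is exactly what is intended.
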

\begin{proof}
This follows immediately from \eqref{eq:gr1diff} and \eqref{eq:gr2diff}, together with the fact that the $\varphi_\mu^U$ and $\varphi_\nu^V$ count parameters in the standard complex representative of $C$.
\end{proof}

\begin{proof}[Proof of Theorem \ref{thm:homs}]
We consider $\smash{\varphi_\mu^U}$; the proof for $\smash{\varphi_\nu^V}$ is similar. Let $C_1 = C(a_1, \dots, a_{n_1})$ and $C_2 = C(b_1, \dots, b_{n_2})$ be two arbitrary but fixed standard complexes. Let the tensor product of $C_1 \otimes C_2$ be locally equivalent to the standard complex $C_3 = C(c_1, \dots, c_{n_3})$. Clearly, for $\mu$ outside of the finite set $S$ consisting of the odd parameters of these complexes, we have that $\smash{\varphi_\mu^U(C_1) + \varphi_\mu^U(C_2)} = \smash{\varphi_\mu^U(C_1 \otimes C_2)} = 0$. We thus need to show that
\begin{equation}\label{eq:induction}
\varphi_\mu^U(C_1) + \varphi_\mu^U(C_2) = \varphi_\mu^U(C_1 \otimes C_2)
\end{equation}
for $\mu \in S$. Proceed by strong induction on the elements of $S$. Let $M \in S$ and assume that we have shown \eqref{eq:induction} for all $\mu \lebang M$. Fix any element $\mu_0 \in \fm_U$ such that $\gr_U(\mu_0) \neq 0$, as guaranteed by Definition~\ref{def:gradingnontrivial}. Define a shift map $m_U \co \Gamma_+(\R_U) \rightarrow \Gamma_+(\R_U)$ by
\[
m_U(\mu) = 
\begin{cases} 
\mu \quad &\text{if } \mu \gebang M \\
\mu_0 \cdot \mu \quad &\text{if } \mu \leqbang M
\end{cases}
\]
and extend this to a shift map $m = m_U \sqcup m_V$ by defining $m_V$ to be the identity. The trick will be to consider the quantity $P(\shift_{U, m}(C_1 \otimes C_2)) - P(C_1 \otimes C_2)$. On one hand, using \eqref{eq:PU} and the fact that $\smash{\varphi_{m(\mu)}^U(\shift_{U,m}(C)) = \varphi_{\mu}^U(C)}$, we have
\begin{align*}
&\phantom{\sum} P(\shift_{U, m}(C_1 \otimes C_2)) - P(C_1 \otimes C_2) \\
&= \sum_{\mu \in \Gamma_+(\R_U)} \Big( \gr_1(m(\mu)) - \gr_1(\mu) \Big) \varphi_\mu^U(C_1 \otimes C_2) \\
&= \sum_{\mu \leqbang M} \gr_1(\mu_0) \varphi_\mu^U(C_1 \otimes C_2),
\end{align*}
where in the last line we have used the equality $\gr_1(m(\mu)) = \gr_1(\mu_0 \cdot \mu) = \gr_1(\mu_0) + \gr_1(\mu)$ for $\mu \leqbang M$. On the other hand, using the fact that $P$ and $\shift_{U, m}$ are both homomorphisms, a similar argument shows that the above quantity is equal to
\begin{align*}
&\phantom{\sum} P(\shift_{U, m}(C_1)) + P(\shift_{U, m}(C_2)) - P(C_1) -P(C_2) \\
&= \sum_{\mu \leqbang M} \gr_1(\mu_0) \varphi_\mu^U(C_1) + \sum_{\mu \leqbang M} \gr_1(\mu_0) \varphi_\mu^U(C_2) \\
&= \sum_{\mu \leqbang M} \gr_1(\mu_0) \Big(\varphi_\mu^U(C_1) + \varphi_\mu^U(C_2)\Big).
\end{align*}
Equating these and using the inductive hypothesis, we see that
\[
\gr_1(\mu_0) \varphi^U_M(C_1 \otimes C_2) = \gr_1(\mu_0) \Big(\varphi^U_M(C_1) + \varphi^U_M(C_2)\Big).
\]
Since $\gr_1(\mu_0) \neq 0$, this establishes the inductive step and completes the proof.
\end{proof}

\section{An algebra for knot Floer homology}\label{sec:algebra-for-knot-floer}

We now specialize to the ring $\X$ discussed in Section~\ref{sec:preliminary}. For the convenience of the reader, we review some aspects of Section~\ref{sec:preliminary} here. Recall that $\X = \stair(\R_U, \R_V)$, where
\[
\R_U=\F[U_B,\{W_{B,i}\}_{i\in \Z}]/(\{U_BW_{B,i}=W_{B,i+1}\}_{i \in \Z})
\] 
and
\[
\R_V=\F[V_T,\{W_{T,i}\}_{i\in \Z}]/(\{V_TW_{T,i}=W_{T,i+1}\}_{i \in \Z}).
\]
Abusing notation slightly, the homogeneous elements of $\R_U$ can also be written as $\smash{U_B^i W_{B,0}^j}$, where
\begin{equation}\label{eq:ruregion2}
(i, j) \in (\Z \times \Z^{\geq 0}) - (\Z^{< 0} \times \{0\}).
\end{equation}
Note that $i \geq 0$ if $j = 0$, but otherwise we allow the exponent of $U_B$ to be negative. We parameterize the elements of $\R_U$ simply as points $(i, j)$ in the region \eqref{eq:ruregion2}. For homogeneous elements of $\R_V$, we similarly write $(i, j)$ to represent $\smash{V_T^iW_{T,0}^j}$, where $(i, j)$ again lies in \eqref{eq:ruregion2}. See Section~\ref{sec:preliminary} and Figure~\ref{fig:Xring} for a review of $\X$. 

Our goal for this section will be to establish that the (grading-shifted) knot Floer complex of a knot in a homology sphere gives us a symmetric knotlike complex over $\X$. Following Section~\ref{sec:homs}, we then obtain a homomorphism for each element of $\smash{\Gamma_+(\R_U)}$. We denote these by
\[
\varphi_{i,j} \co \Chatz \rightarrow \Z
\]
where $(i, j)$ lies in \eqref{eq:ruregion2} (minus the origin) and represents $\smash{U_B^i W_{B,0}^j}$.

\begin{remark}\label{rem:fuvtox}
Importantly, there is a morphism of bigraded $\F$-algebras $\F[U,V]\to \X$ sending
\begin{equation}\label{eq:fuvtox}
U\mapsto U_B+W_{T,0} \quad \text{and} \quad V\mapsto V_T+W_{B,0}. 
\end{equation}
The substitution \eqref{eq:fuvtox} will be helpful for understanding the relationship between complexes over $\F[U, V]$ and complexes over $\X$. For any complex $C$ over $\F[U, V]$, write $C_\X = C \otimes_{\F[U, V]} \X$, where the action of $\F[U, V]$ on $\X$ is defined using \eqref{eq:fuvtox}. 

Now set $U_B = 1$ in $C_\X$. Since $U_B \cdot \fm_V = 0$, this sets $\fm_V$ to zero. The result is a complex over the ring $\F[W_{B,0}]$ with a single grading $\gr_2$; note that setting $U_B = 1$ collapses the $\gr_1$-grading. Composing the morphism $\eqref{eq:fuvtox}$ with this quotient sends $U$ to 1 and $V$ to $W_{B,0}$. Hence it is easily checked that we have a graded isomorphism of chain complexes
\begin{equation}\label{eq:invertsetone}
C/(U - 1) \cong C_{\X}/(U_B - 1).
\end{equation}
Here, the former complex is a module over $\F[V]$ and the latter complex is a module over $\F[W_{B,0}]$, and the isomorphism \eqref{eq:invertsetone} identifies $V$ with $W_{B,0}$.

Now consider the homology of $C_{\X}/(U_B - 1)$. Since inverting $U_B$ sets $\fm_V$ to zero, we may as well replace $C_{\X}$ with $C_{\X} \otimes_\X \R_U$. According to the structure theorem of Remark~\ref{rem:structure}, the homology of $C_{\X} \otimes_\X \R_U$ is isomorphic to some number of copies of $\R_U$ plus some number of torsion summands $\smash{\R_U/(U_B^{i_k}W_{B_0}^{j_k})}$. Thus, up to grading shifting individual summands,
\begin{equation}\label{eq:structureequation}
H_*(C_{\X}/(U_B - 1)) \cong \F[W_{B_0}]^m \oplus \left( \bigoplus_{k} \F[W_{B_0}]/(W_{B_0}^{j_k}) \right).
\end{equation}
Note that if $C = \CFK(Y, K)$ is a knot Floer complex, then 
\begin{equation}\label{eq:hfmiso}
H_*(C/(U - 1)) \cong \HFm(Y), 
\end{equation}
where both sides are viewed as modules over $\F[V]$. Thus if $C$ is the knot Floer complex of a knot, then \eqref{eq:invertsetone} and \eqref{eq:hfmiso} show that $H_*(C_{\X}/(U_B - 1))$ is determined.
\end{remark}

In what follows, we use $C[(a,b)]$ to denote the complex $C$ shifted by bigrading $(a, b)$, so that an element $x \in C$ which previously had bigrading $(a, b)$ now has bigrading $(0,0)$ in $C[(a,b)]$.

\begin{lemma}\label{lem:cfkxknotlike} 
Let $K$ be a knot in an integer homology sphere $Y$. Then the grading-shifted complex $\CFK_\X(Y, K)[(d(Y), d(Y))]$ is a knotlike complex over $\X$.
\end{lemma}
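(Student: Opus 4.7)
The plan is to verify the conditions of Definition~\ref{def:knotlike-complex}. The structural requirements of Definition~\ref{def:scomplex} are immediate: $\CFK(Y,K)$ is a free, finitely generated bigraded $\F[U,V]$-complex with differential of bidegree $(-1,-1)$, and the parity condition $\gr_1 \equiv \gr_2 \pmod 2$ follows from integrality of the Alexander grading $A = (\gr_U - \gr_V)/2$; the bigraded ring morphism~\eqref{eq:fuvtox} preserves all of these when tensored, and the grading shift by $(d(Y),d(Y))$ does not affect them. The content of the lemma lies in verifying the left and right tower conditions with the claimed absolute normalization.

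I focus on the left ($\R_U$) tower; the right is symmetric. Composing~\eqref{eq:fuvtox} with the projection $\X \to \R_U$ of Remark~\ref{rem:sproperties} sends $U \mapsto U_B$ and $V \mapsto W_{B,0}$, and under this action
\[
C_\X \otimes_\X \R_U \;=\; C_\X/\fm_V \;\cong\; \CFK(Y,K) \otimes_{\F[U,V]} \R_U,
\]
which is free and finitely generated as a chain complex over the graded valuation ring $\R_U$. The structure theorem (Remark~\ref{rem:structure}) therefore provides a paired basis $\{x_j\}_{j=1}^m \cup \{y_i, z_i\}_{i=1}^n$ with $\partial x_j = 0$, $\partial y_i = \nu_i z_i$, $\partial z_i = 0$ for non-unit homogeneous $\nu_i = U_B^{a_i}W_{B,0}^{b_i}$, yielding
\[
H_*(C_\X/\fm_V) \;\cong\; \R_U^{\,m} \,\oplus\, \bigoplus_{i=1}^n \R_U/(\nu_i).
\]
To count free summands I pass to the further quotient by $U_B-1$. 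Since $U_B \cdot \fm_V = 0$ in $\X$, setting $U_B = 1$ automatically kills $\fm_V$, and~\eqref{eq:fuvtox} factors through an identification $\X/(U_B-1, \fm_V) \cong \F[W_{B,0}]$ matching $\F[U,V]/(U-1) \cong \F[V]$ under $V \mapsto W_{B,0}$. Thus on the chain level $(C_\X/\fm_V)/(U_B - 1) \cong \CFK(Y,K)/(U-1)$, whose homology is $\HFm(Y)$ by the remark preceding the lemma. Under $U_B \mapsto 1$ each $\nu_i$ becomes $W_{B,0}^{b_i}$, so summands with $b_i = 0$ become acyclic while those with $b_i \geq 1$ contribute $\F[W_{B,0}]/(W_{B,0}^{b_i})$. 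Since $Y$ is an integer homology sphere, $\HFm(Y)$ has a unique non-torsion tower, forcing $m = 1$. The $\R_V$-tower condition follows from the symmetric argument using $V_T-1$ together with $H_*(\CFK(Y,K)/(V-1)) \cong \HFm(Y)$ as $\F[U]$-modules, a consequence of $\CFK(Y,K) \simeq \overline{\CFK}(Y,K)$.

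Finally, the grading shift is determined by the requirement that the tower isomorphism be absolutely $\gr_2$-graded (for $\R_U$) and absolutely $\gr_1$-graded (for $\R_V$); the relative gradings on the other coordinate are automatic from bigradedness. The unique $\R_U$-tower generator in $H_*(C_\X/\fm_V)$ projects under $U_B \mapsto 1$ to the $\F[V]$-tower generator of $\HFm(Y)$, which carries $\gr_V$-grading $d(Y)$ in the conventions of Section~\ref{sec:background}. Since $\gr(U_B) = (-2,0)$, the quotient by $U_B - 1$ preserves $\gr_2$, so the unshifted $\R_U$-tower generator has $\gr_2 = d(Y)$; the shift by $(d(Y),d(Y))$ places it at $\gr_2 = 0$. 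The symmetric calculation gives $\gr_1 = 0$ for the $\R_V$-tower generator after shifting. The main obstacle is this last grading bookkeeping, which requires careful attention to the conventions relating $d(Y)$, the $(\gr_U,\gr_V)$-bigrading on $\CFK(Y,K)$, and the $\X$-bigrading; the homological content reduces cleanly to the structure theorem together with the identification of $H_*(C/(U-1))$ with $\HFm(Y)$.
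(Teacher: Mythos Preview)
Your proof is correct and follows essentially the same approach as the paper: use the structure theorem over $\R_U$ to write $H_*(C_\X/\fm_V)$ as $\R_U^m$ plus torsion, set $U_B = 1$ to identify the result with $\HFm(Y)$ via $C/(U-1) \cong C_\X/(U_B-1)$, deduce $m=1$ from the homology-sphere hypothesis, and track $\gr_2$ through this identification to locate the tower generator at $d(Y)$. You have spelled out a bit more than the paper does (the $\stair$-complex axioms, the explicit factorization $\F[U,V]\to\X\to\R_U$, and the fate of each torsion summand under $U_B\mapsto 1$), but the argument is the same.
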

\begin{proof}
Let $C = \CFK(Y, K)$ and $C_\X = \CFK_\X(Y,K)$. Our first claim is that $H_*(C_\X \otimes_\X \R_U)/\R_U\text{-torsion} \cong \R_U$. Due to the structure theorem, this is equivalent to showing that $m = 1$ in \eqref{eq:structureequation}, which follows immediately from \eqref{eq:invertsetone} and \eqref{eq:hfmiso} (together with the fact that $Y$ is a homology sphere). Since \eqref{eq:invertsetone} and \eqref{eq:hfmiso} are absolutely $\gr_2$-graded, the single tower $\F[W_{B,0}]$ in \eqref{eq:structureequation} is generated by an element in $\gr_2$-grading $d(Y)$. Thus the generator of the $\R_U$-tower in $H_*(C_\X \otimes_\X \R_U)/\R_U\text{-torsion}$ has $\gr_2$-grading $d(Y)$. An analogous argument replacing $U$ with $V$ shows that $H_*(C \otimes_\X \R_V)/\R_V\text{-torsion} \cong \R_V$, with the generator of this tower having $\gr_1$-grading $d(Y)$. Shifting gradings to satisfy the normalization convention of Definition~\ref{def:standard} gives the claim. 
\end{proof}

Moreover, a local equivalence between $\CFK(Y_1,K_1)$ and $\CFK(Y_2,K_2)$ gives a local equivalence between $\CFK_\X(Y_1,K_1)$ and $\CFK_\X(Y_2,K_2)$:

\begin{lemma}\label{lem:x-local-equivalence}
If $K_i\in Y_i$ for $i=1,2$ are homology concordant, then we have a local equivalence
\[
\CFK_\X(Y_1,K_1)[d(Y_1), d(Y_1)] \sim \CFK_\X(Y_2,K_2)[d(Y_2), d(Y_2)].
\]
\end{lemma}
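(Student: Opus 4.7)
The plan is to lift the concordance-induced $\F[U,V]$-local equivalence to an $\X$-local equivalence by tensoring along the ring morphism \eqref{eq:fuvtox}. By an extension of Theorem~\ref{thm:concle} to knots in integer homology spheres (which follows from Zemke's knot cobordism construction by essentially the same argument as in \cite{Zemkeconnsuminv}, using the fact that homology concordances produce morphisms of Floer complexes), a homology concordance from $(Y_1,K_1)$ to $(Y_2,K_2)$ yields absolutely bigraded, $\F[U,V]$-equivariant chain maps $f$ and $g$ realizing a local equivalence in the sense of Definition~\ref{def:loceq}. Tensoring $f$ and $g$ with $\X$ over $\F[U,V]$ produces $\X$-equivariant chain maps $f_\X$ and $g_\X$ between the knot Floer $\X$-complexes. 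Since the substitutions $U \mapsto U_B + W_{T,0}$ and $V \mapsto V_T + W_{B,0}$ are bigrading-preserving, $f_\X$ and $g_\X$ remain absolutely bigraded. Because $d$ is a homology cobordism invariant, $d(Y_1) = d(Y_2)$, so the shifts $[d(Y_i), d(Y_i)]$ in the statement coincide, and $f_\X, g_\X$ descend to bigraded maps between the shifted complexes (which are knotlike by Lemma~\ref{lem:cfkxknotlike}).

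By Lemma~\ref{lem:loceqsymmetric}, it suffices to verify that $f_\X$ and $g_\X$ are right local in the sense of Definition~\ref{def:local-map} --- that is, they induce isomorphisms on $H_*(\,\cdot\, \otimes_\X \R_V)/\R_V\text{-torsion}$. By Remark~\ref{rem:structure} combined with the knotlike condition, each of these tower quotients is a rank-one free $\R_V$-module whose generator has a specified bigrading. Consequently, any bigraded $\R_V$-module endomorphism from the source to the target tower is multiplication by an element of $\K$, and whether this element is nonzero (equivalently, the map is an isomorphism) can be detected after inverting all nonzero homogeneous elements of $\R_V$.

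The key observation is that the composite ring map $\F[U,V] \to \X \to \R_V \to \loc(\R_V)$ sends $U \mapsto W_{T,0}$ and $V \mapsto V_T$, both of which become invertible in $\loc(\R_V)$; hence this composition factors through the localization $\F[U^{\pm 1}, V^{\pm 1}]$. It follows that
\[
C_\X \otimes_\X \loc(\R_V) \;\cong\; \bigl(C \otimes_{\F[U,V]} \F[U^{\pm 1}, V^{\pm 1}]\bigr) \otimes_{\F[U^{\pm 1}, V^{\pm 1}]} \loc(\R_V),
\]
and $f_\X \otimes \id$ factors as the base change of $f \otimes_{\F[U,V]} \id_{\F[U^{\pm 1}, V^{\pm 1}]}$. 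The latter is a homotopy equivalence by the hypothesis that $f$ is an $\F[U,V]$-local equivalence, so $f_\X$ induces an isomorphism on $\loc(\R_V)$-homology, hence on the tower quotient over $\R_V$. The symmetric argument yields the $\R_U$ condition. I expect the main obstacle to be keeping grading conventions consistent across the various localizations and grading shifts; this is a routine but tedious bookkeeping task, made manageable by the fact that \eqref{eq:fuvtox} is bigrading-preserving and $d(Y_1) = d(Y_2)$.
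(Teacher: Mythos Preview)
Your proposal is correct and follows essentially the same route as the paper: both arguments tensor the $\F[U,V]$-local equivalence maps $f,g$ along \eqref{eq:fuvtox}, observe that the composite $\F[U,V]\to\X\to\loc(\R_V)$ (or $\loc(\R_U)$) factors through $\F[U^{\pm1},V^{\pm1}]$, and deduce the tower-isomorphism condition from the localized homotopy equivalence together with the absolute bigrading. The only minor redundancy is your appeal to Lemma~\ref{lem:loceqsymmetric}: since local equivalence in $\KL$ is \emph{defined} as right local equivalence, checking that $f_\X,g_\X$ are right local already suffices, so the final sentence about the $\R_U$ condition is not needed (though harmless).
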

\begin{proof}
If $K_1$ is homology concordant to $K_2$, then we have absolutely chain graded maps $f$ and $g$ which induce isomorphisms between 
\[
H_*(\CFK(Y_i, K_i) \otimes_{\F[U,V]} \F[U, U^{-1}, V, V^{-1}])
\]
for $i = 1, 2$. A similar argument as in Remark~\ref{rem:fuvtox} shows these are isomorphic to
\[
H_*(\CFK_\X(Y_i, K_i) \otimes_\X \loc(\R_U))
\]
for $i = 1, 2$. Hence $f$ and $g$ induce isomorphisms on $\CFK_\X$  after inverting $\R_U$. It follows that $f$ and $g$ map $\R_U$-nontorsion elements of $H_*(\CFK_\X(Y_i, K_i) \otimes_\X \R_U)$ to $\R_U$-nontorsion elements. Because $f$ and $g$ are absolutely graded, this implies $f$ and $g$ must induce isomorphisms on $H_*(\CFK_\X(Y_i, K_i) \otimes_\X \R_U)/\R_U\text{-torsion}$. A similar argument holds with $V$ in place of $U$. Since $Y_1$ is homology cobordant to $Y_2$, we moreover have $d(Y_1) = d(Y_2)$, so the claim holds after shifting gradings as well.
\end{proof}


The well-known fact that $\CFK(Y, K)$ is symmetric also translates into the fact that the associated complexes over $\X$ are symmetric, in the sense of Definition~\ref{def:symmetric}:

\begin{lemma}
Let $K$ be a knot in an integer homology sphere $Y$. Then the knotlike complex $\CFK_\X(Y,K)[d(Y), d(Y)]$ is symmetric with respect to the involution on $\X$ sending
\[\xi(U_B)=V_T,\qquad \xi(W_{B,i})=W_{T,i},\qquad \xi(V_T)=U_B,\qquad \xi(W_{T,i})=W_{B,i}.\]
\end{lemma}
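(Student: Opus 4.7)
The plan is to bootstrap from the standard conjugation symmetry of the knot Floer complex over $\F[U,V]$, transporting it through the base change morphism \eqref{eq:fuvtox}. Write $\phi \colon \F[U,V] \to \X$ for the morphism sending $U \mapsto U_B + W_{T,0}$ and $V \mapsto V_T + W_{B,0}$, and let $\tau \colon \F[U,V] \to \F[U,V]$ be the involution swapping $U$ and $V$. Recall from Section \ref{sec:background} the conjugation equivalence $\CFK(Y,K) \simeq \overline{\CFK}(Y,K)$ of bigraded $\F[U,V]$-complexes, where $\overline{\CFK}(Y,K) = \tau^*\CFK(Y,K)$ denotes the pullback along $\tau$.

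The key algebraic observation is the compatibility $\xi \circ \phi = \phi \circ \tau$. Indeed, on the generator $U$ we compute
\[
\xi(\phi(U)) = \xi(U_B + W_{T,0}) = V_T + W_{B,0} = \phi(V) = \phi(\tau(U)),
\]
and similarly for the generator $V$. Since both maps are morphisms of $\F$-algebras, they agree on all of $\F[U,V]$.

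Using this, I would interpret the pullback $\xi^*(\CFK_\X(Y,K))$ as follows. By definition $\CFK_\X(Y,K) = \CFK(Y,K) \otimes_\phi \X$, and applying $\xi^*$ amounts to pre-composing the $\X$-action on the right factor with $\xi$. Thus, as bigraded $\X$-complexes (here we use that $\xi$ is a skew-graded involution compatible with the bigrading swap),
\[
\xi^*(\CFK_\X(Y,K)) \;\cong\; \CFK(Y,K) \otimes_{\xi \circ \phi} \X \;=\; \CFK(Y,K) \otimes_{\phi \circ \tau} \X \;=\; \overline{\CFK}(Y,K) \otimes_\phi \X.
\]
Tensoring the conjugation equivalence $\CFK(Y,K) \simeq \overline{\CFK}(Y,K)$ with $\X$ over $\F[U,V]$ (via $\phi$) yields a bigraded chain homotopy equivalence
\[
\CFK_\X(Y,K) \;\simeq\; \xi^*(\CFK_\X(Y,K))
\]
of $\X$-complexes, establishing symmetry in the sense of Definition~\ref{def:symmetric} (working with the homotopy-equivalence class of the knot Floer complex throughout).

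Finally, the grading shift $(d(Y), d(Y))$ is diagonal and therefore invariant under the swap induced by $\xi$, so the shifted complex $\CFK_\X(Y,K)[d(Y), d(Y)]$ is also symmetric with respect to $\xi$. There is no serious obstacle here; the only thing to verify carefully is the compatibility $\xi \circ \phi = \phi \circ \tau$, after which symmetry over $\X$ is a formal consequence of the symmetry of $\CFK$ over $\F[U,V]$.
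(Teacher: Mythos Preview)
Your proof is correct and takes essentially the same approach as the paper. The paper invokes the skew-equivariant homotopy equivalence $\iota$ from involutive knot Floer homology, but uses only the fact that it exhibits $\CFK(Y,K)\simeq \tau^*\CFK(Y,K)$, which is exactly the conjugation symmetry you cite from Section~\ref{sec:background}; both proofs then check the compatibility $\xi\circ\phi=\phi\circ\tau$ and tensor up to $\X$.
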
 
\begin{proof}
Recall that (as in the construction of involutive knot Floer homology; see \cite{HM:involutive}) there is a skew-graded homotopy equivalence $\iota\colon \CFK(Y,K)\to \CFK(Y,K)$ so that $\iota(Ux)=V\iota(x)$ and $\iota(Vx)=U\iota(x)$.  Let $\xi \colon \F[U,V]\to \F[U,V]$ be the skew-graded isomorphism given by $\xi(U)=V$ and $\xi(V)=U$; then the map \eqref{eq:fuvtox} intertwines this with the involution $\xi$ given in the statement of the lemma. We can view $\iota$ as a $\F[U,V]$-equivariant, absolutely graded homotopy equivalence from $\CFK(Y,K)$ to $\xi^*\CFK(Y,K)$.  Tensoring with $\X$ we obtain an $\X$-equivariant, absolutely graded homotopy equivalence from $\CFK_\X(Y,K)$ to $\xi^*\CFK_\X(Y,K)$. 
\end{proof}

We thus finally obtain:

\begin{proof}[Proof of Theorem \ref{thm:localequivlex}:]
This follows directly from the properties of $\X$ described above along with Theorem \ref{thm:char}.
\end{proof}

\section{Applications}\label{sec:vanishing}


In this section, we prove some of the applications listed in the introduction. We begin by establishing vanishing conditions for the homomorphisms $\varphi_{i,j}$ in the case that $K$ is a knot in an integer homology sphere $L$-space. First consider a standard complex $C(b_1,\ldots,b_n)$ with $\smash{|b_{2k-1}| = U_B^{i_k}W_{B,0}^{j_k}}$. As in \eqref{eq:structureequation}, we have a $\gr_2$-graded isomorphism
\begin{equation}\label{eq:torsion-towers-from-standards}
H_*(C(b_1,\ldots,b_n)/(U_B-1)) \cong \F[W_{B_0}] \oplus \left( \bigoplus_{k} \F[W_{B_0}]/(W_{B_0}^{j_k})[\sigma_k] \right).
\end{equation}
The generator of the nontorsion tower is $[x_n]$, which lies in $\gr_2$-grading zero. The generator of the $k$th torsion tower is $[x_{2k-2}]$ or $[x_{2k-1}]$, according as $b_{2k-1} \gebang 1$ or $b_{2k-1} \lebang 1$, respectively. As in \eqref{eq:gr1diff} and \eqref{eq:gr2diff}, each grading shift $\sigma_k$ is easily calculated to be:
\begin{align*}
\sum_{j>2k-2} \sgn(b_j)+\sum_{j>2k-2} \gr_2(b_j) &\qquad \mbox{ if } b_{2k-1}\gebang 1 \\
\sum_{j>2k-1} \sgn(b_j)+\sum_{j>2k-1} \gr_2(b_j) &\qquad \mbox{ if } b_{2k-1}\lebang 1.
\end{align*}

\begin{theorem} \label{thm:vanishing}
Let $K$ be a knot in an integer homology sphere $L$-space $Y$. Then for any $j > 0$, the homomorphism $\varphi_{i, j}(K)$ vanishes. Thus the homomorphisms $\varphi_{i,j}$ for $j > 0$ descend to homomorphisms 
\[
\Chatz/\cCz\to \mathbb{Z}.
\]
\end{theorem}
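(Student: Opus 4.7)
The plan is to combine the splitting provided by Corollary~\ref{cor:splitting} with the computation of $H_*(C_\X/(U_B-1))$ for $Y$ an L-space. Writing $C_\X = \CFK_\X(Y,K)$ and letting $C(b_1,\ldots,b_n)$ be its standard complex representative from Theorem~\ref{thm:char}, we have
\[
C_\X \simeq C(b_1,\ldots,b_n) \oplus A
\]
for some $A$ with $H_*(\R_U^{-1}A) = H_*(\R_V^{-1}A) = 0$. Since $\varphi_{i,j}(K)$ with $j>0$ is the signed count of those odd-index parameters of the form $\pm(i_k,j_k)$ having $j_k>0$, it suffices to show every odd-index $b_{2k-1} = \pm(i_k,j_k)$ has $j_k = 0$.

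Now apply $-/(U_B-1)$ to the splitting. By \eqref{eq:invertsetone} and \eqref{eq:hfmiso},
\[
H_*(C_\X/(U_B-1)) \cong H_*(\CFK(Y,K)/(U-1)) \cong \HFm(Y),
\]
and since $Y$ is an integer homology sphere L-space this is a single torsion-free tower $\F[W_{B,0}]$. Consequently each direct summand of
\[
H_*(C(b_1,\ldots,b_n)/(U_B-1)) \oplus H_*(A/(U_B-1))
\]
is $W_{B,0}$-torsion-free. But \eqref{eq:torsion-towers-from-standards} identifies the torsion part of the first summand with $\bigoplus_{k:\,j_k>0} \F[W_{B,0}]/(W_{B,0}^{j_k})[\sigma_k]$, which must therefore vanish. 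Hence every odd-index parameter satisfies $j_k = 0$, so $\varphi_{i,j}(K) = 0$ whenever $j > 0$.

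For the descending statement, $S^3$ is an integer homology sphere L-space, so the vanishing just proved shows $\varphi_{i,j}$ is zero on every knot in $S^3$, hence zero on the image of $\cCz$ in $\Chatz$. Combined with the fact that $\varphi_{i,j}$ is already a homomorphism on $\Chatz$ (Theorem~\ref{thm:homs} applied to $\X$, as used in Theorem~\ref{thm:main}), this gives the induced homomorphism $\Chatz/\cCz \to \Z$. The only small subtlety I anticipate is verifying that the isomorphisms \eqref{eq:invertsetone}, \eqref{eq:hfmiso}, and the splitting of $C_\X$ remain compatible as $\F[W_{B,0}]$-modules after the quotient by $(U_B-1)$; this is essentially transparent from the substitution $\F[U,V] \to \X$ defined by $V \mapsto V_T + W_{B,0}$, since $U_B = 1$ together with $U_B \cdot \fm_V = 0$ kills the $V_T$-term and leaves $V$ acting as $W_{B,0}$.
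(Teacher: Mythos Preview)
Your proof is correct and follows essentially the same route as the paper: use Corollary~\ref{cor:splitting} to split off the standard complex, apply \eqref{eq:invertsetone} and \eqref{eq:hfmiso} together with the $L$-space hypothesis to see that $H_*(C_\X/(U_B-1))$ is $W_{B,0}$-torsion-free, and then read off $j_k=0$ for every odd-index parameter from \eqref{eq:torsion-towers-from-standards}. Your added remarks on the $\F[W_{B,0}]$-module compatibility and on the descent (via $S^3$ being an $L$-space) are correct elaborations of points the paper leaves implicit.
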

\begin{proof}
Let $\CFK_\X(Y, K)[d(Y), d(Y)]$ be locally equivalent to $C(b_1, \ldots, b_n)$. Since $Y$ is an integer homology sphere $L$-space, the isomorphism \eqref{eq:invertsetone} implies that there are no torsion summands on the right-hand side of \eqref{eq:structureequation}. By Corollary~\ref{cor:splitting}, the right-hand side of \eqref{eq:torsion-towers-from-standards} appears as a summand of the right-hand side of \eqref{eq:structureequation}. Hence we must have $j_k = 0$ for each standard complex parameter $|b_{2k-1}|$. 
\end{proof}

We now show that the remaining homomorphisms $\varphi_{i, 0}(K)$ coincide with the homomorphisms defined in \cite{DHSTmoreconcord} in the case that $K$ lies in $S^3$. (With a slight modification, it is possible to generalize the results of \cite{DHSTmoreconcord} to knots in integer homology sphere $L$-spaces; then the previous statement holds replacing $S^3$ with any such space.) For this, we first show that in such a situation, local equivalence over $\X$ coincides with local equivalence over $\cR = \F[U, V]/(UV)$. Note that we have maps $\cR \rightarrow \X$ and $\X \rightarrow \cR$. The first is induced from \eqref{eq:fuvtox} by sending $U \mapsto U_B$ and $V \mapsto V_T$, while the second simply maps $U_B \mapsto U$ and $V_T \mapsto V$ (and is zero on $W_{B,0}$ and $W_{T, 0}$).

\begin{proposition}\label{prop:knots-in-l-spaces}
Let $K$ be a knot in an integer homology sphere $L$-space $Y$. Let $\CFKX(Y,K)$ be $\X$-locally equivalent to $C(b_1,\ldots,b_n)[d(Y), d(Y)]$. Then all $b_i$ lie in $\cR$ and $\CFKUV(Y,K)$ is $\cR$-locally equivalent to $C(b_1,\ldots,b_n)[d(Y), d(Y)]$.
\end{proposition}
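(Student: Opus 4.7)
The proof has two main parts: first establishing that all parameters $b_i$ lie in $\cR$, and then descending the $\X$-local equivalence to an $\cR$-local equivalence.

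First I would show that every $b_i$ lies in the image of the embedding $\cR \hookrightarrow \X$ sending $U \mapsto U_B$ and $V \mapsto V_T$. By Theorem~\ref{thm:vanishing}, the homomorphism $\varphi_{i,j}$ vanishes on $(Y,K)$ whenever $j>0$. Since $\varphi_{i,j}$ counts the odd-index standard parameters equal to $\pm U_B^{i}W_{B,0}^{j}$, this forces every odd-index $|b_{2k-1}| = U_B^{i_k}W_{B,0}^{j_k}$ to have $j_k=0$, so it is a pure power of $U_B$. Applying the symmetry $b_i = \xi(b_{n+1-i}^{-1})$ of Lemma~\ref{ex:symmetricstandard} together with $\xi(U_B)=V_T$ then forces each even-index $b_i$ to be a pure power of $V_T$. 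In either case, $b_i$ lies in the image of $\cR \hookrightarrow \X$.

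Next I would transfer the $\X$-local equivalence along the ring map $\pi \co \X \to \cR$ defined by $U_B\mapsto U$, $V_T\mapsto V$, and $W_{B,i}, W_{T,i}\mapsto 0$. Composing $\pi$ with the map $\F[U,V]\to\X$ from Remark~\ref{rem:fuvtox} recovers the standard quotient $\F[U,V]\to \cR$, so $\CFK_\X(Y,K)\otimes_\X \cR = \CFKUV(Y,K)$. Since all $b_i\in\cR$, the differentials in $C(b_1,\ldots,b_n)$ use only scalars in $\cR$, and so $C(b_1,\ldots,b_n)\otimes_\X \cR = C(b_1,\ldots,b_n)_\cR$. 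Tensoring the $\X$-local maps $f,g$ between $\CFK_\X(Y,K)$ and $C(b_1,\ldots,b_n)[d(Y),d(Y)]$ with $\cR$ therefore yields $\cR$-linear, bigraded chain maps $\bar f, \bar g$ between $\CFKUV(Y,K)$ and $C(b_1,\ldots,b_n)_\cR[d(Y),d(Y)]$.

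The hard part will be verifying that $\bar f$ and $\bar g$ are $\cR$-local equivalences, since the $\F[U,U^{-1}]$-localization of an $\cR$-complex does not coincide with either of the localizations $\loc(\R_U)$ or $\loc(\R_V)$ used to define $\X$-local equivalence. Both target homologies $H_*(\CFKUV(Y,K)\otimes_\cR \F[U,U^{-1}])$ and $H_*(C(b_1,\ldots,b_n)_\cR \otimes_\cR \F[U,U^{-1}])$ are single copies of $\F[U,U^{-1}]$: the former because $U^{-1}\HFK^-(Y,K)\cong \F[U,U^{-1}]$ for any integer homology sphere $Y$, the latter by direct inspection of the standard complex (all $V$-differentials vanish after killing $V$, and all $U$-differentials become isomorphisms after inverting $U$, leaving only $x_n$ as a surviving tower generator). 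To pin down that $\bar f_*$ is nonzero (hence an isomorphism) on this single-tower homology, I would consider $\bar g \circ \bar f = \overline{g\circ f}$: the $\X$-composition $g\circ f$ acts as the identity on the $\R_U$-tower generator of $H_*(\CFK_\X(Y,K)\otimes_\X \R_U)/\R_U\text{-torsion}$ by the $\X$-local equivalence, and pushing this generator further along the quotient $\R_U\to \F[U,U^{-1}]$ (which kills $W_{B,0}$ and inverts $U_B$) preserves the identity action on the surviving $\F[U,U^{-1}]$-tower. This forces $\bar f_*$ to be an isomorphism on the $U$-localization; the symmetric argument with $V$ in place of $U$ completes the verification.
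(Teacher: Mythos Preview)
Your first paragraph contains a genuine gap: the vanishing of $\varphi_{i,j}(K)$ for $j>0$ does \emph{not} force each individual odd-index parameter $|b_{2k-1}|$ to have $j_k=0$. By Definition~\ref{def:homs}, $\varphi_{i,j}$ is a \emph{signed} count, namely the difference between the number of odd-index parameters equal to $U_B^iW_{B,0}^j$ and those equal to $(U_B^iW_{B,0}^j)^{-1}$; these can cancel without either set being empty. For instance, the symmetric standard complex
\[
C\bigl((U_BW_{B,0})^{-1},\,(V_TW_{T,0})^{-1},\,U_BW_{B,0},\,V_TW_{T,0}\bigr)
\]
has $\varphi_{1,1}=0$ even though no parameter lies in~$\cR$. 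Your symmetry argument does not rule this out, since the relation $b_i=\xi(b_{n+1-i}^{-1})$ only pairs odd-index parameters with even-index ones. The fix is to invoke the \emph{proof} of Theorem~\ref{thm:vanishing} rather than its statement: that proof shows directly, via \eqref{eq:torsion-towers-from-standards} and the absence of torsion in $\HFm(Y)$, that $j_k=0$ for every individual $|b_{2k-1}|$. This is precisely what the paper does, and the even-index parameters follow either by the analogous argument with $U$ and $V$ swapped or, as you suggest, by symmetry once the odd-index case is correctly established.

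A smaller point: in your third paragraph, the assertion that $U^{-1}\HFK^-(Y,K)\cong\F[U,U^{-1}]$ for \emph{any} integer homology sphere is false; in general one has $U^{-1}\HFK^-(Y,K)\cong\HFhat(Y)\otimes\F[U,U^{-1}]$, which is a single tower exactly when $Y$ is an $L$-space. This does not break your argument since you are already assuming $Y$ is an $L$-space, but the justification should use that hypothesis. With that correction, your verification that $\bar f$ and $\bar g$ remain local via $\overline{g\circ f}$ is sound and in fact more explicit than the paper's, which simply asserts that $\X$-local maps descend to $\cR$-local maps and leaves the check to the reader.
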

\noindent
Here, when we say that the $b_i$ are in $\cR= \F[U, V]/(UV)$, we mean that they lie in the image of the map $\cR \rightarrow \X$. In this situation, we may consider $C(b_1,\ldots,b_m)$ as a standard complex over $\X$ or as a standard complex over $\cR$; we write $C(b_1,\ldots,b_n)_{\X}$ and $C(b_1,\ldots,b_n)_{\cR}$ to distinguish these when necessary.
\begin{proof}
We assume $d(Y)=0$ for ease of notation. The fact that the $b_i$ lie in $\cR$ is simply the proof of Theorem~\ref{thm:vanishing} (replacing $U$ with $V$ in the case of the even-index parameters). It is easily checked that
\[
\CFKUV(Y,K)=\CFKX(Y,K)\otimes_\X \cR
\]
where the action of $\X$ on $\cR$ is as defined above. Moreover, the reader may verify that $\X$-local maps descend to $\R$-local maps, so
	\[
	\CFKUV(Y,K) \simeq_{\cR} C(b_1,\ldots,b_n)_\X\otimes_\X \cR.
	\]
Finally, the fact that all the $b_i$ lie in $\cR$ implies $C(b_1,\ldots,b_n)_{\X} \otimes_\X \cR=C(b_1,\ldots,b_n)_{\cR}$. This completes the proof.
\end{proof}

\begin{proposition} 
Let $K$ be a knot in an integer homology sphere $L$-space $Y$. For any $i > 0$, we have that $\varphi_{i, 0}(K)=\varphi_i(K)$ from \cite{DHSTmoreconcord}. 
\end{proposition}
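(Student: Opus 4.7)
The plan is to invoke Proposition~\ref{prop:knots-in-l-spaces} to identify the standard-complex representatives of $\CFKX(Y,K)$ and $\CFKUV(Y,K)$, and then to observe that the definitions of $\varphi_{i,0}$ and $\varphi_i$ reduce to the same signed count of odd-index parameters. By Proposition~\ref{prop:knots-in-l-spaces}, there is a sequence $(b_1,\ldots,b_n)$ with each $b_k$ lying in the image of the inclusion $\cR \hookrightarrow \X$ (sending $U\mapsto U_B$ and $V\mapsto V_T$) such that
\[
\CFKX(Y,K) \sim_{\X} C(b_1,\ldots,b_n)_\X[d(Y),d(Y)] \quad \text{and} \quad \CFKUV(Y,K) \sim_{\cR} C(b_1,\ldots,b_n)_\cR[d(Y),d(Y)].
\]
Each odd-index parameter $b_k$ may therefore be regarded simultaneously as an element of $\Gamma(\F[U])$ and of $\Gamma(\R_U)$, with $U^m$ identified with $U_B^m$ for every integer $m$.

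Next, I would unpack the two definitions in parallel. By Definition~\ref{def:homs}, $\varphi_{i,0}(K)$ is the number of odd-index parameters $b_k$ equal to $U_B^i$ minus the number equal to $U_B^{-i}$. The corresponding definition of $\varphi_i$ in \cite{DHSTmoreconcord} gives the signed count of odd-index parameters in the $\cR$-standard-complex representative of $\CFKUV(Y,K)$ that are equal to $U^i$ or to $U^{-i}$. Since the two parameter sequences coincide under $U\leftrightarrow U_B$, the two signed counts must agree, provided the notions of ``positive'' and ``negative'' parameter match on the two sides.

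This last compatibility is the only real point to verify, and I expect it to be routine rather than a genuine obstacle. Concretely, one checks that the total order $\leqbang$ of Definition~\ref{def:latticetotalorder}, when restricted to the subgroup $\Gamma(\F[U]) \subset \Gamma(\R_U)$ corresponding to the $j=0$ axis of Figure~\ref{fig:totalorder}, reproduces exactly the total order on $\Gamma(\F[U])$ displayed in Figure~\ref{fig:order} and used in \cite{DHSTmoreconcord}. Inspection of the two pictures shows that this is immediate: along $j=0$, the rule $1/k < 1/i$ in Definition~\ref{def:latticetotalorder} is precisely the divisibility rule on $\F[U]$, and so $U_B^i$ is positive (respectively negative) exactly when $U^i$ is. Consequently $\varphi_{i,0}(K)$ and $\varphi_i(K)$ count the same parameters with the same signs, and the equality $\varphi_{i,0}(K)=\varphi_i(K)$ follows.
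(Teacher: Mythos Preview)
Your proposal is correct and follows exactly the paper's approach: the paper's proof is the single line ``Follows immediately from Proposition~\ref{prop:knots-in-l-spaces},'' and your argument simply unpacks what that line means, verifying that the standard-complex parameters agree and that the sign conventions on $\Gamma(\F[U])\subset\Gamma(\R_U)$ match.
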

\begin{proof}
Follows immediately from Proposition \ref{prop:knots-in-l-spaces}.
\end{proof}

A slightly more refined analysis of \eqref{eq:torsion-towers-from-standards} constrains the behavior of the $\varphi_{i,j}$ for knots in Seifert fibered homology spheres:

\begin{proof}[Proof of Proposition~\ref{prop:seifert}:]
Consider each grading shift $\sigma_k$ from \eqref{eq:torsion-towers-from-standards}. Since every $\gr_2(b_j)$ is even and $n$ is even, it is clear that $\sigma_k$ is even if and only if $b_{2k-1} \gebang 1$. Note that the $k$th torsion tower has grading supported in the same parity as $\sigma_k$, with the caveat that if $j_k = 0$ then the corresponding torsion tower is empty. As in the proof of Theorem~\ref{thm:vanishing}, we have that the right-hand side of \eqref{eq:torsion-towers-from-standards} appears as a summand of $\HFm(Y)$ up to a grading shift of $d(Y)$. Now, any negative Seifert space has (minus-flavor) reduced Heegaard Floer homology concentrated in odd degrees \cite{OSplumbed}. Hence if $Y$ is a positive Seifert space, then for each $k$, we must either have $b_{2k-1} \gebang 1$ or $j_k = 0$. The argument for positive Seifert spaces is analogous.
\end{proof}

We now relate our homomorphisms to the knot invariants $\tau(Y,K)$ and $\varep(Y,K)$.  
First, we recall the definition of $\tau(Y, K)$ from \cite{OS4ball} (see also \cite{HomLidmanLevine}). Let $C = \CFK^\infty(Y,K)$ which, after choosing a filtered basis, decomposes as a direct sum $C = \oplus_{i,j \in \mathbb{Z}} C(i,j)$. For any set $X \subset \mathbb{Z}^2$, let $CX = \oplus_{(i,j) \in X} C(i,j)$. 
Let
\[
\iota_s\colon C\{i=0,j\leq s\}\to C\{i=0\}
\]
denote the inclusion map, and let $\rho \colon \widehat{\mathit{CF}}(Y) \to \mathit{CF}^+(Y)$ be the natural inclusion map $\widehat{\mathit{CF}}(Y) \simeq C\{i=0\} \to C\{ i \geq 0 \} \simeq {\mathit{CF}}^+(Y)$.  

\begin{definition}{\cite{HomLidmanLevine}}\label{def:tau-1} Let $K$ be a knot in an integer homology sphere $Y$. Define
\[
\tau(Y,K)=\min \{ s \mid \mathrm{im} (\rho_*\circ \iota_{s,*}) \cap U^N HF^+(Y)\neq 0 \qquad \forall N\gg 0\}.
\]
\end{definition}
We give an equivalent definition of $\tau(Y,K)$. Let $\rho'\colon \CF^{\leq 0}(Y) \to \widehat{\mathit{CF}}(Y)$ be the natural projection map, where $\CF^{\leq 0}(Y) = C\{i\leq 0\}$.  
 
 \begin{lemma} \label{lem:next-tau}
 	The definition of $\tau(Y,K)$ above agrees with:
 	\begin{equation*}
 	\tau(Y,K)=\min \{ s \mid \exists \ y \in \HF^{\leq 0}_{d(Y)}(Y), \ U^Ny\neq 0 \quad  \forall N>0, \mbox{ and } 0 \neq \rho'_*(y)\in \mathrm{im} (\iota_{s,*})\},
 	\end{equation*}
where $\HF^{\leq 0}_{d(Y)}(Y) = H_{d(Y)}(\CF^{\leq 0}(Y))$, where the subscript $d(Y)$ denotes the summand in grading equal to the Ozsv\'ath-Szab\'o $d$-invariant $d(Y)$.  
 \end{lemma}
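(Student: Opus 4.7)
The plan is to establish the equivalence by translating both definitions into statements about lifts to $\HFinf(Y) \cong \F[U, U^{-1}]$, which is a single $U$-equivariant tower since $Y$ is an integer homology sphere. Three observations drive the argument: the chain-level composition $\rho \circ \rho' \colon \CF^{\leq 0} \to \CFp$ factors through the natural inclusion $\CF^{\leq 0} \hookrightarrow \CFinf$ followed by the quotient $\pi \colon \CFinf \twoheadrightarrow \CFp$; for $N$ sufficiently large, $U^N \HFp(Y)$ equals $\mathrm{im}(\pi_*) = \bigcap_N U^N \HFp(Y)$, namely the $\F[U^{-1}]$-tower starting at grading $d(Y)$; and a class $y \in \HF^{\leq 0}_{d(Y)}$ satisfies $U^N y \neq 0$ for all $N > 0$ if and only if its image $\hat{y}$ in $\HFinf_{d(Y)} \cong \F$ is nonzero.

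For the direction (new definition $\Rightarrow$ original): given $y$ as in the lemma statement and a cycle $x \in C\{i = 0, j \leq s\}$ with $\iota_{s,*}[x] = \rho'_*(y)$, I would compute
\[
\rho_*(\iota_{s,*}[x]) \;=\; \rho_*\rho'_*(y) \;=\; \pi_*(\hat y),
\]
which is the bottom class of the tower in $\HFp_{d(Y)}$ and therefore lies in $\bigcap_N U^N \HFp(Y)$. Hence the original definition is realized at the same value of $s$.

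For the reverse direction, given a cycle $x \in C\{i = 0, j \leq s\}$ whose image in $\HFp$ is a tower class, I would lift $x$ to a cycle $\tilde y := x + w \in C\{i \leq 0\}$ with $w \in C\{i \leq -1\}$ chosen so that $\partial_{\CFi}(x+w) = 0$; the obstruction to finding such $w$ is the connecting homomorphism of the long exact sequence associated to $0 \to C\{i \leq -1\} \to C\{i \leq 0\} \to C\{i=0\} \to 0$ applied to $\iota_{s,*}[x]$, and it vanishes precisely because the $\HFp$-image of $x$ lifts to $\HFinf$. Setting $y := [\tilde y] \in \HF^{\leq 0}$, the equality $\rho'_*(y) = \iota_{s,*}[x]$ holds on the nose, and $y$ is $U$-nontorsion because its $\HFinf$-image is nonzero. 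The main obstacle will be to ensure $y$ lies in grading exactly $d(Y)$, since $\tilde y$ inherits the grading of $x$, which \emph{a priori} may be $d(Y) + 2k$ for some $k \geq 0$. This reduces to showing that at the minimum $s$ in the original definition, the image of $\rho_* \circ \iota_{s,*}$ necessarily contains the bottom tower class $1 \in \HFp_{d(Y)}$: if the image contained only higher tower classes $U^{-k}$ at grading $d(Y)+2k$ for some $k \geq 1$, then a staircase-adjustment argument using the structure of $\CFi(Y,K)$ together with the minimality of $s$ should produce a cycle at grading $d(Y)$ in $C\{i=0, j \leq s\}$ whose $\HFp$-image is $1$, thereby yielding the required $y \in \HF^{\leq 0}_{d(Y)}$.
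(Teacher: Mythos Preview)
Your approach is essentially the same as the paper's: both directions use the factorization of $\rho_*\rho'_*$ through $\HFinf$, and the reverse direction lifts $[x]\in\widehat{\HF}$ to $\HF^{\leq 0}$ via the long exact sequence of $0\to\CF^-\to\CF^{\leq 0}\to\widehat{\CF}\to 0$, with the obstruction vanishing because $\rho_*[x]$ lies in $\mathrm{im}(\pi_*)$. Your ``new $\Rightarrow$ original'' direction is fine.

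The gap is in your handling of the grading. You correctly flag that the lift $y$ sits in the same grading as $x$, which \emph{a priori} could be $d(Y)+2k$, but the ``staircase-adjustment argument'' you sketch is both vague and unnecessary. In fact the grading is forced to be exactly $d(Y)$ by what you have already shown. You established that the image of $y$ in $\HFinf$ is nonzero (since $\rho_*\rho'_*(y)=\rho_*[x]\neq 0$ and this composite factors through $\HFinf$). Now observe that multiplication by $U$ gives a degree-$(-2)$ isomorphism $\CF^{\leq 0}\xrightarrow{\sim}\CF^-$, so the image of $\HF^{\leq 0}\to\HFinf$ is exactly $U^{-1}\cdot\mathrm{im}(\HF^-\to\HFinf)$, i.e.\ the classes of degree $\leq d(Y)$. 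Hence any $U$-nontorsion class in $\HF^{\leq 0}$ has degree $\leq d(Y)$. On the other hand $\rho_*[x]$ is a nonzero class in $\mathrm{im}(\HFinf\to\HFp)$, which lives in degrees $\geq d(Y)$, so $\deg(x)=\deg(y)\geq d(Y)$. Combining, $\deg(y)=d(Y)$ automatically; the case $k\geq 1$ simply never occurs. The paper's proof makes exactly this observation (phrased as: the image of $y$ in $\HFinf$ is nonzero but not in $\mathrm{im}(\HF^-\to\HFinf)$, the latter being concentrated in degrees $\leq d(Y)-2$).
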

A class (or underlying chain representing) $y \in \HF^{\leq 0}_{d(Y)}(Y)$ such that $ U^Ny\neq 0$ for all integers $ N>0$ will be called a tower generator.
\begin{proof}
	For now, let the expression $\tau(Y,K)$ as in the lemma be called $\tau'(Y,K)$.  We have the following diagram of exact sequences:
	\begin{equation}\label{eq:transfer}
	\begin{tikzpicture}
	\node (a0) at (0,0) {$\mathit{CF}^-(Y)$};
	\node (a1) at (3,0) {$\mathit{CF}^\infty(Y)$};
	\node (a2) at (6,0) {$\mathit{CF}^+(Y)$};
	
		\node (b0) at (0,-1) {$\mathit{CF}^{\leq 0}(Y)$};
	\node (b1) at (3,-1) {$\mathit{CF}^\infty(Y)$};
	\node (b2) at (6,-1) {$\mathit{CF}^{>0}(Y)$};

	\draw[->] (a0) -- (b0); 
	\draw[->] (a1) -- (b1);
	\draw[->] (a2) -- (b2);
	\draw[->] (a0) -- (a1);
		\draw[->] (a1) -- (a2);
			\draw[->] (b0) -- (b1);
		\draw[->] (b1) -- (b2);
	\end{tikzpicture}
	\end{equation}
	The left vertical arrow is inclusion, the middle is the identity, and the right is projection.  There is an associated diagram of exact triangles:
	
		\begin{equation*}
	\begin{tikzpicture}
	\node (a0) at (0,0) {$\mathit{HF}^-(Y)$};
	\node (a1) at (3,0) {$\mathit{HF}^\infty(Y)$};
	\node (a2) at (6,0) {$\mathit{HF}^+(Y)$};
	\node (a3) at (9,0) {$\dots$};
	
	\node (b0) at (0,-1) {$\mathit{HF}^{\leq 0}(Y)$};
	\node (b1) at (3,-1) {$\mathit{HF}^\infty(Y)$};
	\node (b2) at (6,-1) {$\mathit{HF}^{>0}(Y)$};
	\node (b3) at (9,-1) {$\dots$};

	\draw[->] (a0) -- (b0); 
	\draw[->] (a1) -- (b1);
	\draw[->] (a2) -- (b2);
	\draw[->] (a0) -- (a1)  node[pos=.5, anchor=south] {\scriptsize $\iota_-$};
	\draw[->] (a1) -- (a2) node[pos=.5, anchor=south] {\scriptsize $\pi_+$};
	\draw[->] (a2) -- (a3);
	\draw[->] (b0) -- (b1) node[pos=.5, anchor=south] {\scriptsize $\iota_{\leq 0}$};
	\draw[->] (b1) -- (b2) node[pos=.5, anchor=south] {\scriptsize $\pi_{>0}$};
	\draw[->] (b2) -- (b3);
	\end{tikzpicture}
	\end{equation*}
	It is straightforward to check that for a class $[y]\in\widehat{\mathit{CF}}(Y)$ to satisfy $0 \neq \rho_*([y]) \in U^N\mathit{HF}^+(Y)$ for all $N \gg 0$ is equivalent to $\rho_*([y])\in \mathrm{im}(\pi_+)$ (Similarly, for $y\in \HF^-(Y)$ to have $U^Ny\neq 0$ for all $N$ is equivalent to $\iota_-(y)\neq 0$).  We also note that the diagram (\ref{eq:transfer}) defines a map $\mathit{CF}^{\leq 0}(Y)\to \mathit{CF}^+(Y)$, by using that the middle vertical arrow is an isomorphism.  At the chain level, this map is $\rho\circ \rho'$, as can be checked from the definitions.  Putting this all together, if $\rho'_*([y])$ satisfies the condition in the definition of $\tau'(Y,K)$ then $\tau'(Y,K)\geq \tau(Y,K)$, since $(\rho\rho')_*([y])$ is a class as in Definition \ref{def:tau-1}.
	
	On the other hand, the following exact triangle has a morphism to the top exact triangle in the figure above:
	\[
	\mathit{HF}^-\to \mathit{HF}^{\leq 0}\to \widehat{\mathit{HF}}
	\]
	Using the connecting maps, we get the following convenient diagram::
	
			\begin{equation*}
	\begin{tikzpicture}
	\node (a0) at (0,0) {$\mathit{HF}^{\leq 0}$};
	\node (a1) at (3,0) {$\widehat{\mathit{HF}}$};
	\node (a2) at (6,0) {$\mathit{HF}^-$};

	\node (b0) at (0,-1) {$\mathit{HF}^{\infty}$};
	\node (b1) at (3,-1) {$\mathit{HF}^+$};
	\node (b2) at (6,-1) {$\mathit{HF}^{-}$};

	\draw[->] (a0) -- (b0); 
	\draw[->] (a1) -- (b1);
	\draw[->] (a2) -- (b2) node[pos=.5, anchor=west] {\scriptsize $\text{id}$};
	\draw[->] (a0) -- (a1);
	\draw[->] (a1) -- (a2) ;

	\draw[->] (b0) -- (b1) ;
	\draw[->] (b1) -- (b2) ;

	\end{tikzpicture}
	\end{equation*}
	In particular, a class $[y]$ with $\rho_*[y]$ as in Definition \ref{def:tau-1} gives rise to a class $[y']$ in $\mathit{HF}^{\leq 0}$ with nontrivial image in $\mathit{HF}^\infty$.  It remains to show that this class $[y']$ is in degree $d(Y)$.  Indeed, this follows since the image of $[y']$ in $\mathit{HF}^\infty$ is not in the image of $\mathit{HF}^-\to \mathit{HF}^{\infty}$; the latter image is given by the span of all nontrivial homogeneous classes of degree at most $d(Y)-2$.  This completes the proof.  
\end{proof}

We will reinterpret $\tau(Y,K)$ in the language of the $\mathbb{F}[U,V]$-module $\CFK(Y,K)$. First we recall some useful facts that translate between $\CFK(Y,K)$ and the filtered chain complex $C=\CFK^\infty(Y,K)$. The filtration of $\CFK(Y,K)$ satisfies:
\[
C\{i\leq i_0, j \leq j_0\}=\mathcal{G}_{(-i_0,-j_0)}\cap ((U,V)^{-1}\CFK(Y,K))_0,
\]
the latter being defined as the span over $\F[U,V]$ of generators $U^{i'}V^{j'}\mathbf{x}$ with $i'\geq -i_0$ and $j'\geq -j_0$.  The subscript of $((U,V)^{-1}\CFK(Y,K))_0$ signifies the Alexander grading zero summand of $((U,V)^{-1}\CFK(Y,K))$.  Some consequences of this are as follows.  Recall that we write $\hat{U}$ for the action of $UV$ on $\CFK(Y,K)$ (which corresponds to the action of $U$ on the flavors of $\mathit{HF}(Y)$). First, $C\{i=0\}$ is spanned, over $\mathbb{F}$, by generators $V^{j}\mathbf{x}$ so that $A(\mathbf{x})=-j$, and is identified with $(V^{-1}\CFK(Y,K)/U)_0$ (we reserve boldface variables to come from intersection points, rather than combinations thereof).  The complex $C\{i\geq 0\}$ is the quotient of $((U,V)^{-1}\CFK(Y,K))_0$ by terms $U^iV^j\mathbf{x}$, for $i>0$. Similarly, $C\{i\leq 0\}$ is identified with $(V^{-1}\CFK(Y,K))_0$ and $\hat{U}(V^{-1}\CFK(Y,K))_0=C\{i<0\}$.  The complex $C\{i=0,j\leq s\}$ is identified with the $\mathbb{F}$-span of terms $V^{j'}\mathbf{x}$ with $j'\geq -s $ and $A(\mathbf{x})=-j'$.   
\begin{proposition}\label{prop:tauUV}
Let $K$ be a knot in an integer homology sphere $Y$.  Then $\tau(Y,K)$ is the minimal Alexander grading $s$ of a cycle $\alpha \in \CFK(Y,K)/U$, such that $V^{-s}\alpha \in (V^{-1}\CFK(Y,K)/U)_0$ is the image of a tower-generator  of $(V^{-1}\CFK(Y,K))_0$.  Here, tower-generator of $(V^{-1}\CFK(Y,K))_0$ means a $UV$-nontorsion element of Maslov grading $d(Y)$. 
\end{proposition}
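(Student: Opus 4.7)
The plan is to show that the two descriptions of $\tau(Y,K)$ are translations of one another under the dictionary recalled just before the proposition, namely the identifications
\[
C\{i\leq 0\}\cong (V^{-1}\CFK(Y,K))_0,\qquad C\{i=0\}\cong (V^{-1}\CFK(Y,K)/U)_0,
\]
with $\rho'\co C\{i\leq 0\}\to C\{i=0\}$ corresponding to reduction mod $U$, and the action of $U$ on $\HF^{\leq 0}$ corresponding to the action of $\hat{U}=UV$ on $(V^{-1}\CFK(Y,K))_0$. The starting point is the characterization of $\tau(Y,K)$ given in Lemma~\ref{lem:next-tau}, which packages $\tau(Y,K)$ as the minimal $s$ admitting a tower generator $y\in\HF^{\leq 0}_{d(Y)}(Y)$ whose image $\rho'_*(y)$ lies in $\mathrm{im}(\iota_{s,*})$.

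The first step is to identify $\mathrm{im}(\iota_{s,*})$ in the language of $\CFK(Y,K)/U$. A chain in $C\{i=0,\,j\leq s\}$ is an $\mathbb{F}$-combination of monomials $V^{j'}\mathbf{x}$ with $A(\mathbf{x})=-j'$ and $j'\geq -s$. Factoring $V^{-s}$, each such monomial can be written as $V^{-s}\cdot V^{j'+s}\mathbf{x}$, where $V^{j'+s}\mathbf{x}$ is a nonnegative power of $V$ on a generator, and
\[
A(V^{j'+s}\mathbf{x})=A(\mathbf{x})+(j'+s)=s.
\]
Thus the image of $\iota_s$ at the chain level is exactly $V^{-s}\cdot\{\alpha\in \CFK(Y,K)/U\mid A(\alpha)=s\}$. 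Since multiplication by $V^{-s}$ is an isomorphism on $(V^{-1}\CFK(Y,K)/U)_0$ commuting with the differential, $V^{-s}\alpha$ is a cycle (resp.\ boundary) in $C\{i=0\}$ if and only if $\alpha$ is a cycle (resp.\ boundary) in $\CFK(Y,K)/U$. Hence $\mathrm{im}(\iota_{s,*})$ is precisely the set of homology classes represented by $V^{-s}\alpha$ for some cycle $\alpha\in\CFK(Y,K)/U$ with $A(\alpha)=s$.

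The second step is to rewrite the tower-generator condition. Under the identification $C\{i\leq 0\}\cong (V^{-1}\CFK(Y,K))_0$, a class $y\in \HF^{\leq 0}_{d(Y)}(Y)$ with $U^Ny\neq 0$ for all $N$ corresponds to a homology class represented by a chain $\tilde{\alpha}\in (V^{-1}\CFK(Y,K))_0$ of Maslov grading $d(Y)$ which is $UV$-nontorsion, i.e.\ a tower generator of $(V^{-1}\CFK(Y,K))_0$ in the sense of the proposition. The condition $0\neq\rho'_*(y)\in\mathrm{im}(\iota_{s,*})$ then translates, via the previous step, to the existence of a cycle $\alpha\in\CFK(Y,K)/U$ of Alexander grading $s$ such that $V^{-s}\alpha$ is the image under $\rho'_*$ of a tower generator of $(V^{-1}\CFK(Y,K))_0$.

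Combining these translations, the set of integers $s$ appearing in the definition of $\tau(Y,K)$ in Lemma~\ref{lem:next-tau} is identical to the set appearing in the proposition, so the two minima agree. No genuine obstacle arises here: the argument is purely a bookkeeping unpacking of the identifications, with the only point requiring care being the verification that passing from chain-level descriptions through $V^{-s}$ preserves cycles/boundaries and grades correctly, which is immediate from the definitions of $A$ and the action of $V$.
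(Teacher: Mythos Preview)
Your proof is correct and follows essentially the same route as the paper's: both start from Lemma~\ref{lem:next-tau} and translate its conditions through the dictionary between $C\{i\leq 0\},\,C\{i=0\},\,C\{i=0,j\leq s\}$ and the $\F[U,V]$-module language. The paper carries this out by explicit element-chasing in both directions, while you do it slightly more cleanly by identifying $\mathrm{im}(\iota_{s,*})$ wholesale with the classes $[V^{-s}\alpha]$ for $\alpha$ a cycle of Alexander grading $s$ and then matching the tower-generator condition; the content is the same.
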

\begin{proof}
Let $y \in \HF^{\leq 0}_{d(Y)}(Y)$ be a class that realizes $\tau(Y,K)$, as in Lemma~\ref{lem:next-tau}, satisfying $U^Ny\neq 0 \quad  \forall N>0$ { and } $0 \neq \rho'_*(y)\in \mathrm{im} (\iota_{s,*})$. 
Note that $\rho'\colon \CF^{\leq 0}(Y) \to \widehat{\mathit{CF}}(Y)$ is surjective, so if $\rho_*'(y)\in \mathrm{im}(\iota_{s,*})$, then we can find a tower-generator chain $y' \in \CF^{\leq 0}_{d(Y)}$ so that $\rho'(y')\in \mathrm{im}(\iota_s)$.  Under the identification of $\CF^{\leq 0}_{d(Y)}$ with the $d(Y)$-graded part of $(V^{-1}\CFK(Y,K))_0$, the map $\rho' \colon \CF^{\leq 0}(Y)\to \widehat{\mathit{CF}}(Y)$   is the natural quotient map
\[
(V^{-1}\CFK(Y,K))_0\to (V^{-1}\CFK(Y,K)/U)_0.
\]
We can represent $y' \in (V^{-1}\CFK(Y,K))_0$ as a nontrivial sum of the form 
$y'=\sum V^{-A(\mathbf{x})}\mathbf{x}+U\zeta$ for some $\zeta\in V^{-1}\CFK(Y,K)$,  where each of the $\mathbf{x}$ has $A(\mathbf{x})\leq s$ since $\rho'(y') = \sum V^{-A(\mathbf{x})} \mathbf{x} \in \mathrm{im}(\iota_s)$. Moreover, $y'$ generates $UV$-nontorsion element in grading $d(Y)$ of $H_*((V^{-1}\CFK(Y,K))_0)\cong\HF^-(Y)$.

We define an Alexander grading on $V^{-1}\CFK(Y,K)/U$ by defining the homogeneous classes to be the classes in $V^{-1}\CFK(Y,K)/U$ with homogeneous lift in $V^{-1}\CFK(Y,K)$, with grading given by the lift.  It is straightforward to check that the grading of a homogeneous lift of a nonzero class in $V^{-1}\CFK(Y,K)/U$ is independent of the choice of lift.

Given the element $y'$, we have that $V^{s}(y'\bmod{U})$ is an element of $\CFK(Y,K)/U$ with Alexander grading $s$. The class $[y'\bmod{U}]\in H_*((V^{-1}\CFK(Y,K)/U)_{0})$ is the image of a tower-generator of $H_*((V^{-1}\CFK(Y,K))_0)$.    In fact, $(y' \bmod{U})$  is the image of the tower-generator $y' \in (V^{-1}\CFK(Y,K))_0$.  

Conversely, suppose that the cycle $\beta\in (\CFK(Y,K)/U)_{s}$ satisfies the condition that the cycle $V^{-s}\beta \in (V^{-1}\CFK(Y,K)/U)_0$ is the image of a tower generator $V^{-s}\hat{\beta}\in (V^{-1}\CFK(Y,K))_0$.  
We can express $\beta = \sum V^{s-A(\mathbf{x})} \mathbf{x}$ where $s-A(\mathbf{x}) \geq 0$ for all $\mathbf{x}$ appearing in the nontrivial sum, and $V^{-s}\hat\beta = \sum V^{-A(\mathbf{x})} \mathbf{x} + U \zeta$. Letting $y' = V^{-s} \hat\beta$, we have that $\rho'(y') = \sum V^{-A(\mathbf{x})} \mathbf{x}$ is in the image $\mathrm{im}(\iota_s)$ and that $y'$ is $U$-nontorsion. Thus, $\tau(Y,K) \leq s$. 
\end{proof}

Next we prove Proposition \ref{prop:tau}, showing that the homomorphisms recover the $\tau(Y,K)$ invariant. The proof uses the reinterpretation of $\tau(Y,K)$ in Proposition~\ref{prop:tauUV} in terms of the $\mathbb{F}[U,V]$-module knot Floer chain complex $\CFK(Y,K)$. 

\begin{proof}[Proof of Proposition \ref{prop:tau}.] A similar argument as in the previous section shows that we have a bigraded isomorphism
\[
V^{-1} \CFK(Y,K) \cong V_T^{-1} \CFKX(Y, K).
\]
Using Lemma~\ref{cor:splitting}, $\CFKX(Y, K)$ has a summand given by some grading-shifted standard complex $C(b_1, \ldots, b_n)[d(Y), d(Y)]$. Under the above isomorphism, any $x$ which realizes $\tau(Y,K)$ must be supported by the grading-shifted generator $x_0'$, where we write $x_0'$ to denote the grading-shifted version of the usual standard complex generator $x_0$ in $C(b_1, \ldots, b_n)$. Thus $\tau(Y,K)$ is equal to half of
\[
\gr_1(x_0') -\gr_2(x_0') = (\gr_1(x_0) - d(Y)) - (\gr_2(x_0) - d(Y)) = - \gr_2(x_0).
\]
Since our standard complex is symmetric, we have that $\sgn(b_{n+1-i}) = - \sgn(b_i)$ and that $\gr_2(b_{n+1-i}) = - \gr_1(b_i)$. Applying \eqref{eq:gr2diff} thus yields
\[
- \gr_2(x_0) = \sum_{i \text{ odd}} (\gr_2(b_i) - \gr_1(b_i)).
\]
Recall that in Definition~\ref{def:rureformulation}, the pair $(i, j)$ represents an element of $\R_U$ of grading $(-2i, -2j)$. The claim follows.
\end{proof}


\begin{proof}[Proof of Corollary \ref{cor:1.9}.] The condition $\varep(Y,K)=0$ is equivalent to the assertion that $\CFK_\X(Y,K)$ is locally equivalent to a complex $C(b_1,\ldots,b_n)$, where $|b_1|$ is not of the form $(i, 0)$, unless $\CFK_\X(Y,K)$ is locally equivalent to $\X= C(0)$.  However, the condition on $\tau(Y,K)$ guarantees that $\CFK_\X(Y,K)$ is not locally equivalent to $\cC(0)$ by Proposition \ref{prop:tau}.  The corollary follows from Proposition~\ref{prop:knots-in-l-spaces}.
\end{proof}

We next prove the results on homology concordance genus and homology concordance unknotting number from the introduction. The homology concordance genus is
\[
g_{H,c}(Y, K)=\min_{(Y', K') \mbox{ homology concordant to } (Y,K)} g_3(Y', K'),
\]
where $g_3(Y,K)$ denotes the minimum genus of a compact orientable surface $S$ embedded in $Y$ with boundary $\partial S = K$.  
The homology concordance unknotting number is
\[
u_{H,c}(Y, K)=\min_{(Y', K') \mbox{ homology concordant to }  (Y,K)} u(Y',K'),
\]
where $u(Y,K)$ is the minimum number of crossing changes to change $K$ to the unknot in $Y$, taken over all possible diagrams for $K$.   Let 
\begin{align*} 
N (Y,K)&=\sup_{(i,j)\mid \varphi_{i,j}(Y,K)\neq 0} |i-j|.
\end{align*}  
We claim that $g_{H,c}(Y,K) \geq N(Y,K)/2$ and $u_{H,c}(Y,K) \geq N(Y,K)$.

\begin{proof}[Proof of Proposition~\ref{prop:genera-bounds} (1).]
Let $K$ be a knot in an integer homology sphere $Y$. Fix any $(i, j)$ and suppose that $\varphi_{i,j}(K) \neq 0$. Then there exist generators $x$ and $y$ in $\CFKX(Y,K)$ such that 
\[
\d_U x= U_B^{i}W_{B,0}^j y.
\]
Moreover, we may take $x$ and $y$ to be standard complex generators, so $x$ and $y$ do not lie in $(\fm_U, \fm_V)$. Since we may assume our complex is reduced, both $x$ and $y$ survive to be nontrival cycles in $\CFKX(Y,K)/(\fm_U, \fm_V)$. Note that
\[
\gr(x)-\gr(y)=(-2i+1,-2j+1),
\]
so the difference in Alexander gradings is $|A(x)-A(y)|=|i-j|$. Now, it is clear that we have a bigraded isomorphism of $\F$-vector spaces
\[
\widehat{\CFK}(Y,K) \cong \CFK(Y,K)/(U,V) \cong \CFKX(Y,K)/(\fm_U, \fm_V).
\]
Hence if $\varphi_{i,j}(K) \neq 0$, then $\widehat{\HFK}(Y, K)$ has two nonzero elements which differ in Alexander grading by $|i - j|$. But it is well-known that $\smash{\widehat{\HFK}(Y, K, s)=0}$ for all Alexander gradings $|s| > g_3(Y, K)$; hence $g_3(Y, K) \geq |i-j|/2$. Since $\varphi_{i,j}$ (and thus the property of $\varphi_{i,j}$ being nonzero) is a homology concordance invariant, this bound holds replacing $(Y, K)$ with any homology concordant pair $(Y', K')$. Taking the maximum of $|i -j|$ over all such $(i,j)$ gives the claim.
\end{proof}

\begin{remark}
The result on the homology concordance $3$-genus admits a natural generalization by considering the Alexander gradings of the entire standard complex summand; this is left to the reader.   
\end{remark}

\begin{proof}[Proof of Proposition~\ref{prop:genera-bounds} (2).]
Let $\mathbb{U}$ denote the unknot in $Y$. Let $u'(K)$ be the smallest integer $\ell $ so that there exist $\F[U,V]$-complex morphisms
\[
f\colon \CFK(Y, K)\to \CFK(Y, \mathbb{U}) \qquad \mbox{ and } \qquad g\colon \CFK(Y,\mathbb{U}) \to \CFK(Y, K)
\]
such that $f\circ g$ and $g\circ f$ are both homotopic to multiplication by $U^\ell$. By \cite{AlishahiEftekharyunknotting}, $u'(K)$ is a lower bound for the unknotting number of $K$ inside $Y$. Changing our ground ring to $\X$ using \eqref{eq:fuvtox} and modding out by $\fm_V$, we obtain induced maps of $\R_U$-modules
\[
f\colon H_*(\CFKX(Y, K)/\fm_V) \to H_*(\CFKX(Y, \mathbb{U})/\fm_V)
\]
and
\[
g\colon H_*(\CFKX(Y,\mathbb{U})/\fm_V) \to H_*(\CFKX(Y, K)/\fm_V),
\]
such that $f\circ g$ and $g\circ f$ are both equal to multiplication by $U_B^\ell$. Both of these $\R_U$-modules are isomorphic to a single copy of $\R_U$ plus some torsion summands. Our goal will be to use the existence of $f$ and $g$ to place restrictions on the lengths of these torsion summands, which will force various $\varphi_{i,j}(K)$ to be zero.

We begin by understanding $H_*(\CFKX(Y, \mathbb{U})/\fm_V)$. Note that $\CFK(Y, \mathbb{U})$ is isomorphic to $\smash{\F[U, V] \otimes_{\F[U]} \HFm(Y)}$, where the action of $U$ on $\F[U, V]$ is given by $U \mapsto UV$. As usual, $\CFm(Y)$ consists of a single $U$-nontorsion generator, together with some paired generators corresponding to $U$-torsion towers. Changing our base ring to $\X$, each $U^i$-arrow in $\CFm(Y)$ turns into an arrow decorated by $(U_B + W_{T,0})^i(V_T + W_{B,0})^i = U_B^iW_{B_0}^i + V_T^iW_{T,0}^i$. Modding out by $\fm_V$ and taking homology, we see that the torsion towers in $H_*(\CFKX(Y, \mathbb{U})/\fm_V)$ are all of the form $\R_U/(U_B^iW_{B,0}^i)$.

Now fix any $(i, j)$, and suppose $\varphi_{i,j} (K) \neq 0$. Then $H_*(\CFKX(Y, K)/\fm_V)$ has at least one torsion tower of length $\smash{U_B^i W_{B,0}^j}$. Let $x$ be a generator of such a tower. If $j = 0$ and $\ell \geq i$, then we tautologically have $\ell \geq |i-j|$. Otherwise, we have $g(f(x)) = U_B^\ell x \neq 0$, so in particular $f(x) \neq 0$. Write $f(x)$ as a multiple of a primitive element $y$ in $H_*(\CFKX(Y,\mathbb{U})/\fm_V)$. It is easily checked that $f(x)$ is not in the image of $W_{B,0}$; that is, 
\[
f(x) = U_B^m y
\]
for some $m \geq 0$. Indeed, otherwise we would have that $g(f(x)) = U_B^\ell x$ was in the image of $W_{B,0}$, which it is not. A similar argument shows that $m \leq \ell$.

We now claim that the torsion order of $y$ is $\smash{U_B^j W_{B,0}^j}$, where $j$ is the exponent of $W_{B,0}$ in the torsion order of $x$. Note that this implicitly forces $j > 0$, since $y$ is nonzero. Since $y$ is primitive, the structure of $H_*(\CFKX(Y, \mathbb{U})/\fm_V)$ shows that the torsion order of $y$ is of the form $\smash{U_B^k W_{B,0}^k}$, where $k = \infty$ if $y$ is nontorsion. Suppose $j < k$. Then for $*$ sufficiently large, 
\begin{align*}
U_B^*W_{B,0}^j f(g(y)) = U_B^{*-m} W_{B,0}^j f(g(f(x))) = U_B^{*-m+\ell} W_{B,0}^j f(x),
\end{align*}
which is zero due to the torsion order of $x$. On the other hand, this is also equal to $\smash{U_B^{* + \ell} W_{B,0}^j y}$, which is nonzero. Similarly, suppose $j > k$. Then for $*$ sufficiently large,
\[
U_B^* W_{B,0}^k g(f(x)) = U_B^{* + m} W_{B,0}^k g(y)
\]
which is zero due to the torsion order of $y$. On the other hand, this is also equal to $\smash{U_B^{* + \ell} W_{B,0}^k x}$, which is nonzero.

We now finally show $\ell \geq |i - j|$. We divide into two cases. If $i \leq j$, consider 
\[
U_B^i W_{B,0}^j f(x) = U_B^{i+m} W_{B,0}^j y.
\]
The left-hand side is zero due to the torsion order of $x$; the right-hand side then shows that $i + m \geq j$. Thus in this case, $\ell \geq m \geq j - i$. If $i \geq j$, we instead consider 
\[
U_B^jW_{B,0}^j g(y) = U_B^{j- m} W_{B, 0}^j g(f(x)) = U_B^{j-m + \ell} W_{B,0}^j x.
\]
Note that $\smash{U_B^{j- m} W_{B, 0}^j}$ is well-defined since $j > 0$. The left-hand side is zero due to the torsion order of $y$; the right-hand side then shows that $j -m + \ell \geq i$. Thus in this case, $\ell \geq \ell - m \geq i - j$. 

We thus see that the unknotting number of $K$ inside $Y$ is bounded below by $|i - j|$. Since $\varphi_{i,j}$ (and thus the property of $\varphi_{i,j}$ being nonzero) is a homology concordance invariant, this bound holds replacing $(Y, K)$ with any homology concordant pair $(Y', K')$. Taking the maximum of $|i -j|$ over all such $(i,j)$ gives the claim.
\end{proof}

\section{Computations}\label{sec:computations}

In this section, we give some computations of the $\varphi_{i,j}$ for knots in various homology spheres. Our first set of examples are based on computations from \cite{Zhou}. For any $n > 1$, let $M_n$ denote $+1$-surgery on the torus knot $T_{2, 4n-1}$. Let $Y_n = M_n \# -M_n$, and let $K_n$ denote the connected sum of the core of surgery in $M_n$ and the unknot in $-M_n$. In \cite{Zhou}, Zhou computes the knot complexes associated to $(Y_n, K_n)$ working over the ring $\F[U, V]$. 

\begin{proposition}[{\cite[Proposition 7.1]{Zhou}}] \label{prop:zhou}
The knot complex $\CFK(Y_n, K_n) $ is locally equivalent (over $\F[U, V]$) to a complex generated over by $x_0$, $x_1$, and $y$, with bigradings 
\begin{align*}
\gr(x_0) &= (2, 0) 
\\
\gr(x_1) &= (0, 2)
\\
\gr(y) &= (2n-1, 2n-1)
\end{align*}
and differential 
\[
\partial x_0  = U^{n-1}V^n y \qquad \text{and} \qquad \partial x_1 = U^nV^{n-1}y.
\]
\end{proposition}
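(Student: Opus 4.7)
The plan is to use the K\"unneth formula for knot Floer homology together with explicit computations of each factor. Since $(Y_n, K_n) = (M_n, c_n) \# (-M_n, O)$, where $O$ denotes the unknot, Zemke's connected sum formula gives
\[
\CFK(Y_n, K_n) \simeq \CFK(M_n, c_n) \otimes_{\F[U,V]} \CFK(-M_n, O).
\]
I would compute the two tensor factors separately, then simplify the product up to local equivalence.

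For $\CFK(-M_n, O)$, recall that the knot Floer complex of an unknot in a closed three-manifold $Y$ is chain homotopy equivalent to $\CFm(Y)\otimes_{\F[U]}\F[U,V]$, where $\F[U,V]$ is made into an $\F[U]$-module by $U\mapsto UV$. Taking $Y = -M_n$ and using duality, this factor is determined by $\HFm(-M_n)$, which has a single tower plus a reduced part; all of its local information is therefore encoded by the ambient Heegaard Floer homology. For $\CFK(M_n, c_n)$, I would apply the Ozsv\'ath--Szab\'o integer surgery formula to the L-space knot $T_{2,4n-1}\subset S^3$, whose knot Floer complex is a standard staircase of genus $2n-1$. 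The mapping cone description of the dual knot then produces an explicit model for $\CFK(M_n, c_n)$ in which the ``endpoints'' of the staircase contribute arrows of weights $U^{n-1}V^n$ and $U^nV^{n-1}$ into a single tower generator.

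Assembling the tensor product, one extracts the advertised three-generator summand as follows. The generator $x_0$ is the image of the $V$-tower generator; $x_1$ is the image of the $U$-tower generator; and $y$ is a single intermediate class surviving the cancellation. The differentials $\partial x_0 = U^{n-1}V^ny$ and $\partial x_1 = U^nV^{n-1}y$ are inherited from the mapping cone arrows of $(M_n,c_n)$, the $\partial$ on $(-M_n, O)$ being trivial once one restricts to its $U,V$-nontorsion part. To conclude the local equivalence, one verifies that the complementary summand becomes acyclic after inverting either $U$ or $V$ alone; by Definition~\ref{def:loceq}, this is exactly what is needed. The bigradings are then read off by combining the Maslov and Alexander gradings on each factor: the identities $A(x_0)=1$, $A(x_1)=-1$, $A(y)=0$ are forced by the symmetry of the complex and the vanishing of $\tau(Y_n,K_n)$ via Proposition~\ref{prop:tau}, while the Maslov gradings follow from the grading shift formulas in the surgery formula together with the $d$-invariants of $\pm M_n$.

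The main obstacle is the mapping cone bookkeeping for $\CFK(M_n, c_n)$: one must track a staircase of length $2n-1$ through the truncation procedure and identify which arrows survive the quasi-isomorphism to the small three-generator model. Once that computation is in hand, the tensor product and the local equivalence extraction are essentially mechanical. All of these computations are carried out in detail in \cite{Zhou}; the purpose of the present discussion is to point out that those results fit naturally into the framework developed in this paper.
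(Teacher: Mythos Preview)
The paper does not prove this proposition; it is quoted directly from \cite[Proposition~7.1]{Zhou} with no argument given, and is used only as input for the change of base ring in Proposition~\ref{prop:changebasis}. Your sketch is a reasonable summary of Zhou's approach---K\"unneth for the connected sum, the integer surgery formula applied to the core of $+1$-surgery on the staircase knot $T_{2,4n-1}$, and the tensor with $\CFm(-M_n)\otimes_{\F[U]}\F[U,V]$---and you correctly identify the mapping-cone bookkeeping as the substantive step, which you then defer to \cite{Zhou}. Since you end by pointing there anyway, your proposal and the paper's treatment are effectively the same: cite Zhou.

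One error to fix: your claim that $\tau(Y_n,K_n)=0$ is incorrect. By Lemma~\ref{lem:phi-calc} the only nonzero invariant is $\varphi_{n,n-1}(K_n)=-1$, so Proposition~\ref{prop:tau} gives
\[
\tau(Y_n,K_n)=\bigl(n-(n-1)\bigr)\cdot(-1)=-1.
\]
This does not actually damage your grading argument, because the Alexander gradings $A(x_0)=1$, $A(x_1)=-1$, $A(y)=0$ already follow from the $U\leftrightarrow V$, $x_0\leftrightarrow x_1$ symmetry together with the fact that $\partial$ preserves $A$; the appeal to $\tau$ is both unnecessary and wrong and should simply be deleted. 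Also, in your local-equivalence step, Definition~\ref{def:loceq} only requires the complementary summand to become acyclic after inverting $U$ and $V$ \emph{together}, not each separately; what you wrote is a sufficient but stronger condition.
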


Changing our base ring to $\X$ gives:

\begin{proposition}\label{prop:changebasis}
The knot complex $\CFKX(Y_n, K_n) $ is locally equivalent (over $\X$) to a complex generated by $a_0$, $a_1$, and $b$, with bigradings 
\begin{align*}
\gr(a_0) &= (2, 0) 
\\
\gr(a_1) &= (0, 2)
\\
\gr(b) &= (2n-1, 2n-1)
\end{align*}
with differential 
\[
\partial a_0  = V_T^n W_{T,0}^{n-1} b \qquad \text{and} \qquad \partial a_1 = U_B^nW_{B,0}^{n-1}b.
\]
\end{proposition}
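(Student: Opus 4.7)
The plan is to start from Zhou's computation of $\CFK(Y_n,K_n)$ over $\F[U,V]$ (Proposition \ref{prop:zhou}), change the base ring to $\X$ using the morphism \eqref{eq:fuvtox}, and then perform an explicit change of basis to put the resulting complex in the claimed form. The reason we obtain a local equivalence over $\X$ is that a chain map $f\co C_1\to C_2$ of $\F[U,V]$-complexes inducing an isomorphism on $H_*(C_i\otimes_{\F[U,V]}\F[U,U^{-1},V,V^{-1}])$ induces a chain map $f\otimes\id$ of the associated $\X$-complexes that becomes an isomorphism after inverting homogeneous elements of $\R_U$ (and of $\R_V$), exactly as in the first half of the proof of Lemma~\ref{lem:x-local-equivalence}.

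First I would tensor Zhou's complex with $\X$. Under the substitution $U\mapsto U_B+W_{T,0}$, $V\mapsto V_T+W_{B,0}$, the key computation is that the mixed products $U_B\cdot W_{T,0}$ and $V_T\cdot W_{B,0}$ vanish in $\X$ (they lie in $\fm_U\cdot\fm_V=0$), as does $U_BV_T$. Working over $\F$ and using these vanishings, induction on the exponent gives
\[
(U_B+W_{T,0})^k = U_B^k+W_{T,0}^k, \qquad (V_T+W_{B,0})^k = V_T^k+W_{B,0}^k
\]
for each $k\geq 1$. Moreover, $U_B^{n-1}V_T^n=0$ and $W_{T,0}^{n-1}W_{B,0}^n=0$, so expanding the product yields
\[
\d x_0 = \bigl(U_B^{n-1}W_{B,0}^n + W_{T,0}^{n-1}V_T^n\bigr)y, \qquad
\d x_1 = \bigl(U_B^nW_{B,0}^{n-1} + W_{T,0}^nV_T^{n-1}\bigr)y.
\]

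The second step is the change of basis
\[
a_0 = x_0 + W_{B,-1}x_1, \qquad a_1 = x_1 + W_{T,-1}x_0, \qquad b=y,
\]
exactly generalizing the $n=2$ case described at the end of Section~\ref{sec:preliminary}. The key identities needed are $U_B W_{B,-1}=W_{B,0}$ and $V_T W_{T,-1}=W_{T,0}$, which propagate to $U_B^n W_{B,-1}=U_B^{n-1}W_{B,0}$ and analogously for $W_{T,-1}$, together with the annihilation $W_{B,-1}\cdot W_{T,0}=W_{T,-1}\cdot W_{B,0}=0$ (again from $\fm_U\cdot\fm_V=0$). Feeding these into $\d a_0=\d x_0+W_{B,-1}\d x_1$ cancels the unwanted $U_B^{n-1}W_{B,0}^n y$ term in characteristic two, leaving $\d a_0 = V_T^nW_{T,0}^{n-1}b$; the computation for $\d a_1$ is symmetric. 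I would also verify that the gradings match, which is immediate since $\gr(W_{B,-1})=(2,-2)$ matches $\gr(x_0)-\gr(x_1)$ and $\gr(W_{T,-1})=(-2,2)$ matches $\gr(x_1)-\gr(x_0)$, and that the transition matrix is invertible (its determinant is $1+W_{B,-1}W_{T,-1}=1$).

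The main obstacle is essentially bookkeeping: organizing the expansion of $(U_B+W_{T,0})^{n-1}(V_T+W_{B,0})^n$ and its companion, keeping careful track of which cross-terms are forced to vanish by the $\X$-relations, and choosing the change of basis precisely so that the two ``unwanted'' monomials cancel in characteristic two. There is no conceptual difficulty once the vanishing lemmas $U_B V_T=0$ and $\fm_U\cdot\fm_V=0$ are in hand, so the argument is essentially a mechanical verification generalizing the worked example in Section~\ref{sec:preliminary} from $n=2$ to arbitrary $n>1$.
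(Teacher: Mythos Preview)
Your proposal is correct and follows essentially the same approach as the paper: tensor Zhou's complex with $\X$ via the substitution \eqref{eq:fuvtox}, use the relations in $\X$ to simplify $U^{n-1}V^n$ and $U^nV^{n-1}$, and then perform the very change of basis $a_0=x_0+W_{B,-1}x_1$, $a_1=x_1+W_{T,-1}x_0$, $b=y$. Your write-up is in fact more detailed than the paper's (which leaves the verification of the relations and the cancellation to the reader), and your remark that the $\F[U,V]$-local equivalence from Proposition~\ref{prop:zhou} induces an $\X$-local equivalence via the argument of Lemma~\ref{lem:x-local-equivalence} is a point the paper leaves implicit.
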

\begin{proof}
Recall that to translate from complexes over $\F[U, V]$ to complexes over $\X$, we map $U \mapsto U_B + W_{T,0}$ and $V \mapsto V_T + W_{B, 0}$. Performing this substitution in Proposition~\ref{prop:zhou} and using the relations in $\X$ gives
\[
\partial x_0 = (U_B^{n-1} W_{B, 0}^{n} + W_{T,0}^{n-1} V_T^{n}) y
\]
and
\[
\partial x_1 = (U_B^n W_{B, 0}^{n-1} + W_{T,0}^n V_T^{n-1}) y.
\]
We perform the change of basis
\begin{align*}
a_0 &= x_0 + W_{B, -1} x_1 ,
\\
a_1 &= x_1 + W_{T, -1} x_0 ,
\\
b & = y. 
\end{align*}
The result follows.
\end{proof}

\begin{lemma}\label{lem:phi-calc}
For each $n > 1$, we have that $\varphi_{n,n-1}(K_n) = -1$, while $\varphi_{i,j}(K_n)=0$ for $(i,j)\neq (n,n-1)$.  
\end{lemma}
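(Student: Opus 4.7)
The plan is to identify the three-generator complex from Proposition~\ref{prop:changebasis} directly as a standard complex, invoke uniqueness of standard representatives, and then read off the homomorphism values from Definition~\ref{def:homs}.

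First I would observe that $Y_n = M_n \# -M_n$ is a homology cobordism of $S^3$, so $d(Y_n) = 0$ and thus by Lemma~\ref{lem:cfkxknotlike} no further grading shift is required on $\CFK_\X(Y_n, K_n)$. I would then propose the relabeling
\[ x_0 := a_1, \qquad x_1 := b, \qquad x_2 := a_0, \]
and compare the resulting complex with the standard complex $C\bigl(-(n,n-1),\,(n,n-1)\bigr)$ given by Definition~\ref{def:standard}. Here $b_1 = -(n,n-1)$ is a negative element of $\Gamma(\R_U)$, so Definition~\ref{def:standard} prescribes $\partial_U x_0 = |b_1| x_1 = U_B^n W_{B,0}^{n-1} x_1$; this matches $\partial a_1 = U_B^n W_{B,0}^{n-1} b$. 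Similarly $b_2 = (n,n-1)$ is a positive element of $\Gamma(\R_V)$, so Definition~\ref{def:standard} prescribes $\partial_V x_2 = b_2 x_1 = V_T^n W_{T,0}^{n-1} x_1$; this matches $\partial a_0 = V_T^n W_{T,0}^{n-1} b$. A short calculation using \eqref{eq:gr1diff}, \eqref{eq:gr2diff}, and the normalization conventions $\gr_1(x_0)=0$, $\gr_2(x_2)=0$ (or rather their grading-shifted versions dictated by the absolute gradings of $a_0, a_1, b$) confirms that the bigradings $\gr(a_0)=(2,0)$, $\gr(b)=(2n-1,2n-1)$, and $\gr(a_1)=(0,2)$ are precisely what Definition~\ref{def:standard} assigns (up to an overall shift).

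Next I would invoke Theorem~\ref{thm:char}: every knotlike $\X$-complex is locally equivalent to a unique standard complex. Since Proposition~\ref{prop:changebasis} exhibits $\CFK_\X(Y_n, K_n)$ as locally equivalent to a complex which is itself a standard complex (up to grading shift), this standard complex must coincide with the standard representative of $\CFK_\X(Y_n, K_n)$ guaranteed by Theorem~\ref{thm:localequivlex}. Hence
\[ \CFK_\X(Y_n, K_n) \sim C\bigl(-(n,n-1),\,(n,n-1)\bigr). \]

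Finally, I would apply Definition~\ref{def:homs} directly. The standard complex $C(-(n,n-1),(n,n-1))$ has exactly one odd-index parameter, namely $b_1 = -(n,n-1) = (n,n-1)^{-1}$ in $\Gamma_-(\R_U)$. The signed count in Definition~\ref{def:homs} therefore gives $\varphi^U_{(n,n-1)}(K_n) = 0 - 1 = -1$, while $\varphi^U_{(i,j)}(K_n)=0$ for all other $(i,j)$. Since the complex is locally symmetric (as ensured by Section~\ref{sec:algebra-for-knot-floer}), we may identify $\varphi_{i,j} = \varphi^U_{(i,j)}$, proving the lemma.

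The only substantive step is the grading verification in the identification with the standard complex; the rest is bookkeeping. There is no serious obstacle, since Proposition~\ref{prop:changebasis} has already done the hard work of producing a small local model.
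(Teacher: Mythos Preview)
Your proposal is correct and follows exactly the paper's approach: identify the three-generator complex of Proposition~\ref{prop:changebasis} as the standard complex $C(-(n,n-1),(n,n-1))$ with preferred generators $a_1,b,a_0$ in that order, then read off the $\varphi$-values from Definition~\ref{def:homs}. The paper's proof is a single sentence to this effect; you have simply spelled out the verification in more detail (and correctly, though since $d(Y_n)=0$ the gradings in fact match on the nose rather than up to shift).
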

\begin{proof}
The complex of Proposition~\ref{prop:changebasis} is in fact a standard complex with preferred generators (listed in order) $a_1, b,$ and $a_0$. The result immediately follows.
\end{proof}

This completes the final step in the proof of Theorem~\ref{thm:main}:

\begin{proof}[Proof of Theorem~\ref{thm:main}]
The homomorphisms $\varphi_{i,j}$ and their properties have been constructed and verified over the last several sections. Lemma~\ref{lem:phi-calc} shows that the $\varphi_{n, n-1}$ are linearly independent and that they give a surjection
\[
\bigoplus_{n > 1} \varphi_{n, n-1} \co \Chatz \rightarrow \Z^\infty.
\]
By Theorem~\ref{thm:vanishing}, we thus in fact have a surjection
\[
\bigoplus_{n > 1} \varphi_{n, n-1} \co \Chatz/\cCz \rightarrow \Z^\infty,
\]
as desired.
\end{proof}

We also look at the examples from \cite{HomLidmanLevine}. Let $J=T_{2,-3;2,3}$ denote the $(2,3)$-cable of the left-handed trefoil, and $M$ be $+1$-surgery on $J$. Let $\smash{\tilde{J}\subset M}$ denote the knot obtained as the core of the surgery. 

\begin{proposition}\label{prop:cable}
The complex $\CFK_\X(M,\tilde{J})$ is locally equivalent to 
\[
C(-(1,1),(1,0),-(1,0),(1,1)).
\]
\end{proposition}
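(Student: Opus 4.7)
My plan is to follow the same template as Proposition~\ref{prop:changebasis}: start with a locally-equivalent $\F[U,V]$-model of $\CFK(M,\tilde J)$, change ground ring to $\X$, and then perform a bigraded change of basis to recognize the standard form.

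\textbf{Step 1 (Input model).} I would first extract from \cite{HomLidmanLevine} (or re-derive using the integer surgery mapping cone for the core of $+1$-surgery on $J = T_{2,-3;2,3}$) an $\F[U,V]$-locally equivalent representative $C'$ of $\CFK(M,\tilde J)$. The cable $J$ has $\tau(J)=-1$ and $\varepsilon(J)=0$, so its $\F[U,V]$-complex decomposes as the unknot complex plus an acyclic box, and the surgery formula produces a small generating model whose non-acyclic part has five generators $\{x_0,x_1,x_2,x_3,x_4\}$ and four differential arrows: two of weight $U$, two of weight $V$, and one ``long'' arrow of weight divisible by $UV$ coming from the $U$-torsion tower in $\HFm(M)$. (The key qualitative input is that $\varepsilon(\tilde J)=0$ but $\tilde J$ is not $\F[U,V]$-locally trivial, reflected in the appearance of both a short $U$-/$V$-pair and a long mixed arrow.)

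\textbf{Step 2 (Base change to $\X$).} Applying the ring morphism $U\mapsto U_B+W_{T,0}$, $V\mapsto V_T+W_{B,0}$ of \eqref{eq:fuvtox} turns each short $U$-arrow of $C'$ into an arrow weighted by $U_B+W_{T,0}$, each $V$-arrow into one weighted by $V_T+W_{B,0}$, and, using $U_BV_T=0$ together with $U_BW_{B,i}=W_{B,i+1}$ and $V_TW_{T,i}=W_{T,i+1}$, the mixed $UV$-arrow into a sum of two pure arrows $U_BW_{B,0}+V_TW_{T,0}$. This is exactly the same phenomenon as in the proof of Proposition~\ref{prop:changebasis}, only with one short pair of arrows added.

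\textbf{Step 3 (Basis change in $\X$).} I would then absorb the $W_{T,0}$- and $W_{B,0}$-tails of the short arrows into the target generators via a substitution of the form $x_i' = x_i + W_{*,-1}x_j$ (as in Proposition~\ref{prop:changebasis}), and simultaneously split the long mixed arrow into two separate arrows, one of weight $U_BW_{B,0}$ in $\R_U$ and one of weight $V_TW_{T,0}$ in $\R_V$, each landing on its own new generator. After this change of basis the complex splits as $C(-(1,1),(1,0),-(1,0),(1,1))$ direct sum with an acyclic piece that is torsion after inverting either of $\R_U$, $\R_V$; Corollary~\ref{cor:splitting} then identifies the standard summand as the $\X$-local equivalence class. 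As a consistency check, Proposition~\ref{prop:tau} applied to this sequence recovers $\tau(M,\tilde J)=\pm 1$ in agreement with \cite{HomLidmanLevine}, and the symmetry condition $b_i=\xi(b_{5-i}^{-1})$ is manifestly satisfied.

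The main obstacle is Step~1: writing down a small enough $\F[U,V]$-model of $\CFK(M,\tilde J)$ to make the substitution tractable. Given such a model, Steps~2 and~3 are mechanical, and by Theorem~\ref{thm:localequivlex} it suffices to match the ordered parameter sequence, which can be read off from the $U$- and $V$-differentials after the substitution.
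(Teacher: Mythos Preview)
Your overall strategy---pull an $\F[U,V]$-model of $\CFK(M,\tilde J)$ from \cite{HomLidmanLevine}, base change to $\X$ via \eqref{eq:fuvtox}, then change basis into standard form---is exactly what the paper does. The difference is entirely in Step~1, where your guess at the shape of the model is off.

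The actual five-generator model (from \cite[Figure~12]{HomLidmanLevine}) has generators $E,F,G,J,K$ with
\[
\partial G = UE + VJ,\qquad \partial F = UV\,E,\qquad \partial K = UV\,J,\qquad \partial E = \partial J = 0.
\]
So there is one $U$-arrow, one $V$-arrow, and \emph{two} $UV$-arrows, not the ``two $U$, two $V$, one $UV$'' pattern you described (which is also internally inconsistent with your count of four arrows). Relatedly, $J=T_{2,-3;2,3}$ does not have $\varepsilon(J)=0$ in $S^3$; its complex is not the unknot plus a box. The vanishing $\varepsilon(M,\tilde J)=0$ is a feature of the surgered pair, visible only after the surgery formula, and is reflected in the two $UV$-arrows rather than in the structure of $\CFK(J)$ itself.

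Once you have the correct model, Step~3 requires no new generators and no acyclic summand: the five generators $E,F,G,J,K$ are already exactly the five generators of the length-four standard complex. The change of basis (analogous to Proposition~\ref{prop:changebasis}, but now involving both $W_{B,-1}$ and $W_{T,-1}$ corrections on several generators simultaneously) turns each $UV$-arrow into a pure $U_BW_{B,0}$- or $V_TW_{T,0}$-arrow and cleans the short $U$- and $V$-arrows into pure $U_B$- and $V_T$-arrows. Reading off the chain of arrows $K'\to J'\leftarrow G'\to E'\leftarrow F'$ then gives the parameter sequence $(-(1,1),(1,0),-(1,0),(1,1))$ directly.
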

\begin{proof}
From \cite[Figure 12]{HomLidmanLevine}, we obtain that $\CFK(M,\tilde{J})$ is locally equivalent (over $\F[U,V]$) to the complex on generators $E,F, G,J,K$ with gradings:
\[
\gr(E)=(1,-1),\qquad \gr(F)=(0,-2),\qquad  \gr(G)=(0,0),
\]
\[
\gr(J)=(-1,1),\qquad \gr(K)=(-2,0),
\]
and with differential given by:
\[
\d F=UV E,\qquad \d E=0,\qquad \d G= UE+VJ,\qquad \d J=0,\qquad \d K=UV J.
\]
Performing the usual substitution yields:
\[
\d F=(U_B W_{B,0} + W_{T,0} V_T) E, \qquad \d K=(U_B W_{B,0} + W_{T,0} V_T) J.
\]
\[
\d E=0,\qquad \d G= (U_B + W_{T,0})E+(V_T + W_{B,0})J,\qquad \d J=0.
\]
Consider the change-of-basis
\[
F' = F + W_{B,0} G + W_{B, -1} K, \qquad K' = K + W_{T,0} G + W_{T, -1} F,
\]
\[
E' = E + W_{B, -1} J, \qquad G' = G, \qquad J' = J + W_{T, -1} E.
\]
This gives the differential
\[
\partial F' = W_{T,0}V_T E', \qquad \partial K' = U_BW_{B,0} J'
\]
\[
\partial E' = 0, \qquad \partial G' = U_B E' + V_T J', \qquad \partial J' = 0.
\]
This exhibits $\CFK_\X(M,\tilde{J})$ as a standard complex, with preferred generators (listed in order) $K', J', G', F'$, and $E'$. This gives the standard complex parameters
\[
b_1 = (U_BW_{B,0})^{-1}, \quad b_2 = (V_T), \quad b_3 = (U_B)^{-1}, \quad b_4 = (W_{T,0}V_T), 
\]
as desired.
\end{proof}

\bibliographystyle{plain}
\bibliography{bib}

\end{document}